\documentclass[11pt]{article}
\usepackage[utf8]{inputenc} % allow utf-8 input
\usepackage[T1]{fontenc}    % use 8-bit T1 fonts
\usepackage{geometry}
\geometry{verbose,tmargin=1in,bmargin=1in,lmargin=1in,rmargin=1in}
\usepackage{amsmath, amsfonts, bm, mathrsfs, amssymb}
\usepackage{amsthm,graphicx}
\usepackage[dvipsnames]{xcolor}

\usepackage[unicode=true,
 bookmarks=false,
 breaklinks=false,pdfborder={0 0 1},colorlinks=false]
 {hyperref}
\hypersetup{
 colorlinks,citecolor=blue,filecolor=blue,linkcolor=blue,urlcolor=blue}

\allowdisplaybreaks

\usepackage{float}
\usepackage{multirow}
\usepackage{footnote}
\usepackage{dsfont}
\usepackage{booktabs}

\usepackage[linesnumbered,ruled,vlined]{algorithm2e}
\usepackage{algorithmic}

\definecolor{cc}{RGB}{0,127,0}

\definecolor{bluegray}{rgb}{0.1, 0.1, 0.7}

\usepackage{dsfont}

\title{Dimension free ridge regression}

\author{Chen Cheng\thanks{Department of Statistics, Stanford University} \and 
Andrea Montanari\thanks{Department of Electrical Engineering
    and Department of Statistics, Stanford University; School of Mathematics,
    Institute for Advanced Studies, Princeton}}

\newcommand{\opnorm}[1]{{\left\vert\kern-0.25ex\left\vert\kern-0.25ex\left\vert #1 
		\right\vert\kern-0.25ex\right\vert\kern-0.25ex\right\vert}}

\newcommand{\bigO}{\mathcal{O}}
\newcommand{\bigOmg}{\Omega}
\newcommand{\bigTht}{\Theta}
\newcommand{\smin}{s_{\sf min}}
\newcommand{\proj}{\boldsymbol{P}}
\newcommand{\bzeta}{\boldsymbol{\zeta}}
\newcommand{\intdim}{\mathsf{intdim}}
\newcommand{\constant}{\mathsf{C}}
\newcommand{\constantx}{\constant_{\bx}}
\newcommand{\constantsig}{\mathsf{d}_{\bSigma}}
\newcommand{\tconstantsig}{\tilde{\mathsf{d}}_{\bSigma}}
\newcommand{\constantlambdaefct}{\mathsf{C}_{\bSigma}}

\newcommand{\half}{\frac{1}{2}}
\newcommand{\Riter}{\mathsf{R}}
\newcommand{\Siter}{\mathsf{S}}
\newcommand{\lambdaprop}{\lambda_{\mathsf{p}}}
\newcommand{\lambdabv}{\lambda_{\mathsf{bv}}}
\newcommand{\vareps}{\tau}
\def\bvarphi{{\boldsymbol \varphi}}

\newcommand{\mc}[1]{\mathcal{#1}}

\newcommand{\Ffct}{\mathscr{F}}
\newcommand{\Rfct}{\mathscr{R}}

\newcommand{\bb}{\boldsymbol{b}}

\newcommand{\integersp}{\integers_{\geq 0}}
\newcommand{\wb}[1]{\overline{#1}}
\newcommand{\ind}{\mathds{1}}
\newcommand{\Ep}{\mathbb{E}}
\newcommand{\Prb}{\mathbb{P}}
\newcommand{\real}{\mathbb{R}} % real numbers
\newcommand{\prn}[1]{\left({#1}\right)} % parentheses
 % parentheses
 % parentheses
 % parentheses
\newcommand{\brk}[1]{\left[{#1}\right]} % bracket
 % parentheses
 % parentheses
 % parentheses
\newcommand{\brc}[1]{\left\{{#1}\right\}} % brace
\newcommand{\norm}[1]{\left\|{#1}\right\|} % norm
 % norm

\newcommand{\normop}[1]{\norm{#1}}

 % norm
\newcommand{\est}[1]{\widehat{#1}}

\newcommand{\bxnew}{\bx_{\mathsf{new}}}
\newcommand{\ynew}{y_{\mathsf{new}}}
\newcommand{\risk}{\mathscr{R}}
\newcommand{\bias}{\mathscr{B}}
\newcommand{\var}{\mathscr{V}}

\newcommand{\estboldbeta}{\est{\boldbeta}}

\newcommand{\ridgebeta}{\estboldbeta_\lambda}

\newcommand{\covX}{\what{\bSigma}}

\def\naturals{{\mathbb N}}
\def\reals{{\mathbb R}}

\def\spn{{\rm span}}

\DeclareMathOperator*{\argmin}{arg\,min}

\newcommand{\VAR}{\mathsf{V}}
\newcommand{\BIAS}{\mathsf{B}}
\newcommand{\RISK}{\mathsf{R}}
\newcommand{\lambdaefct}{\lambda_\star}
\newcommand{\muefct}{\mu_\star}

\newtheoremstyle{myexample} % name
    {\topsep}                    % Space above
    {\topsep}                    % Space below
    {\rm\small }                   % Body font
    {}                           % Indent amount
    {\bf }                   % Theorem head font
    {.}                          % Punctuation after theorem head
    {.5em}                       % Space after theorem head
    {}  % Theorem head spec (can be left empty, meaning normal)

\newtheoremstyle{myremark} % name
    {\topsep}                    % Space above
    {\topsep}                    % Space below
    {\rm}                        % Body font
    {}                           % Indent amount
    {\bf}                        % Theorem head font
    {.}                          % Punctuation after theorem head
    {.5em}                       % Space after theorem head
    {}  % Theorem head spec (can be left empty, meaning normal)

\newtheorem{claim}{Claim}[section]
\newtheorem{lemma}[claim]{Lemma}

\newtheorem{assumption}{Assumption}

\newtheorem{theorem}{Theorem}
\newtheorem{proposition}[claim]{Proposition}
\newtheorem{corollary}[claim]{Corollary}

\theoremstyle{myremark}
\newtheorem{remark}{Remark}[section]

\theoremstyle{myremark}

\theoremstyle{myexample}

\def\<{\langle}
\def\>{\rangle}

\def\argmin{{\rm argmin}}

\def\c{{\sf c }}

\def\eps{{\varepsilon}}

\def\id{{\boldsymbol I}}

\def\sT{{\sf T}}

\def\cH{{\cal H}}

\def\bQ{{\boldsymbol Q}}

  %stationary disTribution

\def\const{{\kappa}}

\def\normal{{\sf N}}

\def\Cov{{\rm Cov}}
\def\Var{{\sf Var}}

\def\Tr{{\rm {Tr}}}

\def\de{{\rm d}}

\def\boldbeta{{\boldsymbol \beta}}

\def\Var{{\sf Var}}

\def\ind{\mathbb{I}}

\def\bfzero{{\bf 0}}

%mapping to spacetime grid

\def\bb{{\boldsymbol b}}

\def\cF{{\cal F}}

\def\Var{{\rm Var}}

%
%******************
%

\def\naturals{\mathbb{N}}
\def\integers{\mathbb{Z}}
\def\reals{\mathbb{R}}

\def\realsp{\mathbb{R}_{\ge 0}}

%
%**************************
% Overline
%

%
%**************************
% Calligraphic
%

%
%**************************
% BoldMath
%
\def\bA{{\boldsymbol A}}

\def\bB{{\boldsymbol B}}
\def\bC{{\boldsymbol C}}

\def\bI{{\boldsymbol I}}
\def\bW{{\boldsymbol W}}
\def\bU{{\boldsymbol U}}
\def\bV{{\boldsymbol V}}

\def\bM{{\boldsymbol M}}
\def\bD{{\boldsymbol D}}

\def\bX{{\boldsymbol X}}

\def\bZ{{\boldsymbol Z}}

\def\balpha{{\boldsymbol \alpha}}

\def\bu{{\boldsymbol u}}

\def\bx{{\boldsymbol x}}

\def\by{{\boldsymbol y}}
\def\bz{{\boldsymbol z}}

\def\bSigma{{\boldsymbol \Sigma}}

\def\btheta{{\boldsymbol \theta}}

\def\bg{{\boldsymbol g}}
\def\bv{{\boldsymbol v}}

\def\bS{{\boldsymbol S}}

\def\beps{{\boldsymbol \varepsilon}}

%
%**************************
% Factor models
%

\def\spn{{\rm span}}

%
%**************************
% Probability

\def\prob{{\mathbb P}}
\def\E{{\mathbb E}}

   %% To be modified
%
%**************************
% Graphs
%

\def\Treg[#1]{T^{{\rm reg},#1}}
\def\GW[#1]{{\rm GW}(#1)}
\def\MGW[#1]{{\rm MGW}(#1)}

%
%**************************
% Algorithms
%

%
%**************************
% General
%

\def\btheta{{\boldsymbol \theta}}

\def\rank{{\rm rank}}

\def\spn{{\rm span}}

%
%********************************
%Roman

\def\constu{\varepsilon}

\def\tby{\tilde{\boldsymbol{y}}}
\def\tbg{\tilde{\boldsymbol{g}}}
\def\tomega{\tilde{\omega}}

\newcommand{\normF}[1]{\norm{#1}_F}

\date{\today}

\begin{document}

\maketitle
	
	\begin{abstract}
	Random matrix theory has become a widely useful tool in high-dimensional statistics and theoretical
	machine learning. However, random matrix theory is largely focused on the proportional
	asymptotics in which the number of columns grows proportionally to the number of rows of the data matrix.
	This is not always the most natural setting in statistics where columns correspond
	to covariates and rows to samples.
	 
	With the objective to move beyond the proportional asymptotics,
	we revisit ridge regression ($\ell_2$-penalized least squares) on i.i.d. data
	$(\bx_i,y_i)$, $i\le n$, where $\bx_i$ is a feature vector and $y_i = \<\boldbeta,\bx_i\>+\eps_i
	\in\reals$
	is a response. We allow the feature vector to be high-dimensional, or even infinite-dimensional,
	in which case it belongs to a separable Hilbert space, and assume either
	$\bz_i := \bSigma^{-1/2}\bx_i$ to have i.i.d. entries, or to satisfy a certain convex
	concentration property.

	Within this setting, we establish  non-asymptotic bounds that
	approximate the bias and variance of ridge regression in terms of the bias and
	variance of an `equivalent' sequence model (a regression model with diagonal design matrix). 
	The approximation is up to multiplicative factors bounded by 
	$(1\pm \Delta)$ for some explicitly small $\Delta$. 
	
	Previously, such an approximation result was known only in the proportional regime
	and only up to additive errors: in particular, it did not allow to characterize the behavior
	of the excess risk when this converges to $0$.  
	Our general theory recovers earlier results in the proportional regime (with better error rates).
	As a new application, we obtain a completely explicit and sharp 
	characterization of ridge regression for Hilbert covariates with regularly varying spectrum.
	Finally, we analyze the overparametrized near-interpolation setting and obtain sharp 
	`benign overfitting' guarantees. 
	\end{abstract}

	\tableofcontents
	
	\newcommand*{\horzbar}{\rule[.5ex]{2.5ex}{0.5pt}}
\newcommand*{\what}[1]{\widehat{#1}}

\section{Introduction}
\label{sec:Introduction}

In regression modeling, we typically assume to be given data $(\bx_i,y_i)$, $i\le n$
that are i.i.d. samples from a common distribution $\prob$, with $\bx_i$ a feature vector,
and $y_i\in\reals$ a scalar response. We would like to estimate a model $f:\bx\mapsto f(\bx)$
to predict $\ynew$
from $\bxnew$, where  $(\bxnew,\ynew)\sim\prob$ is a new sample from the same 
distribution.
In this paper, we will focus on linear models whereby $f(\bx) = \<\estboldbeta,\bx\>$,
and use ridge regression for the estimator $\estboldbeta$. Denoting by $\bX$ the matrix with
rows $\bx_1,\dots,\bx_n$, we have 
\begin{align}
	\ridgebeta &:=\arg\min_{\bb}\Big\{\frac{1}{n}\|\by-\bX\bb\|^2+\lambda\|\bb\|^2\Big\}\\
	& = (\bX^\sT \bX + n\lambda \bI)^{-1} \bX^\sT \by \, .\label{eq:RidgeDef2}
\end{align}
We will also be interested in the $\lambda\to 0+$ limit of this estimator which
(in the overparametrized case) corresponds to the minimum norm interpolator of the data,
and refer to it as `ridgeless regression.'
We will denote by $\boldbeta := \arg\min_{\bb}\E\{(y-\bb^\sT\bx)^2\}$ the population regressor.

Statistical theory studies this and similar estimators in three 
different regimes:
\begin{enumerate}
\item The classical low-dimensional setting in which $\bx_i,\boldbeta\in\reals^d$
with $d$ fixed and $n\to\infty$. In this regime, the empirical covariance $\covX:=\bX^\sT \bX/n$
converges to the population covariance $\bSigma := \E\{\bx_1\bx_1^\sT\}$ (provided the latter
exists) and $\ridgebeta$ is asymptotically normal \cite{van2000asymptotic}. 
\item The (by now) classical high-dimensional regime in which $\bx_i,\boldbeta\in\reals^d$
with $d\gg n$ but: $(i)$~the population covariance $\bSigma$ is well conditioned,
and $(ii)$~the population regressor $\boldbeta$ is sparse. In this case it is advised to 
replace the $\ell_2$ penalty $\|\bb\|^2$ by a sparsity promoting penalty, e.g. 
 $\|\bb\|_1$ \cite{tibshirani1996regression,donoho2005stable}. In many ways, this regime
  is similar to the previous one, provided $n\gg s\log d$.
While $\covX$ does not concentrate, its restrictions to subsets of $O(s)$ 
 coordinates do \cite{candes2005decoding}.
\item The proportional regime in which $n\asymp d$. In this case $\covX$ does not 
concentrate, and $\ridgebeta$ is not consistent, and indeed consistent estimation is 
generally impossible. 
However, accurate characterizations of the ridge estimator and its risk can be derived 
using random matrix theory \cite{dicker2016ridge,dobriban2018high,hastie2022surprises, wu2020optimal, richards2021asymptotics}. 
Such characterizations answers the question of 
$\eps$-consistency: for what sample size, and what data distributions does the ridge estimator achieves
error $\E\{\|\ridgebeta-\boldbeta\|^2\}\le \eps$? Similar characterizations hold for other
estimators such as the Lasso \cite{bayati2011lasso,miolane2021distribution,celentano2020lasso}, 
robust M-estimators \cite{bean2013optimal,el2013robust,el2018impact,donoho2016high}, and so on 
\cite{barbier2019optimal,thrampoulidis2018precise,taheri2021fundamental,celentano2022fundamental}.
\end{enumerate}
Despite the wealth of fascinating technical results in this area, this state of affairs
leaves open many important questions.

\vspace{0.1cm}

 \noindent\emph{First,} it would be important have a unified theoretical framework
 that does not require the statistician to decide which asymptotics to use. 
For instance, in order to apply sharp asymptotics in the classical or proportional
regimes, it is often assumed that a given pair $(n,d)$ is in fact an element of
a sequence  $(n,d(n))$ with, respectively, either $d(n) \asymp 1$, or $d(n) \asymp n$.

In practice we are given a single pair, say  $(n,d) = (1000,50)$: should we interpret this
as $d \asymp 1$, $d \asymp n$, or yet another regime that is not covered by current theory
(e.g., $d\asymp n^{2/3}$)?

In fact, the distinction between three types of asymptotics outlined above 
is rather the consequence of
the technical tools used to derive them, rather than a fundamental statistical phenomenon.

\vspace{0.1cm}

\noindent\emph{Second,} the restriction $d = O(n)$ (or $s = O(n)$ in sparse regression)
which is implied both by the proportional and by the classical asymptotics is artificial.
 While this condition might seem necessary for consistency  at first sight
 (it might seem that at least $d$ observations are required to estimate $d$ parameters), as shown
in \cite{bartlett2020benign,tsigler2020benign} this is in fact not the case.
Further, it is not even clear how to check in practice $d = O(n)$ for a given pair $n,d$.

\vspace{0.1cm}

\noindent\emph{Third,} it would be important to remove the assumption of a well conditioned $\bSigma$, 
 and derive precise asymptotics for general covariances.
 We would argue that the ill-conditioned case is most important in practice, since 
high-dimensional data  have often low-dimensional structures.

\vspace{0.1cm}

\noindent\emph{Fourth,} the proportional asymptotics is %Asymptotics is singular!
somewhat un-natural from a statistical viewpoint.
Most statisticians are used to think of the data distribution is fixed (in particular, $d$ is fixed),
while we sample size $n$ increases. In a standard proportional setting, one instead assumes
$n,d\to\infty$ together with $n/d\to\delta$: the data distribution changes with the sample size.
%
%As shown in  \cite{hastie2022surprises}, formulas derived under the proportional
%asymptotics assumption cover the case of fixed large $d$, and $n$ ranging from ---say---
%$d/C$ to $Cd$ for some constant $C$. However, this answer is not entirely satisfactory,
%because it requires to understand the dependence of error terms upon $C$.

\vspace{0.2cm}

Recent progress on several of these issues was achieved in the context of ridge regression.
Among others, \cite{hastie2022surprises} derived a characterization for bias and variance in 
the proportional regime
that is \emph{non-asymptotic}, i.e. holds up to an approximation error that is explicit and 
vanishes for large $n$, $d$.  Using a different approach,  \cite{bartlett2020benign,tsigler2020benign} 
obtained bounds 
on bias and variance that hold for arbitrary (possibly infinite) dimension $d$, in terms of 
of the decay of eigenvalue of $\bSigma$. These bounds allow to demonstrate `benign overfitting,'
i.e. choices of $\bSigma,\boldbeta$ (i.e. data distributions) such that minimum norm
interpolator is consistent.

The results \cite{hastie2022surprises,bartlett2020benign,tsigler2020benign} have limitations. 
The characterization of the risk proved in 
 \cite{hastie2022surprises} has sharp leading constants, but only holds for 
  $C^{-1}\le n/d\le C$ with $C$ a constant,
 and holds up to an additive error. However, this error terms
  can be larger than the actual excess risk when
 the latter vanishes.
  The bounds of  \cite{bartlett2020benign,tsigler2020benign}, 
 on the other hand, hold up to unspecified
  multiplicative constants. The proof techniques in these two sets of results are 
  furthermore very different.

In this paper we attempt to provide a unified picture that covers 
 these gaps, by extending the sharp characterization of ridge regression 
of \cite{hastie2022surprises} beyond the proportional regime. This will allows to recover the 
benign overfitting
results of \cite{bartlett2020benign,tsigler2020benign} (in several cases) with sharp constants.
 In doing so, we will extend random 
matrix theory analysis to cases with $d\gg n$ or $d=\infty$, without restrictions on the
condition number of $\bSigma$. In the case $d=\infty$,  the feature vectors $\bx_i$ 
are  random elements in a separable Hilbert space, whose distribution is fixed (does not change 
with $n$), and whose covariance $\bSigma$ is a trace class self-adjoint operator.

The rest of the paper is organized as follows. 
The next section describes the setting for our analysis,
the main assumptions and the 
resulting asymptotic characterization. It also provides some 
intuition and connects our results to earlier work. 
Section \ref{sec:Statement} contains the formal statement of our general results, 
while Section \ref{sec:Applications} specializes our
theorem  to regimes of interest and develops tools to check its assumptions. 
Section \ref{sec:Numerical} evaluates our characterization for certain choices of 
$\bSigma,\boldbeta$, and compare the predictions with simulations.
Finally, proof are presented in Sections \ref{sec:proof-main}
and \ref{proof:R-approximation}, with most technical steps deferred to the appendices.

	\section{Setting and characterization}
\label{sec:Setting}
%
%\am{I suggest that we modify the notation for the noise variance. $\sigma^2$ is too easy to confuse with the eigenvalues
%of $\bSigma$.} \cc{This is addressed}

\paragraph{Ridge regression in Hilbert space}

We consider the simple linear model
\begin{align}
	y_i = \bx_i^\sT \boldbeta + \eps_i\, , \label{eq:LinearModel}
\end{align}
where $\boldbeta \in \reals^d$ is the ground truth signal. The random features 
$\bx_i \in \reals^d$ and noise $\eps_i$ are independent, and the 
$(\bx_i, \eps_i)$ are i.i.d. samples with $1 \leq i \leq n$. We assume $\bx_i, \eps_i$ are
 mean zero with covariances $\Cov(\bx_i) = \bSigma$ and $\Var(\eps_i) = \vareps^2$. Defining
  the data matrix
\begin{align*}
	\bX = \begin{bmatrix}
		\horzbar & \bx_1^\sT & \horzbar \\
		\horzbar & \bx_2^\sT & \horzbar \\
		& \vdots &  \\
		\horzbar& \bx_n^\sT &\horzbar
	\end{bmatrix} \in \reals^{n \times d}\, ,
\end{align*} 
the response vector $\by = (y_1, \cdots, y_n)^\sT$ and the noise vector $\beps = (\eps_1, \cdots, \eps_n)^\sT$, we can write in matrix form
\begin{align}
	\by = \bX \boldbeta + \beps \, . \label{eq:LinearModel-Matrix}
\end{align}

In this paper, we assume the dimension $d \in \integersp \cup \{\infty\}$. When $d < \infty$,
 we are in the usual setup of linear model with finite dimensional features. In the case
  $d = \infty$, we assume that the $\bx_i$'s' are i.i.d.\ random vectors from a real, 
  separable Hilbert space $\cH$.
  We will use $\|\bx\|$ to denote the norm and  $\<\bx_1,\bx_2\>$ or
  $\bx_1^{\sT}\bx_2$ to denote the scalar product in this
  space. We understand the infinite dimensional matrix $\bx \bx^\sT$ as an operator $\cH \to \cH: \boldbeta \to \<\bx, \boldbeta\> \bx$. Given a linear operator  $\bA:\cH\to\cH$, we denote by $\|\bA\|$ the associated operator norm.
  
We will assume the covariance operator $\bSigma =\E[\bx\bx^{\sT}]$  to be trace-class, namely
\begin{align*}
	\Tr (\bSigma) =\E\{\|\bx_i\|^2\}< \infty \, ,
\end{align*}
and, without loss of generality, we also  assume $\norm{\bSigma} = 1$. Recall that,
 without loss of generality, one can always assume $\cH$ to be 
 $\ell_2 := \{\bx = (x_1, x_2, \cdots, ) : \sum_{i=1}^\infty x_i^2 < \infty \}$
 \cite{brezis2011functional}.
 
For an estimator $\estboldbeta =\estboldbeta(\bX,\by)$ we define the excess 
risk as 
\begin{align*}
	\risk_\bX(\estboldbeta; \boldbeta) = \Ep_{\bxnew, \by} \brk{(\bxnew^\sT \estboldbeta - \bxnew^\sT \boldbeta )^2 \mid \bX} = \Ep_{\by} \brk{\|\estboldbeta - \boldbeta\|_{\bSigma}^2 \mid \bX},
\end{align*}  
where $\bxnew$ is an independent copy of 
 $\bx_1, \cdots, \bx_n$ and $\|\bx\|_\bSigma^2 := \bx^\sT \bSigma \bx$.
 We will also refer to this as the `test error' or the `generalization error'
 (although the latter is actually given by the difference between $\risk_\bX$
 and ts empirical version.) Let us emphasize that in this definition, 
 $\risk_\bX(\estboldbeta; \boldbeta)$ is a random quantity because it depends on
 the data $\bX$: however, as we will prove, it concentrates around a non-random value.
 
 The generalization error admits a variance-bias decomposition $\risk_\bX(\estboldbeta; \boldbeta) = \var_\bX(\estboldbeta; \boldbeta) +\bias_\bX(\estboldbeta; \boldbeta)$, with
\begin{align*}
	\var_\bX(\estboldbeta; \boldbeta)  = \Tr \prn{\bSigma\Cov (\estboldbeta \mid \bX) } \, , \qquad
	\bias_\bX(\estboldbeta; \boldbeta)  = \norm{\Ep_{\by} [ \estboldbeta \mid \bX ] - \boldbeta  }_{\bSigma}^2  \, .
\end{align*}
For ridge regression, we can write 
explicit forms of variance and bias:
\begin{subequations}
\begin{align}
	\var_\bX(\lambda) &
	% = \vareps^2 \Tr \prn{\bSigma \bX^\sT \bX (\bX^\sT \bX + n\lambda \bI)^{-2}} 
	 = \frac{\vareps^2}{n} \Tr \prn{\bSigma \cdot \covX (\covX + \lambda \bI)^{-2}} \, , \label{eq:var-X} \\ 
	\bias_\bX(\lambda) &  
	%= \lambda^2 \< \boldbeta, (\bX^\sT \bX / n + \lambda \bI)^{-1} \bSigma (\bX^\sT \bX / n + \lambda \bI)^{-1}\boldbeta\> 
	 = \lambda^2 \< \boldbeta, (\covX + \lambda \bI)^{-1} \bSigma (\covX + \lambda \bI)^{-1}\boldbeta\> \label{eq:bias-X} \, .
\end{align}
\end{subequations}

\paragraph{Assumptions on the covariates distribution} We impose
  the following assumptions on the covariates $\bx_i$ throughout the paper.
\begin{assumption} \label{asmp:data-dstrb}
We assume $\E[\bx_i]=\bfzero$, $\bSigma :=\E[\bx_i\bx_i^{\sT}]$ is a trace class operator:
 $\Tr (\bSigma) < \infty$ and (without loss of generality) $\norm{\bSigma}= 1$. 
We denote its eigenvalues by $1 = \sigma_1 \geq \sigma_2 \geq \cdots$ in non-increasing order. 
We assume $\|\boldbeta\|_{\bSigma^{-1}} := \|\bSigma^{-1/2}\boldbeta\|<\infty$.

We further assume  $\bx_i = \bSigma^{1/2} \bz_i$ where the following hold.
 
 \noindent{{\bf I.}} There exist  $\constantsig := \constantsig(n) \geq n$ such that,
  for all $1 \leq k \leq \min\{n, d\}$ 
	\begin{align}
		\sum_{l=k}^d \sigma_l \leq \constantsig \sigma_k \,.\label{eq:DefConstantsig}
	\end{align} 

\noindent\emph{{\bf II.}} There exist $\constantx >0$, such that  one of the following condition holds:
	\begin{enumerate}
		\item[$(a)$] \textbf{Independent sub-Gaussian coordinates}:  $\bz_i$ has independent but not necessarily identically
		 distributed coordinates with uniformly bounded sub-Gaussian norm. Namely: each 
		  coordinate $z_{ij}$ of $\bz_i$ satisfies $\Ep [z_{ij}]= 0$, $\Var (z_{ij}) = 1$ and $\norm{z_{ij}}_{\psi_2}:= \sup_{p \geq 1} 
		  p^{-\frac 1 2} \prn{\Ep \brk{|z_{ij}|^p}}^{\frac 1 p} \leq \constantx$.
		\item[$(b)$] \textbf{Convex concentration}: allowing $\bz_i$ to have dependent coordinates, 
		the following holds for any $1-$Lipschitz convex function $\varphi : \real^d \to \real$, and for every $t > 0$
		\begin{align*}
			\Prb \prn{| \varphi(\bz_i) - \Ep \varphi(\bz_i)| \geq t} \leq 2 \exp \prn{-t^2/ \constantx^2}\, .
		\end{align*}
	\end{enumerate}
\end{assumption}

The technical motivation for assumption II is to establish concentration of quadratic forms of $\bz_i$, 
via Hanson-Wright inequality. We notice that the convex concentration property is implied 
by any of the following. $(i)$~By Talagrand inequality, convex concentration holds for
random vectors $\bz_i$ with independent bounded entries \cite[Theorem 7.12]{boucheron2013concentration}.
$(ii)$~By Herbst's argument, concentration of Lipschitz functions (and hence in particular 
convex concentration) holds for random vectors $\bz_i$ that satisfy a log-Sobolev inequality
\cite[Proposition 5.4.1]{bakry2014analysis}. $(iii)$~Finally, as a special case of the last point,
vectors $\bz_i$ with strongly log-concave probability density function satisfy this condition 
\cite[Corollary 5.7.2]{bakry2014analysis}.

The form of Hanson-Wright inequality that we will use is given below.
\begin{lemma}[Hanson-Wright inequality \cite{adamczak2015note, rudelson2013hanson}] \label{lem:hanson-wright} Suppose 
$\bx \in \mathbb{R}^d$ is a random copy of the features vector $\bx_i$ satisfying Assumption~\ref{asmp:data-dstrb}.
Then there exists a universal constant $c_0>0$ such that,
 for any matrix $\bM \in \mathbb{R}^{d \times d}$ with $\Tr (\bSigma^{\frac 1 2} \bM \bSigma^{\frac 1 2} )
 <\infty$, we have
	\begin{equation*}
		\Prb \prn{\left|\bx^\sT \bM \bx - \Tr \prn{\bSigma \bM} \right| \geq t} \leq 2 \exp \Big\{ - c_0 \, \min 
		\Big(
		\frac{t^2} {\constantx^4 \|\bSigma^{\frac 1 2} \bM \bSigma^{\frac 1 2} \|_F^2}, 
		 \frac{t}{\constantx^2 \|\bSigma^{\frac 1 2} \bM \bSigma^{\frac 1 2} \| }
		 \Big) \Big\} \, .
	\end{equation*}
\end{lemma}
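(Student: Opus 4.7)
The plan is to reduce the statement to the standard Hanson-Wright inequality applied to the normalized vector $\bz$, and then invoke the two existing results cited depending on which branch of Assumption~\ref{asmp:data-dstrb} holds.

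First I would set $\tilde{\bM} := \bSigma^{1/2}\bM\bSigma^{1/2}$. By the cyclic property of the trace, $\Tr(\bSigma\bM) = \Tr(\tilde{\bM})$, so writing $\bx = \bSigma^{1/2}\bz$ gives
\begin{align*}
\bx^\sT \bM \bx - \Tr(\bSigma\bM) \;=\; \bz^\sT \tilde{\bM}\bz - \Tr(\tilde{\bM}).
\end{align*}
The hypothesis $\|\tilde{\bM}\| < \infty$ makes the quadratic form well-defined a.s., and the right-hand side of the claimed bound is expressed exactly in terms of $\|\tilde{\bM}\|$ and $\|\tilde{\bM}\|_F$. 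Replacing $\tilde{\bM}$ by its symmetrization $(\tilde{\bM} + \tilde{\bM}^\sT)/2$ does not change the quadratic form, so I may assume $\tilde{\bM}$ is self-adjoint at the cost of a factor of $2$ absorbed into the universal constant $c_0$.

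In case (a), where $\bz$ has independent mean-zero unit-variance coordinates with $\|z_j\|_{\psi_2}\leq \constantx$, the desired inequality is exactly the form of Hanson-Wright established by Rudelson-Vershynin \cite{rudelson2013hanson}, which I would invoke directly. In case (b), where $\bz$ satisfies the convex concentration property with constant $\constantx$, I would invoke Adamczak \cite{adamczak2015note}: the proof there applies convex concentration to the $1$-Lipschitz convex function $\bz\mapsto \|\tilde{\bM}_+^{1/2}\bz\|$ (and the analogous one for $\tilde{\bM}_-$, after splitting $\tilde{\bM}$ into its positive and negative spectral parts), uses the resulting concentration of $\|\tilde{\bM}_\pm^{1/2}\bz\|^2$ around its mean, and combines it with a decoupling step for the off-diagonal terms, producing exactly the two-term exponential with $\|\tilde{\bM}\|_F^2$ and $\|\tilde{\bM}\|$ in the denominators. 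In both branches the constant $c_0>0$ depends only on absolute quantities, not on $n,d,\bSigma,\bM$.

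The main obstacle is really the infinite-dimensional extension: for $d=\infty$, one truncates to the subspace spanned by the first $N$ eigenvectors of $\tilde{\bM}$, applies the finite-dimensional bound (whose parameters $\|\cdot\|_F,\|\cdot\|$ are monotone under truncation), and passes $N\to\infty$ using dominated convergence; since $\tilde{\bM}$ is Hilbert-Schmidt whenever the claimed Frobenius norm on the right-hand side is finite, this step is routine. Since the lemma is essentially a transcription of \cite{adamczak2015note,rudelson2013hanson} into the current notation, the proof amounts to the clean change of variables above together with the invocation of these two references, rather than an independent derivation.
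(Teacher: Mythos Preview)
Your proposal is correct and matches the paper's treatment: the paper does not give an independent proof but simply cites \cite{rudelson2013hanson} and \cite{adamczak2015note} for cases (a) and (b) respectively, with the change of variables $\bx=\bSigma^{1/2}\bz$ implicit in the statement. The only minor difference is in the infinite-dimensional extension: the paper's remark projects onto the top $k$ eigenvectors of $\bSigma$ rather than of $\tilde{\bM}$, but either truncation works and the approximation argument is the same.
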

\begin{remark}
	The results of~\cite{adamczak2015note, rudelson2013hanson} are stated for finite $d$. However, 
	the inequality also holds for $d =\infty$ on the Hilbert space $\ell_2$ by a standard
	approximation argument. Namely, one can project the vector $\bx$ on the span
	of  the top $k$-eigenvectors of $\bSigma$, establish concentration, and take $k\to\infty$ at the end.
\end{remark}

\paragraph{Effective variance and bias} An important observation of~\cite{hastie2022surprises} 
is that variance $\var_\bX$ and bias $\bias_\bX$ concentrate around some non-random quantities,
that can be interpreted in terms of an `effective' regression problem. While \cite{hastie2022surprises}
proves such characterization in the proportional regime $n\asymp d$, here we will extend its validity 
and prove stronger guarantees. 

Define the effective regularization $\lambdaefct$ as the unique non-negative solution of
\begin{align}
	n \cdot \prn{1 - \frac{\lambda}{\lambdaefct}} = \Tr \prn{\bSigma(\bSigma + \lambdaefct \bI)^{-1}} \, , \label{eq:lambda-fixed-point}
\end{align}
we then define the effective variance and bias as
	\begin{align}
		\VAR_n(\lambda) & := \frac{\vareps^2 \Tr \prn{\bSigma^2 (\bSigma + \lambdaefct \bI)^{-2}}}{n -\Tr \prn{\bSigma^2 (\bSigma + \lambdaefct \bI)^{-2}} } \, , \label{eq:VAR-n} \\
		\BIAS_n(\lambda) & := \frac{\lambdaefct^2 \<\boldbeta, \prn{\bSigma + \lambdaefct \bI}^{-2} \bSigma \boldbeta\>}{1 - n^{-1} \Tr \prn{\bSigma^2 (\bSigma +\lambdaefct \bI)^{-2}}} \, , \label{eq:BIAS-n}\\
		\RISK_n(\lambda) & := \BIAS_n(\lambda)+\VAR_n(\lambda)\, . \label{eq:RISK-n}
	\end{align}
Our main result ---stated in the next section--- will establish dimension-free guarantees of
the form
\begin{align}
 \var_\bX = (1+o_n(1)) \VAR_n\, , \;\;\;\;\; \bias_\bX = (1+o_n(1)))\BIAS_n\, .
 \end{align}
 These improve over earlier work in two important directions. 
First, they are \emph{dimension free}, and in particular do not assume $n\asymp d$.
Second, they provide \emph{multiplicative approximations}, and hence retain their utility 
when the risk is small.

 \paragraph{Bounds, interpretation, benign overfitting}
 Before stating our formal results relating  
  $\var_\bX$ to $\VAR_n$ and $\bias_\bX$ to $\BIAS_n$, it is useful to develop some intuition 
  about the expressions  \eqref{eq:VAR-n}, \eqref{eq:BIAS-n} and their immediate 
  consequences.
  Note that, by Eq.~\eqref{eq:lambda-fixed-point}, we necessarily have 
  \begin{align}
  \Tr \prn{\bSigma^2(\bSigma + \lambdaefct \bI)^{-2}}< \Tr \prn{\bSigma(\bSigma + \lambdaefct \bI)^{-1}}\le n\, .
  \end{align}
  If we assume that inequality between the first and last term holds 
  with a constant multiplicative factor, i.e. 
  $\Tr \prn{\bSigma^2(\bSigma + \lambdaefct \bI)^{-2}}\le n(1-c_\star^{-1})$ for some constant
  $c_\star\in (0,\infty)$, then we get
	\begin{align}
\VAR_n(\lambda) &\le  \frac{c_\star\vareps^2}{n} \Tr \prn{\bSigma^2 (\bSigma + \lambdaefct \bI)^{-2}}\, , \label{eq:VAR-n-Simplified} \\
\BIAS_n(\lambda) & \le  c_\star\lambdaefct^2 \<\boldbeta, \prn{\bSigma + \lambdaefct \bI}^{-2} \bSigma \boldbeta\>\, .
\label{eq:BIAS-n-Simplified} 
	\end{align}
 Comparing these bounds with the bias and variance of general ridge regression
 in Eqs.~\eqref{eq:var-X}, \eqref{eq:var-X}, we observe that the right hand sides are 
 (modulo the factor $c_\star$) the bias and variance of a modified ridge regression in which:
 \begin{itemize}
\item The design matrix is non-random and given by $\bSigma^{1/2}$ instead of $\bX$.
\item The regularization parameter is $\lambda_\star$ instead of $\lambda$.
\item The noise level is $\vareps/\sqrt{n}$ instead of $\vareps$.
\end{itemize}
 Even more explicit expressions can be obtained by writing the right-hand side of 
 Eqs.~\eqref{eq:VAR-n-Simplified}, \eqref{eq:BIAS-n-Simplified} in the basis that diagonalizes 
 $\bSigma$ as in the next proposition. A proof of this statement
is in Appendix~\ref{proof:effective-quantities-bound}.  
%\am{Modified statement below.
%I thought previous version had a  mistake.}
%
\begin{proposition} \label{prop:effective-quantities-bound}
Assume $\Tr \prn{\bSigma^2(\bSigma + \lambdaefct \bI)^{-2}}\le n(1-c_\star^{-1})$, for 
$c_\star\in (1,\infty)$. Let $\bSigma :=\sum_{i\ge 1}\sigma_i\bv_i\bv_i^{\sT}$ be the eigendecomposition
of of $\bSigma$, and denote by $\boldbeta_{\le k}:=\sum_{i\le k}\<\boldbeta,\bv_i\>\bv_i$ the orthogonal projection of $\boldbeta$
onto the span of $\bv_1,\dots,\bv_k$, and by $\boldbeta_{> k}:=\boldbeta-\boldbeta_{\le k}$ its complement.
 Finally, let $k_\star := \max\{k\, :\; \sigma_k \ge \lambdaefct\}$, and define 
the tail effective rank parameters by
\begin{align}
r_q(k) := \sum_{\ell>k}\Big(\frac{\sigma_\ell}{\sigma_{k+1}}\Big)^q\, ,\;\;\;\;
\overline{r}(k) := \frac{r_1(k)^2}{r_2(k)}\, .\label{eq:EffRank}
\end{align}
 Then, defining $b_k:=\sigma_k/\sigma_{k+1}$,
 we have $2n \ge k_\star+ r_1(k_\star)/b_{k_{\star}}$  and
\begin{align}
\VAR_n(\lambda) &\le c_\star\vareps^2\Big(\frac{k_\star}{n}+\frac{r_2(k_\star)}{n}\Big)
\le c_\star\vareps^2\Big(\frac{k_\star}{n}+\frac{4b^2_{k_{\star}}n }{\overline{r}(k_\star)}\Big)\, ,\label{eq:CrudeBoundVar}\\
\BIAS_n(\lambda) &\le c_\star 
\Big(\sigma_{k_\star}^2 \|\boldbeta_{\le k_\star}\|_{\bSigma^{-1}}^2 +\|\boldbeta_{>k_\star}\|_{\bSigma}^2\Big)\, .\label{eq:CrudeBoundBias}
\end{align}
\end{proposition}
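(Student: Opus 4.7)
The plan is to first reduce the statement to the simplified bounds (\ref{eq:VAR-n-Simplified}) and (\ref{eq:BIAS-n-Simplified}). These follow directly from the hypothesis $\Tr(\bSigma^2(\bSigma+\lambdaefct\bI)^{-2}) \le n(1-c_\star^{-1})$, which lower-bounds the denominator $n-\Tr(\bSigma^2(\bSigma+\lambdaefct\bI)^{-2})$ appearing in (\ref{eq:VAR-n}) by $n/c_\star$ and the analogous denominator in (\ref{eq:BIAS-n}) by $1/c_\star$. After this reduction, the proposition becomes a purely spectral statement: control the traces $\Tr(\bSigma^2(\bSigma+\lambdaefct\bI)^{-2})$ and $\lambdaefct^2\langle\boldbeta,(\bSigma+\lambdaefct\bI)^{-2}\bSigma\boldbeta\rangle$ in terms of $k_\star$ and the tail effective rank parameters, together with the auxiliary inequality $2n\ge k_\star+r_1(k_\star)/b_{k_\star}$.

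The main idea is to diagonalize in the eigenbasis of $\bSigma$ and split every sum at the threshold $k_\star$, exploiting the defining inequalities $\sigma_{k_\star+1}<\lambdaefct\le\sigma_{k_\star}$. For the variance trace, the head $i\le k_\star$ contributes at most $k_\star$ since $\sigma_i/(\sigma_i+\lambdaefct)\le 1$, while on the tail $i>k_\star$ the bound $\lambdaefct>\sigma_{k_\star+1}$ gives each term at most $(\sigma_i/\sigma_{k_\star+1})^2$, summing to $r_2(k_\star)$. For the bias, the same split together with $\sigma_i+\lambdaefct\ge\sigma_i$ on the head and $\sigma_i+\lambdaefct\ge\lambdaefct$ on the tail, combined with $\lambdaefct\le\sigma_{k_\star}$, yields the two contributions $\sigma_{k_\star}^2\|\boldbeta_{\le k_\star}\|_{\bSigma^{-1}}^2$ and $\|\boldbeta_{>k_\star}\|_{\bSigma}^2$.

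The auxiliary inequality $2n\ge k_\star+r_1(k_\star)/b_{k_\star}$ is extracted from the fixed-point equation (\ref{eq:lambda-fixed-point}) for $\lambdaefct$, which in particular gives $n\ge\Tr(\bSigma(\bSigma+\lambdaefct\bI)^{-1})$. Applying the same head/tail decomposition to this trace: on $i\le k_\star$ each summand satisfies $\sigma_i/(\sigma_i+\lambdaefct)\ge 1/2$, contributing at least $k_\star/2$; on $i>k_\star$ the bound $\sigma_i+\lambdaefct\le 2\sigma_{k_\star}=2 b_{k_\star}\sigma_{k_\star+1}$ yields $\sigma_i/(\sigma_i+\lambdaefct)\ge\sigma_i/(2 b_{k_\star}\sigma_{k_\star+1})$, contributing at least $r_1(k_\star)/(2 b_{k_\star})$. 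Adding the two halves gives the claimed inequality, and plugging $r_1(k_\star)\le 2 n b_{k_\star}$ into the algebraic identity $r_2(k_\star)=r_1(k_\star)^2/\overline{r}(k_\star)$ upgrades the variance bound to the refined form involving $\overline{r}(k_\star)$.

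I do not expect a genuinely hard step here: once the hypothesis has been used to strip the denominators in the definitions of $\VAR_n$ and $\BIAS_n$, the entire argument reduces to elementary head-versus-tail estimates controlled by the two-sided pinch $\sigma_{k_\star+1}<\lambdaefct\le\sigma_{k_\star}$, with the fixed-point equation entering only through the trace inequality $\Tr(\bSigma(\bSigma+\lambdaefct\bI)^{-1})\le n$. The only subtlety is bookkeeping at the boundary index $k_\star$, where one must be careful to use $\lambdaefct\le\sigma_{k_\star}$ for the head terms and $\lambdaefct>\sigma_{k_\star+1}$ for the tail terms, and to propagate these consistently across all three bounds.
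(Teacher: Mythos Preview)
Your proposal is correct and follows essentially the same approach as the paper: strip the denominators via the hypothesis, diagonalize, and split every sum at the threshold $k_\star$ using the pinch $\sigma_{k_\star+1}<\lambdaefct\le\sigma_{k_\star}$, with the fixed-point equation supplying $\Tr(\bSigma(\bSigma+\lambdaefct\bI)^{-1})\le n$ for the auxiliary inequality. The paper phrases the auxiliary step as bounding $k_\star+r_1(k_\star)/b_{k_\star}$ from above by $2\Tr(\bSigma(\bSigma+\lambdaefct\bI)^{-1})$ rather than bounding the trace from below, but the underlying inequalities are identical.
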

(We notice that if the singular values $\sigma_k$ do not decay faster than exponentially, 
then $b_k$ is of order one.)
While these are only bounds on the theoretical characterization $\BIAS_n(\lambda)$, $\VAR_n(\lambda)$
for bias and variance, our main resuls (Theorem \ref{thm:main} and Theorem \ref{thm:main-ridgeless-underparameterized}) will allow to transfer them 
to the actual bias and variance $\bias_\bX(\lambda)$, $\var_\bX(\lambda)$ (modulo additional error terms).
\begin{remark}
These bounds (more precisely, the bounds  on $\bias_\bX(\lambda)$, $\var_\bX(\lambda)$
that follow from these and Theorem \ref{thm:main}) are closely related to the ones in  \cite{bartlett2020benign,tsigler2020benign},
see in particular  \cite[Theorem 1]{tsigler2020benign}.
It is worth pointing out two important differences. \emph{First,} the bounds in 
 Eqs.~\eqref{eq:CrudeBoundVar}, \eqref{eq:CrudeBoundBias}
 are somewhat more precise/explicit: there is no unspecified constant 
 factor\footnote{The factor $c_\star$ is explicit and, if useful, can be replaced by the original expression.}, no dependence on 
the condition number of $\sigma_1/\sigma_{k_\star}$, and no multiplicative factor depending on the probability.
\emph{Second,}  Eqs.~\eqref{eq:CrudeBoundVar}, \eqref{eq:CrudeBoundBias} are only proved for the specific
value of $k_\star$ defined there.
\end{remark}

\begin{remark}
The bounds of Eqs.~\eqref{eq:CrudeBoundVar}, \eqref{eq:CrudeBoundBias} allow to characterize settings
in which the excess test error (as predicted by our theory) vanishes. 
Indeed, for $\VAR_n(\lambda)$ to vanish, it is sufficient that $k_\star/n\to 0$  and
$\overline{r}(k_\star)/n\to \infty$. A simple sufficient condition for $\BIAS_n(\lambda)\to 0$
is that $\boldbeta\in \spn(\bv_1,\dots,\bv_k)$ with $\sigma_k/\sigma_{k_\star}\to\infty$. 

We will discuss special examples in Section \ref{sec:Applications}, and show how our
general results allow to derive more precise estimates of the risk in those cases.
\end{remark}

 \paragraph{Equivalent sequence model} The discussion
 above relies on the assumption  $\Tr \prn{\bSigma^2(\bSigma + \lambdaefct \bI)^{-2}}\le n(1-c_\star^{-1})$,
which implies the simple bounds \eqref{eq:VAR-n-Simplified},  \eqref{eq:BIAS-n-Simplified}.
However the interpretation in terms of a modified ridge regression problem holds for 
the exact formulas of Eqs.~\eqref{eq:VAR-n}, \eqref{eq:BIAS-n}.
This interpretation was developed in the context of earlier work on the proportional 
asymptotics \cite{donoho2013accurate,celentano2022fundamental},
but it is useful to spell it out here for the present context.

In the modified model, we observe $\by^s$ that is related to $\boldbeta$ according to
\begin{align} \label{Eq:equivalent-sequence-model}
\by^s = \bSigma^{1/2} \boldbeta+\frac{\omega}{\sqrt{n}}\bg\, ,\;\;\;\bg \sim\normal(0,\id_d)\, ,
\end{align}
Without loss of generality, we can work in the basis in which $\bSigma$ is diagonal,
and therefore rewrite the above as $y^s_i = \sigma_i^{1/2}\beta_i+(\omega/\sqrt{n})g_i$,
which coincides with the definition of the classical sequence model \cite{tsybakov2009nonparametric}.

We use ridge regression at regularization level $\lambda_\star$ as defined in 
Eq.~\eqref{eq:lambda-fixed-point}:
\begin{align}\label{eq:SeqEstimator}
 \ridgebeta^s:= 
 \argmin_{\bb}\big\{\|\by^s-\bSigma^{1/2}\bb\|^2+\lambda_\star\|\bb\|^2\big\}\, .
\end{align}
Finally, choose the noise level 
$\omega$ to be the unique positive solution of
\begin{align}
\omega^2 = \vareps^2+\E_{\bg}\big\{\|\ridgebeta^s-\boldbeta\|_{\bSigma}^2
\big\}\, .
\end{align}
Then our theoretical prediction for the excess test error $\RISK_n(\lambda)$   
coincides with the excess test error of the sequence model:
\begin{align}
\RISK_n(\lambda) = \E_{\bg}\big\{\|\ridgebeta^s-\boldbeta\|_{\bSigma}^2
\big\}\, .
\end{align}
Summarizing, the predicted test error for the original model is equal to the test
error
in the sequence model, albeit at a different 
value of the ridge regularization parameter and of the noise level. 
Needless to say, studying the sequence model is significantly simpler than the original model
\eqref{eq:LinearModel}.
%
%\cc{\begin{assumption} \label{asmp:lambda-alt}
%		For some constant $\const \in (0, 1/2)$, it holds that $$\frac{\lambda}{n \lambdaefct} = \kappa \, .$$
%	\end{assumption}
%}

\paragraph{A naive explanation} The emergence of the equivalent sequence model is 
somewhat surprising: and one might be tempted to give a simple explanation as 
follows\footnote{This construction is related to the 
debiasing without `degrees-of-freedom' correction, see e.g. 
\cite{javanmard2014confidence,celentano2023lasso}.}.
Defining $\tby^s = n^{-1}\bSigma^{-1/2}\bX^{\sT}\by$, Eq.~\eqref{eq:LinearModel-Matrix}
yields:
\begin{align}
\tby^s &= \bSigma^{1/2}  \boldbeta+\frac{\tomega}{\sqrt{n}}\tbg\, ,\\
\tbg &:= \frac{\sqrt{n}}{\tomega}\left(\frac{1}{n}\bSigma^{-1/2}\bX^{\sT}\bX\bSigma^{-1/2}-
\id\right)\bSigma^{1/2}\boldbeta+
\frac{1}{\tomega \sqrt{n}} \bSigma^{-1/2}\bX^{\sT} \beps\, ,
\end{align}
where we choose $\tomega$ so that $\|\tbg\|\approx\sqrt{n}$.
This way of rewriting the original model \eqref{eq:LinearModel-Matrix} is suggestively
similar to  Eq.~\eqref{Eq:equivalent-sequence-model}. However, it falls 
short of capturing the actual structure of the equivalent sequence model for several reasons:
$(i)$~It is unclear why $\tbg$ defined above should be approximately isotropic;
$(ii)$~The effective noise level $\tomega$ does not match the actual effective noise level 
$\omega$ (the latter depends on $\lambda$);
$(iii)$~Most importantly, the above representation does not clarify why 
the behavior of the ridge estimator \eqref{eq:RidgeDef2} should be related to the one of the sequence model estimator
\eqref{eq:SeqEstimator}.

	\section{Statement of main results}
\label{sec:Statement}

\paragraph{Big-Oh notation} For two functions $f(\bx)$ and $g(\bx)$ (where $\bx$ can be a scalar or a vector),
 we 
write $f(\bx) = \bigO_{\balpha}(g(\bx))$ if there exists
 a constant $\constant_{\balpha}$ depending only on the value of
  $\balpha$ (also $\balpha$ can be either a scalar or a vector) such that $|f(\bx)| \leq 
  \constant_{\balpha} |g(\bx)|$ for all $\bx$.
   In particular, if the constant is universal we write $f(\bx) = \bigO(g(\bx))$.
    Similarly, we write
     $f(\bx) = \bigOmg_{\balpha}(g(\bx))$ if
      $|f(\bx)| \geq \constant_{\balpha} |g(\bx)|$ 
      for all $\bx$ and some constant $\constant_{\balpha}  > 0$. 
      Finally, we write $f(\bx) = \bigTht_{\balpha}(g(\bx))$ 
      if we have both $f(\bx) = \bigO_{\balpha}(g(\bx))$ 
      and $f(\bx) = \bigOmg_{\balpha}(g(\bx))$.

\vspace{0.2cm}

We will state four theorems. 
The first two concern ridge regression with positive regularization
$\lambda>0$: Theorem \ref{thm:main} is our most general result that forms the basis for
all of other ones; Theorem \ref{thm:main-Bis} is a simplified version of the previous one, 
and covers values of ridge regularization $\lambda$ that we expect to include the optimal $\lambda$.
The other two theorems apply to  the ridgeless case $\lambda=0+$: 
Theorem \ref{thm:main-ridgeless} applies to  overparametrized case,
and Theorem \ref{thm:main-ridgeless-underparameterized} to the underparametrized one.

\subsection{Ridge regression}
Our approximation guarantees will depend on the pair $\bSigma$, $\boldbeta$ through the following 
three quantities  (in the case $\lambda=0+$, these quantities will be modified later):
\begin{enumerate}
	\item The ratio between effective dimension and regularization 
	parameter:
	\begin{align} \label{eq:def-chi-n}
		\chi_n (\lambda) := 1 + \frac{\sigma_{\lfloor \eta n\rfloor} \constantsig  \log^2 (\constantsig)}{n\lambda} \, . 
	\end{align}
	Here $\eta$ is a constant that only depends on $\constantx$, and hence we will leave it implicit. 
	\item The ratio between regularization and effective regularization 
	\begin{align}
		\const := \min\Big(\frac{\lambda}{\lambdaefct}; 1 - \frac{\lambda}{\lambdaefct}\Big) >0\, .
		\label{asmp:lambda}
	\end{align}
	\item For a positive semi-definite operator $\bQ$, define
	the modified population resolvent:
	\begin{align}
		\Rfct_0(\zeta, \mu; \bQ) := \Tr \prn{\bSigma^{\frac 1 2} \bQ \bSigma^{\frac 1 2} (\zeta \bI + \mu \bSigma)^{-1}}\, .\label{eq:R-F-func-0}
	\end{align}
	Letting $\boldbeta = \bSigma^{1/2}\btheta$, $\|\btheta\|<\infty$, we consider the ratio
	\begin{align}\label{eq:RhoLambda}
		\rho(\lambda) := 
		\frac{\Rfct_0(\lambdaefct, 1; \btheta \btheta^\sT/\norm{\btheta}^2)}{\Rfct_0(\lambdaefct,1; \bI)} \in (0, 1] \, .
	\end{align}
\end{enumerate}

We next present our master theorem for ridge regression:
 its proof is postponed to Section~\ref{sec:proof-main}.
\begin{theorem}[Ridge regression] \label{thm:main}
	Under Assumption~\ref{asmp:data-dstrb}, 
	for any positive integers $k$ and $D$, there exist constants 
	$\eta = \eta(\constantx) \in (0, 1/2)$
	 and $\constant = \constant(\constantx, D) > 0$ such that the following hold.
	 Define  $\chi_n (\lambda), \const,
	\rho(\lambda)$  as above (with $\eta = \eta(\constantx)$ in Eq.~\eqref{eq:def-chi-n}).

	If it holds that
	\begin{align*}
		\chi_n(\lambda)^3 \log^2 n \leq \constant n \const^{4.5} \, , \qquad  n^{-2D + 1} = \bigO \prn{ \sqrt{\frac{\const^3 \log^2 n}{n \max \brc{1, \lambda}}}} \, ,
	\end{align*}
	then for all $n = \Omega_{k,D}(1)$,  with probability $1-\bigO_k(n^{-D+1})$ we have:
	\begin{enumerate}
		\item \textbf{Variance approximation.} 
		\begin{align}
			\left|\var_\bX(\lambda) - \VAR_n(\lambda)\right| &  
			= \bigO_{k, \constantx, D} \prn{\frac{\chi_n (\lambda)^3 \log^2 n}{n^{1- \frac{1}{k}} \const^{9.5}}} \cdot \VAR_n(\lambda) \, .
		\end{align}
		\item \textbf{Bias approximation.} 
		If we additionally have $\chi_n(\lambda)^3 \log^2 n \leq \constant n \const^{4.5} \sqrt{\rho(\lambda)}$ and $\lambda kn^{-\frac{1}{k}} \leq \const/2$, for all  $n = \Omega_{k,D}(1)$,
		we have
		\begin{align}\label{eq:MainBiasApprox}
			\left|\bias_\bX(\lambda) - \BIAS_n(\lambda)\right| & 
			= \bigO_{k, \constantx, D} \prn{\frac{\lambdaefct(\lambda)^{k+1}}{n \const^3} + \frac{\chi_n (\lambda)^3 \log^2 n}{\sqrt{\rho(\lambda)} n^{1- \frac{1}{k} } \const^{8.5}}}  \cdot \BIAS_n(\lambda)\, .
		\end{align}
	\end{enumerate}
\end{theorem}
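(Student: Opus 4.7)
The plan is to use the resolvent method with a leave-one-out (LOO) argument, in a two-stage structure that matches the division into Sections~\ref{sec:proof-main} and~\ref{proof:R-approximation}. Let $\bM(\lambda) := (\covX + \lambda \bI)^{-1}$. From Eqs.~\eqref{eq:var-X} and~\eqref{eq:bias-X}, using the identity $\covX \bM(\lambda) = \bI - \lambda \bM(\lambda)$, the variance becomes $(\vareps^2/n)[\Tr(\bSigma \bM(\lambda)) - \lambda \Tr(\bSigma \bM(\lambda)^2)]$, while the bias is the quadratic form $\lambda^2 \<\boldbeta, \bM(\lambda) \bSigma \bM(\lambda) \boldbeta\>$. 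The idea is to reduce both to the approximation of resolvent functionals of the form $\Rfct_0(\zeta, \mu; \bQ)$ in~\eqref{eq:R-F-func-0}---with $\bQ = \bI$ capturing the variance and $\bQ = \btheta\btheta^\sT$ capturing the bias---and then show these are well approximated by their population analogs, which by~\eqref{eq:lambda-fixed-point} reproduce $\VAR_n(\lambda)$ and $\BIAS_n(\lambda)$.

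To execute the concentration step, for each $i$ I would define $\bM_{-i}(\lambda) := (\covX - \bx_i \bx_i^\sT / n + \lambda \bI)^{-1}$ (independent of $\bx_i$) and apply Sherman--Morrison:
\begin{align*}
\bM(\lambda) = \bM_{-i}(\lambda) - \frac{\bM_{-i}(\lambda) \bx_i \bx_i^\sT \bM_{-i}(\lambda) / n}{1 + \bx_i^\sT \bM_{-i}(\lambda) \bx_i / n}.
\end{align*}
Hanson--Wright (Lemma~\ref{lem:hanson-wright}) then yields $\bx_i^\sT \bM_{-i}(\lambda) \bx_i / n \approx \Tr(\bSigma \bM_{-i}(\lambda))/n \approx m_n(\lambda) := \Tr(\bSigma \bM(\lambda))/n$. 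Averaging over $i$ produces a noisy version of the fixed-point equation~\eqref{eq:lambda-fixed-point}, and the stability of this fixed point---controlled by $\const = \min(\lambda/\lambdaefct, 1 - \lambda/\lambdaefct)$---pins $m_n(\lambda)$ close to its population analog. The same LOO machinery, applied to $\Tr(\bSigma^{1/2} \bQ \bSigma^{1/2} \bM(\lambda))$ and to its $\lambda$-derivative, should deliver the variance approximation, and applied to quadratic forms $\<\bb, \bM(\lambda) \bSigma \bM(\lambda) \bc\>$ should deliver the bias. I expect the parameter $k$ enters via high-moment ($L^{2k}$) Hanson--Wright bounds, with Markov producing the $n^{1/k}$ penalty; the extra $\lambdaefct^{k+1}/(n \const^3)$ term in~\eqref{eq:MainBiasApprox} likely arises from a $k$-step iterative refinement of the resolvent expansion that shaves off one factor of $\lambdaefct$ per iteration.

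The main obstacles will be threefold. First, producing \emph{multiplicative} rather than additive error bounds requires scale-invariant comparisons between $\var_\bX$ and $\VAR_n$ (and likewise for the bias), ruling out crude triangle-inequality arguments on non-normalized quantities. Second, ensuring all estimates depend on $\bSigma$ only through its spectrum and the effective-dimension quantity $\constantsig$---with no implicit dependence on an ambient $d$---is crucial for the $d = \infty$ case, and requires the Hilbert-space variant of Hanson--Wright noted in the remark following Lemma~\ref{lem:hanson-wright}. Third, the bias approximation involves the alignment parameter $\rho(\lambda)$, reflecting that the rank-one quadratic form $\<\boldbeta, \bM(\lambda) \bSigma \bM(\lambda) \boldbeta\>$ fluctuates differently from the corresponding trace; controlling this asymmetry forces one to compare $\Rfct_0(\lambdaefct, 1; \btheta \btheta^\sT / \|\btheta\|^2)$ against $\Rfct_0(\lambdaefct, 1; \bI)$ throughout. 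The high negative power $\const^{-9.5}$ in the variance bound indicates that much of the technical work consists in carefully tracking the stability parameter $\const$ through many nested linearizations of the fixed-point equation, rather than in any single clever identity.
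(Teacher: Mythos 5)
The broad strategy you sketch---reduce $\var_\bX$ and $\bias_\bX$ to resolvent functionals $\Rfct(\zeta,\mu;\bQ)$ with $\bQ=\bI$ and $\bQ=\btheta\btheta^\sT$, apply Hanson--Wright, and exploit stability of the fixed-point equation through the parameter $\const$---does match the paper at the outline level. However, the actual proof route differs from yours in two structural ways, and your conjectured explanations for the $n^{1/k}$ penalty and the $\lambdaefct^{k+1}$ term are wrong.

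First, the paper does \emph{not} use a classical leave-one-out argument comparing $\bM(\lambda)$ with $\bM_{-i}(\lambda)$ and then averaging to obtain a noisy self-consistent equation. Instead (Section~\ref{proof:R-approximation}) it interpolates from $\Rfct_0(\zeta,\muefct;\bQ)$, which involves the data-free operator $(\zeta\bI+\muefct\bSigma)^{-1}$, to $\Rfct_n(\zeta,\mu;\bQ)$, which involves $(\zeta\bI+\mu\bSigma+\bX^\sT\bX)^{-1}$, by \emph{adding one sample at a time} from $\bX_0$ to $\bX_n$ while simultaneously running an adaptive recursion $\mu_{i+1} = \mu_i - 1/(1+\Siter_i(\bI))$ (Eq.~\eqref{eq:iteration-mu}) that decreases an auxiliary regularization $\mu_i$ from $\muefct(\zeta,\mu)$ down to approximately $\mu$. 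The point of the adaptive rule is precisely that it makes $\Riter_i(\bQ):=\Rfct_i(\zeta,\mu_i;\bQ)$ an approximate martingale, so the total drift telescopes to nearly zero; the error is then controlled by an Azuma--Hoeffding bound plus stopping-time bookkeeping (Lemmas~\ref{lem:from-F-to-E}, \ref{lem:from-E-to-F}). Your LOO approach would instead need to derive and perturb an explicit anisotropic self-consistent equation for all $\bQ$ simultaneously, which is feasible in principle (this is roughly the route of the anisotropic local law of Knowles--Yin used in \cite{hastie2022surprises}) but is what the paper explicitly departs from in order to handle $d=\infty$ and to get multiplicative error control.

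Second, the parameter $k$ does \emph{not} enter via $L^{2k}$ moments of Hanson--Wright. Lemma~\ref{lem:var-bias-formula} expresses $\var_\bX,\VAR_n$ as $\zeta$-derivatives and $\bias_\bX,\BIAS_n$ as $\mu$-derivatives of $\Ffct_n$ and $\Ffct_0$, and the paper converts a function-value approximation $|\Ffct_n-\Ffct_0|$ into a derivative approximation via a $k$-th order finite-difference (numerical differentiation) scheme, Lemma~\ref{lem:function-value-bound-to-derivative}, with step size $\delta=\lambda\const^2 n^{1-1/k}$; the $n^{1/k}$ penalty is the cost of this numerical-differentiation step. Correspondingly, the $\lambdaefct^{k+1}/(n\const^3)$ term in Eq.~\eqref{eq:MainBiasApprox} is the Taylor remainder of that scheme---the $(k+1)$-th derivative of $\Ffct_0$ is bounded by $\bigO_k(\Ffct_0/(\muefct^{k+1}\const^{2k+2}))$ in Lemma~\ref{lem:derivative-F-0}---and has nothing to do with a $k$-step iterative refinement of the resolvent expansion. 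Your proposed mechanisms would not reproduce those particular error exponents, so that part of the sketch is incorrect as stated.
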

\begin{remark}
The condition $\|\boldbeta\|_{\bSigma^{-1}}<\infty$  in Assumption~\ref{asmp:data-dstrb}
amounts to requiring that the coefficients of $\boldbeta$ in the basis of eigenvectors $\bv_i$
of $\bSigma$ decay fast enough. Namely, it is equivalent to 
 $\sum_i\<\bv_i,\boldbeta\>^2/\sigma_i<\infty$.
This condition appears to be a proof artifact and we would expect that the conclusion
of the theorem should hold under the weaker condition
 $\|\boldbeta\|_{\bSigma} < \infty$, which is required in the equivalent sequence model 
 in Eq.~\eqref{Eq:equivalent-sequence-model}. 
 This condition cannot be eliminated by an approximation argument, because
 it appears (implicitly) in the definition of $\rho(\lambda)$,
 via $\|\btheta\|^2=\|\boldbeta\|_{\bSigma^{-1}}^2$. In particular, if 
 $\|\boldbeta\|_{\bSigma^{-1}}^2\to\infty$, then $\rho(\lambda)\to 0$ and the bias
 approximation bound \eqref{eq:MainBiasApprox} becomes vacuous.
% The main difficulty for treating this generic case by a direct approximation argument on
%  $\boldbeta$ by $\boldbeta_{\leq k}$ and taking $k \to \infty$ is that plugging $\btheta_{\leq k}:=\bSigma^{-1/2} \boldbeta_{\leq k}$ into $\rho(\lambda)$ in Eq.~\eqref{eq:RhoLambda} yields a vacuous limit if $\norm{\btheta_{\leq k}} \to \infty$, namely
%\begin{align*}
%	\rho_k(\lambda) &= 
%	\frac{\Rfct_0(\lambdaefct, 1; \btheta_{\leq k} \btheta_{\leq k}^\sT/\norm{\btheta_{\leq k}}^2)}{\Rfct_0(\lambdaefct,1; \bI)} = \frac{1}{\norm{\btheta_{\leq k}}^2} \cdot \frac{\Tr \prn{\boldbeta_{\leq k} \boldbeta_{\leq k}^\sT (\lambda_\star \bI + \bSigma)^{-1}}}{\Tr \prn{\bSigma  (\lambda_\star \bI + \bSigma)^{-1}}} \nonumber \\
%	& =  \frac{1}{\norm{\btheta_{\leq k}}^2} \frac{\Tr \prn{\bSigma^{-1/2} \boldbeta_{\leq k} \boldbeta_{\leq k}^\sT  \bSigma^{-1/2} \bSigma  (\lambda_\star \bI + \bSigma)^{-1}}}{\Tr\prn{\bSigma  (\lambda_\star \bI + \bSigma)^{-1}}} \leq \frac{\norm{\boldbeta_{\leq k}}^2}{\norm{\btheta_{\leq k}}^2} \to 0 \, ,
%\end{align*}
%invalidating the condition $\chi_n(\lambda)^3 \log^2 n \leq \constant n \const^{4.5} \sqrt{\rho(\lambda)}$ for bias approximation in Theorem~\ref{thm:main}. 
%}
\end{remark}

\begin{remark}
As mentioned above, the conditions on the isotropic random vectors $\bz_i$
in Assumption \ref{asmp:data-dstrb} are mainly imposed to be able to apply  
Hanson-Wright inequality (Lemma \ref{lem:hanson-wright}). It is an interesting research question to 
analyze ridge regression for covariates which do not satisfy this inequality.
\end{remark}

\subsection{The non-negligible regularization regime}

Theorem~\ref{thm:main} is our master result in the most general form.
In order to simplify it, we consider two different regimes, depending on the value of 
the regularization $\lambda$: the non-negligible regularization regime in this section and the
min-norm limit in the next section.

Note that our predictions for the 
variance and bias $\VAR_n(\lambda)$, $\BIAS_n(\lambda)$
depend on $\lambda$
only through the solution $\lambdaefct(\lambda)$ of Eq.~\eqref{eq:lambda-fixed-point}, and therefore
through the ratio $\nu := \lambda/\lambdaefct(\lambda) \in [0,1]$. If $\nu\to 0$,
then ridge regression is effectively equivalent to min-norm regression, a case that we 
analyze in greater detail in the next section. If $\nu\to 1$, then the regularization
dominates, which is of course suboptimal. In this subsection, we analyze the most interesting
case $0 < \nu < 1$ (and bounded away from $0$ and $1$). The next proposition gives sufficient conditions for this to 
be the case. We say that $f:\integers_{\ge 0}\to \reals_{>0}$ is polynomially varying
if, for any $\delta\in (0, 1)$ there exist constants $0<c_1(\delta)<c_2(\delta)<\infty$
such that, for all $k$, $c_1(\delta)f(k)\le f(\lfloor \delta k\rfloor ) \le c_2(\delta)f(k)$
(see also Section \ref{sec:RegularSpectrum}.)

\begin{proposition}\label{propo:Nu}
Define the effective rank $\constantsig(k)$ as in Eq.~\eqref{eq:DefConstantsig},
and  $\constantsig^{-1}(m) := \max \{k:\,\constantsig(k) \leq m\} \leq n$.
If $\sigma_{\constantsig^{-1}(n/2)}/\sigma_{\constantsig^{-1}(2n)}\le C$ for a constant 
$C$, then setting $\lambda \in[\sigma_{\constantsig^{-1}(n)}/C', C'\sigma_{\constantsig^{-1}(n)}]$ yields
$\nu = \lambda/\lambdaefct(\lambda)\in [\nu_1,\nu_2]$ for some constants $0<\nu_1<\nu_2 < 1$ depending
on $C$ and $C'$.

Further, the above conditions hold, provided $k\mapsto \sigma_k$
and $k\mapsto \constantsig(k)$ are polynomially varying.
\end{proposition}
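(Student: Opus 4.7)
The plan is to bracket $\lambdaefct(\lambda)$ in an interval of the form $[\underline{c}\,\sigma_{k^\star},\overline{c}\,\sigma_{k^\star}]$ with $k^\star := \constantsig^{-1}(n)$, and then to read off the bracket on $\nu=\lambda/\lambdaefct$ from the hypothesis $\lambda \asymp \sigma_{k^\star}$. The fixed-point equation can be written as $\Phi(\mu)=0$, where
\[
\Phi(\mu) := n\bigl(1-\lambda/\mu\bigr) - T(\mu), \qquad T(\mu) := \sum_i \frac{\sigma_i}{\sigma_i+\mu}.
\]
Since $\mu\mapsto n(1-\lambda/\mu)$ is increasing and $\mu\mapsto T(\mu)$ is decreasing, $\Phi$ is strictly increasing; thus to bracket $\lambdaefct$ it suffices to exhibit test values $\mu_-<\mu_+$ with $\Phi(\mu_-)\le 0\le\Phi(\mu_+)$.

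The workhorse is a two-sided estimate of $T$ at $\mu=\sigma_k$: splitting the sum at index $k$ and using $\sigma_i/(\sigma_i+\sigma_k)\in[1/2,1]$ for $i\le k$ together with $\sigma_i/(\sigma_i+\sigma_k)\in[\sigma_i/(2\sigma_k),\sigma_i/\sigma_k]$ for $i>k$, I get
\[
\tfrac{1}{2}\bigl(k+\constantsig(k)-1\bigr)\;\le\; T(\sigma_k)\;\le\; k+\constantsig(k)-1.
\]
Apply this at the cutoffs $k_1 := \constantsig^{-1}(2n)$ and $k_2:=\constantsig^{-1}(n/2)$. The definition of $k_2$ gives $T(\sigma_{k_2})\le k_2 + n/2 - 1$, and the fact that $k_2$ is bounded away from $n$ (using $\constantsig(n)\ge n>n/2$) lets me pick $\mu_+ = C''\sigma_{k_2}$ with $C''$ large enough in terms of $C'$ so that $\Phi(\mu_+)\ge 0$, yielding $\lambdaefct\le C''\sigma_{k_2}$. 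For the lower bound, the maximality of $k_1$ gives $\constantsig(k_1+1)>2n$, and the recursive identity $\constantsig(k) - 1 = (\sigma_{k+1}/\sigma_k)\,\constantsig(k+1)$, together with the regularity hypothesis used to control $\sigma_{k_1+1}/\sigma_{k_1}$, forces $\constantsig(k_1)\gtrsim n$. Then $T(\sigma_{k_1})\gtrsim n$ via the two-sided estimate, so $\Phi(\sigma_{k_1})\le 0$ and $\lambdaefct\ge \sigma_{k_1}$.

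Combining the two sandwiches with the regularity $\sigma_{k_2}\le C\sigma_{k_1}$ and the ordering $\sigma_{k_1}\le\sigma_{k^\star}\le\sigma_{k_2}$ pins $\lambdaefct$ to within constants (depending only on $C,C'$) of $\sigma_{k^\star}$; together with $\lambda\asymp\sigma_{k^\star}$, this delivers $\nu\in[\nu_1,\nu_2]\subset(0,1)$. For the second assertion, polynomial variation of $k\mapsto \constantsig(k)$ means that replacing the argument of $\constantsig^{-1}$ by a constant multiple changes the output by at most a constant factor, so $k_1/k_2$ is bounded above by a constant; polynomial variation of $k\mapsto\sigma_k$ then upgrades this to $\sigma_{k_2}/\sigma_{k_1}\le C$, verifying the hypothesis.

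The main obstacle is the lower bound step for $\lambdaefct$: the definition of $k_1$ supplies only the upper bound $\constantsig(k_1)\le 2n$, and extracting a matching lower bound of order $n$ requires exploiting the jump $\constantsig(k_1+1)>2n$ via the recursive identity, which in turn demands control over $\sigma_{k_1+1}/\sigma_{k_1}$. Under polynomial variation this ratio is bounded below automatically, but under the weaker regularity hypothesis $\sigma_{k_2}/\sigma_{k_1}\le C$ some additional care is needed to transfer control across the boundary index.
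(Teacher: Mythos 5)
Your strategy---bracket $\lambdaefct(\lambda)$ by exhibiting test values $\mu_\pm$ for the monotone function $\Phi(\mu)=n(1-\lambda/\mu)-T(\mu)$---is a genuinely different route from the paper's, but as you yourself flag, the lower-bound step has a real gap that the stated hypotheses do not close. To certify $\Phi(\sigma_{k_1})\le 0$ you need $T(\sigma_{k_1})\ge n(1-\lambda/\sigma_{k_1})$, which via your two-sided estimate reduces to $k_1+\constantsig(k_1)\gtrsim n$; but the definition $k_1=\constantsig^{-1}(2n)$ supplies only the \emph{upper} bound $\constantsig(k_1)\le 2n$. Your proposed repair via $\constantsig(k_1)=1+(\sigma_{k_1+1}/\sigma_{k_1})\,\constantsig(k_1+1)$ and $\constantsig(k_1+1)>2n$ requires a lower bound on $\sigma_{k_1+1}/\sigma_{k_1}$, and the hypothesis $\sigma_{\constantsig^{-1}(n/2)}/\sigma_{\constantsig^{-1}(2n)}\le C$ simply does not furnish one: a single sharp drop of the spectrum located exactly between indices $k_1$ and $k_1+1$ is fully consistent with it. Polynomial variation of $\sigma$ would close the gap, but that is the \emph{second} clause of the proposition and is not assumed in the first.

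The paper's proof sidesteps this obstacle with a bootstrap rather than explicit test values. It introduces the a posteriori quantities $C_1':=\lambdaefct/\sigma_{2n}$ and $C_2':=\lambdaefct/\sigma_{\constantsig^{-1}(n/2)}$, treated as unknowns, and uses that $\nu$ can be written \emph{both} as $\lambda/\lambdaefct$ \emph{and}, via the fixed-point equation, as $1-n^{-1}\Tr\prn{\bSigma(\bSigma+\lambdaefct\bI)^{-1}}$. This produces four constraints: two from the assumed range $\lambda\asymp\sigma_{\constantsig^{-1}(n)}$ together with the ratio hypothesis, and two from truncating the trace---once at index $2n$, using only the trivial bound $\sigma_k\ge\sigma_{2n}$ for $k\le 2n$, and once at index $\constantsig^{-1}(n/2)$, using only the defining inequality of $\constantsig^{-1}$. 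No lower bound on $\constantsig$ at $\constantsig^{-1}(2n)$ ever enters, because the trace is evaluated at the \emph{true} $\lambdaefct$ rather than at a candidate test value. Combining the four constraints pins $C_1'$ and $C_2'$ to constant ranges and hence $\nu$ to an interval bounded away from $0$ and $1$. That is the structural advantage the bootstrap buys: it converts the hard-to-reach lower bound on $T$ into an easy lower bound involving the unknown $C_1'$, which is then controlled through the system. If you want to retain your explicit-test-value route, you would need to strengthen the hypothesis to one controlling nearest-neighbour eigenvalue ratios near $\constantsig^{-1}(2n)$.
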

The proof of this proposition is presented in Appendix~\ref{proof:Nu}.

When $\lambda$ is chosen in this optimal regime so that $\nu = \Theta(1)$, it is relatively easy
to characterize the behavior of $\lambdaefct$ and other constants. 
In particular, fixing $\nu$ and substituting into Eq.~\eqref{eq:lambda-fixed-point},
we get
$\lambdaefct = \lambda_0(n(1-\nu))$, where $\lambda_0$ is defined by
\begin{align} \label{eq:def-lambda-zero}
	\Tr \prn{\bSigma(\bSigma + \lambda_0(m) \bI)^{-1}} = m \, .
\end{align}
The behavior of $\lambda_0(m)$ is characterized below.

\begin{proposition} \label{prop:control-lambda-zero}
	For $\lambda_0(m)$ in Eq.~\eqref{eq:def-lambda-zero} and the effective rank parameter 
	$\constantsig(k)$ in Assumption~\ref{asmp:data-dstrb}, we have
	\begin{align*}
		  \sigma_{2m} \le \lambda_0(m) \le \sigma_{\constantsig^{-1}(m/2)} .
	\end{align*}
	Further, under the conditions of Proposition \ref{propo:Nu}, we have
	$\lambdaefct(\lambda) =\lambda_0(n(1-\nu))\in [\sigma_{\constantsig^{-1}(n)}/C,\sigma_{\constantsig^{-1}(n)}C]$
	for some constant $C<\infty$.
\end{proposition}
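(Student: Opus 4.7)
The plan is to exploit the monotonicity of
\[
f(\lambda) \,:=\, \Tr\bigl(\bSigma(\bSigma + \lambda\bI)^{-1}\bigr) \,=\, \sum_i \frac{\sigma_i}{\sigma_i + \lambda},
\]
which is continuous and strictly decreasing on $(0,\infty)$ with $f(0^+) = \rank(\bSigma)$ and $f(\infty) = 0$. Consequently, Eq.~\eqref{eq:def-lambda-zero} uniquely defines $\lambda_0(m)$ whenever $m < \rank(\bSigma)$, and the two inequalities $\sigma_{2m} \le \lambda_0(m) \le \sigma_{k^*}$, where $k^* := \constantsig^{-1}(m/2)$, are respectively equivalent to the pointwise estimates $f(\sigma_{2m}) \ge m$ and $f(\sigma_{k^*}) \le m$.

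For the lower bound, for each $i \le 2m$ we have $\sigma_i \ge \sigma_{2m}$, so $\sigma_i/(\sigma_i + \sigma_{2m}) \ge 1/2$; summing the first $2m$ such contributions gives $f(\sigma_{2m}) \ge m$ immediately. For the upper bound I would split the series at $k^*$: the head $\sum_{i < k^*}\sigma_i/(\sigma_i + \sigma_{k^*})$ is bounded termwise by $1$, while the tail $\sum_{i \ge k^*}\sigma_i/(\sigma_i + \sigma_{k^*})$ is bounded termwise by $\sigma_i/\sigma_{k^*}$, producing the effective-rank quantity $\sum_{l \ge k^*}\sigma_l/\sigma_{k^*} = \constantsig(k^*) \le m/2$ by the definition of $k^*$. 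Combining these yields $f(\sigma_{k^*}) \le k^* + m/2$, and using the (implicitly non-decreasing) structure of $\constantsig$ so that $k^* \le m/2$ then gives $f(\sigma_{k^*}) \le m$, as required.

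The second claim is a direct specialization: with $m = n(1-\nu)$ and $\lambdaefct(\lambda) = \lambda_0(m)$ as in Proposition~\ref{propo:Nu}, the first claim yields
\[
\sigma_{2n(1-\nu)} \,\le\, \lambdaefct(\lambda) \,\le\, \sigma_{\constantsig^{-1}(n(1-\nu)/2)}.
\]
Under the polynomial-variation hypothesis on $k \mapsto \sigma_k$ and $k \mapsto \constantsig(k)$ supplied by Proposition~\ref{propo:Nu}, the indices $2n(1-\nu)$, $\constantsig^{-1}(n(1-\nu)/2)$, and $\constantsig^{-1}(n)$ all differ by a bounded multiplicative factor, and the associated $\sigma$-values are correspondingly comparable up to a constant $C$. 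This sandwiches $\lambdaefct(\lambda)$ inside $[\sigma_{\constantsig^{-1}(n)}/C,\, C\,\sigma_{\constantsig^{-1}(n)}]$, as claimed.

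The main technical obstacle I foresee is the head-control step in the upper bound: one must ensure that $\constantsig^{-1}(m/2)$ does not exceed $m/2$ by more than a constant, so that the split $f(\sigma_{k^*}) \le k^* + m/2$ actually delivers $f(\sigma_{k^*}) \le m$. This is automatic under the polynomial-variation assumption used in the second claim but may require a mild monotonization of $\constantsig$ (e.g., replacing it by its smallest non-decreasing majorant) to make the bookkeeping clean in the general statement of the first claim.
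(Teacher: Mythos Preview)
Your proposal is correct and follows essentially the same approach as the paper: exploit monotonicity of $f(\lambda)=\Tr(\bSigma(\bSigma+\lambda\bI)^{-1})$, bound $f(\sigma_{2m})\ge m$ via the first $2m$ terms, and bound $f(\sigma_{k^*})\le k^*+m/2$ via a head/tail split at $k^*=\constantsig^{-1}(m/2)$. The ``technical obstacle'' you flag is not an obstacle: Assumption~\ref{asmp:data-dstrb} already requires $\constantsig(n)\ge n$ (and the same convention extends to $\constantsig(k)\ge k$), so $\constantsig^{-1}(m/2)\le m/2$ holds directly, with no monotonization or polynomial-variation side condition needed for the first claim; the paper invokes exactly this fact.
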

We can then simplify Theorem \ref{thm:main} to the following.

\begin{theorem}\label{thm:main-Bis}
	Under Assumption~\ref{asmp:data-dstrb}, 
	further assume the `non-negligible regularization' condition: namely $\lambda$ is chosen 
	so that
	$\nu = \lambda/\lambdaefct(\lambda)\in [1/C,1-1/C]$.  
	Define $\tconstantsig(n) :=\constantsig(n)(\log\constantsig(n))^2$.

	There exists a constant $\eta$ such that, for some $\epsilon > 0$ if
	$\tconstantsig(n)\le (\sigma_{2n}/\sigma_{\lfloor\eta n\rfloor})n^{4/3}(\log n)^{-2/3-\epsilon}$, then  with probability $1-\bigO(n^{-10})$ we have (suppressing the dependence on $C, C'$ and $\epsilon$ in the big-Oh notation):
	\begin{enumerate}
		\item \textbf{Variance approximation.} 
		\begin{align*}
			\left|\var_\bX(\lambda) - \VAR_n(\lambda)\right| &  
		= \bigO\prn{\frac{1}{n^{0.99}}
		\left(\frac{\tconstantsig(n)\sigma_{\lfloor\eta n\rfloor }}
			{n \sigma_{2n}}\right)^3 }\cdot \VAR_n(\lambda) \, .
		\end{align*}
		\item \textbf{Bias approximation.} 
		Additionally if  $\norm{\boldbeta}_{\bSigma^{-1}}^2\le C''$,
		$\tconstantsig(n)\le (\sigma_{2n}/\sigma_{\lfloor\eta n\rfloor})n^{7/6}(\log n)^{-2/3-\epsilon}$, then
		we have 
		\begin{align*}
			\left|\bias_\bX(\lambda) - \BIAS_n(\lambda)\right| & 
			= \bigO\prn{\frac{1}{n^{0.49}}
		\left(\frac{\tconstantsig(n)\sigma_{\lfloor\eta n\rfloor }}
			{n \sigma_{2n}}\right)^3  }   \cdot \BIAS_n(\lambda)\, .
		\end{align*}
	\end{enumerate}
\end{theorem}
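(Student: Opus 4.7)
The plan is to derive Theorem~\ref{thm:main-Bis} as a corollary of the master Theorem~\ref{thm:main} by translating the simplified hypotheses into the technical conditions required there, and by tuning the free parameters $k,D$ to produce the stated error exponents and probability. First, under the non-negligible regularization hypothesis $\nu = \lambda/\lambdaefct \in [1/C, 1-1/C]$, one has $\const = \min(\nu,1-\nu) \geq 1/C$, so every factor $\const^{-\alpha}$ in Theorem~\ref{thm:main} becomes an absolute constant and is absorbed into big-Oh. Next, Proposition~\ref{prop:control-lambda-zero} combined with the identity $\lambdaefct(\lambda) = \lambda_0(n(1-\nu))$ gives $\lambdaefct \asymp \sigma_{\constantsig^{-1}(n)}$, and in particular $\lambda \asymp \lambdaefct \gtrsim \sigma_{2n}$. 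Substituting these estimates into the definition~\eqref{eq:def-chi-n} of $\chi_n(\lambda)$ yields
\begin{align*}
\chi_n(\lambda) \;\lesssim\; 1 + \frac{\sigma_{\lfloor\eta n\rfloor}\,\tconstantsig(n)}{n\,\sigma_{2n}},
\end{align*}
so the master theorem's variance hypothesis $\chi_n^3 \log^2 n \lesssim n$ becomes exactly the assumed condition $\tconstantsig(n) \leq (\sigma_{2n}/\sigma_{\lfloor\eta n\rfloor})\,n^{4/3}(\log n)^{-2/3-\epsilon}$, with $(\log n)^{-\epsilon}$ providing the necessary slack. Picking a large fixed $k$ so that $n^{-1/k} \leq n^{-0.01}$, together with $D = 11$, then delivers both the claimed probability $1-\bigO(n^{-10})$ and the stated multiplicative variance bound.

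For the bias, two further hypotheses of Theorem~\ref{thm:main} must be checked: the strengthened condition $\chi_n^3 \log^2 n \leq C\,n\,\sqrt{\rho(\lambda)}$ and the auxiliary bound $\lambda\,k\,n^{-1/k} \leq \const/2$. The latter holds for all sufficiently large $n$ since $\bSigma$ trace-class with $\|\bSigma\|=1$ forces $\sigma_{2n} \to 0$, hence $\lambda \asymp \sigma_{2n} \to 0$; the additive term $\lambdaefct^{k+1}/(n\const^3)$ in the master bias bound is at most $1/n \leq n^{-0.49}$ and is harmless. The crucial step is to establish $\rho(\lambda) \gtrsim 1/n$. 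By the fixed-point equation, $\Rfct_0(\lambdaefct,1;\bI) = n(1-\nu) \asymp n$; for the numerator, working in the eigenbasis of $\bSigma$ with $\beta_i = \langle\bv_i,\boldbeta\rangle$,
\begin{align*}
\Rfct_0\!\left(\lambdaefct,1;\frac{\btheta\btheta^{\sT}}{\|\btheta\|^2}\right) \;=\; \frac{\sum_i \beta_i^2/(\sigma_i+\lambdaefct)}{\sum_i \beta_i^2/\sigma_i}.
\end{align*}
Given such a lower bound on $\rho$, the strengthened condition collapses to $\chi_n^3 \log^2 n \lesssim n^{1/2}$, which via the estimate on $\chi_n$ above is precisely the assumed $\tconstantsig(n) \leq (\sigma_{2n}/\sigma_{\lfloor\eta n\rfloor})\,n^{7/6}(\log n)^{-2/3-\epsilon}$, completing the verification.

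The main obstacle is establishing $\rho(\lambda) = \Omega(1/n)$ under the sole assumption $\|\boldbeta\|_{\bSigma^{-1}}^2 \leq C''$. The natural attack is to split the spectrum at $k_\star := \max\{k : \sigma_k \geq \lambdaefct\}$: for $i \leq k_\star$ the summand $\beta_i^2/(\sigma_i+\lambdaefct)$ in the numerator is within a factor two of $\beta_i^2/\sigma_i$, contributing at least $A/2$ where $A = \sum_{i \leq k_\star}\beta_i^2/\sigma_i$, while the denominator equals $A + B$ with $B = \sum_{i > k_\star}\beta_i^2/\sigma_i$. The delicate point is that $B$ is not a priori controlled by $\|\boldbeta\|_{\bSigma^{-1}}^2 \leq C''$, so the ratio $A/(A+B)$ need not be bounded below in isolation. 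The resolution is to couple this comparison with the structure of $\BIAS_n$ itself, whose numerator $\sum_i \beta_i^2 \sigma_i/(\sigma_i+\lambdaefct)^2$ is similarly weighted toward the top of the spectrum; any tail mass in $B$ that would shrink $\rho$ also shrinks $\BIAS_n$ proportionally, so the multiplicative error bound remains valid even when the naive bound $\rho \gtrsim 1/n$ is tight. Once this accounting is carried out, direct substitution into Theorem~\ref{thm:main} completes the proof.
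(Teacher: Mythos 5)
Your proposal follows the paper's route almost exactly for the variance part: under the non-negligible regularization hypothesis, $\const \geq 1/C$ so all $\const^{-\alpha}$ factors are absorbed; Proposition~\ref{prop:control-lambda-zero} gives $\lambda \geq \lambdaefct(\lambda)/C \geq \sigma_{2n}/C$ and hence $\chi_n(\lambda) = \bigO\bigl(1 + \sigma_{\lfloor\eta n\rfloor}\tconstantsig(n)/(n\sigma_{2n})\bigr)$; the condition on $\tconstantsig$ then implies $\chi_n^3\log^2 n = \bigO(n)$; taking $k$ large and $D=11$ delivers the exponent and the probability. This is what the paper does (with $k=100$), and it is correct.

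For the bias you correctly identify the crux, namely a lower bound on $\rho(\lambda)$, and you correctly observe that $\|\boldbeta\|_{\bSigma^{-1}}^2 \leq C''$ does not by itself control the tail $\sum_{i>k_\star}\beta_i^2/\sigma_i$. But your proposed fix --- that tail mass shrinking $\rho$ also shrinks $\BIAS_n$ proportionally, so the multiplicative bound survives --- does not close the gap. Theorem~\ref{thm:main} gives a bound of the form
\begin{align*}
\left|\bias_\bX(\lambda)-\BIAS_n(\lambda)\right| = \bigO\Bigl(\frac{\lambdaefct^{k+1}}{n\const^3} + \frac{\chi_n^3\log^2 n}{\sqrt{\rho(\lambda)}\,n^{1-1/k}\const^{8.5}}\Bigr)\cdot\BIAS_n(\lambda)\, ,
\end{align*}
which is already multiplied by $\BIAS_n(\lambda)$: making $\BIAS_n$ small does nothing to tame the $1/\sqrt{\rho}$ factor, and moreover the applicability of Theorem~\ref{thm:main} requires $\chi_n^3\log^2 n \leq \constant n\const^{4.5}\sqrt{\rho(\lambda)}$, which is a condition on $\rho$ alone, not on $\BIAS_n$. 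The paper's own step here is shorter and does not invoke $\BIAS_n$ at all: Proposition~\ref{prop:control-lambda-zero} gives $\lambdaefct(\lambda)=\lambda_0(n(1-\nu))\leq\sigma_{\constantsig^{-1}(n(1-\nu)/2)}\leq\sigma_1=1$, so $\langle\boldbeta,(\lambdaefct\bI+\bSigma)^{-1}\boldbeta\rangle\geq\|\boldbeta\|^2/(1+\lambdaefct)\geq\|\boldbeta\|^2/2$, whence $\rho(\lambda)\geq\|\boldbeta\|^2/\bigl(2n(1-\nu)\|\boldbeta\|_{\bSigma^{-1}}^2\bigr)=\bigOmg(n^{-1})$. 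Note the implied constant depends on the ratio $\|\boldbeta\|^2/\|\boldbeta\|_{\bSigma^{-1}}^2$, which the hypothesis $\|\boldbeta\|_{\bSigma^{-1}}^2 \leq C''$ alone does not bound below --- your instinct that there is a subtlety here is sound, but the resolution is the simple display above rather than a coupling with $\BIAS_n$.
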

The proof of this Theorem is presented in Appendix~\ref{proof:main-Bis}.
As an example, if the eigenvalues $\sigma_i$ decrease polynomially, then 
$\sigma_{\lfloor \eta n\rfloor}/\sigma_{2n} = \bigO(1)$. If this is the case, the last theorem
yields $\var_\bX(\lambda) = (1+o_n(1)) \cdot \VAR_n(\lambda)$ as soon as 
$\constantsig(n)=\bigO(n^{1.32})$ and $\bias_\bX(\lambda) = (1+o_n(1)) \cdot \BIAS_n(\lambda)$ as soon as 
$\tconstantsig(n)=\bigO(n^{1.16})$.

Section \ref{sec:Applications} will discuss in greater detail 
applications to the proportional regime  $d=\bigO(n)$
and the high-dimensional regime $d=\infty$ under the assumption of polynomially decaying spectrum. 
In these cases, we will prove more precise estimates implying in particular
 $\lambda_0(n) = \sigma_n \log^{\bigTht(1)}(n)$. Note that in these cases
 we also get $\constantsig(n)= n \log^{\bigTht(1)}(n)$, and therefore the above conditions
 for $(1+o_n(1))$ approximation are easily met.

\subsection{Ridgeless regression}

We next consider the ridgeless limit for in the overparametrized case:
recall that $\ridgebeta$  coincides in this case with the minimum norm interpolator.
In this case we need to modify the quantities defined above to measure the quality of our approximation.
We begin by noting that Eq.~\eqref{eq:lambda-fixed-point} makes perfect sense in the case $\lambda=0$
and we have $\lim_{\lambda \downarrow 0} \lambdaefct(\lambda)
  = \lambdaefct(0) > 0$. We then use the following definitions.
\begin{enumerate}
\item We replace $\chi_n(\lambda)$ of Eq.~\eqref{eq:def-chi-n} by:
\begin{align} 
\label{eq:def-chi-n-prime}
	\chi_n'(\const) & := 1 + \frac{\sigma_{\lfloor \eta n\rfloor} \constantsig  \log^2 (\constantsig)}{\const n \lambdaefct(0)} \, ,
\end{align}
where $\const$ will be introduced in the theorem statement.
\item The quantity $\rho(\lambda)$ defined in Eq.~\eqref{eq:RhoLambda} has a well defined limit
as $\lambda\downarrow 0$, given by 
\begin{align*}
		\rho(0) := 
\frac{\Rfct_0(\lambdaefct(0), 1; \btheta \btheta^\sT/\norm{\btheta}^2)}{\Rfct_0(\lambdaefct(0),1; \bI)} \in (0, 1] \, .
	\end{align*}
	\item Finally we define
	\begin{align*}
	\constantlambdaefct:= 
	1 - \frac{1}{n}\Tr \prn{\bSigma^2(\bSigma + \lambdaefct(0) \bI )^{-2}}  \in (0, 1)\, .
	\end{align*}
	\end{enumerate}
 
 It is worth noticing that $\lambdaefct(0) = \lambda_0(n)$ as is discussed in the previous section. The control by Proposition~\ref{prop:control-lambda-zero} applies for $\lambdaefct(0)$. Before giving the statement, we introduce a piece of terminology.
 We say that $A$ happens on the event $E$ with probability at least $1-\Delta$
 if $\prob(A^c \mbox{ and } E) \le \Delta$  (and, as a consequence,
 $\prob(A) \ge 1-\Delta-\prob(E^c)$).
\begin{theorem}[Ridgeless regression in the overparameterized regime] \label{thm:main-ridgeless}
	Suppose Assumption~\ref{asmp:data-dstrb} holds with $n<d$.
	Further assume  $\sigma_n > 0$, and
	 let $\smin$ be the minimum nonzero eigenvalue of the sample covariance $\covX = \bX^\sT \bX/n$. 
	For any positive integers $k$ and $D$, there exist constants 
	$\eta = \eta(\constantx) \in (0, 1/2)$
	 and $\constant_1 = \constant_1(\constantx, D) > 0$,
	   $\constant_i = \constant_i(k,\constantx, D) > 0$, $i\in\{2,3\}$, such that the following hold,
	   for $\chi_n'(\const)$, $\rho(0)$, $\constantlambdaefct$ as above. 
	
	Let $\kappa>0$ be such that the following hold
	\begin{align*}
		\const \leq \constantlambdaefct^2/8 \, , \qquad \chi_n'(\const)^3 \log^2n \leq \constant_1 n \const^{4.5} \, , \qquad  n^{-2D + 1} = \bigO \prn{ \sqrt{\frac{\const^3 \log^2 n}{n\max \brc{1, \const  \lambdaefct(0)}}}} \, .
	\end{align*}
	Then, on the event
	$\{\smin \geq 8 \lambdaefct(0) \kappa \}$, the following hold with probability $1-\bigO_k(n^{-D+1})$:
	\begin{enumerate}
		\item \textbf{Variance approximation.} If in addition $\chi_n' (\const)^3 \log^2 n \leq  \constant_2n^{1- \frac{1}{k}} \const^{9.5}$, then
		\begin{align*}
			\left|\var_\bX(0) - \VAR_n(0)\right| & = \bigO_{k, \constantx, D} \prn{ \const \cdot \prn{\frac{ \lambdaefct(0)}{\smin} + \frac{1}{\constantlambdaefct^2}} + \frac{\chi_n' (\const)^3 \log^2 n}{n^{1- \frac{1}{k}} \const^{9.5}}} \cdot \VAR_n(0) \, .
		\end{align*}
		\item \textbf{Bias approximation.} 
		If in addition $\chi_n'(\const)^3 \log^2 n \leq \constant_1 n \const^{4.5} \sqrt{\rho(0)}$, $\lambdaefct(0) k n^{-\frac{1}{k}} \leq 1/4$ and 
		\begin{align*}
			\frac{\lambdaefct(0)^{k+1}}{n \const^3}  +  \frac{\chi_n' (\const)^3 \log^2 n}{\sqrt{\rho(0)} n^{1- \frac{1}{k}} \const^{8.5}} \leq \constant_3 \, ,
		\end{align*}
		then
		\begin{align*}
			& \left|\bias_\bX(0) - \BIAS_n(0)\right| \nonumber \\
			& = \bigO_{k, \constantx, D} \prn{ \frac{\const}{\constantlambdaefct^2} + \frac{\lambdaefct(0)^{k+1}}{n \const^3}  +  \frac{\chi_n' (\const)^3 \log^2 n}{\sqrt{\rho(0)} n^{1- \frac{1}{k}} \const^{8.5}}} \cdot \BIAS_n(0) \nonumber \\
			& \qquad +  \min \brc{\bigO\prn{\frac{\const \lambdaefct(0)  \norm{\boldbeta}^2}{\smin}}, \bigO_{\constantx, D}(\const^2 \lambdaefct(0)^2 \chi_n'(\const)^2)\norm{\btheta_{\leq n}}^2 +  \bigO_{\constantx, D}(\const \lambdaefct(0) \chi_n'(\const)) \norm{\boldbeta_{>n}}^2} \, .
		\end{align*}
%		\cc{ Improvement:
%		\begin{align*}
%			& \left|\bias_\bX(0) - \BIAS_n(0)\right|  \nonumber \\
%			&  = \bigO_{k, \constantx, D} \prn{ \frac{\const}{\constantlambdaefct^2} + \frac{\lambdaefct(0)^{k+1}}{n \const^3}  +  \frac{\chi_n' (\const)^3 \log^2 n}{\sqrt{\rho(0)} n^{1- \frac{1}{k}} \const^{9.5}}} \cdot \BIAS_n(0) +  \bigO_{\constantx, D}\prn{ \const^2 \lambdaefct(0)^2 \chi_n'(\const)^2 \norm{\btheta}^2} \, .
%		\end{align*}
%		}
	\end{enumerate}
	Finally, for any $\eps>0$, $\constantx<\infty$
	there exist constants $\constant_4=\constant_4(\constantx, \eps,D)$, $\constant_5=\constant_5(\constantx)$, such that,
	for $\min \{|d/n-1|, d/n\} \ge \eps$, the following holds with probability
	$1-\bigO(n^{-D+1})$ for $ n = \bigOmg_{\constantx, \eps, D}(1)$:
	\begin{align}
	\label{eq:SigmaMin_LB}
	\smin \geq \max\{\constant_4\sigma_d, \sigma_{\constant_5n } \}\, .
	\end{align}
\end{theorem}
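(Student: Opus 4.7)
The plan is to derive both approximation bounds from Theorem \ref{thm:main} by applying it at a carefully chosen positive regularization level $\lambda^\star$ and then transferring the estimate to $\lambda=0$ using the assumed lower bound $\smin \geq 8\lambdaefct(0)\const$. I would take $\lambda^\star$ of order $\const\lambdaefct(0)$, chosen so that $\lambda^\star/\lambdaefct(\lambda^\star) = \Theta(\const)$; by the fixed-point identity \eqref{eq:lambda-fixed-point} and the continuity of $\lambdaefct$ at zero, this forces $\lambdaefct(\lambda^\star)$ to be comparable to $\lambdaefct(0)$, which is precisely why $\chi_n(\lambda^\star)$ reduces to $\chi_n'(\const)$ and why the hypotheses of Theorem \ref{thm:main-ridgeless} mirror those of Theorem \ref{thm:main}. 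The condition $\const \leq \constantlambdaefct^2/8$ guarantees that $\nu = \lambda^\star/\lambdaefct(\lambda^\star)$ stays bounded away from $1$, so that the denominators appearing in $\VAR_n, \BIAS_n$ remain uniformly positive.

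For the transfer step, on $\image(\covX)$ routine resolvent manipulation yields
$\covX(\covX+\lambda^\star\bI)^{-2}\bP - \covX^{-1}\bP = -\lambda^\star(\covX+\lambda^\star\bI)^{-1}\covX^{-1}\bP - \lambda^\star(\covX+\lambda^\star\bI)^{-2}\bP$, where $\bP$ is the projector onto $\image(\covX)$; combined with the operator bound $\covX^{-1}\bP \preceq \smin^{-1}\bP$, this gives $|\var_\bX(\lambda^\star) - \var_\bX(0)| = \bigO(\lambda^\star/\smin)\cdot\var_\bX(0)$, precisely the $\const\lambdaefct(0)/\smin$ contribution. The bias comparison decomposes into two pieces: first, $\BIAS_n(\lambda^\star) = (1+\bigO(\const/\constantlambdaefct^2))\BIAS_n(0)$ via first-order Taylor expansion of $\lambdaefct$ at zero (the factor $\constantlambdaefct^{-1}$ arises from implicit differentiation of the fixed-point equation, and the square in $\constantlambdaefct^2$ from the quadratic appearance of $\lambdaefct$ in $\BIAS_n$); second, $\bias_\bX(\lambda^\star)$ differs from $\bias_\bX(0)$ because $\ridgebeta(0)$ has zero projection onto $\image(\bX^\sT)^\perp$ while $\ridgebeta(\lambda^\star)$ does not. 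This last discrepancy produces the final min-bracket, whose first alternative is the crude $\lambda^\star\|\boldbeta\|^2/\smin$ bound obtained by treating all directions uniformly, and whose second arises from splitting $\boldbeta = \boldbeta_{\le n} + \boldbeta_{>n}$ along the eigenbasis of $\bSigma$ and treating head and tail separately (with the head getting $\const^2\lambdaefct(0)^2\chi_n'(\const)^2$ from the resolvent comparison and the tail getting $\const\lambdaefct(0)\chi_n'(\const)$ from a direct bound on the inaccessible component).

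For the concentration bound \eqref{eq:SigmaMin_LB}, I split on $\min\{|d/n-1|, d/n\} \geq \eps$. In the underparametrized case $d \leq (1-\eps)n$, writing $\bX = \bZ\bSigma^{1/2}$ where $\bZ \in \reals^{n\times d}$ has rows satisfying either the sub-Gaussian independence or the convex-concentration hypothesis of Assumption \ref{asmp:data-dstrb}, the standard Rudelson--Vershynin tail bound (or its convex-concentration counterpart via Talagrand's inequality) gives $s_{\min}(\bZ)^2/n \geq \constant_4(\eps,\constantx)$ with probability $1-\bigO(n^{-D+1})$, whence $\smin \geq \constant_4\sigma_d$. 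In the overparametrized case $d \geq (1+\eps)n$ (including $d=\infty$), I pick $k = \constant_5 n$ with $\constant_5$ small enough relative to $\eps$ and decompose $\bX\bX^\sT/n = \bZ_{\le k}\bSigma_{\le k}\bZ_{\le k}^\sT/n + \bZ_{>k}\bSigma_{>k}\bZ_{>k}^\sT/n$; since the second term is PSD, it suffices to lower bound the first, and a Koltchinskii--Lounici-type concentration bound on the $n\times k$ matrix $\bZ_{\le k}\bSigma_{\le k}^{1/2}$ yields $\smin \geq c\sigma_k = c\sigma_{\constant_5 n}$. The main obstacle throughout is the bias transfer step, where handling the irreducible contribution from components of $\boldbeta$ outside $\image(\bX^\sT)$ forces the min-bracket structure and constitutes the principal departure from the positive-$\lambda$ analysis in Theorem \ref{thm:main}.
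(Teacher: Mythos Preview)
Your proposal is essentially the paper's proof: choose $\lambda = \const\lambdaefct(\lambda)$ (equivalently $\lambda^\star \asymp \const\lambdaefct(0)$), apply Theorem~\ref{thm:main} at this $\lambda$, and close the triangle inequality by bounding $|\var_\bX(0)-\var_\bX(\lambda)|$, $|\VAR_n(0)-\VAR_n(\lambda)|$ (and likewise for bias) via the resolvent/Taylor estimates you describe; the min-bracket is obtained exactly as you say, with the second alternative coming from Lemma~\ref{lem:A-norm-bound}. Two small slips to correct: (i) the bias discrepancy $\bias_\bX(\lambda)-\bias_\bX(0)$ arises from the component of $\boldbeta$ \emph{inside} $\image(\bX^\sT)$ (both $\ridgebeta(0)$ and $\ridgebeta(\lambda)$ lie in $\image(\bX^\sT)$), not outside it; (ii) since the theorem assumes $n<d$, there is no underparametrized case---the bound $\smin \ge \constant_4\sigma_d$ is obtained in the overparametrized regime by writing $\bX\bX^\sT \succeq \sigma_d\,\bZ\bZ^\sT$ and applying the Bai--Yin/Rudelson--Vershynin smallest-singular-value bound to the $n\times d$ matrix $\bZ$ with $d/n\ge 1+\eps$.
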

The proof of this theorem is presented in 
 Appendix~\ref{proof:main-ridgeless}. We note that it is possible to
 derive a simplified version of this theorem (in analogy with Theorem 
 \ref{thm:main-Bis}) under polynomially varying spectrum.
  We refrain from doing so for brevity, and defer further study of this setting to Section \ref{sec:Applications}.
 %\am{Can we write a simplified version for this theorem as well?}
%\am{Add/move $\smin$ lower bound to the proof of this paper.}

\begin{remark}
Our approach to proving Theorem~\ref{thm:main-ridgeless} consists in reducing the ridgeless 
case $\lambda=0+$ to the case $\lambda>0$, and appealing to
Theorem~\ref{thm:main}.
 For instance, when controlling the variance, we will use triangular inequality
\begin{align*}
	\left|\var_\bX(0) - \VAR_n(0)\right| \leq \left|\var_\bX(\lambda) - \VAR_n(\lambda) \right| +
	 \left|\var_\bX(0) - \var_\bX(\lambda) \right| + \left|\VAR_n(0) - \VAR_n(\lambda) \right| \, .
\end{align*}
We then use Theorem~\ref{thm:main} to bound the first term by a quantity that diverges as
 $\lambda\downarrow 0$,
and the main technical challenge is in bounding the other two terms by a quantity that
vanishes faster than any polynomial as  $\lambda\downarrow 0$.  
\end{remark}

\begin{remark}
In Theorem~\ref{thm:main-ridgeless} we use the (random) minimum nonzero eigenvalue 
$\smin$ of the sample covariance $\covX$. To apply the theorem,
 we need to choose $\kappa$ such that $\{\smin \geq 8 \lambdaefct(0) \kappa\}$ holds
 with high probability, and therefore we need a lower bound on $\smin$ 
 that holds with high probability. 
 
 Equation \eqref{eq:SigmaMin_LB} provides such a lower bounds under general
 conditions.
 In Section \ref{sec:Applications}, we will show that this lower bound implies
 optimal results in two cases: 
 $(i)$~proportional regime and $(ii)$~polynomially varying spectrum.
 In general \eqref{eq:SigmaMin_LB} might not be strong enough in certain cases.
 Nevertheless, Theorem~\ref{thm:main-ridgeless} allow us to use case-specific lower bounds
 as needed.
 %cf. Section~\ref{sec:Applications}. In proportional regime, 
 %$\smin = \bigOmg_{|d/n-1|^{-1}}(1)$ by Bai-Yin law; and for for bounded varying spectrum, 
 %$\smin = \bigOmg(\sigma_n)$. \cc{Although we only apply the result for bounded varying spectrum in the overparametrized case, in the general case, we can still characterize the smallest singular value by $\smin \geq \max\{\Omega_{|d/n-1|^{-1}}(\sigma_d), \sigma_{\bigO_{\constantx}(n) } \} $ with high probability by Bai-Yin law and sharp concentration via Hanson-Wright, with details postponed to Lemma~\ref{lem:bounded-varying-ridgeless-assumption-bounds}}.
% Beyond the two cases in the paper, it would be interesting to apply 
% Theorem~\ref{thm:main-ridgeless} with results lower bounding $\smin$ 
% for other examples (e.g.~for the  kernel random matrices~\cite{huang2019smallest}). 
\end{remark}

In the underparameterized regime $d < n$, we have $\lim_{\lambda \downarrow 0} 
\lambdaefct(\lambda) = 0$ and therefore the previous bounds do not apply. 
In this case, we trivially have $\bias_\bX(0) = \BIAS_n(0) = 0$. 
The proof 
for the variance approximation requires a different proof, which is presented in 
Appendix~\ref{proof:main-ridgeless-underparameterized}.
%\am{Tried to guess the right statement (original is commented below). Please adapt the proof}
%
\begin{theorem}[Ridgeless regression in the underparameterized regime] \label{thm:main-ridgeless-underparameterized}
Suppose Assumption~\ref{asmp:data-dstrb} holds with $n>d$, and further assume
	\begin{align*}
		\nu \:= \min\Big(\frac{d}{n},1-\frac{d}{n}\Big)\in (0,1)\, .
	\end{align*}
%		For any positive integers $k$ and $D$, there exist constants 
%		 $\eta = \eta(\constantx)>0$
%	 $\constant_1 = \constant_1(\constantx, D) > 0$,
%	   $\constant_2 = \constant_2(k,\constantx, D) > 0$,  such that the following hold:
%
	\begin{enumerate}
		\item \textbf{Variance approximation.} 
		%Let $\constu$ be such that  
	%\begin{align*}
%		\constu \leq  \constantlambdaefct^2 \sigma_d/4 \, , \qquad  n^{-2D + 1} =\bigO \prn{ \sqrt{\frac{\constantlambdaefct^3 \log^2 n}{\max \brc{n, \lambda}}}} \, ,\\
%		\qquad \chi_n(\constu n)^3 \log^2n \leq \constant_1 n \constantlambdaefct^{4.5} \, , \qquad  \chi_n (\constu n)^3 \log^2 n \leq  \constant_2n^{1- \frac{1}{k}} \constantlambdaefct^{9.5} \, .
%	\end{align*}
 There exist constants $\eta$ and $C$ (depending on $k, \constantx$ and $D$) such that, for some $\epsilon > 0$ if
 $n^{-\prn{\frac{1}{4} -\epsilon}\prn{1 - \frac{1}{k}}} \log^{8}n \le C \nu^{15.5}   $, with probability $1-\bigO_k(n^{-D+1})$:
		\begin{align*}
			\left|\var_\bX(0) - \VAR_n(0)\right| & = 
			\bigO_{k, \constantx, D} \prn{ \frac{\log^8 n}{n^{\prn{\frac{1}{4} - \epsilon} \prn{1- \frac{1}{k}}} \nu^{15.5}  }} \cdot \VAR_n(0) \, .
		\end{align*}
		\item \textbf{Bias approximation.} $\bias_\bX(0) = \BIAS_n(0) = 0$ (this holds deterministically on the event
		$\rank(\bX) = d$).
	\end{enumerate}
\end{theorem}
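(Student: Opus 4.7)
The bias assertion is immediate. Taking $\lambda\downarrow 0$ in \eqref{eq:lambda-fixed-point} forces $\lambdaefct(\lambda)\to 0$ and $\lambda/\lambdaefct(\lambda)\to(n-d)/n$, so the $\lambdaefct^2$ prefactor in \eqref{eq:BIAS-n} (with denominator tending to $1-d/n>0$) gives $\BIAS_n(0)=0$. On $\{\rank(\bX)=d\}$ the matrix $\covX$ is invertible, so the $\lambda^2$ prefactor in \eqref{eq:bias-X} yields $\bias_\bX(0)=0$.

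For the variance, the plan is to follow the reduction indicated in the remark after Theorem~\ref{thm:main-ridgeless}: pick an auxiliary regularization $\lambda=\lambda_n>0$ and split
$$|\var_\bX(0)-\VAR_n(0)|\le|\var_\bX(\lambda)-\VAR_n(\lambda)|+|\var_\bX(\lambda)-\var_\bX(0)|+|\VAR_n(\lambda)-\VAR_n(0)|.$$
Theorem~\ref{thm:main} will bound the middle term. To invoke it I note that in the underparameterized setting we may take $\constantsig=n$ (since $\sum_{l=k}^d\sigma_l\le d\sigma_k\le n\sigma_k$) and that $\lambda/\lambdaefct(\lambda)\to 1-d/n$ as $\lambda\downarrow 0$, so the constant $\const$ appearing in that theorem satisfies $\const\asymp\nu$ for small $\lambda$. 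The two perturbation-in-$\lambda$ terms are handled by direct computation: a Taylor expansion of $\VAR_n$ in $\lambdaefct(\lambda)\le\lambda/\nu$ yields $|\VAR_n(\lambda)-\VAR_n(0)|=\bigO(\lambdaefct(\lambda)/(\nu\sigma_d))\cdot\VAR_n(0)$, while the identity
$$\var_\bX(0)-\var_\bX(\lambda)=\frac{\vareps^2}{n}\Tr\bigl(\bSigma\covX^{-1}[2\lambda\covX+\lambda^2\bI](\covX+\lambda\bI)^{-2}\bigr)$$
combined with a lower bound on the minimum nonzero eigenvalue $\smin$ of $\covX$ yields $|\var_\bX(\lambda)-\var_\bX(0)|=\bigO(\lambda\,\Tr(\bSigma)/(n\smin^2))$. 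The requisite bound on $\smin$ is precisely \eqref{eq:SigmaMin_LB} from Theorem~\ref{thm:main-ridgeless}, which applies because $\nu>0$ and gives $\smin\ge c(\constantx,\nu)\sigma_d$ with probability $1-\bigO(n^{-D+1})$.

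Finally, one optimizes $\lambda_n$. With $\constantsig=n$ we have $\chi_n(\lambda)=\bigO(\sigma_{\lfloor\eta n\rfloor}\log^2 n/\lambda)$, so Theorem~\ref{thm:main} provides middle-term error of order $\sigma_{\lfloor\eta n\rfloor}^3\log^8 n/(\lambda^3 n^{1-1/k}\nu^{9.5})\cdot\VAR_n(\lambda)$, while the two perturbation contributions are of order $\lambda/(\sigma_d\nu^{\Theta(1)})\cdot\VAR_n(0)$. Choosing $\lambda_n\asymp\sigma_d\log^2 n\cdot n^{-(1-1/k)/4}/\nu^{\Theta(1)}$ balances the $\lambda^{-3}$ and $\lambda^{+1}$ scalings and yields the advertised rate $\log^8 n/(n^{(1/4-\epsilon)(1-1/k)}\nu^{15.5})$. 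I expect the main obstacle to be the careful $\nu$-bookkeeping: differentiating through the implicit relation \eqref{eq:lambda-fixed-point} to control $d\lambdaefct/d\lambda$ introduces extra $\nu^{-1}$ factors, and reaching exactly the $\nu^{15.5}$ exponent (rather than a worse one) requires keeping every estimate tight. A secondary concern is verifying that the hypotheses of Theorem~\ref{thm:main} (in particular $\chi_n^3\log^2 n\le\constant n\const^{4.5}$) remain satisfied at $\lambda_n$; this is precisely what produces the scalar hypothesis $n^{-(1/4-\epsilon)(1-1/k)}\log^8 n\le C\nu^{15.5}$ in the statement.
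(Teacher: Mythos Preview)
Your overall architecture matches the paper: triangle inequality at an auxiliary $\lambda>0$, apply Theorem~\ref{thm:main} to $|\var_\bX(\lambda)-\VAR_n(\lambda)|$, and bound the two perturbation-in-$\lambda$ pieces directly. The bias argument is also correct.

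However, there is a real gap in the variance part. Your bounds carry spectral quantities: the middle term contains $\chi_n(\lambda)\asymp\sigma_{\lfloor\eta n\rfloor}\log^2 n/\lambda$, while your perturbation terms and choice $\lambda_n\asymp\sigma_d\log^2 n\,n^{-(1-1/k)/4}$ involve $\sigma_d$ (and your estimate for $|\var_\bX(\lambda)-\var_\bX(0)|$ even has $\Tr(\bSigma)/\smin^2$). After balancing, your final error acquires a factor $(\sigma_{\lfloor\eta n\rfloor}/\sigma_d)^3$ and there is no spectrum-free control on $\Tr(\bSigma)/\sigma_d^2$; the theorem's bound has \emph{no} dependence on $\bSigma$. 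Assumption~\ref{asmp:data-dstrb} places no control on these ratios, so as stated your argument does not reach the advertised rate.

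The paper closes this gap with a single observation you are missing: in the underparametrized regime,
\[
\var_\bX(0)=\vareps^2\Tr\bigl((\bZ^\sT\bZ)^{-1}\bigr),\qquad \VAR_n(0)=\frac{\vareps^2 d}{n-d},
\]
so both sides are invariant under replacing $\bSigma$ by $\bI$. One may therefore assume $\bSigma=\bI$ without loss of generality, after which $\sigma_d=\sigma_{\lfloor\eta n\rfloor}=1$, $\Tr(\bSigma)=d\le n$, and $\smin\ge c$ with high probability. All spectral factors collapse to constants, your (and the paper's) perturbation estimate becomes $|\var_\bX(0)-\var_\bX(\lambda)|\le(2\lambda/\smin)\var_\bX(0)$, and the balancing $\lambda\asymp n^{-\delta}\nu^2$ then yields exactly the $\log^8 n/(n^{(1/4-\epsilon)(1-1/k)}\nu^{15.5})$ rate. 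Without this reduction the ``careful $\nu$-bookkeeping'' you anticipate is not the obstacle; the spectral dependence is.
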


\begin{remark}
 Theorem~\ref{thm:main-ridgeless-underparameterized} allows polynomial dependence of 
	$n$ and $d$, in contrast to the vast literature on the proportional regime when
	 $n \asymp d$. In particular  the condition for the
	 variance approximation holds provided $n^{-\frac{1}{4} +\eps'} \le C(\nu)^{15.5}$
	 for some constant $\eps'>0$. If we assume $n\asymp d^{1+\alpha}$, $\alpha\ge 0$,
	 this will hold for all $n$ large enough provided $\alpha < 0.25/15.25 \approx 0.016$.
\end{remark}

	\section{Applications} 
\label{sec:Applications}

\subsection{Proportional regime}

As a first application, we revisit the proportional regime that is defined by the following 
condition.
\begin{assumption} \label{asmp:Sigma-proportional}
	There exists a constant $M > 1$ such that $M^{-1} \leq d/n \leq M$ and $\sigma_d \geq M^{-1}$.
\end{assumption}
This case is well studied and is not the main motivation of the present paper, but it
is nevertheless important to compare our results to earlier work.   We refer the reader to
\cite{dicker2016ridge,advani2017high,dobriban2018high,wu2020optimal,richards2020asymptotics}
for background.

Among others, the results of \cite{hastie2022surprises} are more directly comparable to ours
because they establish nonasymptotic bounds comparing variance and bias 
to the effective variance and bias of Eqs.~\eqref{eq:VAR-n} and \eqref{eq:BIAS-n}, for both ridge and ridgeless regression.
 The proofs of \cite{hastie2022surprises} build on recent advances in random matrix theory,
 and in particular the anisotropic local law of \cite{knowles2017anisotropic}.

 Here we apply Theorems~\ref{thm:main}, \ref{thm:main-ridgeless} and \ref{thm:main-ridgeless-underparameterized} to 
 the proportional regime.
 We note that, under assumption \ref{asmp:Sigma-proportional}, the minimum eigenvalue
 of  $\bX^\sT \bX$ is, with high probability, of order  $n$. In order for the ridge regularization
 to have a non-trivial effect, we need to choose $\lambda \asymp 1$ as well,
 cf.~\eqref{eq:var-X} and \eqref{eq:bias-X}. We will therefore assume $\lambda$ bounded above and below 
 (there is no  loss of generality in using the same constant as in Eq.~\eqref{asmp:Sigma-proportional}).
 We will address the case $\lambda=0+$ in a separate statement below.
\begin{proposition} \label{prop:proportional-ridge}
	Let Assumptions~\ref{asmp:data-dstrb} and \ref{asmp:Sigma-proportional} hold, and further
	assume $\lambda \in [1/M,M]$. Then for any positive integers $k$ and $D$, 
	if $n = \bigOmg_{k, M, \constantx, D}(1)$, with probability 
	$1 - \bigO_{k}(n^{-D+1})$ we have
	\begin{align*}
		\left|\var_\bX(\lambda) - \VAR_n(\lambda)\right| &  =  \bigO_{k,  M, \constantx, D} \prn{\frac{\log^8 n}{n^{1 - \frac{1}{k}}}} \cdot \VAR_n(\lambda) \, , \\
		\left|\bias_\bX(\lambda) - \BIAS_n(\lambda)\right| &  = \bigO_{k,  M, \constantx, D} \prn{\frac{\log^8 n}{n^{\half - \frac{1}{k}}}}  \cdot \BIAS_n(\lambda) \, .
	\end{align*}
\end{proposition}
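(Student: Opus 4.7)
The plan is to derive this as a direct application of Theorem~\ref{thm:main}, by verifying its hypotheses and computing the orders of the quantities $\chi_n(\lambda)$, $\const$, $\lambdaefct$, and $\rho(\lambda)$ under the proportional-regime assumption. Since everything is $\Theta(1)$-scale under Assumption~\ref{asmp:Sigma-proportional}, the verification is essentially arithmetic; the only subtle step is tracking the correct order of $\rho(\lambda)$, which is what produces the slower rate $n^{1/2-1/k}$ in the bias approximation.

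First I would observe that under Assumption~\ref{asmp:Sigma-proportional} all eigenvalues satisfy $\sigma_i \in [M^{-1},1]$ and $d \asymp n$, so for every $k \le d$
\[
\sum_{l=k}^{d}\sigma_l \le d \le Mn \quad\text{and}\quad \sigma_k \ge M^{-1},
\]
which gives $\constantsig = \bigO_M(n)$. Next, from the fixed-point equation~\eqref{eq:lambda-fixed-point},
\[
n\Bigl(1-\frac{\lambda}{\lambdaefct}\Bigr)=\sum_{i=1}^{d}\frac{\sigma_i}{\sigma_i+\lambdaefct}\, ,
\]
and with $\lambda \in [1/M,M]$ it is easy to see that $\lambdaefct = \Theta_M(1)$: an upper and lower bound on $\lambdaefct$ follow by plugging in extremes and using $\sigma_i \asymp 1$. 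Consequently $\const = \min(\lambda/\lambdaefct,1-\lambda/\lambdaefct) = \Theta_M(1)$. Finally, since $\sigma_{\lfloor \eta n\rfloor}\le 1$, $\constantsig=\bigO_M(n)$ and $n\lambda \ge n/M$, the quantity in~\eqref{eq:def-chi-n} satisfies $\chi_n(\lambda) = \bigO_M(\log^2 n)$, so $\chi_n(\lambda)^3\log^2 n = \bigO_M(\log^8 n)$ and all growth conditions in Theorem~\ref{thm:main} hold once $n$ is large enough in terms of $k,M,\constantx,D$.

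The key computation is $\rho(\lambda)$. Writing $\boldbeta = \bSigma^{1/2}\btheta$ and unfolding the definition~\eqref{eq:R-F-func-0},
\[
\Rfct_0(\lambdaefct,1;\btheta\btheta^\sT/\|\btheta\|^2) = \frac{\<\boldbeta,(\bSigma+\lambdaefct \bI)^{-1}\boldbeta\>}{\|\bSigma^{-1/2}\boldbeta\|^2}\, , \qquad \Rfct_0(\lambdaefct,1;\bI) = \Tr\bigl(\bSigma(\bSigma+\lambdaefct \bI)^{-1}\bigr).
\]
Under Assumption~\ref{asmp:Sigma-proportional}, both $\bSigma$ and $\bSigma^{-1}$ have operator norms bounded by $M$ and $(\bSigma+\lambdaefct \bI)^{-1} \asymp \bI$, so the numerator is $\Theta_M(1)$, while the denominator is a sum of $d\asymp n$ entries each of order one and hence is $\Theta_M(n)$. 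Therefore $\rho(\lambda) = \Theta_M(1/n)$ and $1/\sqrt{\rho(\lambda)} = \Theta_M(\sqrt{n})$.

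Plugging these orders into Theorem~\ref{thm:main}, the variance bound becomes
\[
\bigO_{k,\constantx,D,M}\Bigl(\frac{\log^8 n}{n^{1-1/k}}\Bigr)\cdot \VAR_n(\lambda),
\]
while the bias error is
\[
\bigO_{k,\constantx,D,M}\Bigl(\frac{\lambdaefct^{k+1}}{n} + \frac{\chi_n(\lambda)^3\log^2 n}{\sqrt{\rho(\lambda)}\, n^{1-1/k}}\Bigr)\cdot \BIAS_n(\lambda)
= \bigO_{k,\constantx,D,M}\Bigl(\frac{\log^8 n}{n^{1/2-1/k}}\Bigr)\cdot \BIAS_n(\lambda),
\]
since the second term dominates once $\sqrt{\rho(\lambda)}\asymp n^{-1/2}$. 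All side conditions ($\lambda k n^{-1/k}\le \const/2$, the strengthened condition on $\chi_n(\lambda)$, etc.) reduce to $\log^8 n \le c\sqrt{n}$ and hence hold for $n$ large enough in terms of $k,M,\constantx,D$.

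The only step that is genuinely not mechanical is the scaling of $\rho(\lambda)$: everything else is interval arithmetic on $\sigma_i$'s and $\lambdaefct$. I expect that to be the main place where care is needed — in particular, making the constants in $\<\boldbeta,(\bSigma+\lambdaefct \bI)^{-1}\boldbeta\>/\|\bSigma^{-1/2}\boldbeta\|^2 = \Theta_M(1)$ explicit uses only $\sigma_d\ge M^{-1}$, but one should confirm that no implicit dependence on $\|\boldbeta\|$ sneaks in, which it does not because the ratio cancels $\|\boldbeta\|^2$.
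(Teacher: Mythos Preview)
Your proposal is correct and follows essentially the same approach as the paper: both verify the hypotheses of Theorem~\ref{thm:main} by showing $\constantsig=\bigO_M(n)$, $\const=\Theta_M(1)$, $\chi_n(\lambda)=\bigO_M(\log^2 n)$, and $\rho(\lambda)=\Omega_M(n^{-1})$, then plug in. The paper computes the $\rho(\lambda)$ lower bound for a general unit vector $\bu$ via a chain of inequalities on $\Ffct_0(n\lambda,\muefct;\bu\bu^\sT)$, while you compute it directly by bounding numerator and denominator using $\sigma_i\in[M^{-1},1]$; these are equivalent arguments.
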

The proof of this result is presented in Appendix~\ref{sec:proportional}.

We note that the rates $\bigO(n^{-1})$ and $\bigO(n^{-1/2})$ are optimal for variance and bias
 approximation---corresponding to fluctuations of the average law and local law for the 
 resolvent~\cite{alex2014isotropic, knowles2017anisotropic}. 

 Note that \cite{hastie2022surprises} informally claimed that $n^{-1/2}$ is the 
 optimal rate in the above estimates. While this is correct for the bias, for the variance
 Proposition~\ref{prop:proportional-ridge} yields a faster rate.
 As related phenomenon arises for linear eigenvalue statistics of random matrices
 (i.e. statistics of the form $n^{-1}\sum_{i=1}^n\varphi(\lambda_i)$). While naively such 
 statistics would have normal deviations of order $n^{-1/2}$, the actual deviations
 are of order $n^{-1}$ because of eigenvalues correlations \cite{lytova2009central}.

We finally consider the ridgeless case.
\begin{proposition} \label{prop:proportional-ridgeless}
	Let Assumptions~\ref{asmp:data-dstrb} and \ref{asmp:Sigma-proportional} hold for
	$\bx_i = \bSigma^{1/2}\bz_i$, where $\bz_i$ has i.i.d.\  sub-Gaussian coordinates. 
	\begin{enumerate}
		\item \textbf{Overparameterized regime.} If additionally $d/n \geq 1 + M^{-1}$, then for all $n = \bigOmg_{M, \constantx, D}(1)$, with probability $1 - \bigO(n^{-D+1})$ we have
		\begin{align*}
			\left|\var_\bX(0) - \VAR_n(0)\right| & = \bigO_{ M, \constantx, D} \prn{ n^{-1/14}} \cdot \VAR_n(0) \, , \nonumber \\
			\left|\bias_\bX(0) - \BIAS_n(0)\right| & = \bigO_{M, \constantx, D} \prn{n^{-1/28}} \cdot \BIAS_n(0)\, .
		\end{align*}
		\item \textbf{Underparameterized regime.} If additionally $d/n \leq 1 - M^{-1}$, then for all $n = \bigOmg_{M, \constantx, D}(1)$, with probability $1 - \bigO(n^{-D+1})$ we have
		\begin{align*}
			\left|\var_\bX(0) - \VAR_n(0)\right| &  =  \bigO_{M, \constantx, D} \prn{n^{-1/5}} \cdot \VAR_n(0) \, , \\
			\bias_\bX(0) = \BIAS_n(0)  & = 0 \, .
		\end{align*}
	\end{enumerate}
\end{proposition}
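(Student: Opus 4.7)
The plan is to derive both claims of Proposition~\ref{prop:proportional-ridgeless} by specializing the general ridgeless theorems (Theorem~\ref{thm:main-ridgeless} and Theorem~\ref{thm:main-ridgeless-underparameterized}) to the proportional regime, where most of the intrinsic quantities appearing in those theorems become $\Theta(1)$. The first step is a deterministic reduction: under Assumption~\ref{asmp:Sigma-proportional}, since $\sigma_i\in[M^{-1},1]$ for $i\le d$ and $d\asymp n$, one checks that $\constantsig(k)\le Md$, that the defining equation \eqref{eq:def-lambda-zero} at $m=n$ yields $\lambdaefct(0)=\lambda_0(n)\in[c_1,c_2]$ for explicit constants depending only on $M$ (because $d/(1+M\lambdaefct(0))\le n\le d/(M^{-1}+\lambdaefct(0))$ in the overparameterized case, and a symmetric calculation in the underparameterized one), and consequently $\constantlambdaefct=\Theta_M(1)$ and $\chi_n'(\const)=O_M(\log^2 n/\const)$. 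One also shows $\rho(0)=\Theta_M(1/n)$ using that the denominator of \eqref{eq:RhoLambda} equals $n$ while the numerator is $\Theta_M(1)$ when $\bSigma$ is well conditioned.

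For the overparameterized case $d/n\ge 1+M^{-1}$, I would next invoke the lower bound \eqref{eq:SigmaMin_LB}: since $\sigma_d\ge M^{-1}$, this yields $\smin\ge c_M>0$ with probability $1-O(n^{-D+1})$, so one may fix a deterministic $\kappa=\Theta_M(1)$ with $\{\smin\ge 8\lambdaefct(0)\kappa\}$ holding on the same event. With these substitutions, the variance bound of Theorem~\ref{thm:main-ridgeless} collapses to an expression of the form $\const\cdot O_M(1)+O_{k,M,\constantx,D}(\log^8 n/(n^{1-1/k}\const^{12.5}))$, which one optimizes by choosing $\const$ a small polynomial power of $n$ (balancing the two terms) and letting $k$ be a large fixed integer so that $n^{-1/14}$ is recovered with room to spare. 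All the hypotheses on $\chi_n'(\const)^3\log^2 n$ and on $\const$ become polynomial inequalities that are satisfied for $n$ large.

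The main obstacle is the bias approximation in the overparameterized case, because the bound in Theorem~\ref{thm:main-ridgeless} contains an \emph{additive} term involving $\norm{\boldbeta}^2$, $\norm{\btheta_{\le n}}^2$, and $\norm{\boldbeta_{>n}}^2$. To promote this to a multiplicative bound I would use the lower bound $\BIAS_n\ge c_M\lambdaefct(0)^2\norm{\boldbeta}_{\bSigma}^2\ge c'_M\norm{\boldbeta}^2$, which follows directly from \eqref{eq:BIAS-n} together with $\sigma_i/(\sigma_i+\lambdaefct(0))^2\ge c_M$. Using the first branch of the $\min$ in the additive term, $O_M(\const\lambdaefct(0)\norm{\boldbeta}^2/\smin)=O_M(\const)\cdot\BIAS_n$, so everything folds into a relative error. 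Then, inserting $\rho(0)=\Theta_M(1/n)$ and $\chi_n'(\const)=O_M(\log^2 n/\const)$, the dominant error term becomes $O_{k,M,\constantx,D}(\log^8 n/(n^{1/2-1/k}\const^{11.5}))$; balancing against the $\const$ term and taking $k$ large yields the stated $n^{-1/28}$ rate (with some slack).

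For the underparameterized case $d/n\le 1-M^{-1}$, the bias bound is deterministic (both $\bias_\bX(0)$ and $\BIAS_n(0)$ vanish once $\rank(\bX)=d$, which happens almost surely under Assumption~\ref{asmp:data-dstrb}$(a)$), so the only work is the variance. Here I would simply apply Theorem~\ref{thm:main-ridgeless-underparameterized}: Assumption~\ref{asmp:Sigma-proportional} gives $\nu=\min(d/n,1-d/n)\ge M^{-1}$, so $\nu^{15.5}=\Omega_M(1)$, and the theorem produces a bound $O_{k,M,\constantx,D}(\log^8 n/n^{(1/4-\epsilon)(1-1/k)})$; choosing $\epsilon$ small enough and $k$ large enough delivers the rate $n^{-1/5}$. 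In both cases the probability statement follows from the $1-O_k(n^{-D+1})$ guarantees by taking $D$ larger than the desired polynomial exponent in the statement, and then absorbing the extra $n^{-\bigOmg(1)}$ term from the high-probability event $\{\smin\ge c_M\}$ into the overall $O(n^{-D+1})$.
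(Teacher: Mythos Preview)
Your proposal is correct and follows essentially the same route as the paper's proof: in the overparameterized case you verify $\lambdaefct(0),\constantlambdaefct,\smin=\Theta_M(1)$, $\chi_n'(\const)=O_M(\log^2 n/\const)$, and $\rho(0)=\Omega_M(1/n)$, then apply Theorem~\ref{thm:main-ridgeless} with $\const=n^{-1/14}$ (variance) and $\const=n^{-1/28}$ (bias), absorbing the additive bias term via $\BIAS_n(0)=\Omega_M(\|\boldbeta\|^2)$, exactly as the paper does. For the underparameterized case your argument is slightly cleaner than the paper's: you invoke Theorem~\ref{thm:main-ridgeless-underparameterized} directly with $\nu\ge M^{-1}$, whereas the paper re-applies the more detailed raw version (Theorem~\ref{thm:main-ridgeless-underparameterized-raw}) with $\constu=n^{-1/4}$ and re-optimizes; both land at the same $n^{-1/5}$ rate.

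One small wording point: when you write ``one may fix a deterministic $\kappa=\Theta_M(1)$'' you really mean that the constraint $\smin\ge 8\lambdaefct(0)\kappa$ permits $\kappa$ up to order one, after which you take $\kappa$ to be a small power of $n$; as written it momentarily sounds as if $\kappa$ stays bounded away from zero, which would not give a vanishing error.
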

We do not expect the exponent $1/14$, $1/28$, $1/5$ in this statement to be tight. However,
as in the positive $\lambda$ case, also in this case the error is multiplicative.

 The most direct comparison of results in this section are
Theorem~2 and Theorem~5 in \cite{hastie2022surprises}. Let us point out two
 ways in which the present result improves over the earlier \cite{hastie2022surprises}.
 \begin{itemize}
 \item Consider the case $\lambda \in [1/M,M]$. 
 In \cite[Theorem 5]{hastie2022surprises} the rate for variance approximation of
  ridge regression is $\bigO(n^{-1/2})$, while here we obtain the faster rate  $\bigO(n^{-1})$.
\item Consider the overparametrized case $\lambda=0+$. 
In \cite[Theorem 2]{hastie2022surprises} the error terms are additive, while
Proposition \ref{prop:proportional-ridgeless} provides multiplicative error terms.
In this regime, the variance is bounded below, but the bias is not.
The quality of
 approximation of our theorem does not deteriorate in the interesting case in 
 which the bias becomes small, unlike in  \cite{hastie2022surprises}.
 \end{itemize}
 %\

\subsection{Polynomially varying spectrum}
\label{sec:RegularSpectrum}

We next consider the highly overparametrized case $d \gg n$.
Overparametrized ridge (or minimum norm) regression attracted significant attention
recently because of the realization that many deep learning models are overparametrized and 
overfit the training data. This connection is reviewed in \cite{bartlett2021deep,belkin2021fit}.
%The specific ridge regression problem that arises from linearizing two-layer neural
%network is studied in \cite{mei2022generalization,montanari2020interpolation}.

Here we consider covariate vectors $\bx_i$ taking values in a general Hilbert space with $d = \infty$,
under Assumption \ref{asmp:data-dstrb} on the covariates distribution.
This is most closely related to  \cite{bartlett2020benign,tsigler2020benign}, and
\cite{koehler2021uniform}. The last paper derives refined upper bounds using Gaussian 
width techniques, but is limited to the case of Gaussian covariates and, as for earlier
results, is only accurate up to constant factors.

We will show that our general theory yields excess risk estimates that are accurate up 
to $1+o_n(1)$ multiplicative errors.
We impose the following condition on the spectrum of $\bSigma$.
\begin{assumption}[Polynomially varying spectrum] \label{asmp:Sigma-bounded-varying}
	There exists a monotone decreasing
	function $\psi:(0,1]\to [1,\infty)$ 
	with  $\lim_{\delta \downarrow 0} \psi(\delta) = \infty$, such that
	$\sigma_{\lfloor \delta i \rfloor} / \sigma_i \leq \psi(\delta)$ for all 
	$\delta \in (0, 1]$, $i \in \naturals$ and $\delta i \geq 1$.
\end{assumption}
Recall that, by definition, for any $j\le i$, $\sigma_{j}/\sigma_i\ge 1$. 
The polynomially varying condition requires that, if $i,j$ diverge proportionally,
then the eigenvalue ratio $\sigma_{j}/\sigma_i$  stays bounded.
Note that this assumption is equivalent to 
$\sup_{i\ge 1}\sigma_{\lfloor \delta i \rfloor} / \sigma_i<\infty$ for every $\delta\in(0,1]$,
which is in turn equivalent to 
\begin{align}
\lim\sup_{i \to \infty} \frac{\sigma_{\lfloor \delta i \rfloor}}{\sigma_i} <\infty\, .
\end{align}

As special case, Assumption~\ref{asmp:Sigma-bounded-varying} holds if
the sorted eigenvalues 
$(\sigma_1, \sigma_2, \cdots )$ forms a so-called \emph{regularly varying sequence}, namely 
for any $\delta \in (0, \infty)$, 
\begin{align*}
	\lim_{i \to \infty} \frac{\sigma_{\lfloor \delta i \rfloor}}{\sigma_i} = \psi(\delta) \, ,
\end{align*}
where $\psi(\delta)$ is positive and finite for any $\delta$. 
In other words, in the regularly varying case, the ratio $\sigma_{j}/\sigma_i$ 
converges when  $i,j$ diverge proportionally.

A special case of regularly varying spectrum is given by Zipf's law whereby
$\sigma_i = i^{-\alpha}$ for some $\alpha >1$ (in this case $\psi(\delta) = \delta^{-\alpha}$). 
Regularly varying functions were characterized by \cite{karamata1933mode} 
(for functions on the positive real line), and by
\cite{galambos1973regularly} (for the sequences, i.e. functions defined on the naturals).
 Namely all such sequences take the form
\begin{align*}
	\sigma_i = i^{-\alpha} a_i \exp \Big\{\sum_{j=1}^i b_j / j \Big\} \, ,
\end{align*}
where $a_i$ are arbitrary and converge to a positive limit as $i \to \infty$ and 
$b_i \to 0$. 

It is easy to see that Assumption~\ref{asmp:Sigma-bounded-varying} holds 
beyond the case of regularly varying sequences.
Consider for instance $\sigma_i = 3^{-s}$ for all $2^s \leq i < 2^{s+1}$, $s=0,1,\cdots$. 

Applying Theorems~\ref{thm:main} and \ref{thm:main-ridgeless} to $\bSigma$ with polynomially varying 
spectrum, we obtain the following result, whose  proofs are detailed 
in Appendix~\ref{sec:bounded-varying}.
\begin{proposition} \label{prop:bounded-varying-ridge}
	Let Assumptions~\ref{asmp:data-dstrb} and \ref{asmp:Sigma-bounded-varying} hold. 
	For any constants $M>0$, $\gamma\in(0,1/3)$, and positive integers $k$, $D$ the following holds.
	  If $\constantsig \le M n^{1+ \gamma}$ and $\lambda / \lambdaefct(\lambda) \in  [1/M,
	  1-1/M]$, then
	   for $n = \bigOmg_{k, M, \psi, \gamma, \constantx, D}(1)$, with probability $1 - \bigO_{k}(n^{-D+1})$
	\begin{align*}
		\left|\var_\bX(\lambda) - \VAR_n(\lambda)\right| &  =  \bigO_{k, M, \psi, \constantx, D} \prn{\frac{(\constantsig/n)^3 \log^8 n}{n^{1- \frac{1}{k}} }} \cdot \VAR_n(\lambda) \, .
	\end{align*}
	If additionally $\constantsig = \bigO_{M, \psi, \constantx}(n^{1 + \gamma} \prn{\rho(\lambda)}^{1/6})$ and 
	$\lambdaefct(0) = \bigO(1)$, 
	with the same probability we have (cf.~Theorem~\ref{thm:main} for the function $\rho$)
	\begin{align*}
		\left|\bias_\bX(\lambda) - \BIAS_n(\lambda)\right| &  
		= \bigO_{k, M, \psi, \constantx, D} \prn{\frac{(\constantsig / n)^3 \log^8 n}{\sqrt{\rho(\lambda)} n^{1- \frac{1}{k} }}}   \cdot \BIAS_n(\lambda) \, .
	\end{align*}
\end{proposition}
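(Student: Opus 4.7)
The plan is to apply Theorem~\ref{thm:main} after translating Assumption~\ref{asmp:Sigma-bounded-varying} and the bound $\constantsig \le Mn^{1+\gamma}$ into estimates for the three scalars $\chi_n(\lambda)$, $\const$, $\rho(\lambda)$ appearing in that theorem. Since $\lambda/\lambdaefct(\lambda) \in [1/M, 1-1/M]$ by hypothesis, $\const = \min(\lambda/\lambdaefct, 1-\lambda/\lambdaefct) \ge 1/M$, so every factor $\const^{-p}$ is absorbed into the $\bigO_M$ constant. The central estimate is for $\chi_n(\lambda) = 1 + \sigma_{\lfloor\eta n\rfloor}\constantsig\log^2(\constantsig)/(n\lambda)$: Proposition~\ref{prop:control-lambda-zero} gives $\lambdaefct(\lambda) = \lambda_0(n(1-\nu)) \ge \sigma_{2n(1-\nu)} \ge \sigma_{2n}$, and polynomial variation applied with $i=2n$, $\delta=\eta/2$ yields $\sigma_{\lfloor\eta n\rfloor} \le \psi(\eta/2)\sigma_{2n} \le \psi(\eta/2)\lambdaefct(\lambda)$. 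Combined with $\lambda \ge \lambdaefct(\lambda)/M$ and $\log\constantsig = \bigO_{M,\gamma}(\log n)$, this delivers
\begin{equation*}
\chi_n(\lambda) = \bigO_{M,\psi,\gamma}\!\big((\constantsig/n)\log^2 n\big),\qquad \chi_n(\lambda)^3\log^2 n = \bigO_{M,\psi,\gamma}\!\big((\constantsig/n)^3\log^8 n\big).
\end{equation*}

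Next, the side conditions of Theorem~\ref{thm:main} must be checked. The requirement $\chi_n^3\log^2 n \le \constant\,n\,\const^{4.5}$ reduces to $n^{3\gamma-1}\log^8 n = o(1)$, which holds precisely for $\gamma < 1/3$ and $n$ large. The condition $n^{-2D+1} = \bigO(\sqrt{\const^3\log^2 n/(n\max\{1,\lambda\})})$ is trivial since $\const = \Theta(1)$ and $\lambda \le \lambdaefct \le \norm{\bSigma} = 1$. Plugging the $\chi_n$ estimate into the variance bound of Theorem~\ref{thm:main} produces the claimed rate for $|\var_\bX(\lambda) - \VAR_n(\lambda)|$.

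For the bias, the only new constraint is $\chi_n^3\log^2 n \le \constant\,n\,\const^{4.5}\sqrt{\rho(\lambda)}$, which under the above $\chi_n$ bound is equivalent to $(\constantsig/n)^3\log^8 n \lesssim \sqrt{\rho(\lambda)}\,n$, matching the strengthened hypothesis $\constantsig = \bigO(n^{1+\gamma}\rho(\lambda)^{1/6})$. The side condition $\lambda k n^{-1/k} \le \const/2$ holds for large $n$ since $\lambda \le 1$. Theorem~\ref{thm:main} then bounds the bias error by $\bigO\!\big(\lambdaefct^{k+1}/(n\const^3) + \chi_n^3\log^2 n/(\sqrt{\rho(\lambda)}\,n^{1-1/k}\const^{8.5})\big)\cdot\BIAS_n(\lambda)$; under the assumption $\lambdaefct(0) = \bigO(1)$ together with $\const = \Theta(1)$ and $\lambdaefct(\lambda) \le \norm{\bSigma} = 1$, the first term is $\bigO_k(1/n)$ and is dominated by the second as soon as $\constantsig\ge n$ and $\rho(\lambda)\le 1$, yielding the stated bound.

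The main obstacle is purely bookkeeping: sharpening the trivial bound $\sigma_{\lfloor\eta n\rfloor}\le 1$ down to $\sigma_{\lfloor\eta n\rfloor}\lesssim\lambdaefct(\lambda)$ via polynomial variation, and then checking carefully that all side conditions of Theorem~\ref{thm:main} remain compatible under $\gamma<1/3$ without any hidden polynomial loss that would widen the final rate $(\constantsig/n)^3\log^8 n/n^{1-1/k}$.
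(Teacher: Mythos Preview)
Your approach is the same as the paper's: bound $\const$ via the hypothesis $\lambda/\lambdaefct\in[1/M,1-1/M]$, bound $\chi_n(\lambda)$ by combining $\lambdaefct(\lambda)\ge\sigma_{2n}$ with the polynomial variation estimate $\sigma_{\lfloor\eta n\rfloor}\le\psi(\eta/2)\sigma_{2n}$, and then feed everything into Theorem~\ref{thm:main}. The verification of the side conditions and the final rates are correct.

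There is one genuine (if minor) gap: the inequality $\lambdaefct(\lambda)\le\|\bSigma\|=1$ that you invoke twice is \emph{not} a general fact about the fixed-point equation~\eqref{eq:lambda-fixed-point}. For instance with $d=1$, $\sigma_1=1$, $n=1$, $\lambda=1$ one gets $\lambdaefct=(1+\sqrt5)/2>1$. What is true under the standing hypotheses is that $\Tr\bigl(\bSigma(\bSigma+\lambdaefct\bI)^{-1}\bigr)=n(1-\lambda/\lambdaefct)\ge n/M$, which forces $\lambdaefct\le 1$ only once $n\ge M\Tr(\bSigma)$; but $\Tr(\bSigma)$ is not among your listed constants. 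The paper closes this by a short direct argument: using polynomial variation it shows $\Tr\bigl(\bSigma(\bSigma+\psi(\delta)\lambdaefct(0)\bI)^{-1}\bigr)\le(1/2+2\delta+1/\psi(\delta))n$, and choosing $\delta=\delta_0(M,\psi)$ so that this is below $n(1-\lambda/(\psi(\delta)\lambdaefct(0)))$ yields $\lambdaefct(\lambda)\le\psi(\delta_0)\lambdaefct(0)$. Combined with the explicit hypothesis $\lambdaefct(0)=\bigO(1)$ (for the bias part), this gives $\lambdaefct(\lambda)=\bigO_{M,\psi}(1)$, which is what you actually need to control the $\lambdaefct^{k+1}/(n\const^3)$ term and the side condition $\lambda kn^{-1/k}\le\const/2$. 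Once this bound is in place, the rest of your argument goes through unchanged.
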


Applying Theorem~\ref{thm:main-ridgeless}, we have the following conclusion for ridgeless regression. 
\begin{proposition} \label{prop:bounded-varying-ridgeless}
	Let Assumptions~\ref{asmp:data-dstrb} and \ref{asmp:Sigma-bounded-varying} hold. Suppose $\boldbeta = \bSigma^{1/2} \btheta$ with $\norm{\btheta} < \infty$. If we have $\lambdaefct(0) / \sigma_n = \bigO(\log^{\bigO(1)}n)$ and $\constantsig(n) = \bigO (n\log^{\bigO(1)}n))$, for any $n = \bigOmg_{\psi, \constantx, D}(1)$, it holds with probability $1 - \bigO(n^{-D+1})$ that
	\begin{align*}
		\left|\var_\bX(0) - \VAR_n(0)\right| & = \bigO_{\psi, \constantx, D} \prn{ n^{-1/15}} \cdot \VAR_n(0) \, .
	\end{align*}
%	Let $\rho(0)$ be defined as in Theorem~\ref{thm:main-ridgeless}. If $\rho(0) = \bigO(n^{-2+\gamma})$ for some $\gamma >0$, we have when  $n = \bigOmg_{\psi, \constantx, D}(1)$,
%	\begin{align*}
%		\left|\bias_\bX(0) - \BIAS_n(0)\right| & = \bigO_{\psi, \constantx, D} \prn{ n^{-\gamma/29} }
%		 \cdot \BIAS_n(0) + \log^{\bigO(1)}n \cdot  \bigO_{\psi, \constantx, D} 
%		 \prn{\sigma_n^2 \norm{\btheta_{\leq n}}^2 + \sigma_n  \norm{\boldbeta_{> n}}^2}   \, .
%	\end{align*}
%	with probability $1 - \bigO(n^{-D+1})$.
\end{proposition}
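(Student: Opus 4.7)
The plan is to apply Theorem~\ref{thm:main-ridgeless} directly and carefully estimate each geometric quantity appearing in its error bound under the polynomially varying spectrum assumption. Since $\bias_\bX(0) = \BIAS_n(0) = 0$ is not the issue here (we are bounding the variance only), the argument reduces to checking the hypotheses of Theorem~\ref{thm:main-ridgeless}, controlling $\lambdaefct(0)$, $\constantlambdaefct$, $\smin$, and $\chi_n'(\const)$, and then choosing $\const$ to balance the error terms.

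First, I would control the effective quantities. By Proposition~\ref{prop:control-lambda-zero} and the polynomially varying condition, $\lambdaefct(0) \in [\sigma_{2n}, \sigma_{\constantsig^{-1}(n/2)}]$; combined with the standing hypotheses $\constantsig(n) = \bigO(n \log^{\bigO(1)} n)$ and $\lambdaefct(0)/\sigma_n = \bigO(\log^{\bigO(1)} n)$, Assumption~\ref{asmp:Sigma-bounded-varying} implies $\sigma_{\lfloor \eta n\rfloor} = \bigO(\log^{\bigO(1)} n)\, \lambdaefct(0)$, giving the polylogarithmic bound
\[
\chi_n'(\const) \;=\; 1+\frac{\sigma_{\lfloor \eta n\rfloor}\,\constantsig\,\log^2(\constantsig)}{\const\, n\, \lambdaefct(0)} \;=\; \bigO\!\left(\frac{\log^{\bigO(1)} n}{\const}\right).
\]
Similarly, writing $\constantlambdaefct = n^{-1}\sum_i \sigma_i \lambdaefct(0)/(\sigma_i+\lambdaefct(0))^2$ and restricting to indices $i \in [c_1 n, c_2 n]$ on which $\sigma_i \asymp \lambdaefct(0)$ (guaranteed by the polynomially varying hypothesis and Proposition~\ref{prop:control-lambda-zero}), each such term is $\Omega(1)$, yielding $\constantlambdaefct = \Omega(1)$.

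Next, the minimum nonzero eigenvalue $\smin$ must be lower bounded in order to invoke Theorem~\ref{thm:main-ridgeless} on an event of high probability. Since $d=\infty$ the crude bound \eqref{eq:SigmaMin_LB} is not directly applicable, so I would use a Koltchinskii--Lounici--type concentration argument (relying on Hanson--Wright, Lemma~\ref{lem:hanson-wright}, projected onto the tail block of $\bSigma$) to prove that with probability $1-\bigO(n^{-D+1})$,
\[
\smin \;\geq\; c\, \sigma_n \;\geq\; c'\, \lambdaefct(0)/\log^{\bigO(1)} n.
\]
Choosing the parameter $\kappa$ in Theorem~\ref{thm:main-ridgeless} to satisfy $8\lambdaefct(0)\kappa \le \smin$ is then possible for any $\kappa = n^{-\alpha}$ with $\alpha>0$, and the quantity $\lambdaefct(0)/\smin$ that appears in the variance error is $\bigO(\log^{\bigO(1)} n)$.

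Finally I would balance $\const$ and $k$. Plugging the bounds above into the variance-error estimate of Theorem~\ref{thm:main-ridgeless} gives, up to $\log^{\bigO(1)} n$ factors,
\[
\frac{|\var_\bX(0)-\VAR_n(0)|}{\VAR_n(0)} \;=\; \bigO\!\left(\const\right) + \bigO\!\left(\frac{\log^{\bigO(1)} n}{n^{1-\frac{1}{k}}\,\const^{12.5}}\right).
\]
Setting $k = \lceil \log n\rceil$ (so $n^{1/k}=\bigO(1)$) and $\const = n^{-1/13.5}$ balances the two terms and produces the bound $\bigO(n^{-1/13.5}\log^{\bigO(1)} n) = \bigO(n^{-1/15})$ after absorbing polylogarithmic factors. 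A final verification that the chosen $\const$ satisfies $\const \le \constantlambdaefct^2/8$, $\chi_n'(\const)^3\log^2 n \le \constant_1 n\const^{4.5}$, and the tail probability condition of Theorem~\ref{thm:main-ridgeless} (all easily met for large $n$) completes the argument.

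The main obstacle, in my view, is Step on $\smin$: in infinite dimensions one cannot appeal to \eqref{eq:SigmaMin_LB}, and one must show by hand that even the ``tail'' block of $\bSigma$ contributes enough mass to keep $\smin$ comparable to $\sigma_n$ with high probability. This requires combining the effective-rank estimate $\constantsig(n) = \bigO(n\log^{\bigO(1)} n)$ with Hanson--Wright concentration applied to a suitable truncation of $\bSigma$, which is where the bulk of the technical work of the proof resides.
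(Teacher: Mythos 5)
Your overall plan---use the variance part of Theorem~\ref{thm:main-ridgeless}, bound $\constantlambdaefct$, $\chi_n'(\const)$, $\smin$ and $\lambdaefct(0)$, then balance $\const$---is exactly the paper's approach (Appendix~\ref{proof:bounded-varying-ridgeless}, via Lemma~\ref{lem:bounded-varying-ridgeless-assumption-bounds}). But two steps of your proposal are off.

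First, your claim that ``since $d=\infty$ the crude bound \eqref{eq:SigmaMin_LB} is not directly applicable'' is wrong. The hypothesis of that bound is $\min\{|d/n-1|,d/n\}\ge\eps$, which is vacuously satisfied when $d=\infty$. Its proof (Part V of Appendix~\ref{proof:main-ridgeless}) explicitly handles $d=\infty$ by passing to the top $k=\lfloor\constant(\constantx)n\rfloor$ eigendirections via Cauchy interlacing, so that the truncated $n\times k$ block has i.i.d.~isotropic sub-Gaussian rows, and then using a smallest-singular-value bound \`{a} la Vershynin together with Hanson--Wright to control the row norms. This is already essentially the Koltchinskii--Lounici/truncation argument you propose to construct ``by hand''; the paper applies \eqref{eq:SigmaMin_LB} and Assumption~\ref{asmp:Sigma-bounded-varying} to get $\smin\ge\sigma_{\constant_5 n}\ge\sigma_n/\psi(1/\constant_5)$ directly. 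So there is no obstacle here, and treating it as the ``main obstacle'' of the proof misreads the state of the available tools.

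Second, and more consequentially, your choice $k=\lceil\log n\rceil$ is not safe. The constants $\constant_2=\constant_2(k,\constantx,D)$ in the hypotheses of Theorem~\ref{thm:main-ridgeless}, and the implicit constant in the $\bigO_{k,\constantx,D}$ of the variance bound, genuinely depend on $k$ --- traceable to $\|\bV_k^{-1}\|_\infty$ for the order-$k$ Vandermonde matrix on nodes $\{0,1,\dots,k\}$ in Lemma~\ref{lem:function-value-bound-to-derivative} (and to the Fa\`a di Bruno expansions in Lemmas~\ref{lem:derivative-F-n} and~\ref{lem:derivative-F-0}). These constants can grow exponentially in $k$, so with $k=\lceil\log n\rceil$ you could pick up $n^{\bigO(1)}$ factors that destroy the $n^{-1/15}$ rate. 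The paper sidesteps this entirely by fixing a large but constant $k$ (it takes $\const=n^{-1/14}$ and any $k\ge 29$), for which the $k$-dependent constants are absorbed into the final $\bigO_{\psi,\constantx,D}$. You should do the same: pick a fixed $k$ large enough that $1.5/14 - 1/k > 1/15$, rather than letting $k$ grow with $n$.

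Everything else --- the estimates $\constantlambdaefct=\bigOmg_\psi(1)$, $\chi_n'(\const)=\bigO_\psi(\log^{\bigO(1)}n/\const)$, the final absorption of polylogarithmic factors --- matches the paper.
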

\begin{remark}
The assumptions $\lambdaefct(0) / \sigma_n = \bigO(\log^{\bigO(1)}n)$ and 
$\constantsig = \bigO (n\log^{\bigO(1)}n)$ are primarily introduced to simplify 
the form of the statement. These two conditions can be relaxed to $\lambdaefct(0) / \sigma_n = \bigO(n^{\bar\gamma})$
 and $\constantsig =\bigO(n^{1+\bar\gamma})$ for a sufficiently small $\bar\gamma$, but we do not pursue 
 this generalization here.
\end{remark}
\begin{remark}
It is possible to apply the upper/lower bounds on the bias
of Theorem~\ref{thm:main-ridgeless} to prove bounds on the bias in the setting of 
Proposition   \ref{prop:bounded-varying-ridgeless}. 
However the resulting error term is larger than
$(\sigma_n^2 \norm{\btheta_{\leq n}}^2 + \sigma_n  \norm{\boldbeta_{> n}}^2)$, 
which is  the size  of the upper bound  on $\BIAS_n(0)$ in 
Proposition~\ref{prop:effective-quantities-bound}.
\end{remark}

In order to illustrate the accuracy of our general framework, 
we apply Proposition~\ref{prop:bounded-varying-ridge}  to derive sharp asymptotics
for bias and variance in a number cases.
In each of the case below, we scale the regularization parameter $\lambda$ 
as $\lambda = \tilde{\lambda}_0(n)\cdot \nu$ for a certain explicit function $\tilde{\lambda}_0(n)$.
The scaling $\tilde{\lambda}_0(n)$ is chosen so that the bias and variance retain a non-trivial dependence on 
$\nu$ for large $n$. We expect that the excess risk achieved by optimal regularization is also covered
by this scaling (up to negligible corrections), but do not prove it formally here.
\begin{theorem} \label{thm:bounded-varying-ridge-check-assumptions}
	Let Assumption~\ref{asmp:data-dstrb} hold. Then,
	for a fixed constant $\nu > 0$ and any positive integer $D$, the following events hold with probability $1 - \bigO(n^{-D})$
	(the $o_n(1)$ errors may depend on $D$):
	\begin{enumerate}
		\item \textbf{Regularly varying spectrum with $\alpha > 1$.} Assume $(\sigma_i)_{i\ge 1}$	
		is a regularly varying sequence with exponent $\alpha>1$. As a consequence,
		$\sigma_i = i^{-\alpha} a_i \exp \brc{\sum_{j=1}^i b_j / j }$  with $a_i$ converging
		to a positive limit and $b_i \to 0$. Define $\c_\star = \c_\star(\nu)>0$ 
		as the unique positive solution of
		\begin{align*}
			1 = \nu \c_{\star}^{-1} + \frac{\pi/\alpha}{\sin(\pi / \alpha)} \c_{\star}^{-1/\alpha}\, .
		\end{align*} 
		Then we have
		\begin{align}
			\lambdaefct(\nu n^{-\alpha}) & = \c_{\star}\sigma_n (1+o_n(1)) \, ,\\
			\var_\bX(\nu n^{-\alpha}) & = \frac{\tau^2  (1- \nu \c_{\star}^{-1})(\alpha - 1)}{1 + \nu \c_{\star}^{-1} (\alpha - 1)}   (1 +o_n(1)) \, .\label{eq:VarianceRegDecay}
		\end{align}
		Let $F_\boldbeta(x) = \sum_{k=1}^{\lfloor nx\rfloor }\<\boldbeta,\bv_k\>^2$. If additionally
		 $\boldbeta$ satisfies the following ``polynomial-decay'' property: for some $0<\theta \leq 1$ that
		\begin{align*}
			\int_0^\infty x^\alpha\,  \de F_{\boldbeta}(x) = \bigO \prn{n^{1- \theta} \int_0^\infty x^\alpha \prn{1 + \c_{\star}x^\alpha}^{-1}  \de F_{\boldbeta}(x)} \, ,
		\end{align*}
		we further have
		\begin{align}
			\bias_{\bX}(\nu n^{-\alpha})  & = \frac{\sigma_n \c_{\star}^2 \alpha}{1 + \nu \c_{\star}^{-1} (\alpha - 1)} \int_{0}^{\infty}\frac{x^{\alpha}}
			{(1+\c_{\star}x^{\alpha})^2}\de F_{\boldbeta}(x)\big(1+o_n(1)\big)\, . \label{eq:BiasRegDecay}
		\end{align} 
		\item \textbf{Regularly varying spectrum with $\alpha = 1$.} 
		Next consider the case 	$\sigma_i = i^{-1} a_i (1 + \log i)^{-\alpha'}$ for some 
		$\alpha' > 1$ with $a_i$ converging to a positive limit. Define $\c_\star = \c_\star(\nu) > 0$ as
		\begin{align*}
			\c_\star = \nu + \frac{1}{\alpha'-1} \, .
		\end{align*}
		%Assuming $F_{\boldbeta}(M)$ to be bounded away from zero for some constant $M$, 
		We have
		\begin{align}
			\lambdaefct(\nu n^{-1} \log^{1-\alpha'} n) &= \c_{\star} \sigma_n \log n ( 1 + o_n(1)) \, , \\
			\var_\bX(\nu n^{-1} \log^{1-\alpha'} n) & =\frac{\vareps^2}{\c_\star \log n}\, \big(1+o_n(1)\big)  \, .
		\end{align}
		Let $F_\boldbeta(x) = \sum_{k=1}^{\lfloor(n/\log n) x\rfloor }\<\boldbeta,\bv_k\>^2$. If additionally $\boldbeta$ satisfies the following ``rapid-decay'' property: for some $0 < \theta \leq 1$ that
		\begin{align*}
			\int_0^\infty x \,  \de F_{\boldbeta}(x) = \bigO \prn{n^{1-\theta} \int_0^\infty x \prn{1 + \c_{\star}x}^{-1} \,  \de F_{\boldbeta}(x)} \, .
		\end{align*}
		then we further have
		\begin{align}
			\bias_{\bX}(\nu n^{-1} \log^{1-\alpha'} n)  & = \c_{\star}^2  \sigma_n \log n \int_0^\infty	\frac{x}{(1+\c_{\star} x)^2}\, \de F_{\boldbeta}(x)\, \big(1+o_n(1)\big)\, .
		\end{align}
		\item \textbf{A non-regularly varying spectrum.} 
		$\sigma_i = p^{-s}$ for all $q^s \leq i < q^{s+1}$, with $1 < q < p$ and $s =0 ,1, \ldots$ 
		Define $s_\star$ such that $q^{s_\star} \leq n < q^{s_\star + 1}$, and for positive integer $r$ the following decreasing function in $t > 0$,
		\begin{align*}
			G_{p,q,r}(t) = \sum_{k=-\infty}^\infty \frac{q^k}{(1 + tp^k)^r} \, .
		\end{align*}
		Let $\rho_\star = n/(q^{s_\star + 1} - q^{s_\star}) \in [1/(q-1), q/(q-1))$. Then there exists a unique solution $\c_{\star} = \c_{\star}(\nu)$ to the following equation
		\begin{align*}
			1 = \nu \c_{\star}^{-1} + \rho_\star^{-1} \cdot G_{p,q,1}(\c_{\star}) \, .
		\end{align*} 
		%and further define $F_{\boldbeta}(x) = \sum_{k = 1}^{q^{\lceil x \rceil} - 1} \< \boldbeta, \bv_k \>^2$. 
		Then we have
		\begin{align}
			\lambdaefct(\nu  p^{-s_\star}) & =\c_{\star} \sigma_n  \, \big(1+o_n(1)\big) \, , \\
			\var_\bX(\nu  p^{-s_\star}) & = \frac{G_{p,q,2}(\c_\star) \tau^2 }{\rho_\star - G_{p,q,2}(\c_\star)} \big(1+o_n(1)\big) \, .
		\end{align}
		Let $F_{\boldbeta}(x) = \sum_{k = 1}^{q^{\lceil x \rceil} - 1} \< \boldbeta, \bv_k \>^2$. If additionally $\boldbeta$ satisfies the following ``rapid-decay'' property: for some $0<\theta \leq 1$ that
		\begin{align*}
		\int_0^\infty p^{x-s_\star}  \,  \de F_{\boldbeta}(x) = \bigO \prn{n^{1-\theta} \int_0^\infty p^{x-s_\star} (1 + \c_{\star} p^{x-s_\star} )^{-1}  \,  \de F_{\boldbeta}(x)} \, ,
		\end{align*}
		we further have
		\begin{align}
			\bias_{\bX}(\nu  p^{-s_\star})  & = \frac{\c_{\star}^2 \sigma_n}{1 - \rho_\star^{-1} G_{p,q,2}(\c_{\star})} \int_0^\infty \frac{p^{x-s_\star}}{(1 +  \c_{\star} p^{x-s_\star})^2}\, \de F_{\boldbeta}(x)\, \big(1+o_n(1)\big) \, .
		\end{align}
		%		\begin{align*}
			%			\lambdaefct(0) = \bigTht(\sigma_n) \, , \qquad \VAR_n(0) = \bigTht(\vareps^2), \qquad \BIAS_n(0) = \bigTht(\boldbeta^\sT (\bSigma/\sigma_n + \bI)^{-2} \bSigma \boldbeta) \, .
			%		\end{align*}
	\end{enumerate}
\end{theorem}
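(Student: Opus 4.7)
The plan is to invoke Proposition~\ref{prop:bounded-varying-ridge} in each of the three cases, so that it suffices to compute the deterministic quantities $\lambdaefct(\lambda)$, $\VAR_n(\lambda)$, $\BIAS_n(\lambda)$ explicitly by asymptotic analysis of the Stieltjes-type sums that define them. The heuristic is that, with $\lambdaefct=\c_\star s_n(1+o(1))$ for the correct natural scale $s_n$, the sum $\sum_i\sigma_i/(\sigma_i+\lambdaefct)$ and its companions become Riemann sums in the regularly varying cases and geometric sums over scales in the block-spectrum case.

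\paragraph{} First I would verify the hypotheses of Proposition~\ref{prop:bounded-varying-ridge}. Assumption~\ref{asmp:Sigma-bounded-varying} holds in cases~1 and~2 because the spectra are regularly varying, and in case~3 because $\sigma_{\lfloor\delta i\rfloor}/\sigma_i\le p^{\lceil\log_q(1/\delta)\rceil+1}$. In each case a short estimate shows $\constantsig(n)=\bigO(n\log^{\bigO(1)}n)$, so the effective-rank condition $\constantsig\le Mn^{1+\gamma}$ is satisfied with ample room. I would then make the ansatz $\lambdaefct=\c_\star s_n(1+o(1))$ with $s_n=\sigma_n$ in cases~1 and~3 and $s_n=\sigma_n\log n$ in case~2. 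Substituting into \eqref{eq:lambda-fixed-point} and changing variables $u=i/n$ in case~1 yields, via the classical identity $\int_0^\infty du/(1+\c u^\alpha)=\c^{-1/\alpha}\pi/(\alpha\sin(\pi/\alpha))$, the fixed-point equation $1=\nu\c_\star^{-1}+\c_\star^{-1/\alpha}\pi/(\alpha\sin(\pi/\alpha))$ stated in the theorem. Case~2 reduces to an elementary logarithmic integral and produces $\c_\star=\nu+1/(\alpha'-1)$; case~3 collapses to the doubly-infinite geometric series $G_{p,q,1}(\c_\star)$ after expanding $\sum_i\sigma_i/(\sigma_i+\lambdaefct)$ block-by-block over $[q^{s_\star+k},q^{s_\star+k+1})$. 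Boundedness of $\c_\star$ away from $\nu$ and $\infty$ implies $\lambda/\lambdaefct\in[1/M,1-1/M]$ for some fixed $M$, which unlocks Proposition~\ref{prop:bounded-varying-ridge}.

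\paragraph{} With $\lambdaefct$ pinned down, the same Riemann-/scale-sum technique delivers $\Tr(\bSigma^2(\bSigma+\lambdaefct\bI)^{-2})$ and $\lambdaefct^2\langle\boldbeta,(\bSigma+\lambdaefct\bI)^{-2}\bSigma\boldbeta\rangle$. In case~1 the identity $\int_0^\infty du/(1+\c u^\alpha)^{-2}=\frac{\alpha-1}{\alpha^2}\c^{-1/\alpha}\pi/\sin(\pi/\alpha)$ combined with the formula for $\c_\star$ gives $\Tr(\bSigma^2(\bSigma+\lambdaefct\bI)^{-2})=n(1-\nu\c_\star^{-1})(\alpha-1)/\alpha\cdot(1+o(1))$, which plugged into \eqref{eq:VAR-n} yields \eqref{eq:VarianceRegDecay}; the corresponding bias expression \eqref{eq:BiasRegDecay} results from rewriting the inner product as the Stieltjes integral $\c_\star^2\sigma_n\int x^\alpha(1+\c_\star x^\alpha)^{-2}\,dF_\boldbeta(x)$ and dividing by the same denominator. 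Cases~2 and~3 follow by analogous manipulations; in case~3 the discrete block structure turns the trace sums into $G_{p,q,2}(\c_\star)$, with the effective density $\rho_\star=n/(q^{s_\star+1}-q^{s_\star})$ encoding the residual ``phase'' of $n$ relative to the geometric grid.

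\paragraph{} It remains to ensure that the multiplicative error terms in Proposition~\ref{prop:bounded-varying-ridge} are $o_n(1)$. For the variance this is immediate from $\constantsig=\bigO(n\log^{\bigO(1)}n)$. For the bias we need $\constantsig=\bigO(n^{1+\gamma}\rho(\lambda)^{1/6})$, i.e.\ a polynomial lower bound on $\rho(\lambda)$. Unfolding the definition \eqref{eq:RhoLambda}, one has $\rho(\lambda)\asymp(n\|\btheta\|^2)^{-1}\sum_i\beta_i^2/(\sigma_i+\lambdaefct)$, and the polynomial-/rapid-decay hypothesis on $\boldbeta$ translates, after writing it in the same Stieltjes form, into $\rho(\lambda)\ge c\,n^{-\theta}$ for constants $c,\theta>0$; this is more than enough since $\constantsig/n$ is only polylogarithmic. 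The main technical obstacle is Step~3 in case~3: the Riemann-sum picture fails because $\sigma_i$ is constant on the blocks $[q^s,q^{s+1})$, and the block-sum expansion must be controlled uniformly in $n$ at both the ``bulk'' end ($k\to-\infty$) and the ``tail'' end ($k\to+\infty$) of the doubly-infinite series, with the residual phase $\rho_\star$ tracked carefully to produce the exact formula with $G_{p,q,r}$. A secondary nuisance is case~2, where the logarithmic scale $s_n=\sigma_n\log n$ forces a two-scale expansion in which the slowly varying factor $(1+\log i)^{-\alpha'}$ must be carried through the Riemann sum via Karamata's integration theorem.
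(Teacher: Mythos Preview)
Your proposal is correct and follows essentially the same approach as the paper: verify the polynomially-varying and effective-rank hypotheses, pin down $\lambdaefct$ via the fixed-point equation using Riemann/geometric sums, compute $\VAR_n$ and $\BIAS_n$ from the same trace identities, and close by invoking Proposition~\ref{prop:bounded-varying-ridge}.

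Two small corrections. First, your stated lower bound on $\rho(\lambda)$ is too optimistic: unfolding \eqref{eq:RhoLambda} gives
\[
\rho(\lambda)\;\asymp\;\frac{\int x^\alpha(1+\c_\star x^\alpha)^{-1}\,\de F_{\boldbeta}}{n\int x^\alpha\,\de F_{\boldbeta}},
\]
so the decay hypothesis yields $\rho(\lambda)=\Omega(n^{-(2-\theta)})$, not $\Omega(n^{-\theta})$. This does not break the argument---one can still choose $\gamma\in((2-\theta)/6,1/3)$ to meet the condition $\constantsig=\bigO(n^{1+\gamma}\rho^{1/6})$---but the margin is tighter than you suggest, and you should track it. Second, you somewhat overstate the difficulties in Cases~II and~III. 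In Case~III the paper simply expands $\Tr(\bSigma(\bSigma+t\sigma_n\bI)^{-1})=(q^{s_\star+1}-q^{s_\star})\sum_{l\ge 0}q^{l-s_\star}/(1+tp^{l-s_\star})$ and lets $s_\star\to\infty$ to get $G_{p,q,1}(t)$ directly; no uniform tail control beyond the obvious geometric decay is needed. In Case~II the paper avoids Karamata and just uses the elementary tail estimate $\sum_{l\ge k}l^{-1}(1+\log l)^{-\alpha'}\sim(\alpha'-1)^{-1}(1+\log k)^{1-\alpha'}$; your route via Karamata would also work but is not required.
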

The proof of this theorem is presented in 
Appendix~\ref{proof:bounded-varying-ridge-check-assumptions}.

\begin{remark}
In the case of a regularly varying spectrum with $\alpha > 1$, the 
bias vanishes with the sample size as $n^{-\alpha+o(1)}$ but the variance stays bounded
away from zero as long as $\tau>0$, cf.~Eq.~\eqref{eq:VarianceRegDecay}. 
In other words in this case overfitting is not benign and 
Theorem~\ref{thm:bounded-varying-ridge-check-assumptions} quantifies precisely this claim.

On the other hand, in the case $\alpha=1$, both bias and variance vanish for large $n$,
an therefore we achieve benign overfitting. We must emphasize however that the  variance decay 
is very slow, namely 	$\var_\bX(\lambda) \asymp (\log n)^{-1}$, and hence the decay of the excess risk is at least as slow.
\end{remark}

	\section{Numerical illustrations}
\label{sec:Numerical}

In this section we evaluate numerically the 
theoretical prediction for variance and bias, cf. Eqs.~\eqref{eq:VAR-n},
\eqref{eq:BIAS-n} and compare them with the results of numerical simulations with synthetic data.
We carry out the simulations in the ridgeless limit $\lambda=0+$
(corresponding to min-norm interpolation). This case is interesting because
it is not covered by some of our theorems. Our numerical experiments suggest that
the theoretical predictions of Eqs.~\eqref{eq:VAR-n},
\eqref{eq:BIAS-n} hold in a broader domain of validity than the one that we are able to
control rigorously.

We use Gaussian covariates $\bx_i$. By rotational invariance, we can limit ourselves
to diagonal covariance $\bSigma$. We will consider two eigenvalue structures:
\begin{description}
\item[$(I)$ Regularly varying with $\alpha>1$.] This is defined by $\sigma_i = i^{-\alpha}$
for all $i\ge 1$. This fits within the first case of  Theorem 
\ref{thm:bounded-varying-ridge-check-assumptions}.
\item[$(II)$ Regularly varying with $\alpha=1$.] This model is defined by
$\sigma_i = i^{-1}(1+\log i)^{-\alpha'}$, with $\alpha' > 1$. This fits within the 
second case of  Theorem 
\ref{thm:bounded-varying-ridge-check-assumptions}.
\end{description}
 In all numerical experiments, we generate data according to the model \eqref{eq:LinearModel}
 with a true parameters vector $\boldbeta$ concentrated on the top $d_0=100$ eigenvectors
 of $\bSigma$. More precisely, we will use
 $\boldbeta = (1,1,\dots,1, 0, 0, \ldots)$ where
  $\|\boldbeta\|_0=d_0=100$.

In Figure~\ref{fig:TheoryPrediction_n}, we plot our theoretical predictions $\VAR_n$, $\BIAS_n$,
 $\RISK_n$  for variance, bias and as a function of the sample size $n$, 
 for the two models $(I)$ and $(II)$ defined above. We use $\lambda=0+$.
  In each case, we consider several values of 
 the exponents $\alpha$, $\alpha'$ that control the decay of eigenvalues of $\bSigma$. 

In Figure~\ref{fig:TheoryPrediction_lambda}, we plot the same quantities at fixed sample
size $n=500$ and vary  the regularization parameter $\lambda$.
A few facts emerge from these figures:
\begin{itemize}
\item For both models, the bias of the minimum norm interpolator is a decreasing function
of the sample size $n$, and appears to vanish as $n\to\infty$, see second row 
of Figure~\ref{fig:TheoryPrediction_n}. 
\item In contrast, the variance exhibits a strikingly different behavior in the two
covariance models, see first row 
of Figure~\ref{fig:TheoryPrediction_n}. For model $(I)$ (polynomial eigenvalue decay, 
with exponent $\alpha>1$), the variance increases with $n$, and eventually stabilizes to a limit 
value.
For model $(II)$ (exponent $\alpha=1$), the variance decreases with $n$,
and appears to vanish, albeit very slowly, as $n\to\infty$.
\item As a consequence of these points, the excess test error of minimum norm interpolation vanishes
with sample size in model $(II)$ but does not vanish in model $(I)$. 
This behavior (and the one at previous points) is precisely quantified by 
Theorem \ref{thm:bounded-varying-ridge-check-assumptions} for $\lambda>0$.
\item Finally the dependence of bias and variance on $\lambda$ is the expected one. 
As $\lambda$ increases, bias increases but variance decreases. However,
the balance between these two factors is non-trivial:
\begin{itemize}
\item For the slowest eigenvalue decay (large $\alpha$ 
in model $(I)$ or large $\alpha'$ in model $(II)$),   the optimal $\lambda$ is strictly positive. 
\item On the other hand, for the fastest eigenvalue decay, the optimal $\lambda$ vanishes.
In these case interpolation is superior to ridge regression: we need to overfit to achieve the best
test error.
\end{itemize}
\end{itemize}

\begin{figure}[htbp!]
	\centering
	\begin{tabular}{cc}
		\includegraphics[width=0.45\textwidth]{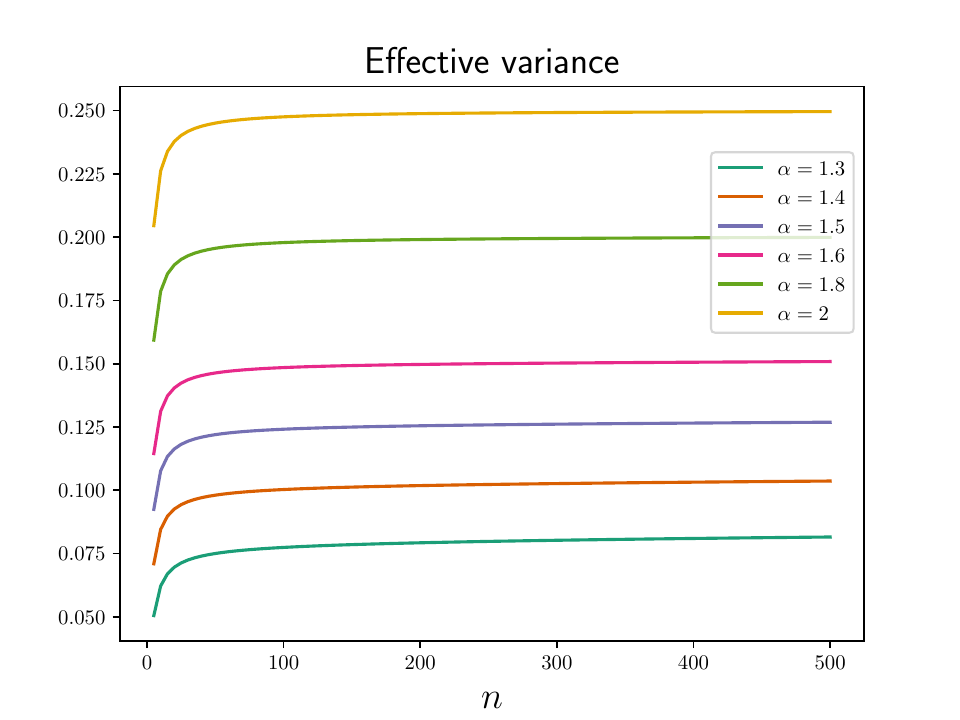} & \includegraphics[width=0.45\textwidth]{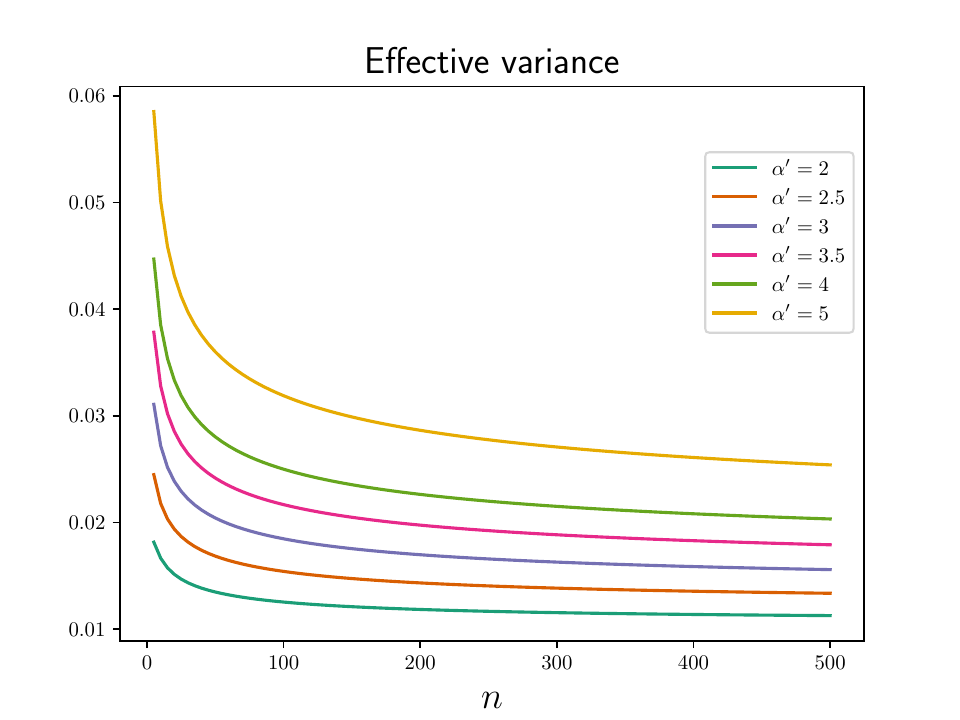} \tabularnewline
		\includegraphics[width=0.45\textwidth]{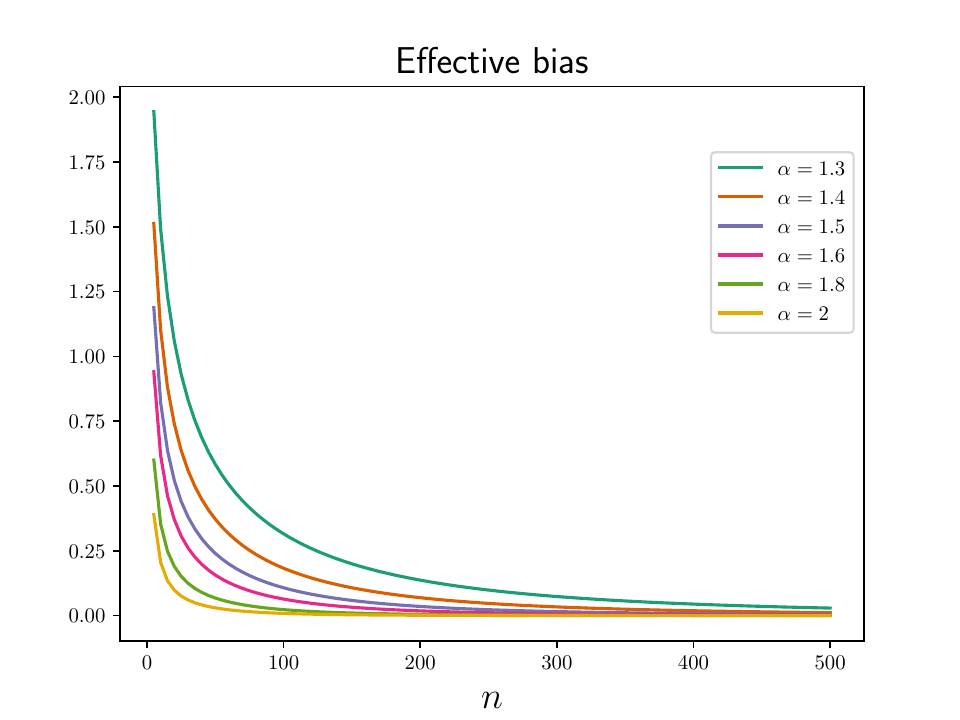} & \includegraphics[width=0.45\textwidth]{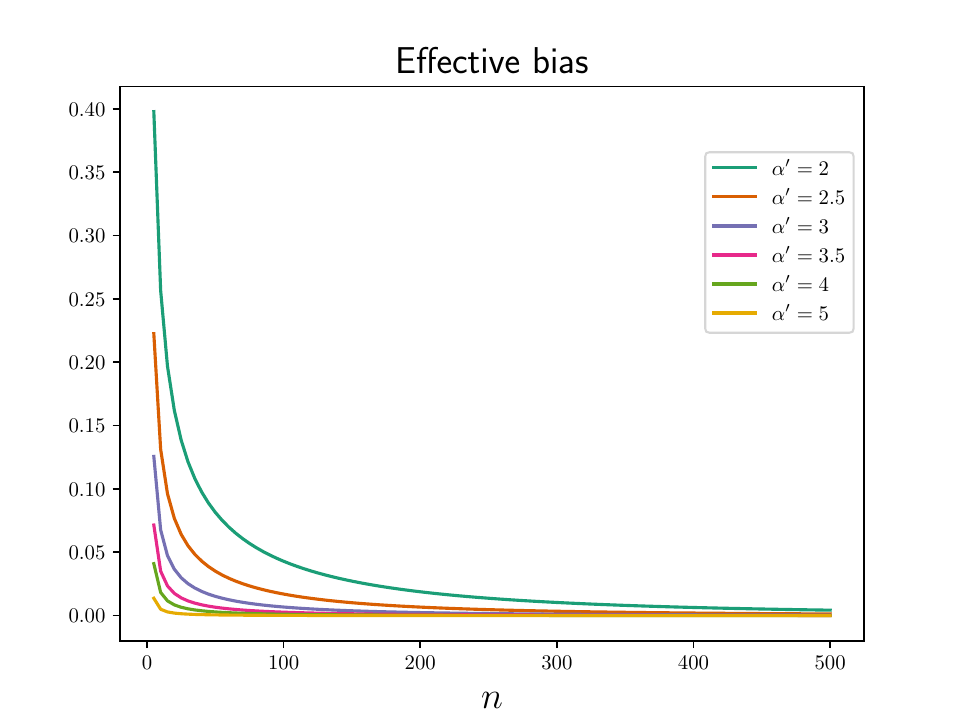} \tabularnewline \includegraphics[width=0.45\textwidth]{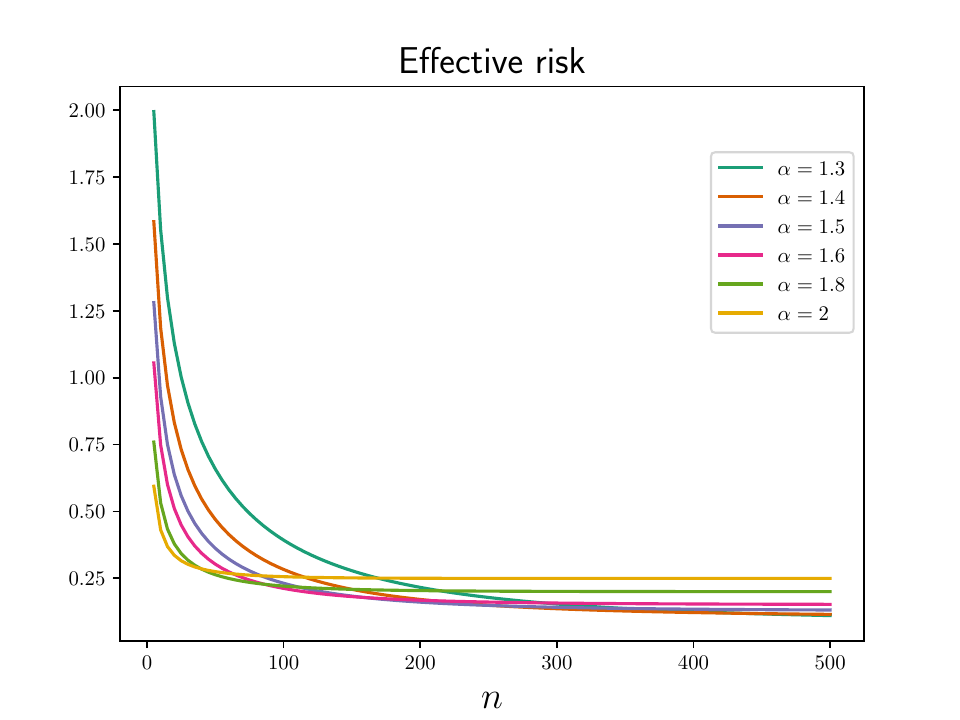} &  \includegraphics[width=0.45\textwidth]{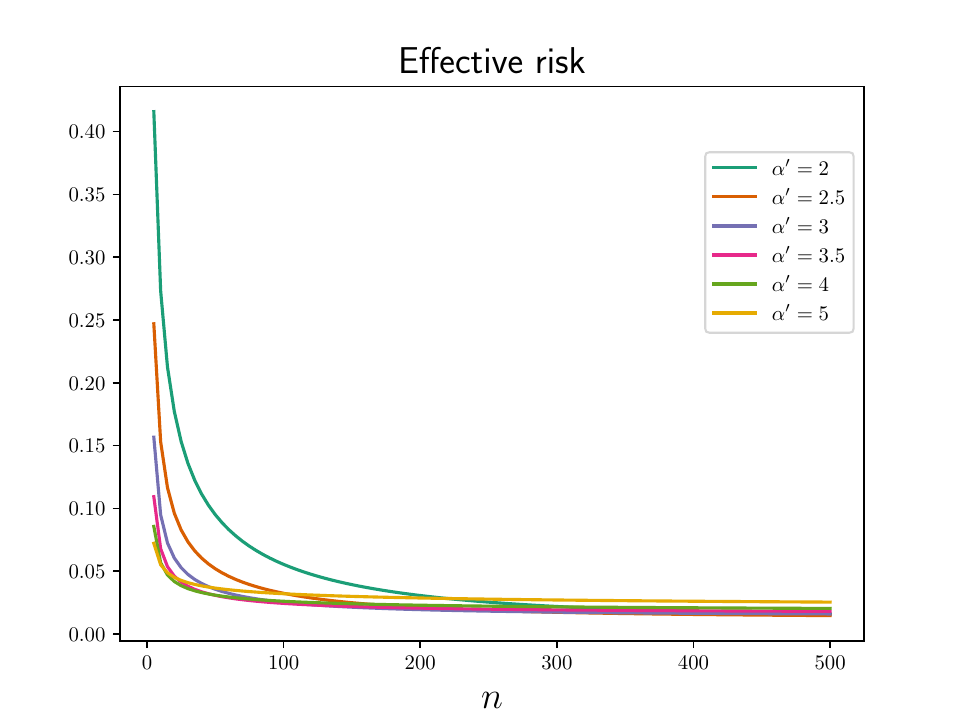} \tabularnewline
		\multicolumn{1}{c}{Model $(I)$: $\sigma_i = i^{-\alpha}$. } &
		\multicolumn{1}{c}{Model $(II)$: $\sigma_i = i^{-1}(1+\log i)^{-\alpha'}$. } \tabularnewline
	\end{tabular}
	\caption{Effective variance, bias, and risk of minimum norm interpolation
	(a.k.a. ridgeless regression) for two covariance structures defined as models $(I)$
	and $(II)$ (power law decay of the eigenvalues with exponents $\alpha>1$ and $\alpha=1$),
	as a function of the sample size $n$. 
	In model $(I)$ we 
	let the noise level to be $\vareps = 0.5$, and in $(II)$ we take $\vareps = 0.2$.} \label{fig:TheoryPrediction_n}
\end{figure}

\begin{figure}[htbp!]
	\centering
	\begin{tabular}{cc}
		\includegraphics[width=0.45\textwidth]{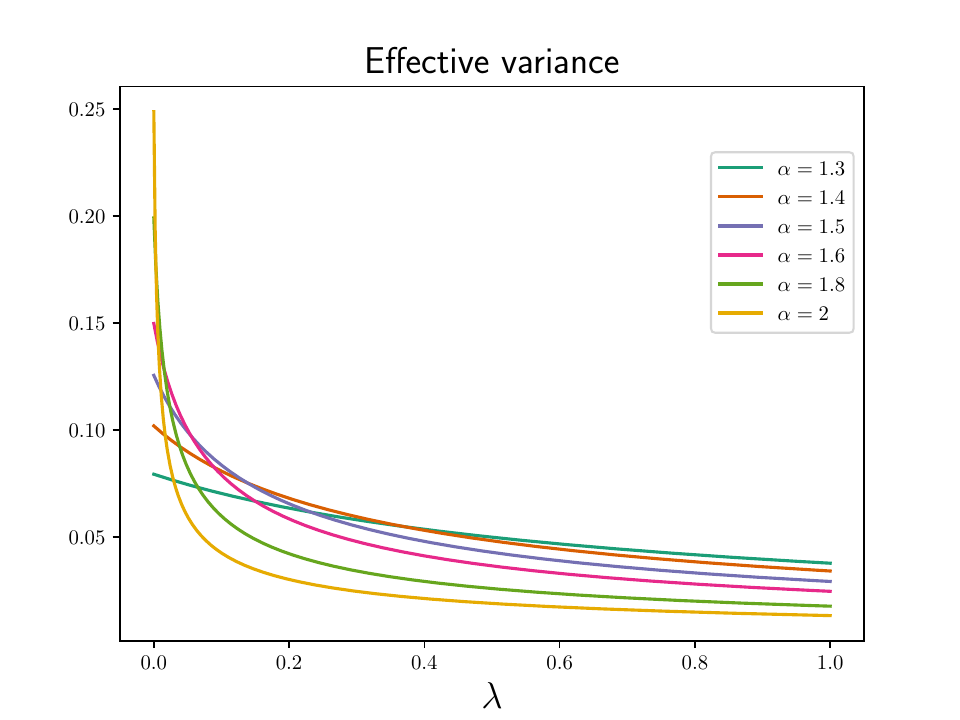} & \includegraphics[width=0.45\textwidth]{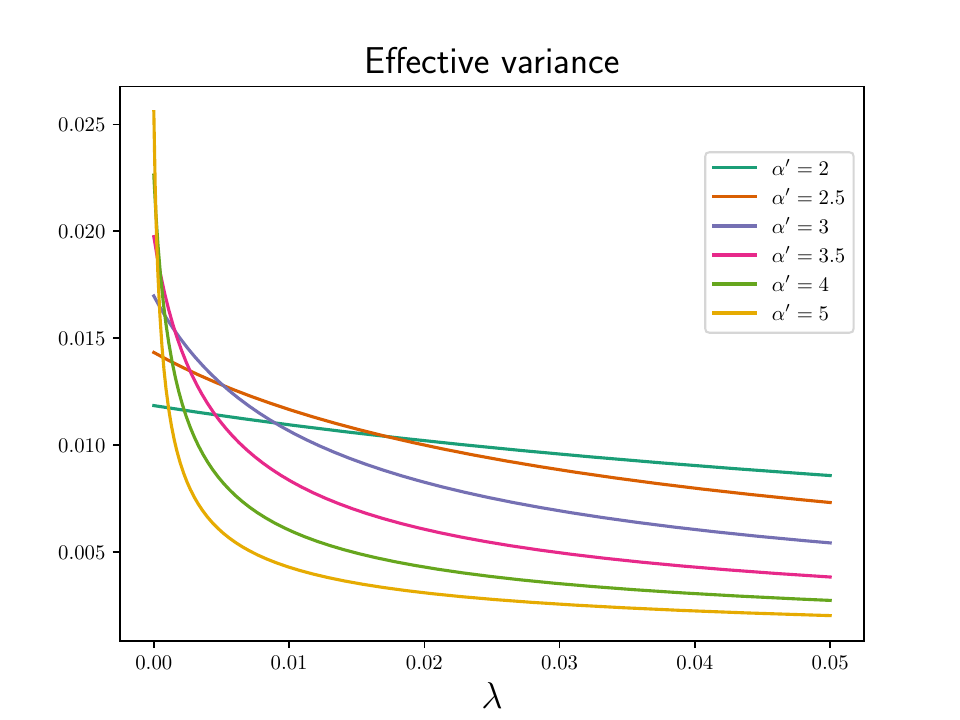} \tabularnewline
		\includegraphics[width=0.45\textwidth]{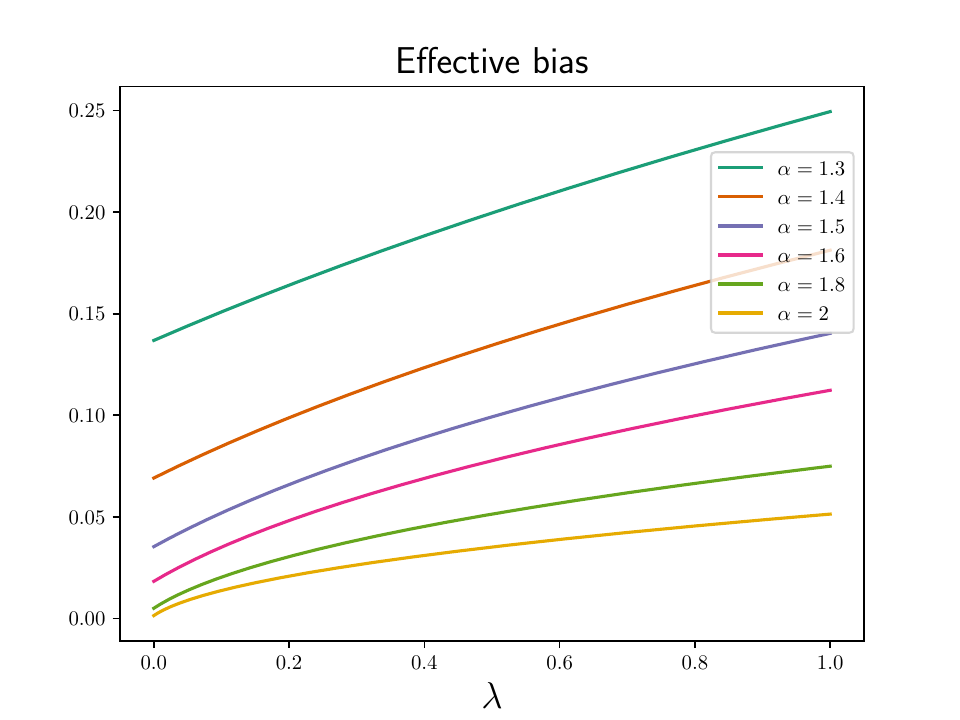} & \includegraphics[width=0.45\textwidth]{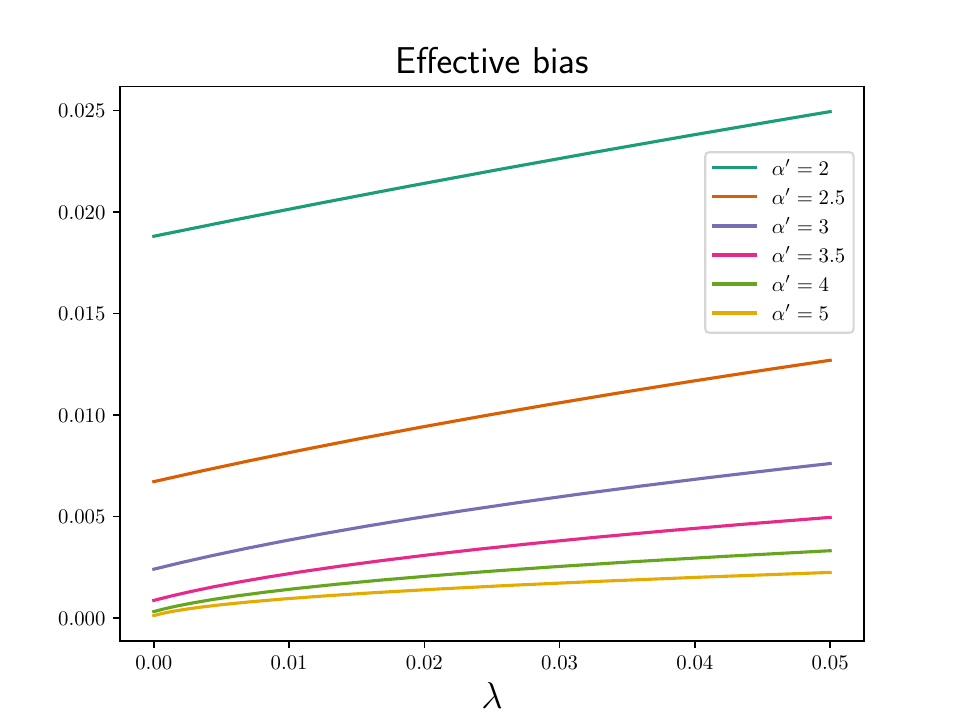} \tabularnewline \includegraphics[width=0.45\textwidth]{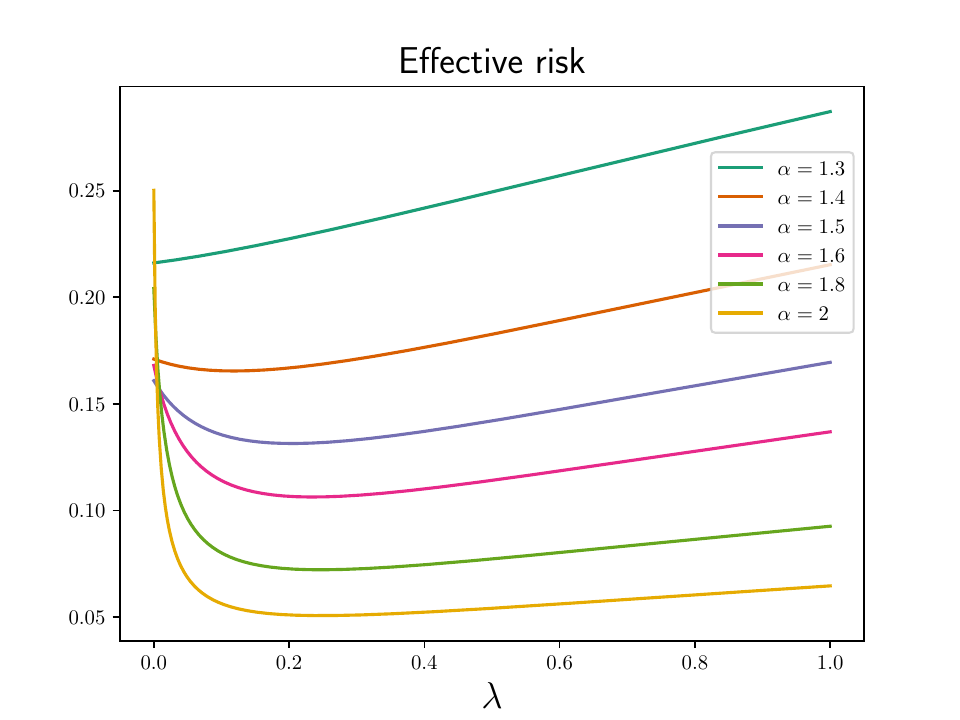} &  \includegraphics[width=0.45\textwidth]{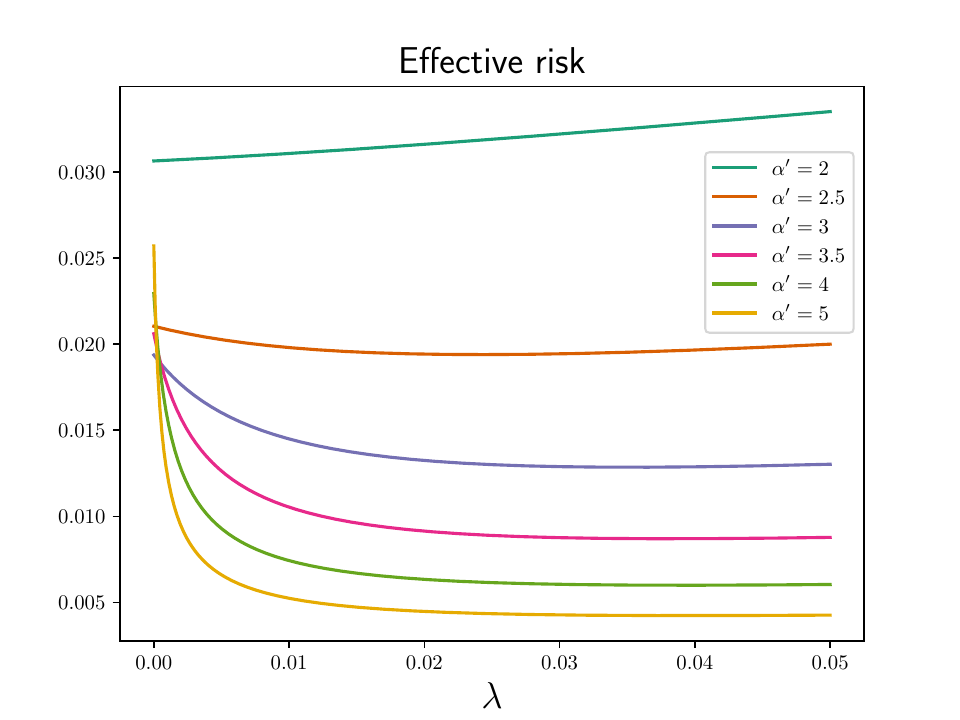} \tabularnewline
		\multicolumn{1}{c}{Model $(I)$: $\sigma_i = i^{-\alpha}$. } &
		\multicolumn{1}{c}{Model $(II)$: $\sigma_i = i^{-1}(1+\log i)^{-\alpha'}$. } \tabularnewline
	\end{tabular}
	\caption{Effective variance, bias, and risk of minimum norm interpolation
	 for two covariance structures defined as models $(I)$
	and $(II)$. Here we fix $n = 500$ and vary the regularization parameter. In model $(I)$ we 
	let the noise size to be $\vareps = 0.5$, and in $(II)$ we take $\vareps = 0.2$.} 
	 \label{fig:TheoryPrediction_lambda}
\end{figure}

\begin{figure}[htbp!]
	\centering
	\begin{tabular}{cc}
		\includegraphics[width=0.45\textwidth]{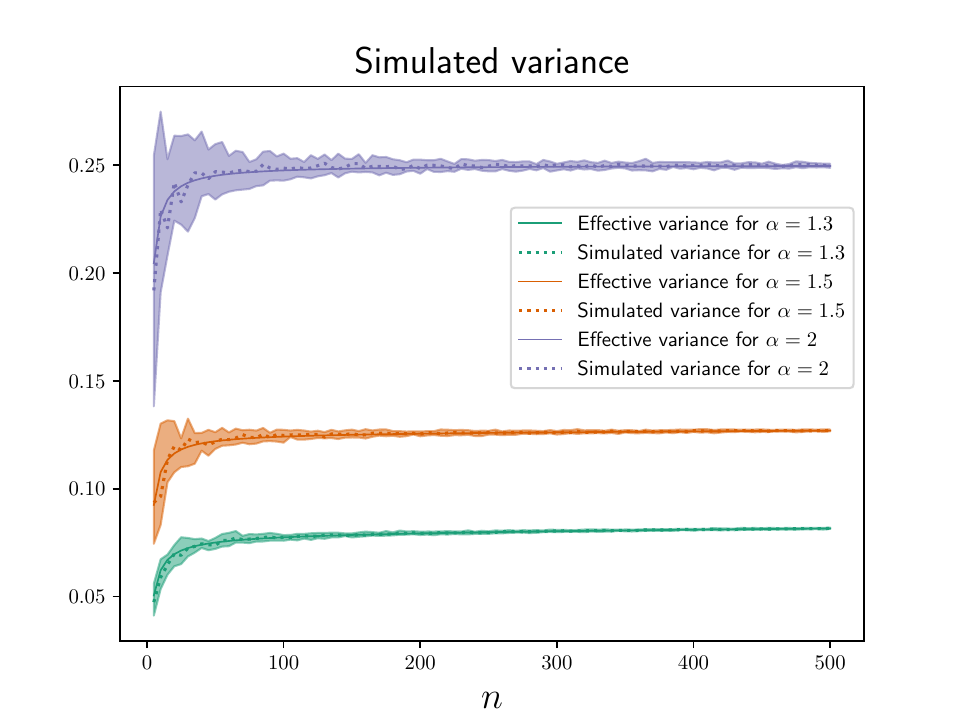} & \includegraphics[width=0.45\textwidth]{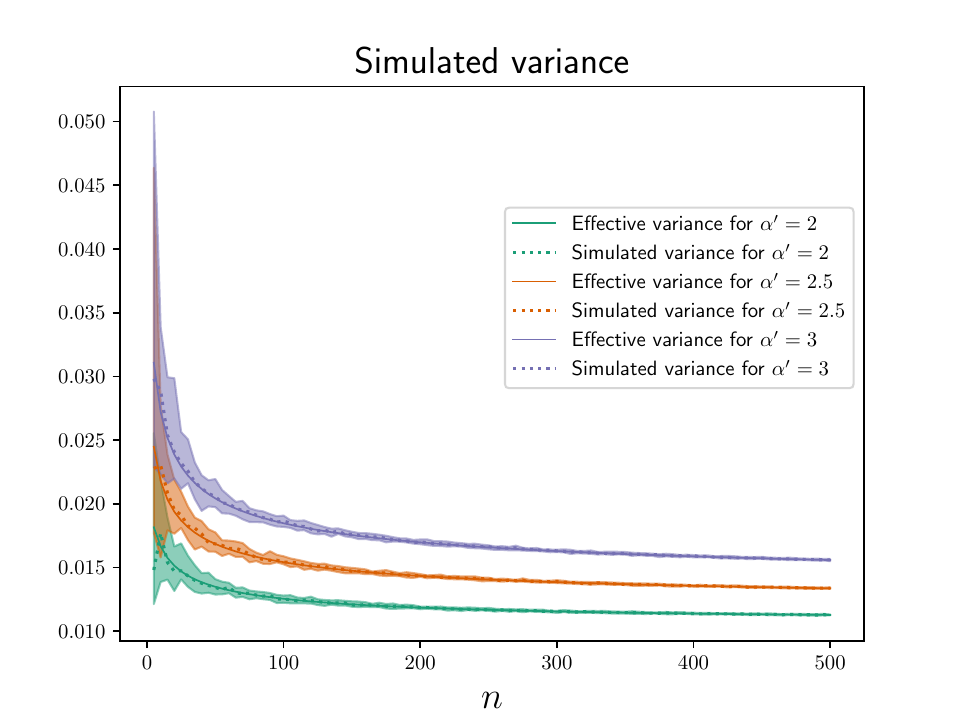} \tabularnewline
		\includegraphics[width=0.45\textwidth]{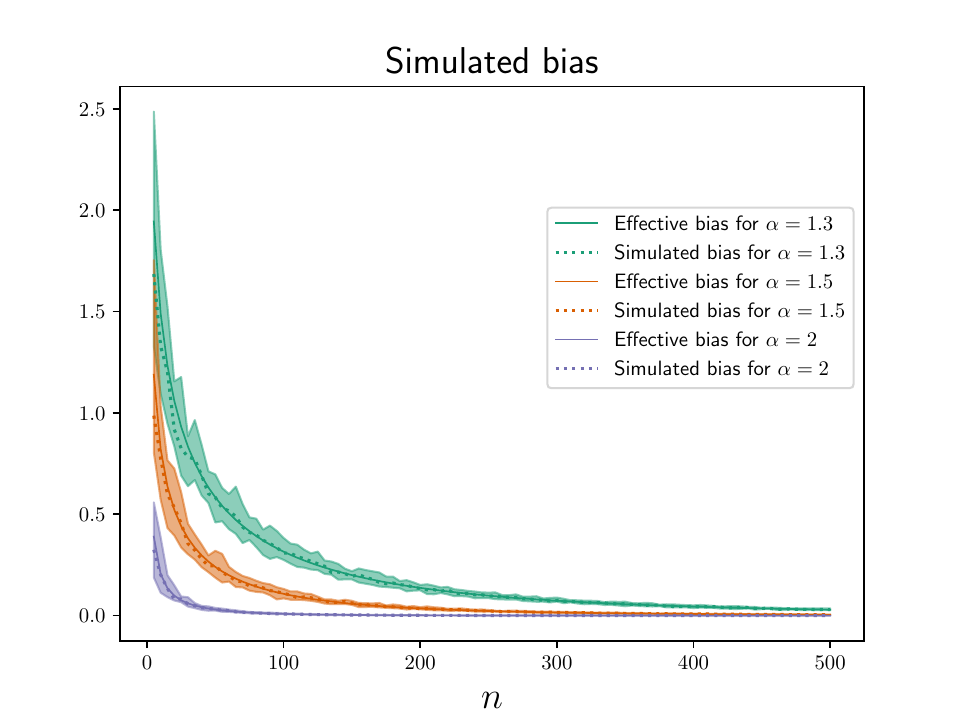} & \includegraphics[width=0.45\textwidth]{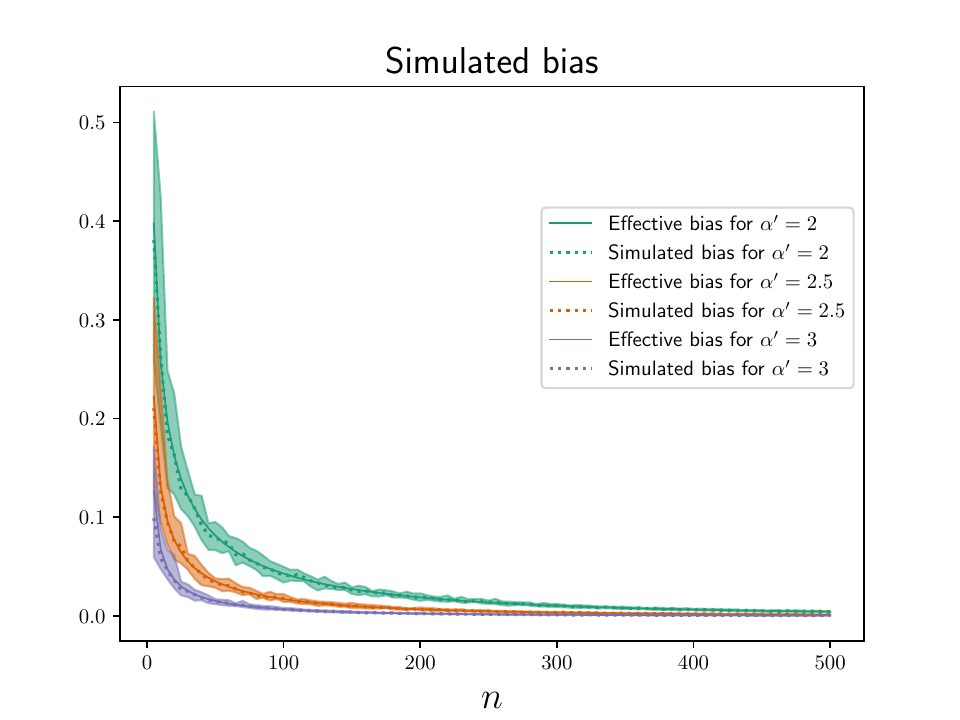} \tabularnewline \includegraphics[width=0.45\textwidth]{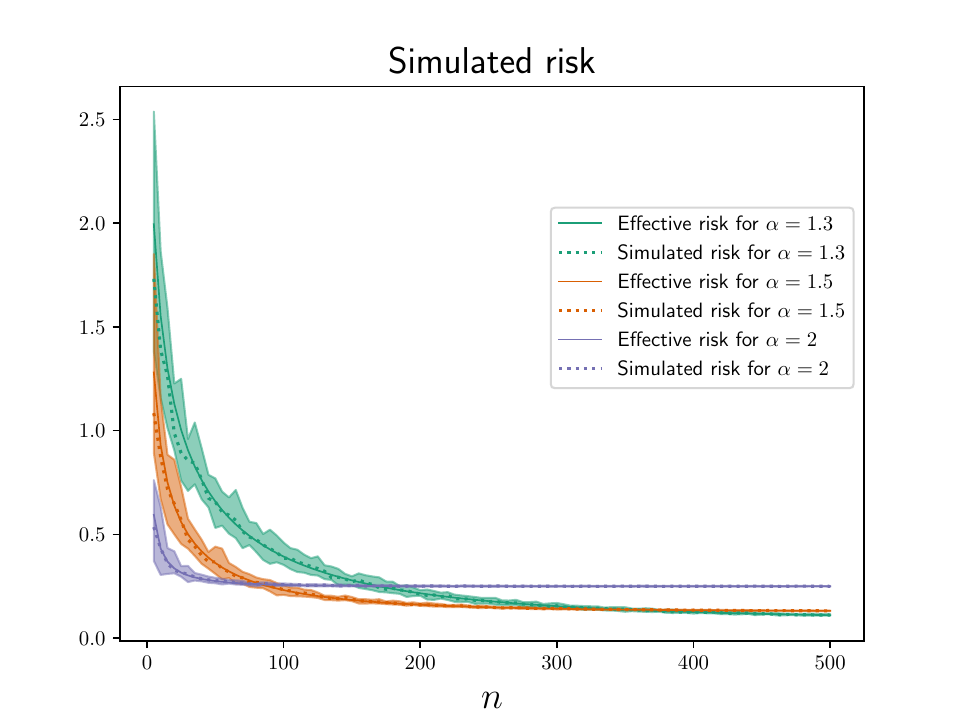} &  \includegraphics[width=0.45\textwidth]{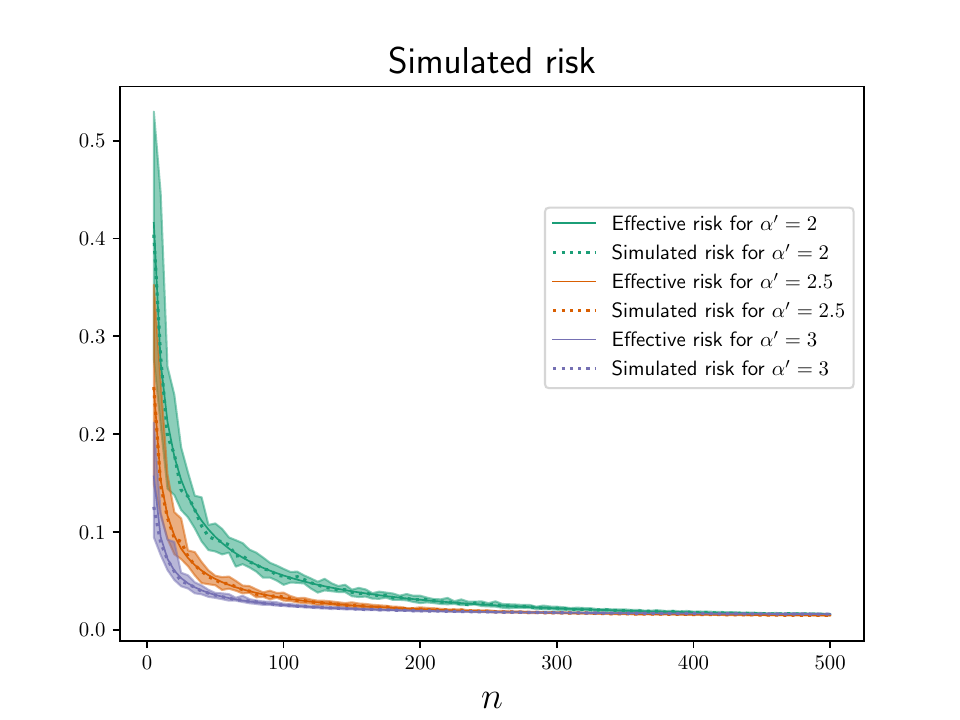} \tabularnewline
		\multicolumn{1}{c}{Model $(I)$: $\sigma_i = i^{-\alpha}$. } &
		\multicolumn{1}{c}{Model $(II)$: $\sigma_i = i^{-1}(1+\log i)^{-\alpha'}$. } \tabularnewline
	\end{tabular}
	\caption{Simulation results for variance, bias, and risk of minimum norm interpolation
	 for two covariance structures defined as models $(I)$
	and $(II)$.
	 In model $(I)$ we let the noise size to be $\vareps = 0.5$, and in $(II)$ we take $\vareps = 0.2$. 
	 For each $n$, we run $20$ independent trials and take the median for the dotted lines. 
	 We also show the shaded areas between $10\%$ and $90\%$ quantiles.} \label{fig:Simulations}
\end{figure}

The above discussion is based on evaluating the theoretical formulas
for bias and variance, as given in Eqs.~\eqref{eq:VAR-n},
\eqref{eq:BIAS-n}. While our main result, Theorems \ref{thm:main}, \ref{thm:main-ridgeless}
guarantee that these formulas are accurate, it is important how accurate they are at small
or moderate $n$, and whether random deviations modify the picture.

 In Figure \ref{fig:Simulations} we plot numerical simulations corroborating
 that $\var_\bX, \bias_\bX$ do 
 concentrate around $\VAR_n, \BIAS_n$ in models $(I)$ and $(II)$.
 As mentioned above, the predictions $\VAR_n, \BIAS_n$ appear to be accurate beyond what 
 is guaranteed by Theorem \ref{thm:main-ridgeless}, and the error appears to be 
 a $(1+o_n(1))$ multiplicative factor.
 
	\section{Proof of Theorem~\ref{thm:main}} \label{sec:proof-main}
Let $\mathcal{F}_k := \sigma(\bx_1, \cdots, \bx_k)$ be the $\sigma$-field generated by the 
first $k$ data points for $1 \leq k \leq n$, and $\mathcal{F}_0$ the trivial $\sigma$-field.
 We then have $\var_\bX, \bias_\bX \in \mathcal{F}_n$ and $\VAR_n, \BIAS_n \in \mathcal{F}_0$. 
 Extending the previous notation of $\Rfct_0$ in Eq.~\eqref{eq:R-F-func-0} to $\Rfct_k$, we let
\begin{align} \label{eq:R-F-func}
	\Rfct_k(\zeta, \mu; \bQ) = \Tr \prn{\bSigma^\half \bQ \bSigma^\half (\zeta \bI + \mu \bSigma + \bX_k^\sT \bX_k )^{-1}} \, , \qquad \Ffct_k(\zeta, \mu; \bQ) = \zeta \Rfct_k(\zeta, \mu; \bQ) \, ,
\end{align}
where $\zeta > 0, \mu \geq 0$, $\bQ$ is a p.s.d.\ matrix with bounded spectral norm, and 
$\bX_k = [\bx_1, \cdots, \bx_k]^\sT \in \reals^{k \times d}$ is the partial data matrix
comprising the first $k$ rows of $\bX$. By convention we set
 $\bX_0^\sT \bX_0 := \boldsymbol{0}$ when $k=0$. An immediate consequence is that 
 $\Rfct_k, \Ffct_k \in \mathcal{F}_k$. Define $\muefct := \muefct(\zeta, \mu)$ as 
 the unique  solution on of the following equation on $(\mu,\infty)$
\begin{align} \label{eq:mu-fixed-point}
	\muefct = \mu + \frac{n}{1 + \Rfct_0(\zeta, \muefct; \bI)} \, .
\end{align}
 For $\zeta = n \lambda$ and $\mu=0$, this equation reduces to Eq.~\eqref{eq:lambda-fixed-point}, via
 the change of variables $\muefct = n\lambda/ \lambdaefct$. For $\mu>0$ existence and
 uniqueness follows by a similar argument to the case $\mu=0$. 
 Indeed, setting $\xi:=(\muefct-\mu)^{-1}$,
 the  equation is equivalent to $n\xi = 1+\Tr(\bSigma(\bA+\xi^{-1}\bSigma)^{-1})$, 
 where $\bA:= n\lambda\bI+\mu\bSigma$. It is further equivalent to $n - \xi^{-1} = \Tr(\bSigma(\xi\bA+\bSigma)^{-1})$. Existence and uniqueness follow since the left-hand side is monotone 
 increasing and the right-hand side monotone decreasing in $\xi$. 

In order to quantify the approximation errors $|\var_\bX - \VAR_n|$ and $|\bias_\bX - \BIAS_n|$,
 we will apply the following lemma (Lemma~\ref{lem:var-bias-formula}), which 
 expresses the bias and variance $\bias_\bX$, $\var_\bX$
 in terms of derivatives of $\Ffct_n$ and $\Ffct_0$ w.r.t. $\lambda$ and $\mu$. 
\begin{lemma} \label{lem:var-bias-formula}
	For any $\lambda > 0, \mu \geq 0$, the quantity $\muefct > \mu$ is uniquely determined and we have at the point $(\zeta, \mu) = (n\lambda, 0)$,
	\begin{align*}
		\var_\bX(\lambda) & = \vareps^2 \cdot \frac{\partial}{\partial \zeta} \Ffct_n(\zeta, \mu; \bI) \, , &  \bias_\bX(\lambda) & = - \zeta \cdot  \frac{\partial}{\partial \mu} \Ffct_n(\zeta, \mu; \btheta \btheta^\sT) \, ; \\
		\VAR_n(\lambda) & = \vareps^2 \cdot \frac{\partial}{\partial \zeta} \Ffct_0(\zeta, \muefct(\zeta, \mu); \bI)  \, , &  \BIAS_n(\lambda) & = - \zeta \cdot  \frac{\partial}{\partial \mu} \Ffct_0(\zeta, \muefct(\zeta ,\mu); \btheta \btheta^\sT) \, ,
	\end{align*}
	and $\muefct(n\lambda, 0) = n\lambda / \lambdaefct$.
\end{lemma}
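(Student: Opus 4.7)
The lemma is a purely algebraic/calculus statement: the four identities follow from the definitions by differentiation and an application of the implicit function theorem to Eq.~\eqref{eq:mu-fixed-point}. Uniqueness of $\muefct>\mu$ is already argued in the paragraph preceding the lemma, so I only need to verify the derivative formulas and the identity $\muefct(n\lambda, 0) = n\lambda/\lambdaefct$. I would proceed in four short steps.

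\emph{Step 1: Reduce to $\muefct(n\lambda, 0) = n\lambda/\lambdaefct$.} Plugging $\muefct = n\lambda/\lambdaefct$ into $\Rfct_0(n\lambda,\muefct;\bI) = \Tr(\bSigma(n\lambda\bI + \muefct\bSigma)^{-1})$ and factoring out $1/\muefct$ converts the trace to $(\lambdaefct/(n\lambda))\Tr(\bSigma(\bSigma+\lambdaefct\bI)^{-1})$, which by the fixed point \eqref{eq:lambda-fixed-point} equals $\lambdaefct/\lambda - 1$. Substituting this into the right-hand side of \eqref{eq:mu-fixed-point} gives $n/(\lambdaefct/\lambda) = n\lambda/\lambdaefct$, confirming the identity.

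\emph{Step 2: Finite-sample variance and bias.} For the variance, I compute directly
\begin{align*}
\partial_\zeta \Ffct_n(\zeta, 0; \bI)
= \Tr\bigl(\bSigma(\zeta\bI + \bX^\sT\bX)^{-1}\bigr) - \zeta\Tr\bigl(\bSigma(\zeta\bI + \bX^\sT\bX)^{-2}\bigr)
= \Tr\bigl(\bSigma\,\bX^\sT\bX\,(\zeta\bI + \bX^\sT\bX)^{-2}\bigr),
\end{align*}
and setting $\zeta = n\lambda$, $\bX^\sT\bX = n\covX$ yields $n^{-1}\Tr(\bSigma\covX(\covX+\lambda\bI)^{-2})$, which matches \eqref{eq:var-X} after multiplying by $\vareps^2$. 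For the bias, I use $\bSigma^{1/2}\btheta\btheta^\sT\bSigma^{1/2} = \boldbeta\boldbeta^\sT$ to write $\Ffct_n(\zeta,\mu;\btheta\btheta^\sT) = \zeta\,\boldbeta^\sT(\zeta\bI+\mu\bSigma+\bX^\sT\bX)^{-1}\boldbeta$, differentiate in $\mu$, multiply by $-\zeta$, and evaluate at $(\zeta,\mu)=(n\lambda,0)$ to recover \eqref{eq:bias-X}.

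\emph{Step 3: Effective variance and bias via implicit differentiation.} Write $T_2 := \Tr(\bSigma^2(\bSigma+\lambdaefct\bI)^{-2})$ and $T_1 := \Tr(\bSigma(\bSigma+\lambdaefct\bI)^{-2})$. At $(\zeta,\mu) = (n\lambda,0)$ I have $\muefct = n\lambda/\lambdaefct$, so $(\zeta\bI+\muefct\bSigma)^{-1} = (1/\muefct)(\bSigma+\lambdaefct\bI)^{-1}$, which collapses $\Ffct_0(\zeta,\muefct;\bI) = n(\lambdaefct - \lambda)$ and $\Ffct_0(\zeta,\muefct;\btheta\btheta^\sT) = \lambdaefct\boldbeta^\sT(\bSigma+\lambdaefct\bI)^{-1}\boldbeta$. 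For the variance, $\partial_\zeta\Ffct_0(\zeta,\muefct;\bI) = n\,\partial_\zeta\lambdaefct - 1$; differentiating the fixed-point equation \eqref{eq:lambda-fixed-point} with $\lambda = \zeta/n$ gives $-1/\lambdaefct + (\zeta/\lambdaefct^2)\partial_\zeta\lambdaefct = -T_1\,\partial_\zeta\lambdaefct$. The crucial algebraic identity
\begin{align*}
\lambdaefct T_1 \;=\; \Tr\bigl(\bSigma(\bSigma+\lambdaefct\bI)^{-1}\bigr) - T_2 \;=\; n(1-\lambda/\lambdaefct) - T_2
\end{align*}
(obtained from $\bSigma^2 = \bSigma(\bSigma+\lambdaefct\bI) - \lambdaefct\bSigma$) simplifies this to $\partial_\zeta\lambdaefct = 1/(n-T_2)$, so $\partial_\zeta\Ffct_0 = T_2/(n-T_2)$ and $\vareps^2\partial_\zeta\Ffct_0 = \VAR_n(\lambda)$. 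For the bias, I differentiate $\zeta\boldbeta^\sT(\zeta\bI+\muefct\bSigma)^{-1}\boldbeta$ in $\mu$ (only through $\muefct$), which introduces a factor $\partial_\mu\muefct$. Implicitly differentiating \eqref{eq:mu-fixed-point} and using the same trace identity together with $\muefct(1+R) = n$ yields $\partial_\mu\muefct = n/(n-T_2)$. Combining these gives $-\zeta\partial_\mu\Ffct_0(\zeta,\muefct;\btheta\btheta^\sT) = \lambdaefct^2\,\boldbeta^\sT(\bSigma+\lambdaefct\bI)^{-1}\bSigma(\bSigma+\lambdaefct\bI)^{-1}\boldbeta \cdot n/(n-T_2) = \BIAS_n(\lambda)$, as claimed.

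The computation is entirely mechanical; the only step that requires a small insight is the identity $\lambdaefct T_1 = n(1-\lambda/\lambdaefct) - T_2$, which is exactly the relation that makes the implicit-differentiation expressions collapse into the specific denominators $n - T_2$ appearing in the definitions of $\VAR_n$ and $\BIAS_n$. I expect this identity (together with careful bookkeeping of the chain rule through $\muefct$) to be the only non-routine part of the proof.
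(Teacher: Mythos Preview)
Your proposal is correct and follows essentially the same route as the paper's proof: direct differentiation for $\var_\bX$ and $\bias_\bX$, verification of $\muefct(n\lambda,0)=n\lambda/\lambdaefct$ by substitution into \eqref{eq:mu-fixed-point}, and implicit differentiation for $\VAR_n$ and $\BIAS_n$. The only cosmetic difference is that in Step~3 you first collapse $\Ffct_0(\zeta,\muefct(\zeta,0);\bI)=n\lambdaefct-\zeta$ and differentiate $\lambdaefct$ via \eqref{eq:lambda-fixed-point}, whereas the paper expands $\partial_\zeta\Ffct_0$ through the chain rule in $\muefct$ and computes $\partial_\zeta\muefct$ from \eqref{eq:mu-fixed-point}; these are the same computation under the change of variables $\muefct=\zeta/\lambdaefct$, and both hinge on the identity $\lambdaefct T_1 = n(1-\lambda/\lambdaefct)-T_2$ that you correctly singled out.
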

The proof of this lemma follows by differentiation of  
the definition \eqref{eq:R-F-func} and using Eqs.~\eqref{eq:var-X} and  \eqref{eq:bias-X}.
We refer to Appendix  \ref{proof:var-bias-formula} for details.

Our proof strategy proceeds in four parts: (I)~We  show that ---due to the regularity properties
of $\Ffct_0$ and $\Ffct_n$--- a bound on  $|\Ffct_0 - \Ffct_n|$
implies a bound on the difference of their derivatives, and hence
(via Lemma \ref{lem:var-bias-formula}) on the error in approximating bias and variance;
(II)~We prove a bound on $|\Ffct_0 - \Ffct_n|$
interpolating between $\Ffct_0$ and $\Ffct_n$ by adding one row at the time to $\bX$;
(III) and (IV)~We apply these general bounds to controlling variance and bias,  respectively .

Recall that we defined $\btheta:= \bSigma^{-1/2}\boldbeta$, and assumed $\|\btheta\|<\infty$.
 By homogeneity, we can and will assume $\norm{\btheta} = 1$ throughout the proof.

\paragraph{Part I: Reduction to function values approximation} The following 
lemma reduces controlling the difference of derivatives of $\Ffct_0$ and $\Ffct_n$ to the 
less arduous task of bounding the difference in function values. 
Its proof is presented in Appendix~\ref{proof:function-value-bound-to-derivative}.
\begin{lemma} \label{lem:function-value-bound-to-derivative}
	For any fixed $k \in \naturals, \delta \in \realsp$ and a $(k+1)$-times continuously differentiable function $f(t)$ on $[0, k\delta]$, we have
	\begin{align*}
		\left|f'(0)\right| \leq \bigO_k \prn{\frac{\max_{0 \leq j \leq k} |f(j\delta)|}{\delta} + \sup_{t \in [0, k \delta]} |f^{(k+1)}(t)| \cdot \delta^{k}}. 
	\end{align*}
\end{lemma}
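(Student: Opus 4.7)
The statement is a classical finite-difference differentiation bound; the plan is to express $f'(0)$ as a linear combination of the sample values $\{f(j\delta)\}_{j=0}^{k}$ plus a controlled Taylor remainder. Concretely, I would let $p(t)$ be the unique polynomial of degree at most $k$ interpolating $f$ at the nodes $0,\delta,2\delta,\dots,k\delta$, written in the Lagrange form $p(t)=\sum_{j=0}^{k}f(j\delta)\,\ell_{j}(t)$, where $\ell_{j}(t)=\prod_{i\neq j}(t-i\delta)/((j-i)\delta)$. Differentiating and evaluating at $0$ gives an identity of the form
\begin{equation*}
p'(0)\;=\;\frac{1}{\delta}\sum_{j=0}^{k} c_{j,k}\,f(j\delta),\qquad c_{j,k}:=\delta\,\ell_{j}'(0),
\end{equation*}
where each $c_{j,k}$ depends only on $k$ (the rescaling by $\delta$ absorbs the node spacing). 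Hence $|p'(0)|\le (C_{k}/\delta)\max_{0\le j\le k}|f(j\delta)|$ for an explicit constant $C_k$.

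Next I would bound the approximation error $|f'(0)-p'(0)|$. By the standard interpolation error formula, for each $t\in[0,k\delta]$ there exists $\xi(t)\in(0,k\delta)$ with
\begin{equation*}
f(t)-p(t)\;=\;\frac{f^{(k+1)}(\xi(t))}{(k+1)!}\prod_{j=0}^{k}(t-j\delta).
\end{equation*}
Since $f$ is $C^{k+1}$, one can differentiate this identity at $t=0$ (using either Cauchy's remainder form directly, or the divided-difference identity $f(t)-p(t)=f[0,\delta,\dots,k\delta,t]\prod_{j}(t-j\delta)$ and evaluating $\tfrac{d}{dt}$ at $t=0$, which yields $f'(0)-p'(0)=f[0,\delta,\dots,k\delta,0]\cdot\prod_{j=0}^{k}(-j\delta)$ plus a vanishing piece). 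Either route gives
\begin{equation*}
|f'(0)-p'(0)|\;\le\;C_{k}'\,\sup_{t\in[0,k\delta]}|f^{(k+1)}(t)|\,\delta^{k},
\end{equation*}
because $|\prod_{j=0}^{k}(-j\delta)|/(k+1)!\le k!\,\delta^{k}/(k+1)!$ up to combinatorial constants absorbed in $C_k'$.

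Combining the two estimates via the triangle inequality $|f'(0)|\le|p'(0)|+|f'(0)-p'(0)|$ yields exactly the desired bound with $k$-dependent constants. The only technical care needed is the differentiation of the interpolation error: the cleanest approach I would use is the divided-difference representation, which avoids differentiating the somewhat subtle function $\xi(t)$; alternatively one can verify the bound by solving the Vandermonde system directly, i.e.\ choosing $c_{j,k}$ so that $\sum_{j}c_{j,k}\,j^{i}=\mathbf{1}\{i=1\}$ for $0\le i\le k$, which makes the cancellation of the Taylor polynomial of degree $k$ explicit and exhibits the residual as $\sum_{j}c_{j,k}R_{j}$ with $|R_{j}|\le\sup|f^{(k+1)}|\,(k\delta)^{k+1}/(k+1)!$. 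This is the only mildly delicate step; everything else is bookkeeping of $k$-dependent constants, which are swallowed by the $\bigO_{k}(\cdot)$ notation.
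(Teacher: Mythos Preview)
Your proposal is correct and essentially equivalent to the paper's proof, though packaged differently. The paper does precisely the Vandermonde alternative you sketch at the end: it writes the Taylor expansion $f(j\delta)=\sum_{l=0}^{k} j^{l}\,\tfrac{\delta^{l}}{l!}f^{(l)}(0)+j^{k+1}\tfrac{\delta^{k+1}}{(k+1)!}f^{(k+1)}(t_j)$ for each $j$, stacks these into a Vandermonde system, and reads off $|f'(0)\delta|\le\|V_k^{-1}\|_\infty\bigl(\max_j|f(j\delta)|+\tfrac{\delta^{k+1}}{(k+1)!}\max_j j^{k+1}\sup|f^{(k+1)}|\bigr)$. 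Your primary route via Lagrange interpolation is the same computation in dual form (the weights $c_{j,k}$ you build are exactly the second row of $V_k^{-1}$), and your divided-difference handling of the remainder is arguably cleaner than the paper's, since it avoids carrying the full vector of derivatives and yields the sharper constant $k!/(k+1)!$ in front of $\delta^k$ directly. One small slip: you write the error as $f[0,\delta,\dots,k\delta,0]\prod_{j=0}^{k}(-j\delta)$, but that product vanishes (the $j=0$ factor is zero); what survives after differentiating $\prod_{j=0}^{k}(t-j\delta)$ at $t=0$ is $\prod_{j=1}^{k}(-j\delta)=(-1)^k k!\,\delta^k$, which is what you need.
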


With the help of Lemmas~\ref{lem:var-bias-formula} and \ref{lem:function-value-bound-to-derivative}, for any $\delta \in \realsp$, we can upper bound the variance and bias approximations by
\begin{align}
	\left|\var_\bX(\lambda) - \VAR_n(\lambda)\right| & = \bigO_k \Bigg(\vareps^2 \cdot \max_{0 \leq j \leq k} \frac{\left| \Ffct_n(n\lambda + j \delta, 0; \bI) -  \Ffct_0(n\lambda + j \delta, \muefct(n\lambda + j \delta, 0); \bI)\right|}{\delta} \nonumber \\
	&\qquad  + \vareps^2 \delta^{k}\cdot \sup_{\zeta' \in [n\lambda, n\lambda + k\delta]} \left|\frac{\partial^{k+1}}{\partial {\zeta'}^{k+1}} \Ffct_n(\zeta', 0; \bI) - \frac{\partial^{k+1}}{\partial {\zeta'}^{k+1}} \Ffct_0(\zeta', \muefct(\zeta', 0); \bI)\right|\Bigg) \, , \label{eq:variance-approx-bound-mid-1}
\end{align}
and
\begin{align}
	\left|\bias_\bX(\lambda) - \BIAS_n(\lambda)\right| & = \bigO_k \Bigg(n\lambda \cdot \max_{0 \leq j \leq k} \frac{\left| \Ffct_n(n\lambda, j \delta; \btheta \btheta^\sT) -  \Ffct_0(n\lambda , \muefct(n\lambda, j \delta); \btheta \btheta^\sT)\right|}{\delta} \nonumber \\
	&\qquad  + n\lambda \delta^{k}\cdot \sup_{\mu' \in [0, k\delta]} \left|\frac{\partial^{k+1}}{\partial {\mu'}^{k+1}} \Ffct_n(n\lambda, \mu'; \btheta \btheta^\sT) - \frac{\partial^{k+1}}{\partial {\mu'}^{k+1}} \Ffct_0(n\lambda, \muefct(n\lambda, \mu'); \btheta \btheta^\sT)\right|\Bigg) \, . \label{eq:bias-approx-bound-mid-1}
\end{align}
Before passing to bounding errors in function values, we provide upper bounds 
for higher order derivatives in Eqs.~\eqref{eq:variance-approx-bound-mid-1} and
 \eqref{eq:bias-approx-bound-mid-1}. Bounding the derivatives of
  $\Ffct_n$ is easier as we can easily write an explicit formula 
  for the $k$-th derivative for any $k$. 
(The proof of this lemma is presented in Appendix~\ref{proof:derivative-F-n}).
\begin{lemma}\label{lem:derivative-F-n}
	For any fixed $k \in \naturals$, we have for all $\zeta > 0$ and $\mu \geq 0$,
	\begin{align*}
		\left|	\frac{\partial^k}{\partial \zeta^k}\Ffct_n(\zeta, 0; \bI) \right|  =\bigO_k \prn{\frac{\Ffct_n(\zeta, 0; \bI) }{\zeta^k}} \, , \qquad \left|	\frac{\partial^k}{\partial \mu^k}\Ffct_n(\zeta, \mu; \btheta \btheta^\sT) \right|  = \bigO_k \prn{\frac{\Ffct_n(\zeta, \mu; \btheta \btheta^\sT) }{\zeta^k}}\, .
	\end{align*}
\end{lemma}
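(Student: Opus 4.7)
\medskip

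\noindent\textbf{Proof plan for Lemma~\ref{lem:derivative-F-n}.} The plan is to differentiate the explicit expression for $\Ffct_n$ term by term and then control each derivative using the single elementary operator inequality $\bN(\zeta,\mu) := (\zeta\bI + \mu\bSigma + \bX_n^\sT \bX_n)^{-1} \preceq \zeta^{-1}\bI$, which holds because $\mu\bSigma + \bX_n^\sT\bX_n \succeq 0$.

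For the $\mu$-derivatives, I would first observe by induction on $k$ (using $\partial_\mu \bN = -\bN\bSigma\bN$) that $\partial_\mu^k \bN = (-1)^k k!\,(\bN\bSigma)^k \bN$. Since the factor $\zeta$ in $\Ffct_n$ does not depend on $\mu$, this gives
\begin{align*}
\partial_\mu^k \Ffct_n(\zeta,\mu;\btheta\btheta^\sT) = (-1)^k k!\,\zeta\,\<\bu,(\bN\bSigma)^k \bN \bu\>, \qquad \bu := \bSigma^{1/2}\btheta.
\end{align*}
The key algebraic identity is $(\bN\bSigma)^k \bN = \bN^{1/2}(\bN^{1/2}\bSigma\bN^{1/2})^k \bN^{1/2}$, which one checks by expanding. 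Because $\bN^{1/2}\bSigma\bN^{1/2} \preceq \|\bSigma\|\bN \preceq \zeta^{-1}\bI$, taking the $k$-th power preserves the PSD order and yields $(\bN\bSigma)^k \bN \preceq \zeta^{-k}\bN$. Substituting gives $|\partial_\mu^k \Ffct_n(\zeta,\mu;\btheta\btheta^\sT)| \le k!\,\zeta\cdot\zeta^{-k}\<\bu,\bN\bu\> = k!\,\Ffct_n(\zeta,\mu;\btheta\btheta^\sT)/\zeta^k$, which is the desired bound.

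For the $\zeta$-derivative at $\mu=0$, I would use $\partial_\zeta \bN = -\bN^2$, hence $\partial_\zeta^j \bN = (-1)^j j!\,\bN^{j+1}$, together with Leibniz's rule applied to the product $\zeta\cdot \Tr(\bSigma\bN)$:
\begin{align*}
\partial_\zeta^k \Ffct_n(\zeta,0;\bI) = (-1)^k k!\,\zeta\,\Tr(\bSigma\bN^{k+1}) + (-1)^{k-1}k!\,\Tr(\bSigma\bN^k).
\end{align*}
Using $\bN \preceq \zeta^{-1}\bI$ once more, together with $\bSigma \succeq 0$, one has $\Tr(\bSigma \bN^j) \le \zeta^{-(j-1)}\Tr(\bSigma\bN)$ for any $j\ge 1$ (write $\bN^j = \bN^{1/2}\bN^{j-1}\bN^{1/2}$ and bound the middle factor by $\zeta^{-(j-1)}\bI$). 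Combining the two terms yields $|\partial_\zeta^k \Ffct_n(\zeta,0;\bI)| \le 2k!\,\Tr(\bSigma\bN)/\zeta^{k-1} = 2k!\,\Ffct_n(\zeta,0;\bI)/\zeta^k$, completing the claim. There is no real obstacle here; the only thing worth flagging is the use of the PSD identity $(\bN\bSigma)^k\bN = \bN^{1/2}(\bN^{1/2}\bSigma\bN^{1/2})^k \bN^{1/2}$, which is what allows the manifestly non-symmetric operator $(\bN\bSigma)^k\bN$ to be controlled by a symmetric PSD bound without losing a factor depending on $\|\bSigma\bN\|$.
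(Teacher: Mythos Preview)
Your proof is correct and follows essentially the same approach as the paper: compute the derivatives of the resolvent explicitly and bound using $(\zeta\bI+\mu\bSigma+\bX_n^\sT\bX_n)^{-1}\preceq\zeta^{-1}\bI$. The only cosmetic differences are that for the $\zeta$-derivative the paper first rewrites $\partial_\zeta\Ffct_n(\zeta,0;\bI)=\Tr(\bSigma\bX^\sT\bX(\bX^\sT\bX+\zeta\bI)^{-2})$ (via Lemma~\ref{lem:var-bias-formula}) and then differentiates that, whereas you apply Leibniz directly to $\zeta\cdot\Tr(\bSigma\bN)$; and for the $\mu$-derivative the paper symmetrizes as $(\bSigma^{1/2}\bN\bSigma^{1/2})^{k+1}$ rather than your $\bN^{1/2}(\bN^{1/2}\bSigma\bN^{1/2})^k\bN^{1/2}$, but both lead to the same bound.
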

\noindent Computing higher order derivatives of $\Ffct_0$ is less straightforward 
because $\Ffct_0$ depends on $\muefct$ which itself depends implicitly depending on $(\zeta, \mu)$. 
 We postpone this proof to Appendix~\ref{proof:derivative-F-0}.
\begin{lemma} \label{lem:derivative-F-0}
	Let Eq.~\eqref{asmp:lambda} hold. Then, for any fixed $k \in \naturals$, we have for all $\zeta > 0$,
	\begin{align*}
		\left|	\frac{\partial^k}{\partial \zeta^k}\Ffct_0(\zeta, \muefct(\zeta, 0); \bI) \right|  = \bigO_{k} \prn{\frac{\Ffct_0(\zeta, \muefct(\zeta, 0); \bI)}{\zeta^k \const^{2k}}} \, ,
	\end{align*}
	and for all $\mu$ such that $0 \leq \mu \leq \muefct(\zeta, \mu)/2$,
	\begin{align*}
		\left|\frac{\partial^k}{\partial \mu^k}\Ffct_0(\zeta, \muefct(\zeta, \mu); \btheta \btheta^\sT)\right| = \bigO_{k} \prn{\frac{\Ffct_0(\zeta, \muefct(\zeta, \mu); \btheta \btheta^\sT)}{\muefct(\zeta, \mu)^k \const^{2k}}}\, .
	\end{align*}
\end{lemma}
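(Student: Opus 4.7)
The strategy is to apply Faà di Bruno's formula to the total derivative of the composition $\zeta \mapsto \Ffct_0(\zeta, \muefct(\zeta, 0); \bI)$ (and analogously to the $\mu$-derivative), which reduces the task to bounding (a) the mixed partial derivatives of $\Ffct_0(\zeta, \mu; \bQ)$ at fixed coordinates, and (b) the iterated derivatives of the implicitly defined function $\muefct(\zeta, \mu)$.

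For part (a), I would differentiate the explicit expression $\Ffct_0(\zeta, \mu; \bQ) = \zeta \, \Tr\bigl(\bSigma^{1/2} \bQ \bSigma^{1/2}(\zeta\bI + \mu\bSigma)^{-1}\bigr)$ directly. Each derivative in $\zeta$ brings down a factor of the resolvent $(\zeta\bI+\mu\bSigma)^{-1}$, and each derivative in $\mu$ brings down such a factor paired with an extra $\bSigma$. Using the operator-norm bounds $\|(\zeta\bI+\mu\bSigma)^{-1}\| \le \zeta^{-1}$ and $\|\bSigma(\zeta\bI+\mu\bSigma)^{-1}\| \le \muefct^{-1}$ at the relevant evaluation point (where $\mu \le \muefct/2$ for the second claim), one obtains estimates
\begin{align*}
\bigl|\partial_\zeta^a \partial_\mu^b \Ffct_0(\zeta, \muefct; \bQ)\bigr| = \bigO_{a+b}\!\left(\frac{\Ffct_0(\zeta, \muefct; \bQ)}{\zeta^a\, \muefct^b}\right),
\end{align*}
in direct analogy with the proof of Lemma~\ref{lem:derivative-F-n}.

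For part (b), I would differentiate the defining identity \eqref{eq:mu-fixed-point} implicitly. Writing $G(\zeta, \mu, \nu) := \nu - \mu - n(1+\Rfct_0(\zeta, \nu; \bI))^{-1}$, the partial derivative $\partial_\nu G$ evaluated at $\nu = \muefct$ and $(\zeta, \mu) = (n\lambda, 0)$ simplifies, via the identities $1+\Rfct_0 = \lambdaefct/\lambda$ and the fixed-point equation \eqref{eq:lambda-fixed-point}, to
\begin{align*}
1 - \frac{1}{n}\Tr\bigl(\bSigma^2(\bSigma+\lambdaefct\bI)^{-2}\bigr) \; \geq \; \frac{\lambda}{\lambdaefct} \; \geq \; \const,
\end{align*}
where the first inequality uses $\Tr(\bSigma^2(\bSigma+\lambdaefct\bI)^{-2}) \le \Tr(\bSigma(\bSigma+\lambdaefct\bI)^{-1}) = n(1 - \lambda/\lambdaefct)$. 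This is precisely where Eq.~\eqref{asmp:lambda} enters. Iterating the implicit differentiation, the $k$-th derivative of $\muefct$ becomes a rational function in the mixed partials of $G$: each successive differentiation introduces at most one additional power of $(\partial_\nu G)^{-1}$ together with one additional $\zeta^{-1}$ (coming from differentiating the resolvent-based expressions for $\partial \Rfct_0$), yielding $|\partial_\zeta^k \muefct| = \bigO_k\bigl(\muefct\, \zeta^{-k}\, \const^{-(2k-1)}\bigr)$ and an analogous estimate for $\partial_\mu^k \muefct$.

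Assembling the pieces through Faà di Bruno's formula and taking the worst case over all partitions of $k$ yields the claimed $\bigO_k\bigl(\Ffct_0/(\zeta^k \const^{2k})\bigr)$ bound; in the second statement, the constraint $\mu \le \muefct/2$ guarantees that the resolvents $(\zeta\bI+\mu\bSigma)^{-1}$ and $(\zeta\bI+\muefct\bSigma)^{-1}$ are comparable up to a universal constant, so the partial-derivative estimates of part (a) transfer to the evaluation point $(\zeta, \muefct)$ without loss. The main obstacle is the bookkeeping in the Faà di Bruno expansion: verifying that the $\const^{-1}$ factors accumulated from the iterated implicit differentiation, combined with the mixed partial derivatives of $\Ffct_0$, never exceed $\const^{-2k}$ in total. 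The exponent $2k$ is a safe overestimate that absorbs these contributions cleanly without requiring the sharpest possible count.
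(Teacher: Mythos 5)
Your proposal is correct in outline, and for the $\mu$-derivative (Part~II of the lemma) it is essentially the same as the paper's: both treat $\mu \mapsto \Ffct_0(\zeta, \muefct(\zeta,\mu); \btheta\btheta^\sT)$ as a univariate composition, apply Fa\`a di Bruno, bound the outer derivatives $\partial^j_{\muefct}\Ffct_0$ by $\bigO(\Ffct_0/\muefct^j)$, and control the inner derivatives $\partial^l\muefct/\partial\mu^l$ via the implicit/inverse-function machinery, using $\mu \le \muefct/2$ and the fixed-point equation~\eqref{eq:mu-fixed-point} to get the crucial lower bound $\partial\mu/\partial\muefct \gtrsim \const$.

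For the $\zeta$-derivative (Part~I), you take a genuinely different route than the paper, and it is worth flagging what the difference buys. You propose a full multivariate Fa\`a di Bruno for $\zeta \mapsto \Ffct_0(\zeta, \muefct(\zeta,0); \bI)$, which requires tracking the mixed partials $\partial^a_\zeta\partial^b_\mu\Ffct_0$ together with iterated derivatives of $\muefct$. That works (your bound $|\partial^a_\zeta\partial^b_\mu\Ffct_0| = \bigO(\Ffct_0/(\zeta^a\muefct^b))$ is correct, and summing over the partition bookkeeping does land on $\bigO(\Ffct_0/(\zeta^k\const^{2k}))$), but it is heavier than what the paper does. The paper instead notices that the composite function collapses algebraically: substituting $\muefct(\zeta,0) = \zeta/\lambdaefct$ into $\Ffct_0$ and then using~\eqref{eq:lambda-fixed-point} gives $\Ffct_0(\zeta, \muefct(\zeta,0); \bI) = \lambdaefct\Tr(\bSigma(\bSigma+\lambdaefct\bI)^{-1}) = n\lambdaefct - \zeta$, so that $\partial^k_\zeta\Ffct_0 = n\,\partial^k\lambdaefct/\partial\zeta^k - \ind\{k=1\}$ and only the derivatives of the scalar implicit function $\lambdaefct(\zeta)$ need to be controlled. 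Because $\zeta = \lambdaefct(n - \Tr(\bSigma(\bSigma+\lambdaefct\bI)^{-1}))$ is an \emph{explicit} increasing function of $\lambdaefct$, the paper can then quote the closed-form formula for higher-order derivatives of an inverse function, which keeps the bookkeeping transparent. Your implicit-differentiation-of-$G$ approach is equivalent in content, but the ``one additional power of $(\partial_\nu G)^{-1}$ per derivative'' heuristic you state undercounts the exponent (the inverse-function formula actually produces the leading factor $(\partial\zeta/\partial\lambdaefct)^{1-2k}$); you ultimately quote the correct exponent $2k-1$, so this is only a gap in the prose, not in the end result. One small inaccuracy: the condition $\mu \le \muefct/2$ in Part~II is not really used to make $(\zeta\bI+\mu\bSigma)^{-1}$ and $(\zeta\bI+\muefct\bSigma)^{-1}$ comparable — the outer function is already evaluated at $\muefct$, not $\mu$ — but rather to lower bound the implicit-derivative denominator $\partial\mu/\partial\muefct$ away from zero, through $\muefct - \mu \ge \muefct/2$.
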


\paragraph{Part II: Bounding errors in function values}   We next proceed to bounding
 $|\Ffct_n(\zeta, \mu; \bQ) - \Ffct_0(\zeta, \muefct(\zeta, \mu); \bQ)|$ for a p.s.d.\ matrix 
 $\bQ$, which appears in Eqs.~\eqref{eq:variance-approx-bound-mid-1} and 
 \eqref{eq:bias-approx-bound-mid-1}. Recall that $\Ffct_i(\zeta, \mu; \bQ) =
  \zeta \Rfct_i(\zeta, \mu; \bQ)$.

  The next theorem bounds $|\Rfct_n(\zeta, \mu; \bQ) - \Rfct_0(\zeta, \muefct(\zeta, \mu); \bQ)|$
and   is the most important technical step in the proof of our main theorems.
Its proof is outlined in Section \ref{proof:R-approximation},
with several technical lemmas deferred to the appendices
\begin{theorem} \label{thm:R-approximation}
Introduce the shorthand $\Riter_0(\bQ) := 
   \Rfct_0(\zeta, \muefct(\zeta, \mu); \bQ)$.
	Under Assumption~\ref{asmp:data-dstrb}, for any $\zeta >0, \mu \geq 0$, p.s.d.\ matrix $\bQ$ with $\|\bQ\| = 1$ and positive integer $D$, there exists constants $\eta = \eta(\constantx) \in (0, 1/2)$, $\constant_\alpha = \constant_\alpha(\constantx, D) > 0$, $\constant_\beta = \constant_\beta(\constantx, D) > 0$ and $\constant_\gamma = \constant_\gamma(\constantx, D)$ such that for
	\begin{align*}
		\gamma  &:= \min \brc{\frac{2}{n} \prn{1 + \frac{\constant_\gamma\constantsig \sigma_{\lfloor \eta n\rfloor} \cdot \log n \log (\constantsig n)}{\zeta}} + \frac{2}{\muefct(\zeta, \mu)} , \frac{1}{\zeta}} \, , \\
		\alpha_1 &:=  \constant_\alpha \log n \cdot \sqrt{\gamma \Riter_0(\bI)}\, , \\
		\alpha_2&  :=  \constant_\alpha \log n \cdot \sqrt{\gamma^3 \Riter_0(\bQ)} \, , \\
		\beta_1 & := \constant_\beta \prn{\sqrt{n \log n} \cdot \frac{\alpha_1 \gamma \Riter_0(\bQ) + \alpha_2 (1 + \Riter_0(\bI))}{1 + \Riter_0(\bI)^2} + n \cdot \brc{\frac{\gamma^2 \Riter_0(\bQ) + \alpha_1 \alpha_2}{1 + \Riter_0(\bI)^2} + \frac{\alpha_1^2  \gamma \Riter_0(\bQ)}{1 + \Riter_0(\bI)^3}} + \frac{\gamma \Riter_0(\bQ)}{1 + \Riter_0(\bI)}}  \, , \\
		\beta_2 & := \frac{\constant_\beta n \beta_1}{1 + \Riter_0(\bI)^2}\, ,
	\end{align*}
	if $\alpha_1 \leq \Riter_0(\bI)/8$, $\beta_1 \leq \Riter_0(\bQ)/64$, $\gamma \beta_2(1 + \Riter_0(\bI)) \leq 1/64$ and $n^{-D} = \bigO(\alpha_1/(1 + \Riter_0(\bI)))$, for all $n = \Omega_D(1)$ with probability $1-\bigO(n^{-D+1})$ we have
	\begin{align*}
		|\Rfct_n(\zeta, \mu; \bQ) - \Rfct_0(\zeta, \muefct(\zeta, \mu); \bQ)|
		& = \bigO \prn{\gamma \beta_2 \prn{1 +   \Rfct_0(\zeta, \muefct(\zeta, \mu); \bI)}  \Rfct_0(\zeta, \muefct(\zeta, \mu); \bQ) + \beta_1 }  \, .
	\end{align*}
\end{theorem}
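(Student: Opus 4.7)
The plan is to establish a deterministic equivalent for $\Rfct_n$ through a leave-one-out / self-consistency scheme, with the fluctuations controlled by Hanson--Wright (Lemma~\ref{lem:hanson-wright}). Working in the whitened representation $\bz_i = \bSigma^{-1/2}\bx_i$, a direct factorization gives
\begin{align*}
    \Rfct_n(\zeta, \mu; \bQ) = \Tr\bigl(\bQ\,\widetilde{\bA}^{-1}\bigr)\,, \qquad \widetilde{\bA} := \mu\bI + \zeta\bSigma^{-1} + \bZ^\sT\bZ\,,
\end{align*}
and $\Rfct_0(\zeta, \muefct; \bQ) = \Tr\bigl(\bQ\,\widetilde{\bA}_{(0)}^{-1}\bigr)$ with $\widetilde{\bA}_{(0)} := \muefct\bI + \zeta\bSigma^{-1}$; the case of non-invertible $\bSigma$ is handled by the finite-rank approximation discussed after Lemma~\ref{lem:hanson-wright}. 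The goal becomes to compare $\Tr(\bQ\,\widetilde{\bA}^{-1})$ with $\Tr(\bQ\,\widetilde{\bA}_{(0)}^{-1})$.

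The first step is a single-sample leave-out. With $\widetilde{\bA}_{-i} := \widetilde{\bA} - \bz_i \bz_i^\sT$, Sherman--Morrison yields
\begin{align*}
    \bz_i^\sT\,\widetilde{\bA}^{-1}\bQ\,\bz_i = \frac{\bz_i^\sT\,\widetilde{\bA}_{-i}^{-1}\bQ\,\bz_i}{1 + \bz_i^\sT\,\widetilde{\bA}_{-i}^{-1}\bz_i}\,,
\end{align*}
and since $\widetilde{\bA}_{-i}$ is independent of $\bz_i$, conditional Hanson--Wright replaces the two quadratic forms by $\Tr(\widetilde{\bA}_{-i}^{-1})$ and $\Tr(\bQ\,\widetilde{\bA}_{-i}^{-1})$ up to additive errors of order $\alpha_1$ and $\alpha_2$; a second rank-$1$ perturbation bound then replaces $\widetilde{\bA}_{-i}$ by $\widetilde{\bA}$. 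Summing over $i$ and taking a union bound (which costs the extra $\log n$ factors in $\alpha_1, \alpha_2$) gives
\begin{align*}
    \sum_{i=1}^n \bz_i^\sT\,\widetilde{\bA}^{-1}\bQ\,\bz_i \;\approx\; \frac{n\,\Tr(\bQ\,\widetilde{\bA}^{-1})}{1 + m_n}\,,\qquad m_n := \Tr(\widetilde{\bA}^{-1})\,,
\end{align*}
with error controlled by $\beta_1, \beta_2$. The appearance of $\sigma_{\lfloor \eta n\rfloor}\constantsig$ in $\gamma$ is forced by the Frobenius-norm term in Hanson--Wright, evaluated on $\bSigma^{1/2}\widetilde{\bA}^{-1}\bSigma^{1/2}$, and uses a deterministic lower bound on the smallest eigenvalue of $\bZ^\sT\bZ$ restricted to the top-$\lfloor \eta n\rfloor$ eigenspace of $\bSigma$ --- a side lemma proved using Assumption~\ref{asmp:data-dstrb}II.

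The second step is a self-consistency identity. Expanding $\bQ = \bQ\,\widetilde{\bA}\,\widetilde{\bA}^{-1}$ and taking trace,
\begin{align*}
    \Tr(\bQ) = \mu\,\Tr(\bQ\,\widetilde{\bA}^{-1}) + \zeta\,\Tr(\bQ\,\bSigma^{-1}\widetilde{\bA}^{-1}) + \sum_{i=1}^n \bz_i^\sT\,\widetilde{\bA}^{-1}\bQ\,\bz_i\,.
\end{align*}
Substituting the previous approximation gives $\Tr(\bQ) \approx \bigl(\mu + n/(1+m_n)\bigr)\Tr(\bQ\,\widetilde{\bA}^{-1}) + \zeta\,\Tr(\bQ\,\bSigma^{-1}\widetilde{\bA}^{-1})$, whereas the exact analogue for $\widetilde{\bA}_{(0)}$ reads $\Tr(\bQ) = \muefct \Tr(\bQ\,\widetilde{\bA}_{(0)}^{-1}) + \zeta\,\Tr(\bQ\,\bSigma^{-1}\widetilde{\bA}_{(0)}^{-1})$. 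Specialising first to $\bQ = \bI$ gives a scalar fixed-point equation whose deterministic counterpart is precisely~\eqref{eq:mu-fixed-point}; a stability argument (using the assumption $\gamma\beta_2(1 + \Riter_0(\bI))\le 1/64$) then yields $\mu + n/(1+m_n) \approx \muefct$ and $m_n \approx \Riter_0(\bI)$. Subtracting the two identities for general $\bQ$ and closing the loop with the $\bQ=\bI$ bound produces the claimed estimate.

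The principal obstacle is that every one of the error terms above depends on $\Rfct_n$ itself (through $m_n$ and $\Tr(\bQ\,\widetilde{\bA}^{-1})$), so the argument is genuinely a bootstrap: one must first extract an \emph{a~priori} bound on $\|\widetilde{\bA}^{-1}\|$ and $m_n$ from the spectrum of $\bSigma$ alone (encoded by $\gamma$), then feed it back through the self-consistent equation to obtain the sharp estimate. Secondary difficulties are the uniform $\log n$ control of Hanson--Wright tails across all $n$ leave-out events (hence the $\log^2 n$ factor in $\gamma$), and the fact that the quadratic form $\bz_i^\sT\widetilde{\bA}^{-1}\bQ\,\bz_i$ is asymmetric in $\widetilde{\bA}^{-1}\bQ$, requiring a symmetrization before Lemma~\ref{lem:hanson-wright} can be applied. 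Finally, the hypothesis $n^{-D} = \bigO(\alpha_1/(1+\Riter_0(\bI)))$ is precisely what makes the $\bigO(n^{-D+1})$-probability union bound over the $n$ leave-outs usable.
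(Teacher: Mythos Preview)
Your proposal takes a genuinely different route from the paper. The paper does \emph{not} use a simultaneous leave-one-out on the full sample matrix followed by a self-consistency comparison with the deterministic resolvent. Instead it builds a \emph{sequential interpolation}: a random sequence $\mu_0>\mu_1>\cdots>\mu_{n+1}$ with $\mu_0=\muefct(\zeta,\mu)$, defined by the implicit recursion $\mu_{i+1}=\mu_i-(1+\Siter_i(\bI))^{-1}$, and sets $\Riter_i(\bQ)=\Rfct_i(\zeta,\mu_i;\bQ)$. At each step one adds the row $\bz_i$ \emph{and} lowers the regularization, with $\mu_{i+1}$ chosen so that the Sherman--Morrison increment from the rank-one update is cancelled (in conditional expectation) by the increment from changing $\mu$. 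This turns $\{\Riter_i(\bQ)\}$ into a near-martingale adapted to $\mathcal F_i$, and Azuma--Hoeffding produces the $\sqrt{n\log n}$ factor in $\beta_1$. The argument is then organized around two stopping times $T_E,T_F$ that track when the Hanson--Wright events and the running bounds on $|\Riter_i-\Riter_0|$, $|\mu_i-\bar\mu_i|$, $\|\bA_i\|$ first fail; the condition $n^{-D}=\bigO(\alpha_1/(1+\Riter_0(\bI)))$ enters when bounding the bias contribution from the low-probability stopping event.

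Your plan, as written, has two gaps that would need to be filled before it could recover the stated $\beta_1$. First, the self-consistency identity $\Tr(\bQ)=\mu\,\Tr(\bQ\,\widetilde{\bA}^{-1})+\zeta\,\Tr(\bQ\,\bSigma^{-1}\widetilde{\bA}^{-1})+\sum_i\bz_i^\sT\widetilde{\bA}^{-1}\bQ\,\bz_i$ is a tautology, and after substituting your leave-one-out approximation and subtracting the deterministic analogue you are left with one relation in \emph{two} unknown functionals, $\Tr(\bQ(\widetilde{\bA}^{-1}-\widetilde{\bA}_{(0)}^{-1}))$ and $\Tr(\bQ\bSigma^{-1}(\widetilde{\bA}^{-1}-\widetilde{\bA}_{(0)}^{-1}))$; specializing to $\bQ=\bI$ does not eliminate the second, and iterating the relation with $\bQ\mapsto\bQ\bSigma^{-1}$ diverges since $\|\bSigma^{-1}\|\ge 1$. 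The workable version of this step starts instead from the resolvent identity $\widetilde{\bA}^{-1}-\widetilde{\bA}_{(0)}^{-1}=-\widetilde{\bA}_{(0)}^{-1}\bigl[(\mu-\muefct)\bI+\bZ^\sT\bZ\bigr]\widetilde{\bA}^{-1}$, which expresses the target difference directly in terms of $\sum_i\bz_i^\sT\widetilde{\bA}^{-1}\bQ\,\widetilde{\bA}_{(0)}^{-1}\bz_i$ and closes against the fixed-point equation~\eqref{eq:mu-fixed-point}. Second, ``summing over $i$ and taking a union bound'' yields an error of order $n\cdot\alpha_2/(1+\Riter_0(\bI))$, not $\sqrt{n\log n}\cdot\alpha_2/(1+\Riter_0(\bI))$: the leave-one-out fluctuations $\bz_i^\sT\widetilde{\bA}_{-i}^{-1}\bQ\,\bz_i-\Tr(\bQ\,\widetilde{\bA}_{-i}^{-1})$ are \emph{not} martingale differences for any natural filtration (each $\widetilde{\bA}_{-i}$ depends on all other rows), so an Azuma-type bound is not immediate. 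Getting the $\sqrt{n\log n}$ scaling in $\beta_1$ along your route would require an additional martingale decomposition of $\Tr(\bQ\,\widetilde{\bA}^{-1})-\E\Tr(\bQ\,\widetilde{\bA}^{-1})$, which you do not describe. The paper's sequential interpolation sidesteps both issues: it never needs a self-consistency closure, and the martingale structure is built in from the start.
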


Let us emphasize that this theorem holds under weaker assumptions
than Theorem \ref{thm:main}, but the error bounds it provides are quite implicit.
We can obtain more explicit bounds by imposing the assumptions of  Theorem~\ref{thm:main}. We first define the generalized version of $\rho(\lambda)$ in Eq.~\eqref{eq:RhoLambda} for any p.s.d.\ matrix $\bQ$ as
\begin{align} \label{eq:RhoLambda-General}
	\rho(\lambda) := 
	\frac{\Rfct_0(\lambdaefct, 1; \bQ/\norm{\bQ})}{\Rfct_0(\lambdaefct,1; \bI)} \in (0, 1] \, .
\end{align} 
The proof of this corollary is given in Appendix~\ref{proof:R-approximation-simplified}.

\begin{corollary} \label{cor:R-approximation-simplified}
Under Assumption~\ref{asmp:data-dstrb}, 
	for any positive integers $k$, $D$ and p.s.d.\ matrix $\bQ$ with $\|\bQ\| = 1$, there exist constants 
	$\eta = \eta(\constantx) \in (0, 1/2)$
	 and $\constant = \constant(\constantx, D) > 0$, such that the following hold.
	 Define  $\chi_n (\lambda), \const,
	\rho(\lambda)$  as per Eqs.~\eqref{eq:def-chi-n}, \eqref{asmp:lambda},
	\eqref{eq:RhoLambda-General} (those quantities are defined for $\mu = 0$).
%	Further define
%	\begin{align*}
%		& \zeta_n(\lambda):=\constant_1\frac{\chi_n (\lambda)^2  \log^2 n}{\const^{2.5}}  .
%	\end{align*}
	If 
	it holds that $\muefct(n\lambda, \mu)
	\leq (1 - \const/2)^{-1} \muefct(n\lambda, 0)$, and
	\begin{align*}
		&\chi_n (\lambda)^3  \log^2 n \le \constant n \const^{4.5} \sqrt{\rho(\lambda)} \, , \qquad  n^{-2D + 1} = \bigO \prn{ \sqrt{\frac{\const^3 \log^2 n}{n\max \brc{1, \lambda}}}} \, ,
	\end{align*}
	we then have for all $n = \Omega_D(1)$ with probability $1-\bigO(n^{-D+1})$ that
	\begin{align*}
		& |\Rfct_n(n\lambda, \mu; \bQ) - \Rfct_0(n\lambda, \muefct(n\lambda, \mu); \bQ)|  \leq \mc{E}_n \cdot  \Rfct_0(n\lambda, \muefct(n\lambda, \mu); \bQ),
	\end{align*}
	where
	\begin{align} \label{eq:def-E-n}
		\mc{E}_n & = \bigO_{\constantx, D} \prn{\frac{\chi_n (\lambda)^3 \log^2 n}{n \const^{6.5}} \cdot \sqrt{\frac{\Riter_0(\bI)}{\Riter_0(\bQ)}}} \, .
	\end{align}
\end{corollary}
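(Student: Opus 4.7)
The plan is to invoke Theorem~\ref{thm:R-approximation} at $\zeta=n\lambda$ and then simplify each of the quantities $\gamma,\alpha_1,\alpha_2,\beta_1,\beta_2$ using the extra structural assumptions of the corollary. First I would establish the baseline scaling for $\Riter_0(\bI)$. At $\mu=0$, the fixed point equation \eqref{eq:mu-fixed-point} combined with Lemma~\ref{lem:var-bias-formula} yields $\Riter_0(\bI)\big|_{\mu=0}=n(1-\lambda/\lambdaefct)$, which by the definition of $\const$ in \eqref{asmp:lambda} lies between $n\const$ and $n(1-\const)$. The hypothesis $\muefct(n\lambda,\mu)\le(1-\const/2)^{-1}\muefct(n\lambda,0)$ and the monotonicity of $\muefct\mapsto\Rfct_0(\zeta,\muefct;\bI)$ then guarantee $\Riter_0(\bI)=\bigTht(n\const)$ up to constants, so $1+\Riter_0(\bI)\asymp n\const$.

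Next I would bound $\gamma$. Using $\muefct(n\lambda,0)=n\lambda/\lambdaefct$ and the hypothesis gives $\muefct\gtrsim n\const$, so the term $2/\muefct$ contributes at most $\bigO(1/(n\const))$, while the first bracket is controlled directly by $\chi_n(\lambda)\log^{\bigO(1)}n/n$. Altogether $\gamma=\bigO(\chi_n(\lambda)\log^{\bigO(1)}n/(n\const))$. Substituting into the definitions of $\alpha_1$ and $\alpha_2$ gives
\begin{align*}
\alpha_1 &=\bigO\!\left(\log n\cdot\sqrt{\chi_n(\lambda)/\const}\cdot\log^{\bigO(1)} n\right),\\
\alpha_2 &=\bigO\!\left(\log n\cdot\sqrt{(\chi_n(\lambda)/(n\const))^{3}\,\Riter_0(\bQ)}\cdot\log^{\bigO(1)} n\right),
\end{align*}
and then feeding these into the expressions for $\beta_1$ and $\beta_2$ and using $\Riter_0(\bI)\asymp n\const$ and the ratio $\Riter_0(\bQ)/\Riter_0(\bI)=\rho(\lambda)$ (Eq.~\eqref{eq:RhoLambda-General}), I would obtain after algebraic simplification an estimate of the form
\begin{align*}
\gamma\beta_2(1+\Riter_0(\bI))\,\Riter_0(\bQ)+\beta_1 \;=\; \bigO\!\left(\frac{\chi_n(\lambda)^{3}\log^{2}n}{n\,\const^{6.5}}\sqrt{\frac{\Riter_0(\bI)}{\Riter_0(\bQ)}}\right)\Riter_0(\bQ)\, ,
\end{align*}
which, after multiplying by $\zeta=n\lambda$ to convert from $\Rfct$ to $\Ffct$ if needed (here we stay at the $\Rfct$ level), matches the claimed form of $\mc{E}_n$ in \eqref{eq:def-E-n}.

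Third I would verify the four hypotheses of Theorem~\ref{thm:R-approximation}. The bound $\alpha_1\le\Riter_0(\bI)/8$ reduces to $\chi_n(\lambda)\log^{\bigO(1)}n\lesssim n\const^{3}$, which is strictly weaker than the assumed $\chi_n(\lambda)^{3}\log^{2}n\le \constant\, n\const^{4.5}\sqrt{\rho(\lambda)}$ since $\rho(\lambda)\le 1$. The bound $\beta_1\le\Riter_0(\bQ)/64$ is the one that actually consumes the $\sqrt{\rho(\lambda)}$ factor: after dividing through by $\Riter_0(\bQ)$ one finds a term proportional to $\chi_n(\lambda)^{3}\log^{2}n/(n\const^{4.5}\sqrt{\rho(\lambda)})$, which is made $\le 1/64$ precisely by the stated assumption. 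The bound $\gamma\beta_2(1+\Riter_0(\bI))\le 1/64$ follows from the same inequality, and the probabilistic condition $n^{-D}=\bigO(\alpha_1/(1+\Riter_0(\bI)))$ reduces to the stated assumption $n^{-2D+1}=\bigO\!\left(\sqrt{\const^{3}\log^{2}n/(n\max\{1,\lambda\})}\right)$ by direct substitution of the bounds on $\alpha_1$ and $\Riter_0(\bI)$.

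The main obstacle is really just bookkeeping: keeping track of how each factor of $\const$ (and each $\log n$) propagates through the five displayed quantities in Theorem~\ref{thm:R-approximation}, and isolating the single $\sqrt{\rho(\lambda)}$ factor that controls the ratio $\Riter_0(\bI)/\Riter_0(\bQ)$ in the final error bound. No new probabilistic ingredients are needed; the entire argument is an algebraic simplification of the conclusion of Theorem~\ref{thm:R-approximation} under the additional quantitative hypotheses of the corollary, and the $1-\bigO(n^{-D+1})$ exceptional set is inherited verbatim from that theorem.
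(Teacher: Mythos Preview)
Your overall strategy is exactly the paper's: invoke Theorem~\ref{thm:R-approximation} at $\zeta=n\lambda$, bound $\gamma,\alpha_1,\alpha_2,\beta_1,\beta_2$ in terms of $\chi_n(\lambda)$ and $\const$, then check the four hypotheses. No new probabilistic input is needed. However, your very first computation is wrong, and the error propagates through the entire argument.

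You claim $\Riter_0(\bI)\big|_{\mu=0}=n(1-\lambda/\lambdaefct)$, hence $\Riter_0(\bI)\asymp n\const$. This is incorrect. By definition $\Riter_0(\bI)=\Rfct_0(n\lambda,\muefct;\bI)=\Tr\!\big(\bSigma(n\lambda\,\bI+\muefct\bSigma)^{-1}\big)$. At $\mu=0$ one has $\muefct=n\lambda/\lambdaefct$, so
\[
\Riter_0(\bI)=\frac{\lambdaefct}{n\lambda}\,\Tr\!\big(\bSigma(\bSigma+\lambdaefct\bI)^{-1}\big)=\frac{\lambdaefct}{n\lambda}\cdot n\Big(1-\frac{\lambda}{\lambdaefct}\Big)=\frac{\lambdaefct}{\lambda}-1\, ,
\]
which by Eq.~\eqref{asmp:lambda} lies in $[\const/(1-\const),\,(1-\const)/\const]\subset[\const/2,\const^{-1}]$. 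In other words $\Riter_0(\bI)=\Theta(1)$ in $n$, not $\Theta(n\const)$; you have confused $\Riter_0(\bI)$ with $\Tr(\bSigma(\bSigma+\lambdaefct\bI)^{-1})$, missing the factor $\lambdaefct/(n\lambda)$. The paper carries out exactly this step and obtains $\const/2\le\Riter_0(\bI)\le\const^{-1}$.

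This changes every subsequent line. With the correct scaling, the denominators $1+\Riter_0(\bI)^k$ in the definition of $\beta_1$ are only $\Theta(1)$, not $\Theta((n\const)^k)$, so your bounds on $\beta_1,\beta_2$ would be off by powers of $n$; the same happens with $\alpha_1=\constant_\alpha\log n\sqrt{\gamma\Riter_0(\bI)}$. Your claimed form of $\mc{E}_n$ is the right target, but the algebra getting there is not correct as written. The fix is straightforward: redo the bookkeeping with $\Riter_0(\bI)\le\const^{-1}$ and $\Riter_0(\bQ)=\rho\,\Riter_0(\bI)$, using that ratios such as $\Riter_0(\bI)/(1+\Riter_0(\bI)^2)$ are $O(1)$; this is what the paper does and it leads cleanly to $\beta_1=\bigO\big(\sqrt{\rho}\,\chi_n(\lambda)^2\log^2 n/(n\const^{2.5})\big)$ and then to the stated $\mc{E}_n$.
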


To further simplify the assumption  $\muefct(n\lambda, \mu) \leq (1 - \const/2)^{-1} \muefct(n\lambda, 0)$ in 
Corollary~\ref{cor:R-approximation-simplified}, the next lemma will be helpful. 
We defer its proof to Appendix~\ref{proof:muefct-small-mu-bound}.
\begin{lemma} \label{lem:muefct-small-mu-bound}
	For any fixed $\zeta = n\lambda > 0$, the function $\muefct(\zeta, \mu)$ is increasing in $\mu$ for all $\mu \geq 0$. Assuming Eq.~\eqref{asmp:lambda}, if $0 \leq \mu \leq n \const^3 /2$, then
	\begin{align*}
		\muefct(\zeta, \mu) \leq (1 - \const/2)^{-1} \muefct(\zeta, 0).
	\end{align*}
\end{lemma}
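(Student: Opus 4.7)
The lemma has two parts: monotonicity of $\mu\mapsto\muefct(\zeta,\mu)$, and the quantitative upper bound. For monotonicity I would reuse the substitution $\xi = 1/(\muefct-\mu)$ already introduced in the passage immediately preceding the lemma. Setting $\bA = \zeta\bI + \mu\bSigma$, the defining equation becomes $n - 1/\xi = \Tr(\bSigma(\xi\bA+\bSigma)^{-1})$. If $\mu$ increases then $\bA$ increases in the p.s.d.\ order, so at any fixed $\xi$ the right-hand side decreases; since the left-hand side is strictly increasing in $\xi$ and the right-hand side strictly decreasing, restoring equality forces $\xi$ to decrease. Consequently $1/\xi$ increases, and thus $\muefct = \mu + 1/\xi$ is strictly increasing in $\mu$.

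For the quantitative bound, the key device is the auxiliary function
\begin{align*}
h(\muefct) \;:=\; \muefct\,\bigl(1 + \Rfct_0(\zeta,\muefct;\bI)\bigr).
\end{align*}
A direct computation, using $\partial_\muefct\Rfct_0(\zeta,\muefct;\bI) = -\Tr(\bSigma^2(\zeta\bI+\muefct\bSigma)^{-2})$, gives after the $\muefct\sigma_i^2$ terms cancel
\begin{align*}
h'(\muefct) \;=\; 1 + \zeta\,\Tr\!\bigl(\bSigma(\zeta\bI+\muefct\bSigma)^{-2}\bigr) \;\geq\; 1.
\end{align*}
Writing the defining identities $\muefct_0(1+\Rfct_0(\zeta,\muefct_0;\bI))=n$ and $(\muefct'-\mu)(1+\Rfct_0(\zeta,\muefct';\bI))=n$ (with the abbreviations $\muefct_0=\muefct(\zeta,0)$ and $\muefct'=\muefct(\zeta,\mu)$), subtracting yields the clean identity
\begin{align*}
h(\muefct')-h(\muefct_0) \;=\; \mu\bigl(1+\Rfct_0(\zeta,\muefct';\bI)\bigr).
\end{align*}

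From the already-established monotonicity, $\muefct'\ge \muefct_0$, so $\Rfct_0(\zeta,\muefct';\bI)\le \Rfct_0(\zeta,\muefct_0;\bI)$; hence $1+\Rfct_0(\zeta,\muefct';\bI)\le n/\muefct_0 = \lambdaefct/\lambda = 1/\nu \le 1/\const$, using $\muefct_0 = n\lambda/\lambdaefct$ and the definition $\const = \min(\nu, 1-\nu)$ with $\nu = \lambda/\lambdaefct$. Combined with $h'\ge 1$, the mean value theorem gives
\begin{align*}
\muefct' - \muefct_0 \;\leq\; h(\muefct')-h(\muefct_0) \;\leq\; \mu/\const.
\end{align*}
For $\mu \leq n\const^3/2$ this bound is $\leq n\const^2/2$. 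On the other hand $\muefct_0 = n\nu \ge n\const$, so $\muefct_0\cdot \const/(2-\const) \ge n\const^2/(2-\const) > n\const^2/2$ (the strict inequality uses $\const \le 1/2$, i.e.\ $2-\const < 2$). Therefore $\muefct' - \muefct_0 \le \muefct_0\,\const/(2-\const)$, which rearranges exactly to $\muefct' \le \muefct_0/(1-\const/2)$ as required.

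The only substantive step is identifying the right auxiliary function $h$: working directly with the implicit defining equation, the derivative $\partial\muefct/\partial\mu$ is the reciprocal of $1 - n\Tr(\bSigma^2(\zeta\bI+\muefct\bSigma)^{-2})/(1+\Rfct_0)^2$, whose sign and size are not transparent. Multiplying the equation through by $(1+\Rfct_0)$ and studying $h$ instead turns this into the manifestly positive expression $1 + \zeta\Tr(\bSigma(\zeta\bI+\muefct\bSigma)^{-2})$, and the rest is bookkeeping with $\nu \ge \const$ and $\const \le 1/2$.
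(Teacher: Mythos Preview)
Your proof is correct and takes a genuinely different route from the paper's. For monotonicity, the paper introduces $f(t)=t-n/(1+\Rfct_0(\zeta,t;\bI))$, notes $f(\muefct(\zeta,\mu))=\mu$, and proves $f'>0$ on $[\muefct(\zeta,0),\infty)$ via a differential-inequality trick (showing $e^{-g(t)}f(t)$ is increasing with $f(\muefct(\zeta,0))=0$); you instead reuse the $\xi$-substitution and argue by monotone comparison of the two sides of the fixed-point equation, which is shorter. For the quantitative bound, the paper evaluates $f$ at the target point $(1-\const/2)^{-1}\muefct(\zeta,0)$ and, through several inequalities, lower-bounds it by $n\const^3/2$; you instead introduce $h(\muefct)=\muefct(1+\Rfct_0(\zeta,\muefct;\bI))$, obtain the clean lower bound $h'\geq 1$ after a cancellation, and derive $\muefct'-\muefct_0\leq \mu/\const$ by the mean value theorem. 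Your auxiliary function $h$ is well chosen: working directly with $f$ or with $\partial\muefct/\partial\mu$ gives denominators whose positivity is not immediate, whereas $h'$ is manifestly $\geq 1$. One tiny remark: the strict inequality $2-\const<2$ follows from $\const>0$, not from $\const\leq 1/2$ as you wrote; the argument is unaffected.
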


\paragraph{Part III: Approximation error for variance} We are now ready to combine our results 
in Part I and Part II to obtain approximation errors $|\var_\bX - \VAR_n|$ and $|\bias_\bX - \BIAS_n|$. 
For the variance, we want to take $\delta$ in Eq.~\eqref{eq:variance-approx-bound-mid-1} such that
 $k \delta \leq n\lambda$. In this case, $[n\lambda, n\lambda + k\delta] \subset [n\lambda, 2n\lambda]$.
  Note that $\lambdaefct(\lambda)$ is an increasing function of $\lambda$. Further, by
\begin{align*}
	n \cdot \prn{1 - \frac{\lambda}{\lambdaefct}} =  \Tr \prn{\bSigma(\bSigma + \lambdaefct \bI)^{-1}} \, ,
\end{align*}
we know $\lambda\mapsto \muefct(n\lambda,0) =  n\lambda/\lambdaefct$ is an increasing function. Therefore
 $\lambda/\lambdaefct(\lambda) \leq 2\lambda /\lambdaefct(2\lambda)$, which implies $\lambdaefct(\lambda) \leq \lambdaefct(2\lambda) \leq 2 \lambdaefct(\lambda)$. For $\lambda$ that satisfies 
Eq.~\eqref{asmp:lambda}, this guarantees that for any $\lambda' \in [\lambda, 2\lambda]$,
\begin{align*}
	\frac{\lambda'}{ \lambdaefct(\lambda')} \ge    \frac{\lambda}{\lambdaefct(\lambda)} \geq \const \, ,
\end{align*}
and
\begin{align*}
	1 - \frac{\lambda'}{\lambdaefct(\lambda')} & = \frac{1}{n} \Tr \prn{\bSigma(\bSigma + \lambdaefct(\lambda') \bI)^{-1}} \geq \frac{1}{n} \Tr \prn{\bSigma(\bSigma + 2\lambdaefct(\lambda) \bI)^{-1}} \geq \frac{1}{2} \cdot  \frac{1}{n} \Tr \prn{\bSigma(\bSigma + \lambdaefct(\lambda) \bI)^{-1}} \nonumber \\
	& = \frac{1}{2} \prn{1 -\frac{\lambda}{ \lambdaefct(\lambda)}  } \geq \const/2 \, .
\end{align*}
Hence, for any $\lambda' \in [\lambda, 2\lambda]$, Eq.~\eqref{asmp:lambda} still holds but 
with constant $\const' \ge \const/2$. Therefore, we can apply
Corollary~\ref{cor:R-approximation-simplified} for any
 $\lambda' \in [\lambda, 2 \lambda]$ for $\bQ = \bI$ and $\mu = 0$, provided the
 following conditions hold
 %\am{????}
\begin{align*}
	 \chi_n (\lambda')^3  \log^2 n \le \constant n (\const/2)^{4.5} \sqrt{\rho(\lambda')} = \constant n (\const/2)^{4.5} \, ,
\end{align*} 
where the last equality used the fact that $\rho(\lambda') = 1$ when $\bQ = \bI$. Finally, setting $\constant' := 2^{-4.5} \constant$ and using the fact that $\chi_n(\lambda')$ is decreasing in $\lambda$, it suffices to require
\begin{align*}
	\chi_n (\lambda)^3  \log^2 n \le \constant' n \const^{4.5} \, ,
\end{align*}
which holds by the theorem's assumptions.
%\am{Isn't a factor $\sqrt{\rho(\lambda)}$ missing in the second inequality? 
%Also, the first requires more explanation. How does this imply
%$\muefct(\lambda', \zeta_n(\lambda'')) \leq (1 - \const/4)^{-1} \muefct(\lambda'', 0)$ ?
%Finally, why do we write $\rho(\lambda)$ instead of $\rho_n(\lambda)$????}
%\cc{$\sqrt{\rho(\lambda)} = 1$ in this case because $\bQ = \bI$; The first equation holds because Eq.~\eqref{asmp:lambda} holds and we can apply Lemma~\ref{lem:muefct-small-mu-bound} (forgot to explicitly invoke it); lastly, $\rho(\lambda)$ is a function depending only on $\bQ$. I also find that we overwrote the general version of $\rho(\lambda)$ in Eq.~\eqref{eq:RhoLambda}. We should add the general version before Corollary~\ref{cor:R-approximation-simplified} with $\btheta \btheta^\sT / \|\btheta\|_2^2$ replaced by any $\bQ$.}
%By Lemma~\ref{lem:muefct-small-mu-bound}, the above conditions hold if
%\begin{align*}
%	\zeta_n(\lambda) \leq \min \brc{\frac{1}{8}, \frac{\constant_2}{2}} n \const^3 \, ,
%\end{align*}
%or equivalently $\chi_n(\lambda)^2 \log^2n \leq \constant n \const^{5.5}$ for 
%$\constant := \min \brc{1/8, \constant_2/2} / \constant_1$. This holds by the theorem's assumptions. 
%This is indeed guaranteed by . 

Hence, we can now apply Corollary~\ref{cor:R-approximation-simplified} with $\bQ = \bI$, and it follows that with probability $1 - \bigO_k(n^{-D+1})$,
\begin{align*}
	& \max_{0 \leq j \leq k} \frac{\left| \Ffct_n(n\lambda + j \delta, 0; \bI) -  \Ffct_0(n\lambda + j \delta, \muefct(n\lambda + j \delta, 0); \bI)\right|}{\delta} \nonumber \\
	& \leq \frac{2 n\lambda \mc{E}_n}{\delta} \cdot  \max_{0 \leq j \leq k } \Rfct_0(n\lambda + j \delta, \muefct(n\lambda + j \delta, 0); \bI)  \nonumber \\
	&\leq \frac{2\mc{E}_n}{\delta} \Ffct_0(n\lambda, \muefct(n\lambda, 0); \bI)  \, ,
\end{align*}
where in the last inequality we use that $\Rfct_0(\zeta , \muefct(\zeta, 0); \bI)  = n/ \muefct(\zeta, 0) - 1$ 
is a decreasing function in $\zeta$ as $\muefct(\zeta, 0)$ is increasing in $\zeta$. %\am{Where is this shown?}\cc{Added explanation}. 
Next by Lemmas~\ref{lem:derivative-F-n} and \ref{lem:derivative-F-0} we obtain
\begin{align*}
	& \sup_{\zeta' \in [n\lambda, n\lambda + k\delta]} \left|\frac{\partial^{k+1}}{\partial {\zeta'}^{k+1}} \Ffct_n(\zeta', 0; \bI) - \frac{\partial^{k+1}}{\partial {\zeta'}^{k+1}} \Ffct_0(\zeta', \muefct(\zeta', 0); \bI)\right| \nonumber \\
	& = \bigO_{k} \prn{ \sup_{\lambda' \in [\lambda, 2\lambda]}  \frac{\Ffct_n(n\lambda', 0; \bI)  + \Ffct_0(n\lambda', \muefct(n\lambda', 0); \bI)}{n^{k+1}{\lambda'}^{k+1} \const^{2k+2}}}  \nonumber \\
	& = \bigO_{k} \prn{ \sup_{\lambda' \in [\lambda, 2\lambda]}  \frac{n\lambda' \Rfct_n(n\lambda', 0; \bI)  + n\lambda' \Rfct_0(n\lambda', \muefct(n\lambda', 0); \bI)}{n^{k+1}\lambda^{k+1} \const^{2k+2}}} \nonumber \\
	& \stackrel{\mathrm{(i)}}{=} \bigO_{k} \prn{ \frac{n\lambda \Rfct_n(n\lambda, 0; \bI)  + n\lambda \Rfct_0(n\lambda, \muefct(n\lambda, 0); \bI)}{n^{k+1}\lambda^{k+1} \const^{2k+2}}} \nonumber \\
	& \stackrel{\mathrm{(ii)}}{=}  \bigO_{k} \prn{\frac{(1 + \mc{E}_n) \Ffct_0(n\lambda, \muefct(n\lambda, 0); \bI)}{n^{k+1}\lambda^{k+1} \const^{2k+2}} } \, ,
\end{align*}
where in (i) we use again that  $\Rfct_n(\zeta , \mu; \bI)$ and $\Rfct_0(\zeta , \muefct(\zeta, 0); \bI)$
 are decreasing in $\zeta$ and in (ii) we apply Corollary~\ref{cor:R-approximation-simplified}.
 Substituting the above displays into Eq.~\eqref{eq:variance-approx-bound-mid-1}, we have
\begin{align}\label{eq:AlmostVar}
	\left|\var_\bX(\lambda) - \VAR_n(\lambda)\right| & = \bigO_{k} \prn{\prn{\frac{\mc{E}_n}{\delta}  +   \frac{(1 + \mc{E}_n) \delta^{k} }{n^{k+1}\lambda^{k+1} \const^{2k+2}}} \cdot \vareps^2   \Ffct_0(n\lambda, \muefct(n\lambda, 0); \bI)} \, .
\end{align}

Finally, we use the fact that
\begin{align*}
	\Ffct_0(n\lambda, \muefct(n\lambda, 0); \bI) &=  n\lambda \Tr \prn{\bSigma\prn{\frac{n\lambda}{\lambdaefct}\bSigma + n\lambda \bI}^{-1}} = \lambdaefct  \Tr \prn{\bSigma(\bSigma + \lambdaefct \bI)^{-1}} \nonumber \\
	& = n\lambda \cdot \frac{\Tr \prn{\bSigma(\bSigma + \lambdaefct \bI)^{-1}}}{n - \Tr \prn{\bSigma(\bSigma + \lambdaefct \bI)^{-1}}} \leq n\lambda \cdot \frac{\Tr \prn{\bSigma^2(\bSigma + \lambdaefct \bI)^{-2}}}{n - \Tr \prn{\bSigma(\bSigma + \lambdaefct \bI)^{-1}}} \nonumber \\
	& \leq n\lambda \cdot \frac{n - \Tr \prn{\bSigma^2(\bSigma + \lambdaefct \bI)^{-2}}}{n - \Tr \prn{\bSigma(\bSigma + \lambdaefct \bI)^{-1}}} \cdot \frac{\Tr \prn{\bSigma^2(\bSigma + \lambdaefct \bI)^{-2}}}{n - \Tr \prn{\bSigma^2(\bSigma + \lambdaefct \bI)^{-2}}} \nonumber \\
	& = n\lambda \cdot \frac{n - \Tr \prn{\bSigma^2(\bSigma + \lambdaefct \bI)^{-2}}}{n - \Tr \prn{\bSigma(\bSigma + \lambdaefct \bI)^{-1}}} \cdot \left. \frac{\partial}{\partial \zeta}\Ffct_0(n\lambda, \muefct(n\lambda, \mu); \bI)\right|_{\mu = 0} \, ,
\end{align*}
and by Eq.~\eqref{asmp:lambda},
\begin{align*}
	\frac{n - \Tr \prn{\bSigma^2(\bSigma + \lambdaefct \bI)^{-2}}}{n - \Tr \prn{\bSigma(\bSigma + \lambdaefct \bI)^{-1}}} \leq \frac{n}{n - \Tr \prn{\bSigma(\bSigma + \lambdaefct \bI)^{-1}}}
	=\frac{\lambdaefct}{\lambda} \leq \const^{-1} \, .
\end{align*}
We therefore have, by Lemma~\ref{lem:var-bias-formula},
$\tau^2\Ffct_0(n\lambda, \muefct(n\lambda, 0); \bI)\le n\lambda\const^{-1}\VAR_n(\lambda)$.
 Substituting in Eq.~\eqref{eq:AlmostVar}, we obtain
\begin{align*}
	\left|\var_\bX(\lambda) - \VAR_n(\lambda)\right| & = \bigO_{k} \prn{\frac{n\lambda \mc{E}_n}{\delta \const} \cdot  +   \frac{(1 + \mc{E}_n) \delta^{k} }{n^k\lambda^{k} \const^{2k+3}}}  \cdot \VAR_n(\lambda) \, .
\end{align*}
By setting $\delta = \lambda \const^2 n^{1-1/k}$, the condition $\delta k \leq n\lambda$ 
is satisfied for all $n = \bigOmg_k(1)$, which completes the proof for variance approximation with
\begin{align*}
	\left|\var_\bX(\lambda) - \VAR_n(\lambda)\right| & = \bigO_{k} \prn{\mc{E}_n  \cdot n^{-\frac{1}{k}} \const^{-3} +  n^{-1} \const^{-3}}  \cdot \VAR_n(\lambda) = \bigO_{k, \constantx, D} \prn{\frac{\chi_n (\lambda)^3 \log^2 n}{n^{1- \frac{1}{k}} \const^{9.5}}} \cdot \VAR_n(\lambda) \, ,
\end{align*}
where we use $\chi_n(\lambda) \geq 1$ in the final bound.

\paragraph{Part IV: Approximation error for bias} 
Note that all the terms on the right-hand side of Eq.~\eqref{eq:bias-approx-bound-mid-1}
are evaluated at the same value of $\lambda$. Hence, Eq.~\eqref{asmp:lambda} 
applies to each of these terms.
We claim that the assumptions of Corollary~\ref{cor:R-approximation-simplified}
apply to all of these terms, provided the following conditions hold
\begin{align}
%	k \delta & \leq \muefct(\lambda, 0)/2 \, , \label{eq:CondBias1}\\
	\muefct(n\lambda, k\delta) & \leq (1 - \const/2)^{-1} \muefct(n\lambda, 0) \, , 
	\label{eq:CondBias2}\\
	 \chi_n (\lambda)^3  \log^2 n & \le \constant n \const^{4.5} \sqrt{\rho(\lambda)} \, .\label{eq:CondBias3}
\end{align}
Indeed, 
%condition \eqref{eq:CondBias1} \am{Not clear why this is needed???} \cc{It is needed to invoke Corollary~\ref{cor:R-approximation-simplified}, which is further explained in its proof ``simplifying conditions'' (I need $\beta_1 \leq \Riter_0(\bQ)/4$)}; 
condition \eqref{eq:CondBias2} implies 
$\muefct(n\lambda, \mu) \leq (1 - \const/2)^{-1} \muefct(n\lambda, 0)$ for all $\mu\in [0,k\delta]$
since $\mu\mapsto \muefct(n\lambda, \mu)$  is monotone decreasing;
finally, condition \eqref{eq:CondBias3} is independent of $\mu$. 

then we can apply Lemmas~\ref{lem:derivative-F-n} and \ref{lem:derivative-F-0} and invoke 
Corollary~\ref{cor:R-approximation-simplified} with $\bQ = \btheta \btheta^\sT$.
 To be specific, by Corollary~\ref{cor:R-approximation-simplified}, we have with probability $1 - \bigO_k(n^{-D+1})$,
\begin{align*}
	\max_{0 \leq j \leq k} \frac{\left| \Ffct_n(n\lambda, j \delta; \btheta \btheta^\sT) -  \Ffct_0(n\lambda , \muefct(n\lambda, j \delta); \btheta \btheta^\sT)\right|}{\delta} 
&\leq \frac{\mc{E}_n}{\delta} \max_{j\in\{0,\dots,k\}}\Ffct_0(n\lambda, \muefct(n\lambda, j\delta); \btheta \btheta^\sT)\\
&\leq \frac{\mc{E}_n}{\delta} \Ffct_0(n\lambda, \muefct(n\lambda, 0); \btheta \btheta^\sT) \, , 
\end{align*}
as $	\Ffct_0(n\lambda, \muefct(n\lambda, \mu); \btheta \btheta^\sT) $ decreases with $\mu$. By Lemmas~\ref{lem:derivative-F-n} and \ref{lem:derivative-F-0} we obtain
\begin{align*}
	& \sup_{\mu' \in [0, k\delta]} \left|\frac{\partial^{k+1}}{\partial {\mu'}^{k+1}} \Ffct_n(n\lambda, \mu'; \btheta \btheta^\sT) - \frac{\partial^{k+1}}{\partial {\mu'}^{k+1}} \Ffct_0(n\lambda, \muefct(n\lambda, \mu'); \btheta \btheta^\sT)\right| \nonumber \\
	&  = \bigO_{k} \prn{ \sup_{\mu' \in [0, k \delta]}  \frac{\Ffct_n(n\lambda, \mu'; \btheta \btheta^\sT)}{n^{k+1}{\lambda}^{k+1}} + \sup_{\mu' \in [0, k \delta]}  \frac{\Ffct_0(n\lambda, \muefct(n\lambda, \mu'); \btheta \btheta^\sT)}{{\muefct(n\lambda, \mu')}^{k+1} \const^{2k+2}} }  \nonumber  \\
	& \stackrel{\mathrm{(i)}}{=} \bigO_{k} \prn{ \frac{\Ffct_n(n\lambda, 0; \btheta \btheta^\sT)}{n^{k+1}{\lambda}^{k+1}} +  \frac{\Ffct_0(n\lambda, \muefct(n\lambda, 0); \btheta \btheta^\sT)}{{\muefct(n\lambda, 0)}^{k+1} \const^{2k+2}} } \nonumber \\
	& \stackrel{\mathrm{(ii)}}{=}  \bigO_{k} \prn{ \frac{(1 + \mc{E}_n + \lambdaefct^{k+1} \const^{-2k-2})\Ffct_0(n\lambda, \muefct(n\lambda, 0); \btheta \btheta^\sT)}{n^{k+1}{\lambda}^{k+1}}} \, ,
\end{align*}
where in the bound (i) we use the fact that $\muefct(n\lambda, \mu)$ is increasing in $\mu$ 
 (cf.~Lemma~\ref{lem:muefct-small-mu-bound}) and $\Ffct_k(n\lambda, \mu; \bQ)$ is decreasing in $\mu$
 when $\mu \geq 0$; in (ii) we use that $\muefct(n\lambda, 0) = n\lambda / \lambdaefct$. Combining the calculations above, we have from Eq.~\eqref{eq:bias-approx-bound-mid-1}
\begin{align*}
	\left|\bias_\bX(\lambda) - \BIAS_n(\lambda)\right| & = \bigO_k \prn{ \prn{\frac{n\lambda\mc{E}_n}{\delta}   + \frac{\delta^k\prn{1 + \mc{E}_n + \lambdaefct^{k+1} \const^{-2k-2}}}{n^k{\lambda}^{k}}} \cdot \Ffct_0(n\lambda, \muefct(n\lambda, 0); \btheta \btheta^\sT)} \, .
\end{align*}

Then we make use of the following bound 
\begin{align*}
	\Ffct_0(n\lambda, \muefct(n\lambda, 0); \btheta \btheta^\sT) &=  n\lambda \Tr \prn{\bSigma^{\half} \btheta \btheta^\sT \bSigma^{\half}\prn{\frac{n\lambda}{\lambdaefct}\bSigma + n\lambda \bI}^{-1}} = \lambdaefct  \Tr \prn{\bSigma^{\half} \btheta \btheta^\sT \bSigma^{\half}(\bSigma + \lambdaefct \bI)^{-1}} \nonumber \\
	& = n\lambda \cdot \frac{\btheta^\sT \prn{\bSigma + \lambdaefct \bI}^{-1} \bSigma \btheta}{n - \Tr \prn{\bSigma(\bSigma + \lambdaefct \bI)^{-1}}} \leq \frac{\lambda}{ \lambdaefct} \cdot \frac{\lambdaefct^2 \btheta^\sT \prn{\bSigma + \lambdaefct \bI}^{-2} \bSigma^2 \btheta}{1 - n^{-1}\Tr \prn{\bSigma(\bSigma + \lambdaefct \bI)^{-1}}} \nonumber \\
	& = \frac{\lambda}{ \lambdaefct}  \cdot \frac{n - \Tr \prn{\bSigma^2(\bSigma + \lambdaefct \bI)^{-2}}}{n - \Tr \prn{\bSigma(\bSigma + \lambdaefct \bI)^{-1}}} \cdot \frac{\lambdaefct^2 \btheta^\sT \prn{\bSigma + \lambdaefct \bI}^{-2} \bSigma^2 \btheta}{1 - n^{-1}\Tr \prn{\bSigma^2(\bSigma + \lambdaefct \bI)^{-2}}} \nonumber \\
	& =  \frac{\lambda}{ \lambdaefct}  \cdot \frac{n - \Tr \prn{\bSigma^2(\bSigma + \lambdaefct \bI)^{-2}}}{n - \Tr \prn{\bSigma(\bSigma + \lambdaefct \bI)^{-1}}} \cdot \BIAS_n(\lambda) \, ,
\end{align*}
where in the last line we used the definition of  $\BIAS_n(\lambda)$ in Eq.~\eqref{eq:BIAS-n}.
 By Eq.~\eqref{asmp:lambda}, we have 
 %\am{Changed the bound below, and removed a factor 
 %$\kappa^{-1}$ in what follows}
\begin{align*}
	\frac{\lambda}{ \lambdaefct}  \cdot \frac{n - \Tr \prn{\bSigma^2(\bSigma + \lambdaefct \bI)^{-2}}}{n - \Tr \prn{\bSigma(\bSigma + \lambdaefct \bI)^{-1}}}
	=1-\frac{1}{n}\Tr \prn{\bSigma^2(\bSigma + \lambdaefct \bI)^{-2}}  \leq1 \, ,
\end{align*}
which reduces the approximation bound for bias to
\begin{align*}
	\left|\bias_\bX(\lambda) - \BIAS_n(\lambda)\right| & = \bigO_{k} \prn{ \frac{n\lambda\mc{E}_n}{\delta}   + \frac{\delta^k\prn{1 + \mc{E}_n + \lambdaefct^{k+1} \const^{-2k-2}}}{n^k{\lambda}^{k} }} \cdot \BIAS_n(\lambda) \, .
\end{align*}
We again take $\delta = \lambda \const^2 n^{1-\frac{1}{k}}$ and the bound becomes
\begin{align*}
	\left|\bias_\bX(\lambda) - \BIAS_n(\lambda)\right| & = \bigO_{k, \constantx, D} \prn{\frac{\lambdaefct^{k+1}}{n \const^2} + \frac{\chi_n (\lambda)^3 \log^2 n}{n^{1- \frac{1}{k} } \const^{8.5}} \cdot \sqrt{\frac{\Riter_0(\bI)}{\Riter_0(\btheta \btheta^\sT)}} } \cdot \BIAS_n(\lambda) \, .
\end{align*}
This bounds hold under the conditions  \eqref{eq:CondBias2} to \eqref{eq:CondBias3},
which are implied by the following:
\begin{align*}
%	k \lambda \const^2 n^{-\frac{1}{k}} & \leq \muefct(\lambda, 0)/2 = \frac{\lambda}{2 \lambdaefct} \, , \\
	\muefct(n\lambda, 	k \lambda \const^2 n^{1-\frac{1}{k}} ) & \leq (1 - \const/2)^{-1} \muefct(n\lambda, 0) \, . \\
\chi_n (\lambda)^3  \log^2 n & \le \constant n \const^{4.5} \sqrt{\rho(\lambda)} \, .
\end{align*}
%Since $\lambda/\lambdaefct \geq n \const$, it is sufficient to have the first condition if $\lambda kn^{-\frac{1}{k}} \leq n/2$. 
For the first condition, we invoke Lemma~\ref{lem:muefct-small-mu-bound} to obtain a sufficient requirement $\lambda k n^{1-\frac{1}{k}} \leq n \const / 2$. For the last condition, it suffices to have $\chi_n(\lambda)^3 \log^2 n \leq \constant' n \const^{4.5} \sqrt{\rho(\lambda)}$ for the same $\constant' $ defined in Part III.
	\section{Proof of Theorem~\ref{thm:R-approximation}} \label{proof:R-approximation}

\paragraph{Part I: The iterative sequence} The proof 
is based on the following interpolating construction.
We will construct a sequence of random variables
$\mu_i \in \mathcal{F}_{i-1}$ for $i=0,1,\cdots, n+1$ (where, by convention,
 $\mathcal{F}_{-1}=\mathcal{F}_0$ is the trivial $\sigma$-algebra) such that,
 defining
\begin{align}\label{eq:RiterDef}
	\Riter_i (\bQ) := \Tr \prn{\bSigma^{\half}\bQ \bSigma^{\half} \prn{\zeta \bI + \mu_i \bSigma + \bX_i^\sT \bX_i}^{-1}} = \Rfct_i(\zeta, \mu_i ; \bQ) \, ,
\end{align}
we obtain that $\Riter_i(\bQ)$ is approximately a martingale and,
as a consequence,  $\Riter_0(\bQ) \approx \Riter_n(\bQ)$.
We will further have $\mu_0 =  \muefct(\zeta, \mu)$ and $\mu_n \approx \mu$, 
asd therefore we obtain the desired claim 
$\Rfct_0(\zeta, \muefct(\zeta, \mu); \bQ) \approx \Rfct_n(\zeta, \mu; \bQ)$.

\begin{remark} 
Note that
\begin{align*}
\Riter_{i+1} (\bQ) := \Tr \prn{\bSigma^{\half}\bQ \bSigma^{\half} \prn{\zeta \bI + \mu_{i+1} \bSigma + 
\bX_i^\sT \bX_i+\bx_{i+1}\bx_{i+1}^{\sT}}^{-1}}\, ,
\end{align*}
Hence, the difference between $\Riter_{i+1} (\bQ)$ and $\Riter_{i} (\bQ)$ results from two
effects: the rank one update $\bx_{i+1}\bx_{i+1}^{\sT}$, and the change in the 
coefficients $\mu_{i+1}-\mu_i$. Each of these effects can be estimated using matrix inversion,
cf. Eqs.~\eqref{eq:I-term} and \eqref{eq:II-term}: we will choose $\mu_{i+1}-\mu_i$
as to cancel the conditional expectation of the overall change vanish approximately.
\end{remark}

\begin{remark}
   The fact that $\Riter_i(\bQ)$  is nearly constant
   gives rise to the connection between the random design model~\eqref{eq:LinearModel} 
    and the equivalent sequence model~\eqref{Eq:equivalent-sequence-model}.
Indeed, if we further set $\zeta = n \lambda$ and $\mu = 0$, we  recover
 $\Tr \prn{\bSigma^{\half}\bQ \bSigma^{\half} \prn{n\lambda \bI + \muefct \bSigma}^{-1}}
  \approx \Tr \prn{\bSigma^{\half}\bQ \bSigma^{\half} \prn{n \lambda \bI + \bX_n^\sT \bX_n}^{-1}}$.
   Recall $\muefct = n \lambda / \lambdaefct$, and we see that the effect of the sample covariance 
   $\bX_n^\sT \bX_n / n$ is equivalent to the deterministic factor $\muefct \bSigma / n$.
In the classical asymptotics where $d$ is fixed and
     $n \to \infty$, we have $\muefct/n \to 1$, and thus recover the law of large numbers.
%      --- as we will present momentarily, that with the help of the martingale argument,
%       each individual features $\bx_i \bx_i^\sT$ effectively contributes $(1+c)^{-1} \bSigma$ 
%       for some nonzero $c$ determined by the self-consistent equation~\eqref{eq:lambda-fixed-point}. 
\end{remark}

Before formally defining the sequence $\{\mu_0, \cdots, \mu_{n+1}\}$, we introduce some helpful notations.
 We first define the matrices $\bA_i, \bB_i \in \mathcal{F}_i$ for $0 \leq i \leq n$ as
\begin{subequations}
	\begin{align}
		\bA_i & := \bSigma^{\half} \prn{\zeta \bI + \mu_i \bSigma + \bX_i^\sT \bX_i}^{-1} \bSigma^{\half} \, , \label{eq:def-A-k} \\ 
		\bB_i & := \bSigma^{\half} \prn{\zeta \bI + \mu_{i+1} \bSigma + \bX_i^\sT \bX_i}^{-1} \bSigma^{\half} \, . \label{eq:def-B-k}
	\end{align}
\end{subequations}
Then we can write $\Riter_i(\bQ) = \Tr \prn{\bQ \bA_i}$. Similarly we define another 
sequence of functions by $\Siter_i(\bQ) := \Tr \prn{\bQ \bB_i}$. 

Now we are ready to define the sequence $\mu_i \in \mathcal{F}_{i-1}$. We set the initial value 
$\mu_0 = \muefct(\zeta, \mu) \in \mathcal{F}_{-1}$ and thus 
$\Riter_0(\bQ) = \Rfct_0(\zeta, \muefct(\zeta, \mu); \bQ)$. The sequence $(\mu_j)_{j\ge 1}$
is iteratively determined through the following equation
\begin{align} \label{eq:iteration-mu}
	\mu_{i+1} = \mu_i - \frac{1}{1 + \Siter_i(\bI)} = \mu_i -  \frac{1}{1 + \Tr \prn{\bSigma^{\half} \prn{\zeta \bI + \mu_{i+1} \bSigma + \bX_i^\sT \bX_i}^{-1}\bSigma^{\half}}} \qquad \mathrm{s.t.} \  \bB_i  \succ 0\, .
\end{align}
It is evident that if the solution $\mu_{i+1}$ exists and is unique  
(almost surely with respect to the random choice of $\mu_i$), since
$\mu_i \in \mathcal{F}_{i-1}$ and $\bX_i \in \mathcal{F}_i$, it follows that $\mu_{i+1} 
\in \mathcal{F}_i$. The next lemma shows that the iteration via~\eqref{eq:iteration-mu} is indeed 
well-defined. Its proof is in Appendix~\ref{proof:update-rule-guarantee}.
\begin{lemma} \label{lem:update-rule-guarantee}
	There exists a unique strictly decreasing sequence 
	$\mu_0 > \mu_1 > \mu_2 > \cdots > \mu_n > \mu_{n+1}$ satisfying the update rule~\eqref{eq:iteration-mu}.
\end{lemma}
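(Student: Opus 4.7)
My plan is to prove the lemma by induction on $i$, constructing each $\mu_{i+1}$ as the unique solution of a scalar implicit equation determined by $\mu_i$ and $\bX_i$. The base case $i = 0$ is the definition $\mu_0 = \muefct(\zeta,\mu) \in \mathcal{F}_{-1}$, whose existence and uniqueness was established in the discussion following Eq.~\eqref{eq:mu-fixed-point}; in particular $\mu_0 > \mu \geq 0$, so $\zeta\bI + \mu_0 \bSigma \succ 0$ and $\bA_0 \succ 0$.

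For the inductive step, assume $\mu_i$ has been constructed with $\zeta\bI + \mu_i\bSigma + \bX_i^\sT\bX_i \succ 0$. Introduce the scalar function
\begin{align*}
F_i(\nu) \;:=\; \nu \,+\, \frac{1}{1 + \Tr\bigl(\bSigma^{1/2}(\zeta\bI + \nu\bSigma + \bX_i^\sT\bX_i)^{-1}\bSigma^{1/2}\bigr)}
\end{align*}
on the open set $\mathcal{D}_i := \{\nu \in \reals : \zeta\bI + \nu\bSigma + \bX_i^\sT\bX_i \succ 0\}$, which contains $[0,\infty)$ since $\zeta > 0$. The update rule~\eqref{eq:iteration-mu} is equivalent to finding $\mu_{i+1} \in \mathcal{D}_i$ with $F_i(\mu_{i+1}) = \mu_i$. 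I would verify three properties: (i) $F_i$ is continuous on $\mathcal{D}_i$, by continuity of the resolvent; (ii) $F_i$ is strictly increasing---as $\nu$ grows the resolvent decreases in Loewner order, hence the trace decreases and the reciprocal term strictly increases, while $\nu$ also strictly increases; (iii) writing $\nu_* := \inf\mathcal{D}_i \in [-\infty, 0)$, we have $F_i(\nu) \to +\infty$ as $\nu \to \infty$ and $F_i(\nu) \to \nu_*$ as $\nu \downarrow \nu_*$. Combining (i)--(iii) with $\mu_i > \nu_*$, the intermediate value theorem produces a unique $\mu_{i+1} \in (\nu_*, \mu_i)$ solving $F_i(\mu_{i+1}) = \mu_i$. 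Since $F_i(\nu) > \nu$ throughout $\mathcal{D}_i$ we obtain $\mu_{i+1} < \mu_i$, and membership $\mu_{i+1} \in \mathcal{D}_i$ yields the required $\bB_i \succ 0$. Measurability $\mu_{i+1} \in \mathcal{F}_i$ is immediate because $\mu_{i+1}$ is a Borel function of $\mu_i \in \mathcal{F}_{i-1}$ and $\bX_i^\sT\bX_i \in \mathcal{F}_i$.

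The main technical obstacle is the boundary limit in (iii), i.e.\ showing the trace diverges as $\nu \downarrow \nu_*$. If $\nu_* = -\infty$ this is vacuous. Otherwise, at $\nu = \nu_*$ some eigenvalue of $\zeta\bI + \nu\bSigma + \bX_i^\sT\bX_i$ vanishes along an eigenvector $v_*$; since $\zeta > 0$ and $\|\bX_i v_*\|^2 \geq 0$, vanishing forces $v_*^\sT \bSigma v_* > 0$, so $\bSigma^{1/2} v_*$ is a nonzero direction along which the quadratic form of the resolvent blows up, making the trace diverge. In the infinite-dimensional setting, this argument is combined with a finite-rank truncation of the trace-class operator $\bSigma$ to justify exchange of limits. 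Iterating the inductive construction for $i = 0, 1, \dots, n$ produces the full strictly decreasing sequence $\mu_0 > \mu_1 > \cdots > \mu_{n+1}$, and uniqueness of the sequence follows from uniqueness at each step.
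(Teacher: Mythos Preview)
Your proof is correct and follows essentially the same route as the paper's: both define the scalar function $F_i(\nu)=\nu+1/(1+\Tr(\bSigma^{1/2}(\zeta\bI+\nu\bSigma+\bX_i^\sT\bX_i)^{-1}\bSigma^{1/2}))$, show it is strictly increasing on the domain where the resolvent is positive, and use the intermediate value theorem on the interval $(\nu_*,\mu_i)$. The paper additionally shows explicitly that $\nu_*>-\infty$ (using the top eigenvector of $\bSigma$), making your case split on $\nu_*=-\infty$ unnecessary; conversely, you give more detail than the paper does on why the trace diverges at the boundary.
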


\paragraph{Part II: Approximation to a martingale}  We next explain 
 what is the rationale for the iterative definition of
  Eq.~\eqref{eq:iteration-mu}, and  how it will help us prove the theorem claim.

Since we want to upper bound  $|\Riter_n(\bQ) - \Riter_{0}(\bQ)|$, 
it makes sense to compute the difference $\Riter_i(\bQ) - \Riter_{i-1}(\bQ)$,
\begin{align}
	\Riter_i(\bQ) - \Riter_{i-1}(\bQ) & = \prn{\Riter_i(\bQ) - \Siter_{i-1}(\bQ)} + \prn{\Siter_{i-1}(\bQ) - \Riter_{i-1}(\bQ)} \nonumber \\
	& = \underbrace{\Tr \prn{\bQ \prn{\bA_i - \bB_{i-1}}}}_{\mathrm{(I)}} + \underbrace{\Tr \prn{\bQ \prn{\bB_{i-1} -\bA_{i-1}}}}_{\mathrm{(II)}} \, . \label{eq:R-difference}
\end{align}
Using rgw definitions in Eqs.~\eqref{eq:def-A-k} and \eqref{eq:def-B-k}, we can further expand (I) by Sherman-Morrison formula
\begin{align*}
	\bA_i - \bB_{i-1} & = \bSigma^{\frac 1 2} \brc{\prn{\zeta \bI + \mu_i \bSigma + \bX_{i-1}^\sT \bX_{i-1} + \bx_i \bx_i^\sT}^{-1} - \prn{\zeta \bI + \mu_i \bSigma + \bX_{i-1}^\sT \bX_{i-1}}^{-1}} \bSigma^{\frac 1 2} \nonumber \\
	&= - \frac{ \bSigma^{\frac 1 2} \prn{\zeta \bI + \mu_i \bSigma + \bX_{i-1}^\sT \bX_{i-1}}^{-1} \bx_i \bx_i^\sT  \prn{\zeta \bI + \mu_i \bSigma + \bX_{i-1}^\sT \bX_{i-1}}^{-1}  \bSigma^{\frac 1 2}}{1 + \bx_i^\sT  \prn{\zeta \bI + \mu_i \bSigma + \bX_{i-1}^\sT \bX_{i-1}}^{-1} \bx_i} \,  \nonumber \\
	& = - \frac{ \bSigma^{\frac 1 2} \prn{\zeta \bI + \mu_i \bSigma + \bX_{i-1}^\sT \bX_{i-1}}^{-1} \bSigma^{\frac 1 2} \bz_i \bz_i^\sT  \bSigma^{\frac 1 2} \prn{\zeta \bI + \mu_i \bSigma + \bX_{i-1}^\sT \bX_{i-1}}^{-1}  \bSigma^{\frac 1 2}}{1 + \bz_i^\sT \bSigma^{\frac 1 2}  \prn{\zeta \bI + \mu_i \bSigma + \bX_{i-1}^\sT \bX_{i-1}}^{-1} \bSigma^{\frac 1 2} \bz_i} \nonumber \\
	& = -\frac{\bB_{i-1} \bz_i\bz_i^\sT \bB_{i-1}}{1 + \bz_i^\sT \bB_{i-1} \bz_i} \, , 
\end{align*}
and thus write
\begin{align}\label{eq:I-term}
	\mathrm{(I)} = -\frac{\Tr \prn{\bQ\bB_{i-1} \bz_i\bz_i^\sT \bB_{i-1}}}{1 + \bz_i^\sT \bB_{i-1} \bz_i} \, .
\end{align}
We can also compute (II) by  noting that
\begin{align*}
	\bB_{i-1} - \bA_{i-1} & =  \bSigma^{\frac 1 2} \brc{\prn{\zeta \bI + \mu_i \bSigma + \bX_{i-1}^\sT \bX_{i-1}}^{-1} - \prn{\zeta \bI + \mu_{i-1} \bSigma + \bX_{i-1}^\sT \bX_{i-1}}^{-1}} \bSigma^{\frac 1 2} \nonumber \\
	& = \bSigma^{\frac 1 2} \brc{\prn{\zeta \bI + \mu_i \bSigma + \bX_{i-1}^\sT \bX_{i-1}}^{-1} \cdot \prn{\mu_{i-1} - \mu_i} \bSigma \cdot \prn{\zeta \bI + \mu_{i-1} \bSigma + \bX_{i-1}^\sT \bX_{i-1}}^{-1}} \bSigma^{\frac 1 2} \, ,
\end{align*}
and therefore
\begin{align}\label{eq:II-term}
	\mathrm{(II)} = (\mu_{i-1} - \mu_i) \cdot \Tr \prn{\bQ \bB_{i-1} \bA_{i-1}} \, .
\end{align}

It is now clear what is the motivation for defining $\mu_{i+1}$ 
as per Eq.~\eqref{eq:iteration-mu}.
We hope to have $\mathrm{(I)}+\mathrm{(II)} \approx 0$.
 Under the approximation $\Tr \prn{\bQ\bB_{i-1} \bz_i\bz_i^\sT \bB_{i-1}} \approx \Ep \brk{\Tr \prn{\bQ\bB_{i-1} \bz_i\bz_i^\sT \bB_{i-1}} \mid \mathcal{F}_{i-1}} =\Tr \prn{\bQ \bB_{i-1}^2} \approx  \Tr \prn{\bQ \bB_{i-1} \bA_{i-1}}$, this
 is achieved  when 
\begin{align*}
	\mu_i - \mu_{i-1} = - \frac{1}{1 + \Ep \brk{\bz_i^\sT \bB_{i-1} \bz_i \mid \mathcal{F}_{i-1}}}= - \frac{1}{1 + \Siter_{i-1}(\bI)} \, ,
\end{align*}
which recovers the iteration in Eq.~\eqref{eq:iteration-mu}.

\paragraph{Part III: Proof via stopping times}
We next make the previous argument rigorous.
 For any scalars $\alpha_1, \alpha_2, \beta_1, \beta_2, \gamma > 0$
 (in what follows, we'll use the notation $\Delta := (\alpha_1, \alpha_2, \beta_1, \beta_2, \gamma)$)
  we consider the events
\begin{subequations}
	\begin{align}
		E_i(\bQ) & := \brc{\left|\bz_i^\sT \bB_{i-1} \bz_i - \Siter_{i-1}(\bI) \right| \leq \alpha_1, \left|\bz_i^\sT \bB_{i-1} \bQ \bB_{i-1} \bz_i - \Tr \prn{\bQ\bB_{i-1}^2}\right| \leq \alpha_2, \norm{\bA_i} \leq \gamma }  \, , \label{eq:def-E-i} \\
		F_i(\bQ) & := \brc{\max \{\left|\Riter_i(\bQ) - \Riter_0(\bQ) \right|, \left|\Siter_i(\bQ) - \Riter_0(\bQ) \right|\} \leq \beta_1, \left|\mu_{i+1} - \wb{\mu}_{i+1}  \right| \leq \beta_2, \norm{\bB_i} \leq \gamma} \, , \label{eq:def-F-i}
	\end{align}
\end{subequations}
where $\wb{\mu}_{i+1} = \mu \cdot (i+1)/n + \muefct(\zeta, \mu) \cdot (1-(i+1)/n)$ is nonrandom.
 In particular we set $E_0(\bQ) = \Omega$ so that $E_i(\bQ)$ and $F_i(\bQ)$ are well-defined for
  $0 \leq i \leq n$. It follows then $E_i(\bQ), F_i(\bQ) \in \mathcal{F}_i$. Next we can proceed to 
  define two stopping times via
\begin{subequations}
	\begin{align}
		\brc{T_E(\bQ) \geq k+1} := \prn{\bigcap_{i=0}^{k} E_i(\bQ)} \cap \prn{\bigcap_{i=0}^{k-1} F_i(\bQ)}\, , \\
		\brc{T_F(\bQ) \geq k+1} := \prn{\bigcap_{i=0}^{k} E_i(\bQ)} \cap \prn{\bigcap_{i=0}^{k} F_i(\bQ)}\, ,
	\end{align}
\end{subequations}
for $k = 0, 1, \cdots, n$, with $T_E(\bQ), T_F(\bQ) \in \{0, 1, \cdots, n+1\}$. One can easily check that $T_E(\bQ)$ and $T_F(\bQ)$ are indeed stopping times since the sets in the above displays are in $\mathcal{F}_k$, and another immediate consequence is that $T_E(\bQ) \geq T_F(\bQ)$. These stopping times are helpful since
 the event $\{T_F(\bQ) = n+1\}$ implies
\begin{align*}
	\max_{0 \leq i \leq n} \left\{\left|\Riter_i(\bQ) - \Riter_0(\bQ) \right|, \left|\Siter_i(\bQ) - \Riter_0(\bQ) \right|  \right\} \leq \beta_1 \, ,
\end{align*}
and thus if $\beta_1$ is much smaller than $\Riter_0(\bQ)$, we can show $\Riter_n(\bQ) \approx \Riter_0(\bQ)$ as desired. Therefore, we want to lower bound the probability for the event $\{T_F(\bQ) = n+1\}$. We use the shorthand $p_{i,j}(T_1, T_2, \bQ) := \Prb(T_1(\bQ) \geq i, T_2(\bI) \geq j)$ for $T_1, T_2 \in \{T_E, T_F\}$. By telescoping sum, we have
\begin{align}
	& \Prb(T_F(\bQ) \geq 0, T_F(\bI) \geq 0) - \Prb(T_F(\bQ) = n+1, T_F(\bI) = n+1) \nonumber \\
	&  = p_{0,0}(T_F, T_F, \bQ) - p_{n+1, n+1}(T_F, T_F, \bQ) \nonumber \\
	& = \sum_{k=0}^n (p_{k,k}(T_F, T_F, \bQ) - p_{k+1, k+1}(T_E, T_E, \bQ)) + \sum_{k=1}^{n+1} (p_{k,k}(T_E, T_E, \bQ) - p_{k,k}(T_F, T_F, \bQ)) \nonumber \\
	& \leq \sum_{k=0}^n (p_{k, k}(T_F, T_F, \bQ) - p_{k+1, k}(T_E, T_F, \bQ) + p_{k, k}(T_F, T_F, \bQ) - p_{k, k+1}(T_F, T_E, \bQ)) \nonumber \\
	& \qquad + \sum_{k=1}^{n+1} (p_{k,k}(T_E, T_E, \bQ) - p_{k, k}(T_F, T_E, \bQ) + p_{k,k}(T_E, T_E, \bQ) - p_{k, k}(T_E, T_F, \bQ)) \nonumber \\
	& \leq \sum_{k=0}^n (p_{k, k}(T_F, T_F, \bQ) - p_{k+1, k}(T_E, T_F, \bQ) + p_{k, k}(T_F, T_F, \bI) - p_{k+1, k}(T_E, T_F, \bI)) \nonumber \\
	& \qquad + \sum_{k=1}^{n+1} (p_{k,k}(T_E, T_E, \bQ) - p_{k, k}(T_F, T_E, \bQ) + p_{k,k}(T_E, T_E, \bI) - p_{k, k}(T_F, T_E, \bI)) \, , \label{eq:stopping-times-telescoping}
\end{align}
where in the last inequality we use $\Prb(A \cap B) - \Prb(A \cap B') \leq \Prb(B)- \Prb(B')$ for $B' \subset B$.

We are left with the task of bounding the two terms 
$p_{k,k}(T_F, T_F, \bQ) - p_{k+1, k}(T_E, T_F, \bQ)$ and $p_{k,k}(T_E, T_E, \bQ) - p_{k, k}(T_F, T_E, \bQ)$ for any p.s.d.\ $\bQ$, and showing that they are small. 
Before doing this, we show that, by appropriately choosing $\gamma$, 
we have $\|\bA_i\| \leq \gamma$ and $\|\bB_i\| \leq \gamma$ with high probability. 
The proof of the next lemma is in Appendix~\ref{proof:A-norm-bound}.
\begin{lemma} \label{lem:A-norm-bound}
	Under Assumption~\ref{asmp:data-dstrb}, for any positive integer $D$, there exists a fixed $\eta = \eta(\constantx) \in (0, 1/2)$, such that for all $n = \Omega_D(1)$, it holds with probability $1 -\bigO(n^{-D})$ that
	\begin{align*}
		\norm{\bSigma^{\frac 1 2} \prn{\zeta \bI + \bX^\sT \bX}^{-1} \bSigma^{\frac 1 2}} \leq \frac{2}{n} \prn{1 + \frac{\bigO_{\constantx, D} \prn{\constantsig \sigma_{\lfloor \eta n\rfloor} \cdot \log n \log (\constantsig n)}}{\zeta}}  \, , \qquad \text{for all }\zeta > 0 \, ;
	\end{align*}
	additionally, under the same notations of Proposition~\ref{prop:effective-quantities-bound}, letting $\btheta_{\leq k}:= \sum_{i \leq k} \< \btheta, \bv_i\> \bv_i$ and $\btheta_{> k}:= \btheta - \btheta_{\leq k}$, we have for all $\zeta > 0$,
	\begin{align*}
		\btheta^\sT \bSigma^{\frac 1 2} \prn{\zeta \bI + \bX^\sT \bX}^{-1} \bSigma^{\frac 1 2} \btheta  \leq \frac{2}{n}\prn{1 + \frac{\bigO_{\constantx, D} \prn{\constantsig \sigma_{\lfloor \eta n\rfloor} \cdot \log n \log (\constantsig n)}}{\zeta}}\norm{\btheta_{\leq n}}^2 + \frac{2 \norm{\boldbeta_{>n}}^2}{\zeta} \, .
	\end{align*}
\end{lemma}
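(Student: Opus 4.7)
The plan is to exploit a block decomposition of $\bM := \zeta\bI + \bX^\sT\bX$ with respect to the top-$k$ eigenspace of $\bSigma$, where $k := \lfloor \eta n\rfloor$ for a small constant $\eta = \eta(\constantx) \in (0,1/2)$ chosen so that a $k$-dimensional isotropic Wishart matrix formed from $\bz_1,\dots,\bz_n$ concentrates. Let $\bP_H$ denote the orthogonal projection onto the top-$k$ eigenspace of $\bSigma$, $\bP_T := \bI - \bP_H$, and $\bSigma_H := \bP_H\bSigma\bP_H$. In the $\bSigma$-eigenbasis let $\widetilde\bX_H$, $\widetilde\bX_T$ denote the $n\times k$ and $n\times (d-k)$ head and tail column blocks of $\bX$, and similarly $\widetilde\bZ_H$, $\widetilde\bZ_T$ from $\bZ := \bX\bSigma^{-1/2}$.

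The core computation is a Schur-complement / push-through identity for the $(H,H)$ block of $\bM^{-1}$. Applying $\widetilde\bX_T(\zeta\bI_T + \widetilde\bX_T^\sT\widetilde\bX_T)^{-1}\widetilde\bX_T^\sT = \bI_n - \zeta\bE_T^{-1}$ with $\bE_T := \zeta\bI_n + \widetilde\bX_T\widetilde\bX_T^\sT$, the Schur complement collapses and one obtains
\[
\bSigma_H^{1/2}\,[\bM^{-1}]_{HH}\,\bSigma_H^{1/2} \;=\; \zeta^{-1}\bigl(\bSigma_H^{-1} + \widetilde\bZ_H^\sT \bE_T^{-1}\widetilde\bZ_H\bigr)^{-1}.
\]
Consequently, the desired bound $\tfrac{2}{n}\bigl(1 + \tfrac{C\constantsig\sigma_{\lfloor\eta n\rfloor}\log n\log(\constantsig n)}{\zeta}\bigr)$ on the operator norm of the $(H,H)$ block will follow from a high-probability lower bound $\widetilde\bZ_H^\sT\bE_T^{-1}\widetilde\bZ_H \succeq \tau\,\bI_H$ with $\tau \ge n/(2\zeta + C\constantsig\sigma_{\lfloor\eta n\rfloor}\log^{\bigO(1)}n)$.

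This reduces to two concentration estimates. First, an upper bound on the $n\times n$ tail Gram matrix: using $\E[\widetilde\bX_T\widetilde\bX_T^\sT] = \Tr(\bP_T\bSigma)\bI_n$, and combining Hanson-Wright (Lemma~\ref{lem:hanson-wright}) with an $\varepsilon$-net over the unit sphere of $\reals^n$, together with Assumption~\ref{asmp:data-dstrb}(I) that $\Tr(\bP_T\bSigma) \le \constantsig\sigma_{\lfloor\eta n\rfloor}$, I would establish $\|\widetilde\bX_T\widetilde\bX_T^\sT\| \le \bigO_{\constantx,D}(\constantsig\sigma_{\lfloor\eta n\rfloor}\log n\,\log(\constantsig n))$ with probability $1-\bigO(n^{-D})$, hence $\bE_T \preceq (\zeta + C\constantsig\sigma_{\lfloor\eta n\rfloor}\log^{\bigO(1)}n)\bI_n$. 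Second, a matching lower bound on $\widetilde\bZ_H^\sT\bE_T^{-1}\widetilde\bZ_H$: under Assumption~\ref{asmp:data-dstrb}(a), $\widetilde\bZ_H$ is independent of $\bE_T$, and conditioning on $\bE_T$ reduces the task to a standard isotropic sub-Gaussian Wishart bound in dimension $k\le\eta n$, giving $\widetilde\bZ_H^\sT\bE_T^{-1}\widetilde\bZ_H \succeq (n/2)/\|\bE_T\|\,\bI_H$. Under Assumption~\ref{asmp:data-dstrb}(b) this independence is lost, and I would apply Hanson-Wright combined with a leave-one-out decoupling of $\bE_T$ (so that $\bE_T^{(-i)}$ is independent of $\bz_i$) plus a rank-one perturbation analysis to control the resulting error. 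I expect this last step to be the main technical obstacle, since one must preserve the sharp $\bigO(1/n)$ scale despite the mild coupling.

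Combining these estimates yields $\|\bSigma_H^{1/2}[\bM^{-1}]_{HH}\bSigma_H^{1/2}\| \le \tfrac{2}{n}\bigl(1 + \tfrac{C\constantsig\sigma_{\lfloor\eta n\rfloor}\log n\log(\constantsig n)}{\zeta}\bigr)$. To pass to the full operator norm, decompose any unit vector as $\bu = \bu_H + \bu_T$ and write $\bSigma^{1/2}\bu = \bSigma_H^{1/2}\bu_H + \bSigma_T^{1/2}\bu_T$ (two orthogonal pieces); a Cauchy-Schwarz bound on the cross term together with the trivial estimate $(\bSigma_T^{1/2}\bu_T)^\sT\bM^{-1}(\bSigma_T^{1/2}\bu_T) \le \sigma_{\lfloor\eta n\rfloor+1}\|\bu_T\|^2/\zeta$ for the tail produces the full bound, since $\constantsig\ge n$ forces $\sigma_{\lfloor\eta n\rfloor}/\zeta \le \constantsig\sigma_{\lfloor\eta n\rfloor}/(n\zeta)$, so the tail is absorbed into the main term. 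Finally, the quadratic-form bound follows by splitting $\btheta = \btheta_{\le n} + \btheta_{>n}$: since $\bSigma^{1/2}\btheta_{\le n}$ and $\bSigma^{1/2}\btheta_{>n}$ lie in orthogonal subspaces, a single Cauchy-Schwarz combined with the just-established operator-norm bound applied to $\btheta_{\le n}$ (yielding the $\|\btheta_{\le n}\|^2$ factor) and the trivial bound $\bM^{-1}\preceq\zeta^{-1}\bI$ on the tail (yielding $\zeta^{-1}\|\boldbeta_{>n}\|^2$) gives the stated inequality. The Hilbert-space case $d=\infty$ is handled by truncating to the top $N$ eigendirections and letting $N\to\infty$, as in the remark following Lemma~\ref{lem:hanson-wright}.
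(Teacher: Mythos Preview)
Your Schur-complement approach is sound and would yield the result under Assumption~\ref{asmp:data-dstrb}(a), but the paper takes a different and more robust route that sidesteps precisely the independence issue you flag as the ``main technical obstacle.'' Instead of computing $[\bM^{-1}]_{HH}$ exactly via Schur complement (which couples $\widetilde\bZ_H$ to $\bE_T$), the paper proves a purely \emph{deterministic} matrix inequality: writing $\bX = \bU_k + \bW_k$ with $\bU_k := \bX\bP_H$, $\bW_k := \bX\bP_T$, a completing-the-square argument gives
\[
\zeta\bI + \bX^\sT\bX \;\succeq\; \tfrac{\zeta}{2}\,\bI \;+\; \bigl(1 + 2\|\bW_k^\sT\bW_k\|/\zeta\bigr)^{-1}\,\bU_k^\sT\bU_k\,.
\]
The right-hand side is block-diagonal in the $(H,T)$ decomposition, so after inverting and conjugating by $\bSigma^{1/2}$ the operator-norm bound reduces to two \emph{decoupled} estimates: (i) $\|\bW_k\bW_k^\sT\| = \bigO_{\constantx,D}(\constantsig\sigma_k\log n\log(\constantsig n))$, obtained by truncating $\|\bP_T\bx_i\|^2$ via Hanson--Wright on the single vector $\bz_i$ and then applying matrix Bernstein with intrinsic dimension to the sum $\sum_i \bP_T\bx_i\bx_i^\sT\bP_T$; and (ii) $\lambda_{\min}(\bV_k^\sT\bZ^\sT\bZ\bV_k)\ge n/2$, obtained from a fourth-moment smallest-singular-value theorem (Yaskov), whose hypothesis is again verified by Hanson--Wright applied to $\langle\bz_i,\bV_k\bvarphi\rangle^2$ for a single $\bz_i$. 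Because both steps invoke concentration only for individual rows $\bz_i$, Assumptions~(a) and (b) are treated uniformly and no leave-one-out decoupling of $\bE_T$ is needed.

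Your proposed $\varepsilon$-net over the unit sphere of $\reals^n$ for the tail Gram matrix also runs into a version of this issue: it would require Hanson--Wright for the concatenated vector $(\bz_1,\dots,\bz_n)\in\reals^{nd}$, which is immediate under (a) but not stated under (b); the paper's matrix-Bernstein argument avoids this as well. The remaining steps of your proposal---Cauchy--Schwarz for the cross terms and the split $\btheta=\btheta_{\le n}+\btheta_{>n}$ for the quadratic form---essentially match the paper (which splits first at $k=\lfloor\eta n\rfloor$ and then shifts indices $k{+}1,\dots,n$ from the tail to the head using $\sigma_i\le\sigma_k$), up to a harmless constant factor.
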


The next lemma---upper bounding the first term (I)---uses Hanson-Wright inequality to show concentration for events $E_i(\bQ)$ in~\eqref{eq:def-E-i}. A proof is in Appendix~\ref{proof:from-F-to-E}.

\begin{lemma} \label{lem:from-F-to-E}
	Under Assumption~\ref{asmp:data-dstrb}, choose $\beta_1,\beta_2$ in
	Eq.~\eqref{eq:def-F-i} so that  $\beta_1 \leq \Riter_0(\bQ)/4$ and $\beta_2 \leq \mu/2$. 
	Then for any positive integer $D$, there exists constants $\eta = \eta(\constantx) \in (0, 1/2)$, $\constant_\alpha = \constant_\alpha(\constantx, D)$ and $\constant_\gamma = \constant_\gamma(\constantx, D)$ such that if we take
	\begin{align*}
		\gamma &= \min \brc{\frac{2}{n} \prn{1 + \frac{\constant_\gamma \constantsig \sigma_{\lfloor \eta n\rfloor} \cdot \log n \log (\constantsig n)}{\zeta}} + \frac{2}{\muefct(\zeta, \mu)} , \frac{1}{\zeta}} \, , \\
		\alpha_1 &=  \constant_\alpha \log n \cdot \sqrt{\gamma \Riter_0(\bI)}\, , \\
		\alpha_2 &=  \constant_\alpha \log n \cdot \sqrt{\gamma^3 \Riter_0(\bQ)} \, ,
	\end{align*}
	it holds for all $n = \Omega_D(1)$ that
	\begin{align*}
		p_{k,k}(T_F, T_F, \bQ) - p_{k+1, k}(T_E, T_F, \bQ) & = \bigO(n^{-D}) \, .
	\end{align*}
	In addition, on the event $\{T_F(\bQ) \geq k, T_F(\bI) \geq k\} \in \mathcal{F}_{k-1}$ we have
	(using the shorthand $\Ep_{k-1}\{\,\cdot\,\} := \E\{\,\cdot\,|\cF_{k-1}\}$)
	\begin{align*}
		& \Ep_{k-1} \brk{\left| \bz_k^\sT \bB_{k-1} \bz_k - \Siter_{k-1}(\bI) \right| \ind\brc{\left| \bz_k^\sT \bB_{k-1} \bz_k - \Siter_{k-1}(\bI)\right| \geq  \alpha_1} } \nonumber \\
		& \qquad  = \bigO_{\constantx} \prn{n^{-D} \cdot \sqrt{ \gamma \Riter_0(\bI)}} = \bigO_{\constantx, D} \prn{n^{-D} \cdot \alpha_1}\, , \\
		& \Ep_{k-1} \brk{\left| \bz_{k}^\sT \bB_{k-1} \bQ \bB_{k-1} \bz_{k} - \Tr \prn{\bQ\bB_{k-1}^2}  \right| \ind\brc{\left| \bz_{k}^\sT \bB_{k-1} \bQ \bB_{k-1} \bz_{k} - \Tr \prn{\bQ\bB_{k-1}^2}  \right| \geq  \alpha_2 }} \nonumber \\
		& \qquad =\bigO_{\constantx} \prn{n^{-D} \cdot \sqrt{ \gamma^3 \Riter_0(\bQ)}} = \bigO_{\constantx, D} \prn{n^{-D} \cdot \alpha_2} \, .
	\end{align*}
\end{lemma}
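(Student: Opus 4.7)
The plan is to reduce the probability difference to a conditional concentration bound. Since $T_E(\bQ) \ge T_F(\bQ)$ always, and on $\{T_F(\bQ) \ge k\}$ one has $F_{k-1}(\bQ)$ holding, the difference equals $\Prb(G_k \cap E_k(\bQ)^c)$, where $G_k := \{T_F(\bQ) \ge k\} \cap \{T_F(\bI) \ge k\} \in \mathcal{F}_{k-1}$. On $G_k$ the $\mathcal{F}_{k-1}$-measurable quantities satisfy $\|\bB_{k-1}\| \le \gamma$, $\Siter_{k-1}(\bI) \le (5/4)\Riter_0(\bI)$, $\Siter_{k-1}(\bQ) \le (5/4)\Riter_0(\bQ)$ (from the hypothesis $\beta_1 \le \Riter_0(\bQ)/4$ applied to both the $\bQ$- and $\bI$-versions), and $\mu_k \ge c_0\muefct(\zeta,\mu)$ (from the $\beta_2$-hypothesis combined with $\wb{\mu}_k \ge \mu$). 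Since $\bz_k$ is independent of $\mathcal{F}_{k-1}$, it suffices to bound $\Prb(E_k(\bQ)^c \mid \mathcal{F}_{k-1})$ pointwise on $G_k$ by $\bigO(n^{-D})$ and take expectations.

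The two quadratic-form constituents of $E_k(\bQ)^c$ are controlled by conditional Hanson--Wright (Lemma~\ref{lem:hanson-wright}): writing $\bz_k^\sT\bB_{k-1}\bz_k = \bx_k^\sT\bM_1\bx_k$ and $\bz_k^\sT\bB_{k-1}\bQ\bB_{k-1}\bz_k = \bx_k^\sT\bM_2\bx_k$ where $\bSigma^{1/2}\bM_1\bSigma^{1/2} = \bB_{k-1}$ and $\bSigma^{1/2}\bM_2\bSigma^{1/2} = \bB_{k-1}\bQ\bB_{k-1}$, the conditional means are exactly $\Siter_{k-1}(\bI)$ and $\Tr(\bQ\bB_{k-1}^2)$. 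Via $\|A\|_F^2 \le \|A\|\cdot\Tr(A)$ for p.s.d.\ $A$, one obtains $\|\bB_{k-1}\|_F^2 \le \gamma \Siter_{k-1}(\bI) = \bigO(\gamma\Riter_0(\bI))$ and $\|\bB_{k-1}\bQ\bB_{k-1}\|_F^2 \le \gamma^3 \Siter_{k-1}(\bQ) = \bigO(\gamma^3\Riter_0(\bQ))$, with operator norms $\gamma$ and $\gamma^2$. With the prescribed $\alpha_1,\alpha_2$, each deviation lies inside Hanson--Wright's sub-Gaussian (quadratic) regime, giving $\bigO(\exp(-c\constant_\alpha^2\log^2 n)) = \bigO(n^{-D})$ for $\constant_\alpha$ taken large in terms of $(\constantx,D)$. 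For the remaining constituent $\{\|\bA_k\|>\gamma\}$, the positivity $\zeta\bI + \mu_k\bSigma + \bX_k^\sT\bX_k \succeq \mu_k\bSigma$ yields $\|\bA_k\| \le 1/\mu_k \le c_0^{-1}/\muefct(\zeta,\mu)$ deterministically on $G_k$, while a direct adaptation of Lemma~\ref{lem:A-norm-bound} to $\bX_k$ provides the $(2/n)(1 + \bigO(\constantsig\sigma_{\lfloor\eta n\rfloor}\log n\log(\constantsig n)/\zeta))$ alternative bound with probability $1 - \bigO(n^{-D})$; the minimum with the trivial $1/\zeta$ (from $M \succeq \zeta\bI$) reproduces the prescribed $\gamma$, and a union bound completes the first claim.

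The truncated-expectation assertions follow by tail integration. For $Z$ one of the two quadratic forms above, with sub-Gaussian scale $V$ (so $V_1 \asymp \sqrt{\gamma\Riter_0(\bI)}$ and $V_2 \asymp \sqrt{\gamma^3\Riter_0(\bQ)}$), one writes $\Ep_{k-1}[|Z|\ind\{|Z| \ge \alpha\}] = \alpha\Prb_{k-1}(|Z|\ge\alpha) + \int_\alpha^\infty \Prb_{k-1}(|Z|\ge t)\,\md t$; plugging the Hanson--Wright bound into the integral yields $\bigO_{\constantx,D}(V \cdot n^{-D})$, matching $\bigO_{\constantx,D}(n^{-D}\alpha)$ since $\alpha = \constant_\alpha V\log n$. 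The main bookkeeping obstacle I foresee is twofold: first, verifying the lower bound $\mu_k \ge c_0\muefct(\zeta,\mu)$ uniformly in $k \le n$ (so that the $1/\muefct$ branch of $\gamma$ is actually attained and the deterministic part of the $\|\bA_k\|$ control survives); and second, confirming that Lemma~\ref{lem:A-norm-bound}, nominally stated for the full $\bX$, transfers cleanly to the partial matrix $\bX_k$ without losing the right scaling --- especially for small $k$, where $\bX_k^\sT\bX_k$ alone may be badly conditioned and only the $\mu_k\bSigma$ shift keeps $\|\bA_k\|$ bounded.
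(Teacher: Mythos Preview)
Your approach is correct and matches the paper's proof; the Hanson--Wright step, the identification $p_{k,k}(T_F,T_F,\bQ)-p_{k+1,k}(T_E,T_F,\bQ)=\Prb(G_k\cap E_k(\bQ)^c)$, and the tail-integration for the truncated expectations are all exactly as in the paper.

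Your flagged concern is the real content, and its resolution is a split at $k=n/2$ rather than a uniform bound. For $k\le n/2$, on $G_k$ the constraint $|\mu_k-\wb{\mu}_k|\le\beta_2\le\mu/2$ together with $\wb{\mu}_k=\muefct-k/(1+\Riter_0(\bI))$ (from $\muefct-\mu=n/(1+\Riter_0(\bI))$) gives $\mu_k\ge\mu/2+(n-k)/(1+\Riter_0(\bI))\ge\muefct/2$, so $\|\bA_k\|\le 2/\muefct$ deterministically. For $k>n/2$, the same computation gives only $\mu_k\ge\mu/2\ge 0$, but that suffices to write $\bA_k\preceq\bSigma^{1/2}(\zeta\bI+\bX_{\lceil n/2\rceil}^\sT\bX_{\lceil n/2\rceil})^{-1}\bSigma^{1/2}$ by monotonicity, and a \emph{single} application of Lemma~\ref{lem:A-norm-bound} to the fixed matrix $\bX_{\lceil n/2\rceil}$ (which has $\asymp n$ rows, so the scaling is preserved up to a constant absorbed in $\constant_\gamma$) handles all such $k$ with probability $1-\bigO(n^{-D})$. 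Neither argument works uniformly in $k$, but together they cover the full range; the sum structure $\frac{2}{n}(\cdots)+\frac{2}{\muefct}$ in the first branch of $\gamma$ is there precisely so that whichever of the two bounds is active suffices.
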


We then proceed to bound the term $p_{k,k}(T_E,T_E,\bQ)-p_{k,k}(T_F,T_E,\bQ)$. 
The  proof of the next lemma is in Appendix~\ref{proof:from-E-to-F}.
\begin{lemma} \label{lem:from-E-to-F}
	Under Assumption~\ref{asmp:data-dstrb}, for any positive integer $D$, there exists a 
	constant $\constant_\beta = \constant_\beta(\constantx, D) > 0$ such that the following holds.
	Consider $\alpha_1, \alpha_2, \gamma$  as defined in Lemma~\ref{lem:from-F-to-E}, and set
	$\beta_1, \beta_2$ by 
	\begin{align*}
		\beta_1 & = \constant_\beta \prn{\sqrt{n \log n} \cdot \frac{\alpha_1 \gamma \Riter_0(\bQ) + \alpha_2 (1 + \Riter_0(\bI))}{1 + \Riter_0(\bI)^2} + n \cdot \brc{\frac{\gamma^2 \Riter_0(\bQ) + \alpha_1 \alpha_2}{1 + \Riter_0(\bI)^2} + \frac{\alpha_1^2  \gamma \Riter_0(\bQ)}{1 + \Riter_0(\bI)^3}} + \frac{\gamma \Riter_0(\bQ)}{1 + \Riter_0(\bI)}}  \, , \\
		\beta_2 & = \frac{\constant_\beta n \beta_1}{1 + \Riter_0(\bI)^2}\, .
	\end{align*}
%
%\am{The proof seems also to use $\alpha_1\le \Riter_0(\bI)/4$ (see proof). If the assumption is added here,
%it needs to be propagated through the paper}
If $\alpha_1\le \Riter_0(\bI)/4$, $\beta_1 \leq \Riter_0(\bQ)/4$, $\beta_2 \leq \mu/2$ and $n^{-D} = \bigO \prn{ \alpha_1 / \prn{1 + \Riter_0(\bI)}}$,
then for all $1 \leq k \leq n+1$ and $n = \Omega_D(1)$,
	\begin{align*}
	p_{k,k}(T_E, T_E, \bQ) - p_{k, k}(T_F, T_E, \bQ)  = \bigO(n^{-D}) \, .
	\end{align*}
\end{lemma}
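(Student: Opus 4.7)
The target $p_{k,k}(T_E,T_E,\bQ)-p_{k,k}(T_F,T_E,\bQ)$ equals $\Prb(T_E(\bQ)\ge k,\,T_E(\bI)\ge k,\,T_F(\bQ)< k)$. Unpacking the definitions, on $\{T_E(\bQ)\ge k\}\cap\{T_E(\bI)\ge k\}$ every event $E_i(\bQ)\cap E_i(\bI)$ for $i\le k-1$ and every $F_i(\bQ)\cap F_i(\bI)$ for $i\le k-2$ already holds, so I only need to verify that each of the four constraints defining $F_{k-1}(\bQ)$ in Eq.~\eqref{eq:def-F-i} fails with probability at most $\bigO(n^{-D})$. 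The bound $\|\bB_{k-1}\|\le\gamma$ is free from Lemma~\ref{lem:A-norm-bound} applied with effective regularization $\zeta+\mu_k\bSigma$ in place of $\zeta$ (note that $\mu_k\ge 0$ on $F_{k-2}$ since $|\mu_k-\wb\mu_k|\le\beta_2\le\mu/2$). The real work is to bound $|\Riter_{k-1}(\bQ)-\Riter_0(\bQ)|$, $|\Siter_{k-1}(\bQ)-\Riter_0(\bQ)|$ and $|\mu_k-\wb\mu_k|$ by $\beta_1,\beta_1,\beta_2$ respectively.

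The engine of the argument is the increment identity obtained by combining Eqs.~\eqref{eq:I-term}--\eqref{eq:II-term} with the update rule~\eqref{eq:iteration-mu}:
\[
\Riter_i(\bQ)-\Riter_{i-1}(\bQ)\;=\;\frac{\Tr\!\bigl(\bQ\bB_{i-1}\bA_{i-1}\bigr)}{1+\Siter_{i-1}(\bI)}\;-\;\frac{\Tr\!\bigl(\bQ\bB_{i-1}\bz_i\bz_i^{\sT}\bB_{i-1}\bigr)}{1+\bz_i^{\sT}\bB_{i-1}\bz_i}.
\]
On $E_i(\bQ)\cap E_i(\bI)$ the quadratic-form deviations are at most $\alpha_1,\alpha_2$ and the denominator satisfies $1+\bz_i^{\sT}\bB_{i-1}\bz_i\asymp 1+\Riter_0(\bI)$ (using $\alpha_1\le\Riter_0(\bI)/8$); Taylor-expanding $1/(1+x)$ about $x_0=\Siter_{i-1}(\bI)$ to second order then decomposes the increment as $\xi_i+\varepsilon_i$, where $\xi_i$ is (after truncation to $E_i(\bQ)\cap E_i(\bI)$) a martingale difference of magnitude $\bigO\!\bigl(\tfrac{\alpha_2}{1+\Riter_0(\bI)}+\tfrac{\alpha_1\gamma\Riter_0(\bQ)}{(1+\Riter_0(\bI))^2}\bigr)$ whose truncation bias is controlled by the conditional tail estimates in the second half of Lemma~\ref{lem:from-F-to-E}, and $\varepsilon_i$ is a deterministic remainder of size $\bigO\!\bigl(\tfrac{\gamma^2\Riter_0(\bQ)+\alpha_1\alpha_2}{(1+\Riter_0(\bI))^2}+\tfrac{\alpha_1^2\gamma\Riter_0(\bQ)}{(1+\Riter_0(\bI))^3}\bigr)$, the $\gamma^2\Riter_0(\bQ)$ term arising from the discrepancy $\bB_{i-1}^2\neq\bB_{i-1}\bA_{i-1}$ governed by $|\mu_{i-1}-\mu_i|=(1+\Siter_{i-1}(\bI))^{-1}$. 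Summing from $i=1$ to $k-1\le n$ and applying Freedman's inequality to $\sum\xi_i$ yields fluctuations of order $\sqrt{n\log n}\cdot\max_i|\xi_i|$ with probability $1-\bigO(n^{-D})$, while $|\sum\varepsilon_i|\le n\cdot\max_i|\varepsilon_i|$; adding the zeroth-order slack $\gamma\Riter_0(\bQ)/(1+\Riter_0(\bI))$ needed to absorb truncation biases reproduces exactly the expression for $\beta_1$ in the statement.

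The auxiliary identity $\Siter_{k-1}(\bQ)-\Riter_{k-1}(\bQ)=(\mu_{k-1}-\mu_k)\Tr(\bQ\bB_{k-1}\bA_{k-1})=\bigO(\gamma\Riter_0(\bQ)/(1+\Riter_0(\bI)))$ is already absorbed into $\beta_1$, yielding the $\Siter$-bound. For $\mu_k$ I telescope $\mu_k-\mu_0=-\sum_{i=0}^{k-1}(1+\Siter_i(\bI))^{-1}$, insert the just-established estimate $|\Siter_i(\bI)-\Riter_0(\bI)|\le\beta_1$ (obtained from the above argument specialized to $\bQ=\bI$), and compare term by term against the linear interpolant $\wb\mu_k=\mu+(\muefct-\mu)(1-k/n)$ via the defining relation $\muefct-\mu=n/(1+\Riter_0(\bI))$; a first-order expansion gives $|\mu_k-\wb\mu_k|=\bigO(n\beta_1/(1+\Riter_0(\bI))^2)$, matching $\beta_2$ after absorbing $\constant_\beta$. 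The main obstacle is the bookkeeping in Step~2: the naive increment has order $1/n$, and only the zeroth-order cancellation mandated by the update rule~\eqref{eq:iteration-mu} demotes it to a centered perturbation with negligible accumulated fluctuations; simultaneously balancing the three scales $\alpha_1,\alpha_2,\gamma$ in the second-order Taylor remainders to recover the precise form of $\beta_1$ (with separate $\sqrt{n\log n}$ and $n$ contributions weighted by different powers of $1+\Riter_0(\bI)$) requires careful algebraic accounting.
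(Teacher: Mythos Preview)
Your proposal is correct and follows essentially the same approach as the paper's proof: the same reduction to controlling $F_{k-1}(\bQ)$ on $\{\wb T\ge k\}$, the same martingale-plus-bias decomposition of the increments $\Riter_i(\bQ)-\Riter_{i-1}(\bQ)$ with the same scales for each piece, the same handling of $\Siter_{k-1}-\Riter_{k-1}$ and of $\mu_k-\wb\mu_k$ via the $\bQ=\bI$ case, and the same appeal to the tail estimates of Lemma~\ref{lem:from-F-to-E} for the truncation bias. The only cosmetic difference is that the paper uses Azuma--Hoeffding rather than Freedman for the martingale sum (bounded increments suffice here), and writes out the algebraic cancellations explicitly rather than framing them as a Taylor expansion.
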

Applying Lemmas~\ref{lem:from-F-to-E} and \ref{lem:from-E-to-F} to Eq.~\eqref{eq:stopping-times-telescoping} (note that we can take $\bQ = \bI$), we have shown that
\begin{align*}
	1 -\Prb(T_F(\bQ) = n+1, T_F(\bI) = n+1) &=  \Prb(T_F(\bQ) \geq 0, T_F(\bI) \geq 0) - \Prb(T_F(\bQ) = n+1, T_F(\bI) = n+1)  \nonumber \\
	&= \bigO(n^{-D+1}) \, , 
\end{align*}
which implies by choosing the parameter $\Delta$ given by the above lemmas, with probability 
$1 - \bigO(n^{-D+1})$
\begin{align*}
	|\Riter_n(\bQ) - \Riter_0(\bQ)| \leq \beta_1 \, ,  \qquad |\mu_n - \wb{\mu}_n| \leq \beta_2 \, .
\end{align*}
Therefore, since $\wb{\mu}_n = \mu$,
$\mu_0 = \muefct(\zeta, \mu)$, and recalling the definition of $\Riter_k(\bQ)$,
cf. Eq.~\eqref{eq:RiterDef}, we have 
%\am{Below I corrected the term in red. (Before it was $\Riter_n(\bQ)$.) If this is correct, 
%the calculation needs to be modified below that line. }
%
\begin{align*}
	|\Rfct_n(\zeta, \mu; \bQ) -& \Rfct_0(\zeta, \muefct(\zeta, \mu); \bQ)| \leq |\Rfct_n(\zeta, \mu; \bQ) - \Rfct_n(\zeta, \mu_n; \bQ)| + |\Rfct_n(\zeta, \mu_n; \bQ) -  \Rfct_0(\zeta, \muefct(\zeta, \mu);\bQ)| \nonumber \\
	& = \left|(\mu_n - \mu) \cdot \Tr \prn{\bQ \bSigma^{\half} (\zeta \bI + \mu \bSigma + \bX^\sT \bX)^{-1} \bSigma^{\half} \bA_n} \right| + \left| \Riter_n(\bQ) - \Riter_0(\bQ) \right| \nonumber \\
	& \stackrel{\mathrm{(i)}}{\leq} \beta_2 \normop{\bSigma^{\half} (\zeta \bI + \mu \bSigma + \bX^\sT \bX)^{-1} \bSigma^{\half}} \Riter_n(\bQ) + \left| \Riter_n(\bQ) - \Riter_0(\bQ) \right|\, \nonumber \\
	& \stackrel{\mathrm{(ii)}}{\leq} \gamma \beta_2 \Riter_n(\bQ) +  \left| \Riter_n(\bQ) - \Riter_0(\bQ) \right|  \leq \gamma \beta_2 \Riter_0(\bQ) + (1 + \gamma \beta_2) \left| \Riter_n(\bQ) - \Riter_0(\bQ) \right|  \nonumber \\
	& \leq  \gamma \beta_2 \Riter_0(\bQ) + \beta_1 (1 + \gamma \beta_2) \stackrel{\mathrm{(ii)}}{\leq}  \frac{5}{4} \gamma \beta_2 \Riter_0(\bQ) + \beta_1  \, ,
\end{align*}
where in (ii) we used Lemma \ref{lem:A-norm-bound};  in (iii) we used the fact that $\beta_1 \leq \Riter_0(\bQ)/4$
by assumption. We explain the inequality in (i) more carefully as it is less evident. Denoting by $\bB = \bSigma^{\half} (\zeta \bI + \mu \bSigma + \bX^\sT \bX)^{-1} \bSigma^{\half}$, we first show $\bB$ and $\bA_n$ commute. Clearly commutativity holds if $\mu = \mu_n$, otherwise we have
\begin{align*}
	\bB \bA_n & = \bSigma^{\half} (\zeta \bI + \mu \bSigma + \bX^\sT \bX)^{-1} \bSigma^{\half} \cdot \bSigma^{\half} (\zeta \bI + \mu_n \bSigma + \bX^\sT \bX)^{-1} \bSigma^{\half} \nonumber \\
	& = (\mu_n - \mu)^{-1}  \bSigma^{\half} \brc{(\zeta \bI + \mu \bSigma + \bX^\sT \bX)^{-1} -   (\zeta \bI + \mu_n \bSigma + \bX^\sT \bX)^{-1}} \bSigma^{\half} \nonumber \\
	& = (\mu_n - \mu)^{-1} \prn{\bB - \bA_n} = \bA_n \bB \, .
\end{align*}
Noting that $\bB$ and $\bA_n$ are both p.s.d.\ compact self-adjoint operators in Hilbert space, commutativity implies they can be simultaneously orthogonally diagonalized and that $\bB^{\half}$ and $\bA_n^{\half}$ also commute. Consequently, combined with the fact that $\Tr(\bA_n\bC)\le \|\bA_n\|\Tr(\bC)$ for any p.s.d. matrix $\bC$, we have (i) from
\begin{align*}
	 \Tr \prn{\bQ \bSigma^{\half} (\zeta \bI + \mu \bSigma + \bX^\sT \bX)^{-1} \bSigma^{\half} \bA_n} & = \Tr  \prn{\bQ \bB \bA_n} = \Tr \prn{\bB \cdot \bA_n^{\half} \bQ \bA_n^{\half}} \leq \norm{\bB} \Tr (\bQ \bA_n) \nonumber \\
	 & = \normop{\bSigma^{\half} (\zeta \bI + \mu \bSigma + \bX^\sT \bX)^{-1} \bSigma^{\half}} \Riter_n(\bQ) \, .
\end{align*}

We therefore proved
 the following. If  $\beta_1 \leq \Riter_0(\bQ)/4$ and $n^{-D} = \bigO \prn{ \alpha_1 / \prn{1 + \Riter_0(\bI)}}$,
 then
 \begin{align}
 \beta_2 \leq \mu/2\;\;\; \Rightarrow\;\;\;
 |\Rfct_n(\zeta, \mu; \bQ) - \Rfct_0(\zeta, \muefct(\zeta, \mu); \bQ)| = \bigO \prn{ \gamma \beta_2 \Riter_0(\bQ) + \beta_1 }\, .
 \label{eq:muLarger} 
 \end{align}
 
To remove the condition  $\beta_2 \leq \mu/2$, we use the following  estimate,
 proven in Appendix~\ref{proof:R-fct-approximation}.
\begin{lemma} \label{lem:R-fct-approximation}
	Under Assumption~\ref{asmp:data-dstrb}, consider the parameter tuple 
	$\Delta = (\alpha_1, \alpha_2, \beta_1, \beta_2, \gamma)$ defined in 
	Lemmas~\ref{lem:from-F-to-E} and \ref{lem:from-E-to-F}. If $\alpha_1 \leq \Riter_0(\bI)/8$, 
	$\beta_1 \leq \Riter_0(\bQ)/64$, $\gamma \beta_2 (1 + \Riter_0(\bI)) \leq 1/ 64$, $n^{-D} = \bigO(\alpha_1/(1 + \Riter_0(\bI)))$ and
	$\beta_2>\mu/2$, then
	 we have
	\begin{align}
	|\Rfct_n(\zeta, \mu; \bQ) - \Rfct_0(\zeta, \muefct(\zeta, \mu); \bQ)| = 	\bigO \prn{\gamma \beta_2 \prn{1 +  \Riter_0(\bI)} \Riter_0(\bQ) + \beta_1 }  \label{eq:muSmaller} 
	\end{align}
\end{lemma}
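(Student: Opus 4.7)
The plan is to reduce the remaining case $\beta_2 > \mu/2$, where the argument leading to~\eqref{eq:muLarger} breaks down, to the case already handled by shifting the second argument of $\Rfct$ upward. Set $\mu^{\star} := 2\beta_2 > \mu$, so that the hypothesis $\beta_2 \le \mu^{\star}/2$ required by~\eqref{eq:muLarger} holds automatically at $\mu^\star$. Since $\muefct(\zeta, \cdot)$ is monotone nondecreasing (Lemma~\ref{lem:muefct-small-mu-bound}) and the parameter tuple $(\gamma, \Riter_0(\bQ), \Riter_0(\bI), \alpha_i, \beta_i)$ is monotone nonincreasing in $\muefct$, replacing $\mu$ by $\mu^\star$ only shrinks the tuple; in particular $\beta_2^\star \le \beta_2 = \mu^\star/2$, so~\eqref{eq:muLarger} applied at $(\zeta, \mu^\star)$ gives
\[
E_{\mathrm{mid}} := |\Rfct_n(\zeta, \mu^{\star}; \bQ) - \Rfct_0(\zeta, \muefct(\zeta, \mu^{\star}); \bQ)| = \bigO\bigl(\gamma \beta_2 \Riter_0(\bQ) + \beta_1\bigr).
\]

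Next I would split by triangle inequality,
\[
|\Rfct_n(\zeta, \mu; \bQ) - \Rfct_0(\zeta, \muefct(\zeta, \mu); \bQ)| \le E_1 + E_{\mathrm{mid}} + E_2,
\]
with $E_1 := |\Rfct_n(\zeta, \mu; \bQ) - \Rfct_n(\zeta, \mu^{\star}; \bQ)|$ and $E_2 := |\Rfct_0(\zeta, \muefct(\zeta, \mu^{\star}); \bQ) - \Rfct_0(\zeta, \muefct(\zeta, \mu); \bQ)|$. The resolvent identity rewrites $E_1 = (\mu^{\star} - \mu)|\Tr(\bQ \bD \bE)|$ with $\bD := \bSigma^{1/2}(\zeta\bI + \mu\bSigma + \bX^\sT\bX)^{-1}\bSigma^{1/2}$ and $\bE := \bSigma^{1/2}(\zeta\bI + \mu^{\star}\bSigma + \bX^\sT\bX)^{-1}\bSigma^{1/2}$, two p.s.d.\ operators that commute by the same algebraic identity already used in the main text (expanding $\bD\bE$ via a resolvent difference symmetric in $\bD$ and $\bE$). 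Combining the trace inequality $\Tr(\bQ \bD \bE) \le \|\bD\|\Tr(\bQ \bE)$ with $\|\bD\| \le \gamma$ (Lemma~\ref{lem:A-norm-bound}) and the consequence of $E_{\mathrm{mid}}$ under the assumptions $\beta_1 \le \Riter_0(\bQ)/64$ and $\gamma\beta_2(1+\Riter_0(\bI)) \le 1/64$, namely $\Tr(\bQ\bE) = \Rfct_n(\zeta, \mu^\star; \bQ) \le 2\Riter_0(\bQ)$, produces $E_1 = \bigO(\gamma\beta_2\Riter_0(\bQ))$.

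The purely deterministic term $E_2$ is controlled by derivative estimates: the inequality $\bSigma(\zeta\bI + \muefct\bSigma)^{-1} \preceq \muefct^{-1}\bI$ yields $|\partial_{\muefct}\Rfct_0(\zeta, \muefct; \bQ)| \le \muefct^{-1}\Rfct_0(\zeta, \muefct; \bQ)$, while implicit differentiation of the fixed-point~\eqref{eq:mu-fixed-point}, combined with the identity $\muefct(1+\Riter_0(\bI)) \ge n$, delivers $d\muefct/d\mu \le 1 + \Riter_0(\bI)$. Integrating these bounds across $[\muefct(\zeta,\mu), \muefct(\zeta,\mu^\star)]$, using $\muefct^{-1} \le (1+\Riter_0(\bI))/n$, and absorbing the remaining $1/n$ into $\gamma$ via its definition in Lemma~\ref{lem:from-F-to-E} gives $E_2 = \bigO(\gamma\beta_2(1+\Riter_0(\bI))\Riter_0(\bQ))$. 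Summing the three contributions yields the announced bound.

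The main obstacle is the commutation step in the treatment of $E_1$: without $\bD\bE = \bE\bD$ the inequality $\Tr(\bQ\bD\bE) \le \|\bD\|\Tr(\bQ\bE)$ fails, and one would be forced to estimate $\Rfct_n(\zeta, \mu; \bQ)$ directly, a quantity we have no handle on when $\mu$ drops below $\beta_2$. Fortunately the same resolvent trick used in the main proof furnishes commutativity essentially for free, reducing $E_1$ to the already-controlled quantity $\Rfct_n(\zeta, \mu^\star; \bQ)$.
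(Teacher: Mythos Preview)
Your overall architecture---shift $\mu$ upward, apply~\eqref{eq:muLarger} at the shifted point, then control the two perturbation errors by a triangle inequality and the commutativity trick---is exactly the paper's route. The gap is in the monotonicity claim.

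You assert that the tuple $(\gamma,\Riter_0(\bQ),\Riter_0(\bI),\alpha_i,\beta_i)$ is nonincreasing in $\muefct$, and deduce $\beta_2^\star\le\beta_2=\mu^\star/2$. This is false for $\beta_1$ and $\beta_2$: inspect the formula in Lemma~\ref{lem:from-E-to-F} and you will see $\Riter_0(\bI)$ in the \emph{denominators} $1+\Riter_0(\bI)^k$, $k=1,2,3$. Increasing $\muefct$ decreases $\Riter_0(\bI)$, which shrinks those denominators and can push $\beta_1,\beta_2$ \emph{up}. So $\mu^\star=2\beta_2$ does not guarantee $\beta_2^\star\le\mu^\star/2$, and you cannot invoke~\eqref{eq:muLarger} at $\mu^\star$. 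The paper resolves this by taking $\mu'=64\beta_2$: the hypothesis $\gamma\beta_2(1+\Riter_0(\bI))\le 1/64$ then forces $\Riter_0'(\bI)\ge\tfrac12\Riter_0(\bI)$, so $1+\Riter_0'(\bI)^k\ge 2^{-k}(1+\Riter_0(\bI)^k)$, yielding $\beta_1'\le 8\beta_1$ and $\beta_2'\le 32\beta_2=\mu'/2$. The factor $64$ is not decorative; it is exactly what makes the self-consistency close. The same two-sided comparison $\tfrac12\Riter_0(\bQ)\le\Riter_0'(\bQ)\le\Riter_0(\bQ)$ is also what verifies the remaining hypotheses $\alpha_1'\le\Riter_0'(\bI)/4$, $\beta_1'\le\Riter_0'(\bQ)/4$, and $n^{-D}=\bigO(\alpha_1'/(1+\Riter_0'(\bI)))$, which you would also need to check.

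A smaller point on $E_2$: the step ``absorb $1/n$ into $\gamma$'' is not valid when the minimum defining $\gamma$ is achieved at $1/\zeta$, since nothing forces $1/\zeta\ge c/n$. The cleaner route (which the paper takes) is to bound $|\partial_{\muefct}\Rfct_0|\le\|\bA_0\|\,\Rfct_0$ with $\|\bA_0\|\le\min(1/\muefct,1/\zeta)\le\gamma$ directly, avoiding the detour through $\muefct^{-1}\le(1+\Riter_0(\bI))/n$.
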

Combining Eqs.~\eqref{eq:muLarger} and \eqref{eq:muSmaller}, the proof is complete.

%Therefore for $\mu<2\beta_2$, we use triangular inequality:
%\am{I do not understand what does it mean to have the argument $\beta_2$ here, since 
%$\beta_2$ depends itself upon $\mu$}  \cc{Add an additional condition $\gamma \beta_2 (1 + \Riter_0(\bI)) \leq 1/128$ can fix this. Construct another sequence starting from $\mu_0' = \muefct' := \muefct(\lambda, \mu')$ with $\mu' = 64 \beta_2$. We can eventually show $\beta_2' \leq 32 \beta_2 \leq \mu'/2$. The large $\mu$ argument then works for $\mu'$ and then we quantify the perturbation error. }
% \begin{align*}
% %
% |\Rfct_n(\lambda, \mu; \bQ) - \Rfct_0(\lambda, \muefct(\lambda, \mu); \bQ)|\le &
%  |\Rfct_n(\lambda, 2\beta_2; \bQ) - \Rfct_0(\lambda, \muefct(\lambda, 2\beta_2); \bQ)|\\
%  &+|\Rfct_n(\lambda, 2\beta_2; \bQ) - \Rfct_n(\lambda, \mu; \bQ)|\\
%  & +
%  | \Rfct_0(\lambda, \muefct(\lambda, 2\beta_2); \bQ) -  \Rfct_0(\lambda, \muefct(\lambda, \mu); \bQ)|\, . 
% \end{align*}
% %
%The claim of the theorem follows by applying Eq.~\eqref{eq:muLarger} and Lemma
% \ref{lem:R-fct-approximation}, and using $\gamma\beta_1\beta_2= \bigO(1)$. \am{Where is this proved?}
 
	%\input{old-preliminaries.tex}
	%\input{main-sec.tex}
	%\input{proof-risk-approximation.tex}
		
	\section*{Acknowledgements}
	This work was supported by the NSF through award DMS-2031883, the Simons Foundation through 
	Award 814639 for the Collaboration on the Theoretical Foundations of Deep Learning, the NSF grant
	 CCF-2006489, the ONR grant N00014-18-1-2729, and a grant from Eric and Wendy Schmidt
	 at  the Institute for Advanced Studies. C. Cheng is supported
	 by the William R. Hewlett Stanford graduate fellowship.
	 
	 Part of this work was carried out while A. Montanari was on partial leave from Stanford and a 
	 Chief Scientist at Ndata Inc dba Project N. The present research is unrelated to A. Montanari’s 
	 activity while on leave. 
	
	\bibliographystyle{amsalpha}
	\bibliography{all-bibliography}

\newcommand{\etalchar}[1]{$^{#1}$}
\providecommand{\bysame}{\leavevmode\hbox to3em{\hrulefill}\thinspace}
\providecommand{\MR}{\relax\ifhmode\unskip\space\fi MR }
% \MRhref is called by the amsart/book/proc definition of \MR.
\providecommand{\MRhref}[2]{%
  \href{http://www.ams.org/mathscinet-getitem?mr=#1}{#2}
}
\providecommand{\href}[2]{#2}
\begin{thebibliography}{EKBB{\etalchar{+}}13}

\bibitem[Ada15]{adamczak2015note}
Radoslaw Adamczak, \emph{A note on the {H}anson-{W}right inequality for random
  vectors with dependencies}, Electronic Communications in Probability
  \textbf{20} (2015), 1--13.

\bibitem[AEK{\etalchar{+}}14]{alex2014isotropic}
Bloemendal Alex, L{\'a}szl{\'o} Erd{\H{o}}s, Antti Knowles, Horng-Tzer Yau, and
  Jun Yin, \emph{Isotropic local laws for sample covariance and generalized
  wigner matrices}, Electronic Journal of Probability \textbf{19} (2014),
  1--53.

\bibitem[Apo00]{apostol2000calculating}
Tom~M Apostol, \emph{Calculating higher derivatives of inverses}, The American
  Mathematical Monthly \textbf{107} (2000), no.~8, 738--741.

\bibitem[ASS20]{advani2017high}
Madhu~S Advani, Andrew~M Saxe, and Haim Sompolinsky, \emph{High-dimensional
  dynamics of generalization error in neural networks}, Neural Networks
  \textbf{132} (2020), 428--446.

\bibitem[BBEKY13]{bean2013optimal}
Derek Bean, Peter~J Bickel, Noureddine El~Karoui, and Bin Yu, \emph{Optimal
  m-estimation in high-dimensional regression}, Proceedings of the National
  Academy of Sciences \textbf{110} (2013), no.~36, 14563--14568.

\bibitem[Bel21]{belkin2021fit}
Mikhail Belkin, \emph{Fit without fear: remarkable mathematical phenomena of
  deep learning through the prism of interpolation}, Acta Numerica \textbf{30}
  (2021), 203--248.

\bibitem[BGL{\etalchar{+}}14]{bakry2014analysis}
Dominique Bakry, Ivan Gentil, Michel Ledoux, et~al., \emph{Analysis and
  geometry of markov diffusion operators}, vol. 103, Springer, 2014.

\bibitem[BKM{\etalchar{+}}19]{barbier2019optimal}
Jean Barbier, Florent Krzakala, Nicolas Macris, L{\'e}o Miolane, and Lenka
  Zdeborov{\'a}, \emph{Optimal errors and phase transitions in high-dimensional
  generalized linear models}, Proceedings of the National Academy of Sciences
  \textbf{116} (2019), no.~12, 5451--5460.

\bibitem[BLLT20]{bartlett2020benign}
Peter~L Bartlett, Philip~M Long, G{\'a}bor Lugosi, and Alexander Tsigler,
  \emph{Benign overfitting in linear regression}, Proceedings of the National
  Academy of Sciences \textbf{117} (2020), no.~48, 30063--30070.

\bibitem[BLM13]{boucheron2013concentration}
St{\'e}phane Boucheron, G{\'a}bor Lugosi, and Pascal Massart,
  \emph{Concentration inequalities: A nonasymptotic theory of independence},
  Oxford University Press, 2013.

\bibitem[BM11]{bayati2011lasso}
Mohsen Bayati and Andrea Montanari, \emph{The lasso risk for gaussian
  matrices}, IEEE Transactions on Information Theory \textbf{58} (2011), no.~4,
  1997--2017.

\bibitem[BMR21]{bartlett2021deep}
Peter~L Bartlett, Andrea Montanari, and Alexander Rakhlin, \emph{Deep learning:
  a statistical viewpoint}, Acta numerica \textbf{30} (2021), 87--201.

\bibitem[Br{\'e}11]{brezis2011functional}
Haim Br{\'e}zis, \emph{Functional analysis, sobolev spaces and partial
  differential equations}, vol.~2, Springer, 2011.

\bibitem[BY08]{bai2008limit}
Zhi-Dong Bai and Yong-Qua Yin, \emph{Limit of the smallest eigenvalue of a
  large dimensional sample covariance matrix}, Advances In Statistics, World
  Scientific, 2008, pp.~108--127.

\bibitem[CM22]{celentano2022fundamental}
Michael Celentano and Andrea Montanari, \emph{Fundamental barriers to
  high-dimensional regression with convex penalties}, The Annals of Statistics
  \textbf{50} (2022), no.~1, 170--196.

\bibitem[CMW20]{celentano2020lasso}
Michael Celentano, Andrea Montanari, and Yuting Wei, \emph{The lasso with
  general gaussian designs with applications to hypothesis testing},
  arXiv:2007.13716 (2020).

\bibitem[CMW23]{celentano2023lasso}
\bysame, \emph{The lasso with general gaussian designs with applications to
  hypothesis testing}, The Annals of Statistics \textbf{51} (2023), no.~5,
  2194--2220.

\bibitem[CT05]{candes2005decoding}
Emmanuel~J Candes and Terence Tao, \emph{Decoding by linear programming}, IEEE
  transactions on information theory \textbf{51} (2005), no.~12, 4203--4215.

\bibitem[DET05]{donoho2005stable}
David~L Donoho, Michael Elad, and Vladimir~N Temlyakov, \emph{Stable recovery
  of sparse overcomplete representations in the presence of noise}, IEEE
  Transactions on information theory \textbf{52} (2005), no.~1, 6--18.

\bibitem[Dic16]{dicker2016ridge}
Lee~H. Dicker, \emph{Ridge regression and asymptotic minimax estimation over
  spheres of growing dimension}, Bernoulli \textbf{22} (2016), no.~1, 1--37.

\bibitem[DJM13]{donoho2013accurate}
David~L Donoho, Iain Johnstone, and Andrea Montanari, \emph{Accurate prediction
  of phase transitions in compressed sensing via a connection to minimax
  denoising}, IEEE transactions on information theory \textbf{59} (2013),
  no.~6, 3396--3433.

\bibitem[DM16]{donoho2016high}
David Donoho and Andrea Montanari, \emph{High dimensional robust m-estimation:
  Asymptotic variance via approximate message passing}, Probability Theory and
  Related Fields \textbf{166} (2016), no.~3, 935--969.

\bibitem[DW18]{dobriban2018high}
Edgar Dobriban and Stefan Wager, \emph{High-dimensional asymptotics of
  prediction: ridge regression and classification}, Annals of Statistics
  \textbf{46} (2018), no.~1, 247--279.

\bibitem[EK18]{el2018impact}
Noureddine El~Karoui, \emph{On the impact of predictor geometry on the
  performance on high-dimensional ridge-regularized generalized robust
  regression estimators}, Probability Theory and Related Fields \textbf{170}
  (2018), no.~1, 95--175.

\bibitem[EKBB{\etalchar{+}}13]{el2013robust}
Noureddine El~Karoui, Derek Bean, Peter~J Bickel, Chinghway Lim, and Bin Yu,
  \emph{On robust regression with high-dimensional predictors}, Proceedings of
  the National Academy of Sciences \textbf{110} (2013), no.~36, 14557--14562.

\bibitem[GS73]{galambos1973regularly}
Janos Galambos and Eugene Seneta, \emph{Regularly varying sequences},
  Proceedings of the American Mathematical Society \textbf{41} (1973), no.~1,
  110--116.

\bibitem[HMRT22]{hastie2022surprises}
Trevor Hastie, Andrea Montanari, Saharon Rosset, and Ryan~J Tibshirani,
  \emph{Surprises in high-dimensional ridgeless least squares interpolation},
  The Annals of Statistics \textbf{50} (2022), no.~2, 949--986.

\bibitem[JM14]{javanmard2014confidence}
Adel Javanmard and Andrea Montanari, \emph{Confidence intervals and hypothesis
  testing for high-dimensional regression}, The Journal of Machine Learning
  Research \textbf{15} (2014), no.~1, 2869--2909.

\bibitem[Kar33]{karamata1933mode}
Jovan Karamata, \emph{{Sur un mode de croissance r{\'e}guli{\`e}re.
  Th{\'e}or{\`e}mes fondamentaux}}, Bulletin de la Soci{\'e}t{\'e}
  Math{\'e}matique de France \textbf{61} (1933), 55--62.

\bibitem[KY17]{knowles2017anisotropic}
Antti Knowles and Jun Yin, \emph{Anisotropic local laws for random matrices},
  Probability Theory and Related Fields \textbf{169} (2017), no.~1, 257--352.

\bibitem[KZSS21]{koehler2021uniform}
Frederic Koehler, Lijia Zhou, Danica~J Sutherland, and Nathan Srebro,
  \emph{Uniform convergence of interpolators: Gaussian width, norm bounds and
  benign overfitting}, Advances in Neural Information Processing Systems
  \textbf{34} (2021), 20657--20668.

\bibitem[LP09]{lytova2009central}
Anna Lytova and Leonid Pastur, \emph{Central limit theorem for linear
  eigenvalue statistics of random matrices with independent entries}, The
  Annals of Probability \textbf{37} (2009), no.~5, 1778--1840.

\bibitem[MM21]{miolane2021distribution}
L{\'e}o Miolane and Andrea Montanari, \emph{The distribution of the lasso:
  Uniform control over sparse balls and adaptive parameter tuning}, The Annals
  of Statistics \textbf{49} (2021), no.~4, 2313--2335.

\bibitem[RMR21a]{richards2021asymptotics}
Dominic Richards, Jaouad Mourtada, and Lorenzo Rosasco, \emph{Asymptotics of
  ridge (less) regression under general source condition}, International
  Conference on Artificial Intelligence and Statistics, PMLR, 2021,
  pp.~3889--3897.

\bibitem[RMR21b]{richards2020asymptotics}
\bysame, \emph{Asymptotics of ridge (less) regression under general source
  condition}, International Conference on Artificial Intelligence and
  Statistics, PMLR, 2021, pp.~3889--3897.

\bibitem[RV09]{rudelson2009smallest}
Mark Rudelson and Roman Vershynin, \emph{Smallest singular value of a random
  rectangular matrix}, Communications on Pure and Applied Mathematics: A
  Journal Issued by the Courant Institute of Mathematical Sciences \textbf{62}
  (2009), no.~12, 1707--1739.

\bibitem[RV13]{rudelson2013hanson}
\bysame, \emph{Hanson-{W}right inequality and sub-{G}aussian concentration},
  Electronic Communications in Probability \textbf{18} (2013), 1--9.

\bibitem[T{\etalchar{+}}15]{tropp2015introduction}
Joel~A Tropp et~al., \emph{An introduction to matrix concentration
  inequalities}, Foundations and Trends{\textregistered} in Machine Learning
  \textbf{8} (2015), no.~1-2, 1--230.

\bibitem[TAH18]{thrampoulidis2018precise}
Christos Thrampoulidis, Ehsan Abbasi, and Babak Hassibi, \emph{Precise error
  analysis of regularized $ m $-estimators in high dimensions}, IEEE
  Transactions on Information Theory \textbf{64} (2018), no.~8, 5592--5628.

\bibitem[TB20]{tsigler2020benign}
Alexander Tsigler and Peter~L Bartlett, \emph{Benign overfitting in ridge
  regression}, arXiv:2009.14286 (2020).

\bibitem[Tib96]{tibshirani1996regression}
Robert Tibshirani, \emph{Regression shrinkage and selection via the lasso},
  Journal of the Royal Statistical Society: Series B (Methodological)
  \textbf{58} (1996), no.~1, 267--288.

\bibitem[TPT21]{taheri2021fundamental}
Hossein Taheri, Ramtin Pedarsani, and Christos Thrampoulidis, \emph{Fundamental
  limits of ridge-regularized empirical risk minimization in high dimensions},
  International Conference on Artificial Intelligence and Statistics, PMLR,
  2021, pp.~2773--2781.

\bibitem[Tsy09]{tsybakov2009nonparametric}
Alexandre~B Tsybakov, \emph{Introduction to nonparametric estimation},
  Springer, 2009.

\bibitem[VdV00]{van2000asymptotic}
Aad~W Van~der Vaart, \emph{Asymptotic statistics}, vol.~3, Cambridge university
  press, 2000.

\bibitem[Ver12]{vershynin2010introduction}
R.~Vershynin, \emph{Introduction to the non-asymptotic analysis of random
  matrices}, Compressed Sensing: Theory and Applications (Y.C. Eldar and
  G.~Kutyniok, eds.), Cambridge University Press, 2012, pp.~210--268.

\bibitem[WX20]{wu2020optimal}
Denny Wu and Ji~Xu, \emph{On the optimal weighted $ell\_2 $ regularization in
  overparameterized linear regression}, Advances in Neural Information
  Processing Systems \textbf{33} (2020), 10112--10123.

\bibitem[Yas14]{yaskov2014lower}
Pavel Yaskov, \emph{Lower bounds on the smallest eigenvalue of a sample
  covariance matrix}, Electronic Communications in Probability \textbf{19}
  (2014), 1--10.

\end{thebibliography}
	
	\newpage
	\appendix
	
	\section{Proof of Proposition~\ref{prop:effective-quantities-bound}} \label{proof:effective-quantities-bound}
Since $\sigma_{k_\star} \geq \lambdaefct \geq \sigma_{k_\star +1}$, we have
\begin{align*}
	k_\star+ \frac{r_1(k_\star)}{b_{k_\star}} & = \sum_{l=1}^{k_\star} \frac{\sigma_l}{\sigma_l} + \sum_{l=k_\star +1}^d \frac{\sigma_l}{\sigma_{k_\star }}  \leq \sum_{l=1}^{k_\star} \frac{\sigma_l + \lambdaefct}{\sigma_l + \lambdaefct} + \sum_{l=k_\star +1}^d \frac{\sigma_l}{\lambdaefct}  \nonumber \\
	& \leq \sum_{l=1}^{k_\star} \frac{2\sigma_l}{\sigma_l + \lambdaefct} + \sum_{l=k_\star +1}^d \frac{2\sigma_l}{\sigma_l + \lambdaefct} = 2 \Tr \prn{\bSigma(\bSigma + \lambdaefct \bI)^{-1}} \leq 2 n \, .
\end{align*}
Next we bound $\VAR_n(\lambda)$. Recalling that  $\Tr \prn{\bSigma^2(\bSigma + \lambdaefct \bI)^{-2}}\le n(1-c_\star^{-1})$, it then follows
\begin{align*}
	\VAR_n(\lambda) & = \frac{\vareps^2 \Tr \prn{\bSigma^2(\bSigma + \lambdaefct \bI)^{-2}}}{n - \Tr \prn{\bSigma^2(\bSigma + \lambdaefct \bI)^{-2}}} \leq  \frac{c_\star \tau^2}{n} \cdot \prn{ \sum_{l=1}^{k_\star} \frac{\sigma_l^2}{(\sigma_l + \lambdaefct)^2} + \sum_{l=k_\star +1}^d \frac{\sigma_l^2}{(\sigma_l + \lambdaefct)^2}} \\
	& \leq \frac{c_\star \tau^2}{n} \cdot \prn{k_\star + \sum_{l=k_\star +1}^d \frac{\sigma_l^2}{\lambdaefct^2}} \leq c_\star\vareps^2\Big(\frac{k_\star}{n}+\frac{r_2(k_\star)}{n}\Big) \stackrel{\mathrm{(i)}}{\leq} c_\star\vareps^2\Big(\frac{k_\star}{n}+\frac{4 b_{k_\star}^2 n }{\overline{r}(k_\star)}\Big)  \, ,
\end{align*}
where in (i) we use the previous bound $r_1(k_\star) \leq 2 b_{k_\star} n$. Finally, for the bias term, we have
\begin{align*}
	\BIAS_n(\lambda) & = \frac{\lambdaefct^2 \<\boldbeta, \prn{\bSigma + \lambdaefct \bI}^{-2} \bSigma \boldbeta\>}{1 - n^{-1} \Tr \prn{\bSigma^2 (\bSigma +\lambdaefct \bI)^{-2}}} \leq c_\star \sum_{l=1}^d \frac{\lambdaefct^2 \sigma_l}{(\sigma_l + \lambdaefct)^2} \<\boldbeta,\bv_l\>^2 \nonumber \\
	& \leq c_\star \prn{\sum_{l=1}^{k_\star} \lambdaefct^2 \sigma_l^{-1} \<\boldbeta,\bv_l\>^2 + \sum_{l=k_\star + 1}^{d} \sigma_l  \<\boldbeta,\bv_l\>^2} \le c_\star 
	\Big(\sigma_{k_\star}^2 \|\boldbeta_{\le k_\star}\|_{\bSigma^{-1}}^2 +\|\boldbeta_{>k_\star}\|_{\bSigma}^2\Big)\, .
\end{align*}
	\section{Auxiliary lemmas}

\subsection{Proof of Lemma~\ref{lem:var-bias-formula}} \label{proof:var-bias-formula}
The lemma follows by pure calculations.
\paragraph{Identities for \texorpdfstring{$\var_\bX(\lambda)$}{TEXT} and \texorpdfstring{$\bias_\bX(\lambda)$}{TEXT}}
Substitute in Eq.~\eqref{eq:R-F-func}, we have
\begin{align*}
	\vareps^2 \cdot \frac{\partial}{\partial \zeta} \Ffct_n(n\lambda, 0; \bI) & = \vareps^2 \cdot \frac{\partial }{\partial \zeta} \left. \prn{\zeta \Tr \prn{\bSigma (\zeta \bI +  \bX^\sT \bX)^{-1}}} \right|_{\zeta=n\lambda}\nonumber \\
	& = \vareps^2 \brc{\Tr \prn{\bSigma (n\lambda \bI +  \bX^\sT \bX)^{-1}} - n\lambda\Tr \prn{\bSigma (n\lambda \bI +  \bX^\sT \bX)^{-2}}} \nonumber \\
	& = \vareps^2 \Tr \prn{\bSigma \bX^\sT \bX (n\lambda \bI + \bX^\sT \bX)^{-2}} = \var_\bX(\lambda)\, ,
\end{align*}
and similarly for the bias term
\begin{align*}
	- n\lambda \cdot \frac{\partial}{\partial \mu}\Ffct_n(n\lambda, 0; \btheta \btheta^\sT)  & =  - n\lambda \cdot \frac{\partial }{\partial \mu} \left.\prn{n\lambda \Tr \prn{\bSigma^{\half} \btheta \btheta^\sT \bSigma^{\half} (n\lambda \bI +  \mu \bSigma +  \bX^\sT \bX)^{-1}}} \right|_{\mu = 0} \nonumber \\
	& = n^2\lambda^2 \left. \Tr \prn{\bSigma^{\half} \btheta \btheta^\sT \bSigma^{\half} (n\lambda \bI +  \mu \bSigma +  \bX^\sT \bX)^{-1} \bSigma  (n\lambda \bI +  \mu \bSigma +  \bX^\sT \bX)^{-1}} \right|_{\mu = 0} \nonumber \\
	& = n^2\lambda^2 \Tr \prn{\boldbeta \boldbeta^\sT (n\lambda \bI +  \bX^\sT \bX)^{-1} \bSigma  (n\lambda \bI  +  \bX^\sT \bX)^{-1}} = \bias_\bX(\lambda)\, .
\end{align*}

\paragraph{Identities for $\VAR_n(\lambda)$ and $\BIAS_n(\lambda)$} First we verify that $\muefct(n\lambda, 0) = n\lambda/\lambdaefct$. Set $\zeta = n\lambda$ and $\mu = 0$ in Eq.~\eqref{eq:mu-fixed-point}, we obtain 
\begin{align*}
	\muefct = \frac{n}{1 + \Rfct_0(n\lambda, \muefct; \bI)} = \frac{n}{1 + \Tr \prn{\bSigma (\muefct \bSigma + n\lambda \bI)^{-1} }} =  \frac{n}{1 + \muefct^{-1} \Tr \prn{\bSigma ( \bSigma + \frac{n\lambda}{\muefct} \bI)^{-1} }}\, , \nonumber 
\end{align*}
and thus
\begin{align*}
	n - \muefct = \Tr \prn{\bSigma \prn{ \bSigma + \frac{n\lambda}{\muefct} \bI}^{-1} }\, ,
\end{align*}
which proves the claim comparing to Eq.~\eqref{eq:lambda-fixed-point}. Further by~\eqref{eq:mu-fixed-point}, we can compute the derivatives
\begin{subequations}
\begin{align}
	\frac{\partial}{\partial \zeta} \muefct(n\lambda, 0) & = \frac{\Tr \prn{\bSigma (\bSigma + \lambdaefct \bI)^{-2}}}{n - \Tr \prn{\bSigma^2(\bSigma + \lambdaefct \bI)^{-2}}}\, ,   \\
	\frac{\partial}{\partial \mu} \muefct(n\lambda, 0) & = \frac{n}{n - \Tr \prn{\bSigma^2(\bSigma + \lambdaefct \bI)^{-2}}} \, .
\end{align}
\end{subequations}

We can then proceed to write
\begin{align*}
	& \vareps^2 \cdot \frac{\partial}{\partial \zeta} \Ffct_0(n\lambda, \muefct(n\lambda, 0); \bI) \nonumber \\
	& = \vareps^2 \cdot \frac{\partial }{\partial \zeta} \left.\prn{\zeta \Tr \prn{\bSigma (\zeta \bI + \muefct \bSigma)^{-1}}} \right|_{\zeta=n\lambda}  \nonumber \\
	& = \vareps^2  \brc{ \Tr \prn{\bSigma (n\lambda \bI + \muefct \bSigma)^{-1}} - n\lambda\Tr \prn{\bSigma (n\lambda \bI + \muefct \bSigma)^{-2}}  -  n\lambda\Tr \prn{\bSigma^2 (n\lambda \bI + \muefct \bSigma)^{-2}} \cdot \frac{\partial}{\partial \zeta} \muefct(n\lambda, 0) } \nonumber \\
	& = \frac{\vareps^2}{\muefct} \Tr \prn{\bSigma^2 (\bSigma + \lambdaefct \bI)^{-2}} \cdot \prn{1 - \frac{\lambdaefct \Tr \prn{\bSigma (\bSigma + \lambdaefct \bI)^{-2}}}{n - \Tr \prn{\bSigma^2(\bSigma + \lambdaefct \bI)^{-2}}}} \nonumber \\
	& = \frac{\vareps^2}{\muefct} \Tr \prn{\bSigma^2 (\bSigma + \lambdaefct \bI)^{-2}} \cdot \frac{n - \Tr \prn{\bSigma(\bSigma + \lambdaefct \bI)^{-1}}}{n - \Tr \prn{\bSigma^2(\bSigma + \lambdaefct \bI)^{-2}}} \nonumber \\
	& \stackrel{\mathrm{(i)}}{=}  \frac{\vareps^2 \Tr \prn{\bSigma^2 (\bSigma + \lambdaefct \bI)^{-2}}}{n -\Tr \prn{\bSigma^2 (\bSigma + \lambdaefct \bI)^{-2}} } = \VAR_n(\lambda) \, ,
\end{align*}
where in (i) we use Eq.~\eqref{eq:lambda-fixed-point} which implies $\muefct = n -  \Tr \prn{\bSigma(\bSigma + \lambdaefct \bI)^{-1}}$. For the bias we can compute
\begin{align*}
	& - n\lambda \cdot \frac{\partial}{\partial \mu}\Ffct_0(n\lambda, \muefct(n\lambda, 0); \btheta \btheta^\sT) \nonumber \\
	& = n^2\lambda^2 \Tr \prn{\bSigma^{\half} \btheta \btheta^\sT \bSigma^{\half} (n\lambda \bI +  \muefct \bSigma )^{-1} \bSigma  (n\lambda \bI +  \muefct \bSigma )^{-1}}  \cdot \frac{n}{n - \Tr \prn{\bSigma^2(\bSigma + \lambdaefct \bI)^{-2}}} \nonumber \\
	& = \frac{\lambdaefct^2 \boldbeta^\sT \prn{\bSigma + \lambdaefct \bI}^{-2} \bSigma \boldbeta}{1 - n^{-1} \Tr \prn{\bSigma^2 (\bSigma +\lambdaefct \bI)^{-2}}} = \BIAS_n(\lambda) \, .
\end{align*}
The proof is complete.

\subsection{Proof of Lemma~\ref{lem:function-value-bound-to-derivative}} \label{proof:function-value-bound-to-derivative}
The lemma is an analogue of \cite[Lemma.~5]{hastie2022surprises}, which requires two-sided differentiability around $0$ and makes use of higher order central difference operators from numerical analysis. Here we apply a more straightforward argument. For any $0 \leq j \leq k$, by Taylor expansion with Lagrange remainder, we can write
\begin{align*}
	f(j\delta) = \sum_{l=0}^k j^l \cdot \frac{\delta^l}{j!} f^{(l)}(0) + j^{k+1} \cdot \frac{\delta^{k+1}}{(k+1)!} f^{(k+1)}(t_j) \, ,
\end{align*}
for some $t_j \in [0, j\delta]$. We can write the $k+1$ equations in matrix form,
\begin{align*}
	\underbrace{\begin{bmatrix}
		1 & 0 & 0 & \cdots & 0 \\
		1 & 1 & 1 & \cdots & 1 \\
		1 & 2 & 4 & \cdots & 2^k \\
		\vdots & \vdots & \vdots & \ddots & \vdots \\
		1 & k & k^2 & \cdots & k^k 
	\end{bmatrix}}_{ := \bV_k} \begin{bmatrix}
	f(0) \\ f'(0) \delta \\ f''(0)\delta^2/2 \\ \vdots \\ f^{(k)}(0) \delta^k /k! 
	\end{bmatrix} + \frac{\delta^{k+1}}{(k+1)!}  \begin{bmatrix}
	0 \\ f^{(k+1)}(t_1) \\ 2^{k+1}f^{(k+1)}(t_2) \\ \vdots \\ k^{k+1} f^{(k+1)}(t_k)
	\end{bmatrix} = \begin{bmatrix}
	f(0) \\ f(\delta) \\ f(2\delta) \\ \vdots \\ f(k\delta) 
	\end{bmatrix} \, .
\end{align*}
The Vandermonde matrix $\bV_k$ is invertible, and therefore we can write
\begin{align*}
	\begin{bmatrix}
		f(0) \\ f'(0) \delta \\ f''(0)\delta^2/2 \\ \vdots \\ f^{(k)}(0) \delta^k /k! 
	\end{bmatrix} = \bV_k^{-1} \begin{bmatrix}
	f(0) \\ f(\delta) \\ f(2\delta) \\ \vdots \\ f(k\delta) 
\end{bmatrix} - \frac{\delta^{k+1}}{(k+1)!}   \bV_k^{-1} \begin{bmatrix}
0 \\ f^{(k+1)}(t_1) \\ 2^{k+1}f^{(k+1)}(t_2) \\ \vdots \\ k^{k+1} f^{(k+1)}(t_k)
\end{bmatrix} \, .
\end{align*}
Denote by $\| \bM \|_\infty$ the $\ell_\infty$-induced operator norm, we thus have
\begin{align*}
	|f'(0) \delta| & \leq \left\|\begin{bmatrix}
		f(0) \\ f'(0) \delta \\ f''(0)\delta^2/2 \\ \vdots \\ f^{(k)}(0) \delta^k /k! 
	\end{bmatrix} \right\|_\infty \leq \left\| \bV_k^{-1}\right\|_\infty \cdot \prn{ \left\| \begin{bmatrix}
	f(0) \\ f(\delta) \\ f(2\delta) \\ \vdots \\ f(k\delta) 
\end{bmatrix}\right\|_\infty + \frac{\delta^{k+1}}{(k+1)!} \left\| \begin{bmatrix}
0 \\ f^{(k+1)}(t_1) \\ 2^{k+1}f^{(k+1)}(t_2) \\ \vdots \\ k^{k+1} f^{(k+1)}(t_k)
\end{bmatrix}\right\|_\infty} \nonumber \\
& = \bigO_k \prn{\max_{0 \leq j \leq k} |f(j\delta)|+ \sup_{t \in [0, k \delta]} |f^{(k+1)}(t)| \cdot \delta^{k+1}}.
\end{align*}
Dividing $\delta$ from both sides completes the proof.

\subsection{Proof of Lemma~\ref{lem:derivative-F-n}} \label{proof:derivative-F-n}
\paragraph{Part I: Derivative w.r.t. $\zeta$} By Lemma~\ref{lem:var-bias-formula}, for $\lambda = \zeta/n$,
\begin{align*}
	\frac{\partial}{\partial \zeta} \Ffct_n(\zeta, 0; \bI) = \var_\bX(\lambda) / \vareps^2 = \Tr \prn{\bSigma \bX^\sT \bX (\bX^\sT \bX + \zeta \bI)^{-2}} \, ,
\end{align*}
we can easily write out derivatives with respect to $\zeta$ up to any order $k \geq 1$ as
\begin{align*}
	\frac{\partial^k}{\partial \zeta^k} \Ffct_n(\zeta, 0; \bI)  = \bigO_k \prn{ \Tr \prn{\bSigma \bX^\sT \bX (\bX^\sT \bX + \zeta \bI)^{-1-k}} }\, , 
\end{align*}
and therefore
\begin{align*}
	\left|\frac{\partial^k}{\partial \zeta^k} \Ffct_n(\zeta, 0; \bI)  \right| & \leq \bigO_k \prn{\frac{1}{\zeta^{k-1}} \Tr \prn{\bSigma \bX^\sT \bX (\bX^\sT \bX + \zeta \bI)^{-2}} }\leq \bigO_k \prn{\frac{1}{\zeta^{k-1}} \Tr \prn{\bSigma (\bX^\sT \bX + \zeta \bI)^{-1}}} \nonumber \\
	&= \bigO_k \prn{\frac{\Ffct_n(\zeta, 0; \bI) }{\zeta^k}} \, .
\end{align*}
\paragraph{Part II: Derivative w.r.t. $\mu$} We can directly compute that
\begin{align*}
	\left|\frac{\partial^k}{\partial \mu^k} \Ffct_n(\zeta, \mu; \btheta \btheta^\sT)  \right|  & = \left| \frac{\partial^k}{\partial \mu^k}  \cdot \zeta  \Tr \prn{\bSigma^{\frac 1 2} \btheta \btheta^\sT \bSigma^{\frac 1 2} (\zeta \bI + \mu \bSigma + \bX^\sT \bX)^{-1}} \right| \nonumber \\
	& = \bigO_k \prn{ \zeta  \Tr \prn{\bSigma^{\frac 1 2} \btheta \btheta^\sT \bSigma^{\frac 1 2} \prn{(\zeta \bI + \mu \bSigma + \bX^\sT \bX)^{-1} \bSigma}^k(\zeta \bI + \mu \bSigma + \bX^\sT \bX)^{-1}} } \nonumber \\
	& = \bigO_k \prn{ \zeta  \Tr \prn{\btheta \btheta^\sT  \prn{\bSigma^{\frac 1 2}(\zeta \bI + \mu \bSigma + \bX^\sT \bX)^{-1} \bSigma^{\frac 1 2}}^{k+1}} } \nonumber \\
	& \stackrel{\mathrm{(i)}}{=} \bigO_k \prn{\zeta^{1-k}  \Tr \prn{\btheta \btheta^\sT  \bSigma^{\frac 1 2}(\zeta \bI + \mu \bSigma + \bX^\sT \bX)^{-1} \bSigma^{\frac 1 2}}} = \bigO_k \prn{\frac{\Ffct_n(\zeta, \mu; \btheta \btheta^\sT) }{\zeta^k}} \, ,
\end{align*}
where in (i) we use $\norm{\bSigma} = 1$.

\subsection{Proof of Lemma~\ref{lem:derivative-F-0}} \label{proof:derivative-F-0}

\paragraph{Part I: Derivative w.r.t. $\zeta$} Note that for $\lambda = \zeta/n$,
\begin{align*}
	\Ffct_0(\zeta, \muefct(\zeta, 0); \bI) = \zeta \Tr \prn{\bSigma (\zeta \bI + \muefct(\zeta, 0) \bSigma)^{-1} } = \lambdaefct \Tr \prn{\bSigma (\bSigma + \lambdaefct \bI)^{-1} }\, .
\end{align*}
Combining with the fixed-point equation~\eqref{eq:lambda-fixed-point} that determines $\lambdaefct$, we further get
\begin{align*}
	\Ffct_0(\zeta, \muefct(\zeta, 0); \bI) =  n \lambdaefct - \zeta\,. 
\end{align*}
Therefore, for all $k \geq 1$.
\begin{align} \label{eq:mid-derivative-lambda-1}
	\frac{\partial}{\partial \zeta^k}\Ffct_0(\zeta, \muefct(\zeta, 0); \bI) = n \cdot \frac{\partial^k \lambdaefct}{\partial \zeta^k} - \ind \{k = 1\}\, ,
\end{align}
and it boils down to controlling higher order derivatives of $\lambdaefct$ w.r.t. $\zeta$. Of course, we need to first show that we can actually write $\lambdaefct = \lambdaefct(\zeta)$ locally by implicit function theorem. Since
\begin{align*}
	\zeta = \lambdaefct \cdot \prn{n - \Tr \prn{\bSigma(\bSigma + \lambdaefct \bI)^{-1}}}
\end{align*}
which is clearly a increasing function of $\lambdaefct$ on the right hand side, and thus $\partial \zeta / \partial \lambdaefct > 0$ and the implicit function theorem applies. To calculate the higher order derivative of the inverse function, we apply the formula for higher order derivatives of inverse function~\cite{apostol2000calculating}
\begin{align} \label{eq:mid-derivative-lambda-2}
	\frac{\partial^k \lambdaefct}{\partial \zeta^k} & = \left|\frac{\partial \zeta}{\partial \lambdaefct} \right|^{1-2k} \cdot \sum_{\substack{m_1 + m_2 + \cdots + m_p = k-1  \\ m_1 + 2m_2 + \cdots + pm_p= 2k-2}} \bigO_k \prn{\prod_{l=1}^p \prn{\frac{\partial^l \zeta}{\partial \lambdaefct^l}}^{m_l}} \, .
%	& = \frac{\partial^{k-1}}{\partial \lambdaefct^{k-1}} \prn{\frac{\lambdaefct}{\lambda}}^{k} = \frac{\partial^{k-1}}{\partial \lambdaefct^{k-1}}  \prn{ \frac{1}{n - \Tr \prn{\bSigma(\bSigma + \lambdaefct \bI)^{-1}}}}^k\, .
\end{align} 

To further upper bound the above display, we need a lower bound for the derivative $\partial \zeta / \partial \lambdaefct$ and upper bounds for higher order derivatives $\partial^l \zeta / \partial \lambdaefct^l$. Using the Leibniz rule, we can compute that
\begin{align*}
	\frac{\partial^l \zeta}{\partial \lambdaefct^l} & = \sum_{r = 0}^l \binom{l}{r} \frac{\partial^r \lambdaefct}{\partial \lambdaefct^r} \cdot \frac{\partial^{l-r} \prn{n - \Tr \prn{\bSigma(\bSigma + \lambdaefct \bI)^{-1}}}}{\partial \lambdaefct^{l-r}} \\
	& = \lambdaefct \cdot \frac{\partial^{l} \prn{n - \Tr \prn{\bSigma(\bSigma + \lambdaefct \bI)^{-1}}}}{\partial \lambdaefct^{l}} + l \cdot \frac{\partial^{l-1} \prn{n - \Tr \prn{\bSigma(\bSigma + \lambdaefct \bI)^{-1}}}}{\partial \lambdaefct^{l-1}} \, .
\end{align*}
For $l=1$, since
\begin{align*}
	\frac{\partial \zeta}{\partial \lambdaefct} & = \lambdaefct \cdot \Tr \prn{\bSigma(\bSigma + \lambdaefct \bI)^{-2}} + n - \Tr \prn{\bSigma(\bSigma + \lambdaefct \bI)^{-1}} = n - \Tr \prn{\bSigma^2(\bSigma + \lambdaefct \bI)^{-2}} \nonumber \\
	& \geq n - \Tr \prn{\bSigma(\bSigma + \lambdaefct \bI)^{-1}} = \frac{\zeta}{\lambdaefct} \geq n \const \, ,
\end{align*}
we have $n \const \leq \partial \zeta / \partial \lambdaefct \leq n$. When $l \geq 2$, we get
\begin{align*}
	\frac{\partial^l \zeta}{\partial \lambdaefct^l} & = (-1)^{l-1} l! \cdot \lambdaefct \Tr \prn{\bSigma(\bSigma + \lambdaefct \bI)^{-l-1}} + (-1)^{l-2} l! \cdot  \Tr \prn{\bSigma(\bSigma + \lambdaefct \bI)^{-l}} \nonumber \\
	&	= (-1)^{l-2} l! \cdot \Tr \prn{\bSigma^2(\bSigma + \lambdaefct \bI)^{-l-1}} \nonumber \\
	&= \bigO_l \prn{\norm{(\bSigma + \lambdaefct \bI)^{-l+1}} \cdot \Tr \prn{\bSigma^2 (\bSigma + \lambdaefct \bI)^{-2}}} = \bigO_l \prn{\frac{n}{\lambdaefct^{l-1}}} \, .
\end{align*}
Substituting the above displays into Eq.~\eqref{eq:mid-derivative-lambda-2} yields
\begin{align*}
	\frac{\partial^k \lambdaefct}{\partial \zeta^k} & = \prn{\frac{1}{(n \const)^{2k-1}}} \cdot \sum_{\substack{m_1 + m_2 + \cdots + m_p = k-1  \\ m_1 + 2m_2 + \cdots + pm_p= 2k-2}} \bigO_k \prn{\prod_{l=1}^p \bigO_l \prn{\frac{n^{m_l}}{\lambdaefct^{lm_l -m_l}}}} = \bigO_{k} \prn{\frac{1}{n^k \lambdaefct^{k-1} \cdot \const^{2k-1}}} 
\end{align*}
Taken collectively with Eq.~\eqref{eq:mid-derivative-lambda-1} and $\Ffct_0(\zeta, \muefct(\zeta, 0); \bI) = \lambdaefct \Tr \prn{\bSigma (\bSigma + \lambdaefct \bI)^{-1} } \geq \const n \lambdaefct $, we obtain for all $k \geq 2$,
\begin{align*}
	\left|	\frac{\partial^k}{\partial \zeta^k}\Ffct_0(\zeta, \muefct(\zeta, 0); \bI) \right| = n 	\left|\frac{\partial^k \lambdaefct}{\partial \zeta^k}\right| = \bigO_{k} \prn{\frac{\Ffct_0(\zeta, \muefct(\zeta, 0); \bI)}{n^{k}\lambdaefct^{k} \const^{2k}}} = \bigO_{k} \prn{\frac{\Ffct_0(\zeta, \muefct(\zeta, 0); \bI)}{\zeta^k \const^{2k}}}\, ,
\end{align*}
%\cc{
%\begin{align*}
%	\left|	\frac{\partial^k}{\partial \lambda^k}\Ffct_0(\lambda, \muefct(\lambda, 0); \bI) \right| = n 	\left|\frac{\partial^k \lambdaefct}{\partial \lambda^k}\right| = \bigO_{k} \prn{\frac{\Ffct_0(\lambda, \muefct(\lambda, 0); \bI)}{\const^{2k-1} n^{k}\lambdaefct^{k}}} = \bigO_{k} \prn{\frac{\Ffct_0(\lambda, \muefct(\lambda, 0); \bI)}{\const^{k-1} \lambda^k}}\, ,
%\end{align*}
%}
where we use Assumption~\eqref{asmp:lambda} again for the final bound. This is also valid for $k=1$ as
\begin{align*}
	\left|	\frac{\partial}{\partial \zeta}\Ffct_0(\zeta, \muefct(\zeta, 0); \bI) \right| & = n 	\left|\frac{\partial \lambdaefct}{\partial \zeta}\right| + 1= \bigO \prn{\frac{\Ffct_0(\zeta, \muefct(\zeta, 0); \bI)}{\zeta \const^{2}}} + 1 = \bigO \prn{\frac{n \lambdaefct - \zeta}{\zeta \const^{2}}} + 1 \nonumber \\
	& = \bigO \prn{\frac{n \lambdaefct - \zeta}{\zeta \const^{2}}} = \bigO \prn{\frac{\Ffct_0(\zeta, \muefct(\zeta, 0); \bI)}{\zeta  \const^{2}}} \, ,
\end{align*}
where we use $\Ffct_0(\zeta, \muefct(\zeta, 0); \bI)/ \zeta = n\lambdaefct/\zeta -1$ and
\begin{align*}
	\frac{n \lambdaefct - \zeta}{\zeta \const^{2}} \geq \prn{(1-\const)^{-1} - 1} \const^{-2} \geq \const^{-1} \geq 1 \, .
\end{align*}

\paragraph{Part II: Derivative w.r.t. $\mu$} Now we fix $\zeta$ and allow $\mu$ be take nonzero values. We will also use the shorthand $\muefct = \muefct(\zeta, \mu)$. Similar to the previous part, we apply Fa\`a di Bruno's formula to $\Ffct_0$ and bound
\begin{align} \label{eq:mid-derivative-mu-1}
	\left|\frac{\partial^k}{\partial \mu}\Ffct_0(\zeta, \muefct; \btheta \btheta^\sT)\right|  = \sum_{m_1 + 2m_2 + \cdots + pm_p =k} \bigO_k \prn{\frac{\partial^{m_1 + \cdots + m_p} }{\partial \muefct^{m_1 + \cdots + m_p}}\Ffct_0(\zeta, \muefct; \btheta \btheta^\sT)  \cdot \prod_{l=1}^p \prn{\frac{\partial^l \muefct}{\partial \mu^l }}^{m_l}}\, .
\end{align}
For any $1 \leq l \leq k-1$, we have
\begin{align*}
	\left|\frac{\partial^l}{\partial \muefct^l} \Ffct_0(\zeta, \muefct; \btheta \btheta^\sT)\right| & = \bigO_l \prn{\zeta \Tr \prn{\bSigma^{\half} \btheta \btheta^\sT \bSigma^{\half} \cdot \bSigma^{l} (\zeta \bI + \muefct \bSigma)^{-1-l} }} = \bigO_l \prn{\frac{\Ffct_0(\zeta, \muefct; \btheta \btheta^\sT)}{\muefct^l}}.
\end{align*}
To bound higher order derivatives $\partial^l \muefct/\partial \mu^l$, we apply again the formula for higher order derivatives of inverse function. Of course, this would first require showing the existence of inverse function by implicit function theorem, which will be evident as we will provide a lower bound for $|\partial \mu/ \partial \muefct|$ below. By~\cite{apostol2000calculating}, we have for all $1 \leq l \leq k-1$,
\begin{align} \label{eq:mid-derivative-mu-2}
	\left|\frac{\partial^l \muefct}{\partial \mu^l }\right| & = \left|\frac{\partial \mu}{\partial \muefct} \right|^{1-2l} \cdot \sum_{\substack{m_1 + m_2 + \cdots + m_p = l-1  \\ m_1 + 2m_2 + \cdots + pm_p= 2l-2}} \bigO_l \prn{\prod_{r=1}^p \prn{\frac{\partial^r \mu}{\partial \muefct^r}}^{m_r}}\,. 
\end{align}

This is a more manageable formula as we can explicitly write $\mu$ as a function of $\muefct$
\begin{align*}
	\mu = \muefct - \frac{n}{1 + \Rfct_0(\zeta, \muefct; \bI)} = \muefct - \frac{n}{1 + \Tr(\bSigma(\zeta \bI + \muefct \bSigma)^{-1})}\, .
\end{align*}
We can compute the first order derivative as
\begin{align*}
	\frac{\partial \mu}{\partial \muefct} = 1- \frac{n \Tr \prn{\bSigma^2 (\zeta \bI + \muefct \bSigma)^{-2}}}{\prn{1 + \Tr \prn{\bSigma (\zeta \bI + \muefct \bSigma)^{-1}}}^2} = 1 - \frac{\prn{\muefct - \mu} \cdot\Tr \prn{\bSigma^2 (\zeta \bI + \muefct \bSigma)^{-2}} }{1 + \Tr \prn{\bSigma (\zeta \bI + \muefct \bSigma)^{-1}}}\, , 
\end{align*}
which, together with $0 \leq \mu \leq \muefct/2$, implies a lower bound
\begin{align*}
	\frac{\partial \mu}{\partial \muefct} &  \geq 1 - \frac{\muefct\Tr \prn{\bSigma^2 (\zeta \bI + \muefct \bSigma)^{-2}} }{1 + \Tr \prn{\bSigma (\zeta \bI + \muefct \bSigma)^{-1}}} = \frac{1 + \zeta \Tr \prn{\bSigma (\zeta \bI + \muefct \bSigma)^{-2}}}{1 + \Tr \prn{\bSigma (\zeta \bI + \muefct \bSigma)^{-1}}} \nonumber \\
	& \geq \frac{1}{1 + \Tr \prn{\bSigma (\zeta \bI + \muefct \bSigma)^{-1}}}  = \frac{\muefct - \mu}{n} \geq \frac{\muefct}{2n}\, .
\end{align*}
To further bound higher order derivatives, we again appeal to Fa\`a di Bruno's formula. Use the shorthand $\Rfct_0 = \Rfct_0(\zeta, \muefct; \bI)$,  we have for all $r \geq 1$,
\begin{align*}
	\left|\frac{\partial^r \mu}{\partial \muefct^r} \right|  = \sum_{m_1 + 2m_2 + \cdots + pm_p =r} \bigO_r \prn{\frac{\partial^{m_1 + \cdots + m_p} }{\partial \Rfct_0^{m_1 + \cdots + m_p}} \frac{n}{1 + \Rfct_0}  \cdot \prod_{s=1}^p \prn{\frac{\partial^s \Rfct_0}{\partial \muefct^s }}^{m_s}}\, .
\end{align*}
Making use of the following two bounds,
\begin{align*}
	\frac{\partial^{s} }{\partial \Rfct_0^{s}} \frac{n}{1 + \Rfct_0} &= \bigO_s \prn{\frac{n}{(1 + \Rfct_0)^{s+1}}} = \bigO_s \prn{\frac{\muefct}{(1 + \Rfct_0)^s}}\, ,\\
	\frac{\partial^s \Rfct_0}{\partial \muefct^s } & = \bigO_s \prn{\Tr \prn{\bSigma^{s+1} (\zeta \bI + \muefct \bSigma)^{-s-1}}} = \bigO_s \prn{\frac{\Rfct_0}{\muefct^s}} \, ,
\end{align*}
we can further obtain
\begin{align*}
	\left|\frac{\partial^r \mu}{\partial \muefct^r} \right|  = \sum_{m_1 + 2m_2 + \cdots + pm_p =r} \bigO_r \prn{\frac{\muefct}{(1 + \Rfct_0)^{m_1 + \cdots + m_p}}  \cdot \prod_{s=1}^p \frac{\Rfct_0^{m_s}}{\muefct^{s m_s}}} = \bigO_r \prn{\frac{1}{\muefct^{r-1}}}\,. 
\end{align*}
Taking the above displays into Eq.~\eqref{eq:mid-derivative-mu-2} and use the condition $\muefct / n \geq \const$, we have
\begin{align*}
	\left|\frac{\partial^l \muefct}{\partial \mu^l }\right| = \bigO_{l}\prn{\frac{1}{\const^{2l-1}}} \cdot  \sum_{\substack{m_1 + m_2 + \cdots + m_p = l-1  \\ m_1 + 2m_2 + \cdots + pm_p= 2l-2}} \bigO_l \prn{\prod_{r=1}^p \bigO_r \prn{\frac{1}{\muefct^{rm_r-m_r}}}} = \bigO_{l}\prn{\frac{1}{\muefct^{l-1} \const^{2l-1}}}\, .
\end{align*}
%\cc{
%\begin{align*}
%	\left|\frac{\partial^l \muefct}{\partial \mu^l }\right| = \bigO_{l}(\const^{1-2l}) \cdot  \sum_{\substack{m_1 + m_2 + \cdots + m_p = l-1  \\ m_1 + 2m_2 + \cdots + pm_p= 2l-2}} \bigO_l \prn{\prod_{r=1}^p \bigO_r \prn{\frac{1}{\muefct^{rm_r-m_r}}}} = \bigO_{l}\prn{\frac{1}{\const^{2l-1}\muefct^{l-1}}}\, .
%\end{align*}
%}
Finally, taking the above display back into Eq.~\eqref{eq:mid-derivative-mu-1} yields
\begin{align*}
		\left|\frac{\partial^k}{\partial \mu^k}\Ffct_0(\zeta, \muefct; \btheta \btheta^\sT)\right| & = \sum_{m_1 + 2m_2 + \cdots + pm_p =k} \bigO_k \prn{\frac{\Ffct_0(\zeta, \muefct; \btheta \btheta^\sT)}{\muefct^{m_1+\cdots +m_p}}  \cdot \prod_{l=1}^p \bigO_{l}\prn{\frac{1}{\muefct^{lm_l-m_l} \const^{2lm_l - m_l}}}} \nonumber \\
		&= \bigO_{k} \prn{\frac{\Ffct_0(\zeta, \muefct; \btheta \btheta^\sT)}{\muefct^k \const^{2k}}}\, .
\end{align*}
%\cc{
%\begin{align*}
%	\left|\frac{\partial^k}{\partial \mu^k}\Ffct_0(\lambda, \muefct; \btheta \btheta^\sT)\right| & = \sum_{m_1 + 2m_2 + \cdots + pm_p =k} \bigO_k \prn{\frac{\Ffct_0(\lambda, \muefct; \btheta \btheta^\sT)}{\muefct^{m_1+\cdots +m_p}}  \cdot \prod_{l=1}^p \bigO_{l}\prn{\frac{1}{\const^{2lm_l - m_l}\muefct^{lm_l-m_l}}}} \nonumber \\
%	&= \bigO_{k} \prn{\frac{\Ffct_0(\lambda, \muefct; \btheta \btheta^\sT)}{\const^{2k-1}\muefct^k}}\, .
%\end{align*}
%}

\subsection{Proof of Lemma~\ref{lem:muefct-small-mu-bound}} \label{proof:muefct-small-mu-bound}
First we show $\muefct(\zeta, \mu)$ is increasing in $\mu$ when $\mu \geq 0$. To this end, we consider the function
\begin{align*}
	f(t) = t  - \frac{n}{1 + \Rfct_0(\zeta, t; \bI)} \, .
\end{align*}
By Eq.~\eqref{eq:mu-fixed-point}, we have $f(\muefct(\zeta, \mu)) = \mu$ for all $\mu \geq 0$. Further, we prove $f(t)$ is increasing in $[\muefct(\zeta, 0), \infty)$. We write
\begin{align*}
	f'(t) = 1- \frac{n \Tr \prn{\bSigma^2 (\zeta \bI + t \bSigma)^{-2}}}{\prn{1 + \Tr \prn{\bSigma (\zeta \bI + t \bSigma)^{-1}}}^2} \stackrel{\mathrm{(i)}}{=} 1 - \frac{\prn{t - f(t)} \cdot\Tr \prn{\bSigma^2 (\zeta \bI + t \bSigma)^{-2}} }{1 + \Tr \prn{\bSigma (\zeta \bI + t \bSigma)^{-1}}}\, , 
\end{align*}
where in (i) we use that $t - f(t) = n/(1 + \Rfct_0(\zeta, t; \bI))$. Define
\begin{align*}
	g(t) :=  \frac{\Tr \prn{\bSigma^2 (\zeta \bI + t \bSigma)^{-2}} }{1 + \Tr \prn{\bSigma (\zeta \bI + t \bSigma)^{-1}}} \, ,
\end{align*}
we have
\begin{align*}
	f'(t) - f(t) g(t) = 1 - \frac{t \Tr \prn{\bSigma^2 (\zeta \bI + t \bSigma)^{-2}}}{1 + \Tr \prn{\bSigma (\zeta \bI + t \bSigma)^{-1}}} = \frac{1 + \zeta \Tr \prn{\bSigma (\zeta \bI + t \bSigma)^{-2}}}{1 + \Tr \prn{\bSigma (\zeta \bI + t \bSigma)^{-1}}} > 0 \, ,
\end{align*}
and therefore $e^{-g(t)}f(t)$ is increasing. As $f(\muefct(\zeta, 0)) = 0$ (cf.~Eq.~\eqref{eq:mu-fixed-point}), we must have $f(t) \geq 0$ for all $t \geq \muefct(\zeta, 0)$. Substituting back into the above display with $g(t) \geq 0$ yields
\begin{align*}
	f'(t) \geq f'(t) - f(t) g(t) > 0 \, .
\end{align*}

We then proceed to show a sufficient condition for $\muefct(\zeta, \mu) \leq (1 - \const/2)^{-1} \muefct(\zeta, 0)$ is $0 \leq \mu \leq n \const^3/2$ under Assumption~\eqref{asmp:lambda}. Provided with monotonicity of $f(t)$, the desired condition $\muefct(\zeta, \mu) \leq (1 - \const/2)^{-1} \muefct(\zeta, 0)$ is essentially equivalent to $\mu = f(\muefct(\zeta, \mu)) \leq f((1 - \const/2)^{-1} \muefct(\zeta, 0))$. Together with $\muefct(\zeta, 0) = n/(1 + \Rfct_0(\zeta, \muefct(\zeta, 0); \bI))$, we obtain a lower bound for the right hand side
\begin{align*}
	& f((1 - \const/2)^{-1} \muefct(\zeta, 0)) \\
	& = (1 - \const/2)^{-1} \muefct(\zeta, 0) - \frac{n}{1 + \Tr \prn{\bSigma (\zeta \bI + (1 - \const/2)^{-1} \muefct(\zeta, 0) \bSigma)^{-1}}} \\
	& = \frac{n (1 - \const/2)^{-1} }{1 + \Tr \prn{\bSigma (\zeta \bI +  \muefct(\zeta, 0) \bSigma)^{-1}} } - \frac{n (1 - \const/2)^{-1}}{(1 - \const/2)^{-1} + \Tr \prn{\bSigma ((1 - \const/2)\zeta \bI +  \muefct(\zeta, 0) \bSigma)^{-1}}} \\
	& \stackrel{\mathrm{(i)}}{\geq} \frac{n (1 - \const/2)^{-1} }{1 + \Tr \prn{\bSigma (\zeta \bI +  \muefct(\zeta, 0) \bSigma)^{-1}} } - \frac{n (1 - \const/2)^{-1}}{(1 - \const/2)^{-1} + \Tr \prn{\bSigma (\zeta \bI +  \muefct(\zeta, 0) \bSigma)^{-1}}} \nonumber \\
	& = \frac{n  \prn{(1 - \const/2)^{-1} - 1}}{\prn{1 + \Tr \prn{\bSigma (\zeta \bI +  \muefct(\zeta, 0) \bSigma)^{-1}}} \cdot \prn{1 + (1 - \const/2) \Tr \prn{\bSigma (\zeta \bI +  \muefct(\zeta, 0) \bSigma)^{-1}}} } \\
	& \stackrel{\mathrm{(ii)}}{\geq} \frac{n  \prn{(1 - \const/2)^{-1} - 1}}{\prn{1 + \Tr \prn{\bSigma (\zeta \bI +  \muefct(\zeta, 0) \bSigma)^{-1}}}^2} \\
	& = \frac{ \prn{(1 - \const/2)^{-1} - 1} \muefct(\zeta,0)^2}{n} \, ,
\end{align*}
where in (i) and (ii) we use two times the trivial bound $1 - \const/2 \leq 1$. By Assumption~\eqref{asmp:lambda},
\begin{align*}
	\frac{\muefct(\zeta,0)}{n} = \frac{\zeta}{n\lambdaefct} = \frac{\lambda}{\lambdaefct} \geq \const, 
\end{align*}
we know 
\begin{align*}
	f((1 - \const/2)^{-1} \muefct(\zeta, 0)) \geq n \cdot \const^2 \prn{(1 - \const/2)^{-1} - 1} \geq n \cdot \const^3 / 2 \, ,
\end{align*}
and thus a sufficient condition for $\muefct(\zeta, \mu) \leq (1 - \const/2)^{-1}\muefct(\zeta, 0)$ is $\mu \leq n \const^3 /2$.

	\section{Proofs for Theorem~\ref{thm:R-approximation}}
\subsection{Proof of Lemma~\ref{lem:update-rule-guarantee}} \label{proof:update-rule-guarantee}
We define $\wb{\mu}_i := \inf \left\{\mu \mid \bSigma^{\half} \prn{\zeta \bI + \mu \bSigma + \bX_i^\sT \bX_i}^{-1} \bSigma^{\half} \succ 0 \right\}$.
Note that, by construction $\wb{\mu}_{i+1}\le \wb{\mu}_i$.
 Let $\bvarphi \in \real^d$ be the leading normalized eigenvector of $\bSigma$. If $\mu \leq - (\zeta + \norm{\bX_i \bvarphi}^2) / \normop{\bSigma}$, it follows that
\begin{align*}
	\bvarphi^\sT \prn{\zeta \bI + \mu \bSigma + \bX_i^\sT \bX_i} \bvarphi = \zeta +\mu \normop{\bSigma} + \norm{\bX_i \bvarphi}^2 \leq 0\, ,
\end{align*}
which implies $\wb{\mu}_i \geq - (\zeta + \norm{\bX_i \bvarphi}^2) / \normop{\bSigma} > -\infty$. The update rule is equivalent to solving the equation
\begin{align*}
	\mu_{i+1} + \frac{1}{1 + \Tr \prn{\bSigma \prn{\zeta \bI + \mu_{i+1} \bSigma + \bX_i^\sT \bX_i}^{-1}}} = \mu_i \, , \qquad \mu_{i+1} \in (\wb{\mu}_i, \infty)\, .
\end{align*}
For all $t \in (\wb{\mu}_i, \infty)$, let
\begin{align*}
	f(t) = t +\frac{1}{1 + \Tr \prn{\bSigma \prn{\zeta \bI + t \bSigma + \bX_i^\sT \bX_i}^{-1}}}\, .
\end{align*}
In this given domain, $\bSigma^{\frac 1 2} \prn{\zeta \bI + t \bSigma + \bX_i^\sT \bX_i}^{-1} \bSigma^{\frac 1 2} \succ 0$ and thus
 $\Tr \prn{\bSigma \prn{\zeta \bI + t \bSigma + \bX_i^\sT \bX_i}^{-1}}$ is decreasing in $t$
 (this can be seen by computing its derivative with respect to $t$), which further implies $f(t)$ is strictly increasing in $t$. Since
$$\lim_{t\downarrow \wb{\mu}_i} \Tr \prn{\bSigma \prn{\zeta \bI + t \bSigma + \bX_i^\sT \bX_i}^{-1}} = \infty \, ,$$ and we have
\begin{align*}
	\lim_{t \downarrow \wb{\mu}_i} f(t) & = \wb{\mu}_i < \mu_i\, , \qquad f(\mu_i) > \mu_i\, .
\end{align*}
(The first inequality follows since $\mu_i\in (\wb{\mu}_{i-1},\infty)$ and $\wb{\mu}_i\le\wb{\mu}_{i-1}$.)
Thus, there must be a unique $\mu_{i+1} \in (\wb{\mu}_i, \mu_i)$ that solves $f(\mu_{i+1}) = \mu_i$, proving the lemma.

\subsection{Proof of Lemma~\ref{lem:A-norm-bound}} \label{proof:A-norm-bound}
Without loss of generality, we can always assume $d \geq n$ or simply $d =\infty$ by embedding $\real^d$ into the Hilbert space $\ell_2$ since we always have
\begin{align*}
	\sum_{l=k}^d \sigma_l \leq \constantsig \sigma_k \, , 
\end{align*} 
when $\sigma_k = 0$. We write the spectral decomposition of $\bSigma$ as
\begin{align*}
	\bSigma = \sum_{i=1}^d \sigma_i \bv_i \bv_i^\sT \, ,
\end{align*}
with $\sigma_1 \geq \sigma_2 \geq \cdots \geq \sigma_n \geq \cdots$ where $\brc{\bv_i}$ form an orthogonal basis of eigenvectors. For any $k \leq n$, define the projection operators
\begin{align*}
	\proj_k := \sum_{i=1}^k \bv_i \bv_i^\sT \, , \qquad \proj_k^\perp := \bI - \proj_k  = \sum_{i=k+1}^{d} \bv_i \bv_i^\sT \, ,
\end{align*}
and we write
\begin{align*}
	\bX = \bX \proj_k + \bX \proj_k^\perp := \bU_k + \bW_k \, .
\end{align*}
\paragraph{Part I: Decomposing into the top and lower eigenspaces} By writing $\bX = \bU_k + \bW_k$, we can have the following inequality:
\begin{lemma} \label{lem:cov-lower-bound-eigenspace} For any $1 \leq k \leq n-1$, 
\begin{align*}
	\zeta \bI + \bX^\sT \bX & \succeq \frac{\zeta}{2} \bI + \prn{1 + \frac{2\norm{\bW_k^\sT \bW_k}}{\zeta}}^{-1} \bU_k^\sT \bU_k \, .
\end{align*}
\end{lemma}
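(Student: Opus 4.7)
The plan is straightforward: decompose $\bX^{\sT}\bX$ using $\bX=\bU_k+\bW_k$ and control the cross terms by an AM--GM-type semidefinite inequality, then absorb the ``extra'' $\bW_k^{\sT}\bW_k$ contribution into the $\zeta\bI$ term. Concretely, I would begin by expanding
\begin{equation*}
\bX^{\sT}\bX = \bU_k^{\sT}\bU_k + \bW_k^{\sT}\bW_k + \bU_k^{\sT}\bW_k + \bW_k^{\sT}\bU_k,
\end{equation*}
and then, for any parameter $t>0$, apply the positive semidefinite inequality
\begin{equation*}
\bigl(\sqrt{t}\,\bU_k + t^{-1/2}\bW_k\bigr)^{\sT}\bigl(\sqrt{t}\,\bU_k + t^{-1/2}\bW_k\bigr) \succeq \bfzero,
\end{equation*}
which rearranges to $\bU_k^{\sT}\bW_k + \bW_k^{\sT}\bU_k \succeq -t\,\bU_k^{\sT}\bU_k - t^{-1}\bW_k^{\sT}\bW_k$. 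Substituting this bound gives $\bX^{\sT}\bX \succeq (1-t)\bU_k^{\sT}\bU_k + (1-t^{-1})\bW_k^{\sT}\bW_k$ for any $t\in(0,1)$.

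Next I would use the trivial operator bound $\bW_k^{\sT}\bW_k \preceq \norm{\bW_k^{\sT}\bW_k}\bI$ (valid because $\bW_k^{\sT}\bW_k$ is p.s.d.) to dominate the negative term $(1-t^{-1})\bW_k^{\sT}\bW_k$ by $-(t^{-1}-1)\norm{\bW_k^{\sT}\bW_k}\bI$, obtaining
\begin{equation*}
\zeta\bI + \bX^{\sT}\bX \succeq \bigl(\zeta-(t^{-1}-1)\norm{\bW_k^{\sT}\bW_k}\bigr)\bI + (1-t)\bU_k^{\sT}\bU_k .
\end{equation*}
Now I would pick $t$ so that the scalar coefficient of $\bI$ equals $\zeta/2$, i.e.\ solve $(t^{-1}-1)\norm{\bW_k^{\sT}\bW_k}=\zeta/2$, which yields
\begin{equation*}
t = \frac{2\norm{\bW_k^{\sT}\bW_k}}{\zeta+2\norm{\bW_k^{\sT}\bW_k}}, \qquad 1-t = \Bigl(1+\tfrac{2\norm{\bW_k^{\sT}\bW_k}}{\zeta}\Bigr)^{-1}.
\end{equation*}
Substituting back gives exactly the claimed inequality.

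There is no real obstacle here; the main thing to verify is that $t\in(0,1)$ (immediate from $\norm{\bW_k^{\sT}\bW_k}\ge 0$ and $\zeta>0$) and that all the manipulations remain valid when $d=\infty$, which is true since $\bW_k^{\sT}\bW_k$ is bounded self-adjoint by construction (its range lies in a finite-dimensional column space of $\bW_k$, and its operator norm is finite). The argument is purely algebraic and does not require any probabilistic input, which is why this lemma can serve as the deterministic starting point for the subsequent concentration bounds used to control $\|\bA_i\|$ and $\|\bB_i\|$.
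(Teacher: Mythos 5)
Your proof is correct and uses the same two ingredients as the paper---an AM--GM-type completed-square bound $\pm(\sqrt{t}\,\bU_k + t^{-1/2}\bW_k)^{\sT}(\sqrt{t}\,\bU_k + t^{-1/2}\bW_k)\succeq 0$ for the cross terms (with your optimal $t$ equal to the reciprocal of the paper's $1+\zeta/(2\norm{\bW_k^\sT\bW_k})$, so the square you complete is exactly their $\bD_k^\sT\bD_k$) together with $\bW_k^{\sT}\bW_k\preceq\norm{\bW_k^{\sT}\bW_k}\bI$ to absorb the residual into $\zeta\bI$. You apply the two steps in the opposite order and keep $t$ free until the end, but this is essentially the same argument.
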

\begin{proof}
	Note that
	\begin{align}
		\zeta \bI + \bX^\sT \bX  
		&= \zeta \bI + \bU_k^\sT \bU_k + \bU_k^\sT \bW_k + \bW_k^\sT \bU_k + \bW_k^\sT \bW_k \nonumber \\
		& \succeq \frac{\zeta}{2} \bI + \bU_k^\sT \bU_k + \bU_k^\sT \bW_k + \bW_k^\sT \bU_k + \prn{1 + \frac{\zeta}{2\norm{\bW_k^\sT \bW_k}}} \bW_k^\sT \bW_k \nonumber \\
		& = \frac{\zeta}{2} \bI + \prn{1 + \frac{2\norm{\bW_k^\sT \bW_k}}{\zeta}}^{-1}  \bU_k^\sT \bU_k + \bC_k \, , \nonumber
	\end{align}
	where
	\begin{align*}
		\bC_k = \prn{1 + \frac{\zeta}{2\norm{\bW_k^\sT \bW_k}}}^{-1} \bU_k^\sT \bU_k + \bU_k^\sT \bW_k + \bW_k^\sT \bU_k + \prn{1 + \frac{\zeta}{2\norm{\bW_k^\sT \bW_k}}} \bW_k^\sT \bW_k  = \bD_k^\sT \bD_k \succeq 0 \, ,
	\end{align*}
	with
	\begin{align*}
		\bD_k = \prn{1 + \frac{\zeta}{2\norm{\bW_k^\sT \bW_k}}}^{-\frac 1 2} \bU_k +  \prn{1 + \frac{\zeta}{2\norm{\bW_k^\sT \bW_k}}}^{\frac 1 2} \bW_k \, .
	\end{align*}

\end{proof}
\noindent To apply the above lemma, we need to further provide an upper bound on $\|\bW_k^\sT \bW_k\|$, which we summarize as the following result.

\begin{lemma} \label{lem:W-k-norm-upper-bound}
	Let Assumption~\ref{asmp:data-dstrb} holds, we have for any $1 \leq k \leq n-1$ with probability $1 - \bigO(n^{-D})$ that,
	\begin{align*}
		\|\bW_k \bW_k^\sT \| = \bigO_{\constantx, D} \prn{\constantsig \sigma_k \cdot \log n \log (\constantsig n)} \, .
	\end{align*}
\end{lemma}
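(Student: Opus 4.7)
The plan is to view $\bW_k^\sT\bW_k$ as $n$ times the sample covariance of the i.i.d.\ vectors $\by_i := \proj_k^\perp \bx_i = \proj_k^\perp\bSigma^{1/2}\bz_i$, whose common covariance is $\bA_k := \proj_k^\perp\bSigma\proj_k^\perp$. Assumption~\ref{asmp:data-dstrb}.I gives the two key spectral facts
\begin{align*}
\|\bA_k\| = \sigma_{k+1}\le \sigma_k,\qquad \Tr(\bA_k) = \sum_{l>k}\sigma_l \le \constantsig\,\sigma_k,
\end{align*}
so the effective rank $r(\bA_k) := \Tr(\bA_k)/\|\bA_k\|$ is at most $\constantsig$. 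Since $\constantsig\ge n$, the deterministic piece $\|n\bA_k\| = n\sigma_{k+1}\le \constantsig\sigma_k$ is already within the target, so the lemma reduces to showing that the fluctuation $\|\bW_k^\sT\bW_k - n\bA_k\|_{\op}$ has magnitude $\bigO_{\constantx,D}(\constantsig\sigma_k\log n\log(\constantsig n))$ with probability $1-\bigO(n^{-D})$.

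To establish this fluctuation bound, I would first check that under either clause of Assumption~\ref{asmp:data-dstrb}.II, every linear functional $\bv\mapsto \<\bv,\by_1\>$ is sub-Gaussian on the scale $\sqrt{\bv^\sT \bA_k \bv}$: in case II(a) this is immediate from the independence and sub-Gaussianity of the coordinates $z_{1j}$, while in case II(b) it follows because the map $\bz_1\mapsto (\proj_k^\perp\bSigma^{1/2}\bv)^\sT \bz_1$ is a linear (hence convex) $\|\bA_k^{1/2}\bv\|$-Lipschitz function. This relative sub-Gaussianity is exactly the hypothesis of Koltchinskii--Lounici-type concentration for sample covariance operators in a separable Hilbert space, which yields a bound of order $\|\bA_k\|\bigl(\sqrt{r(\bA_k)/n}+r(\bA_k)/n\bigr)$ plus a sub-Gaussian deviation term scaling with $\sqrt{t/n}+t/n$; setting $t = D\log n$ and using $\constantsig\ge n$ collapses this to $\bigO(\constantsig\sigma_k)$, up to the logarithmic factors in the statement.

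If one prefers to derive the fluctuation bound directly from Lemma~\ref{lem:hanson-wright} (which is the source of the explicit $\log(\constantsig n)$ factor), I would instead use a truncation plus $\epsilon$-net argument. Choose a dyadic level $m = \bigO(\constantsig)$ and split $\proj_k^\perp = \bP^{\mathrm{top}} + \bP^{\mathrm{tail}}$, where $\bP^{\mathrm{top}}$ projects onto $\spn\{\bv_{k+1},\dots,\bv_{k+m}\}$. For the tail, apply Hanson-Wright to the quadratic form $\|\bX\bP^{\mathrm{tail}}\|_F^2 = \bz^\sT(\bI_n\otimes\bSigma^{1/2}\bP^{\mathrm{tail}}\bSigma^{1/2})\bz$, whose mean and Frobenius scale are both controlled by $\Tr(\bA_k)\le \constantsig\sigma_k$; for the top block, apply Hanson-Wright pointwise to $\bv^\sT\bX^\sT\bX\bv$ on an $\epsilon$-net of $S^{m-1}\subset\img(\bP^{\mathrm{top}})$ and take a union bound, exploiting the Bernstein trade-off in Hanson-Wright (trace versus operator-norm term) so that the penalty scales with $r(\bA_k)\le \constantsig$ rather than with the block dimension $m$.

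The main obstacle lies in case II(b): in case II(a) dimension-free operator-norm concentration for sample covariances is classical, but under convex concentration alone the only quadratic-form tool available is Hanson-Wright, and passing from quadratic-form concentration to an operator-norm concentration via a net necessarily loses a logarithm. Carefully tracking this loss is what produces the prefactor $\log n\,\log(\constantsig n)$ in the lemma; everything else in the argument is either linear algebra on $\bA_k$ or direct bookkeeping with the assumption $\constantsig\ge n$.
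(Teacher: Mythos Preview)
Your first route (Koltchinskii--Lounici) is a legitimate alternative and would deliver the bound: under either clause of Assumption~\ref{asmp:data-dstrb}.II, linear functionals of $\bz_i$ are sub-Gaussian with the correct variance proxy, and dimension-free sample-covariance concentration then applies directly to $\by_i = \proj_k^\perp\bSigma^{1/2}\bz_i$. The paper, however, takes a third path that you did not list: it writes $\bW_k^\sT\bW_k = \sum_{i=1}^n\bzeta_i$ with $\bzeta_i = \proj_k^\perp\bx_i\bx_i^\sT\proj_k^\perp$, uses Hanson--Wright once per sample to bound the \emph{scalar} $\|\bzeta_i\| = \|\proj_k^\perp\bx_i\|^2$ and thereby obtain a truncation level $L_k = \bigO_{\constantx,D}(\constantsig\sigma_k\log n)$, and then applies the \emph{intrinsic-dimension} matrix Bernstein inequality of Tropp to the truncated sum $\sum_i\bzeta_i\ind\{\|\bzeta_i\|\le L_k\}$. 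The intrinsic dimension of the matrix variance is at most $\constantsig$, which is exactly what produces the $\log(\constantsig n)$ factor. Compared to your first route, this keeps the argument self-contained within Hanson--Wright plus one standard matrix-concentration tool, and it makes transparent where each of the two logarithms comes from; compared to your second route, matrix Bernstein is the natural dimension-free replacement for the $\epsilon$-net.

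Your second route, as sketched, has a gap in the tail piece. Bounding $\|\bX\bP^{\mathrm{tail}}\|_{\op}$ by $\|\bX\bP^{\mathrm{tail}}\|_F$ yields a quantity with mean $n\sum_{l>k+m}\sigma_l$, but Assumption~\ref{asmp:data-dstrb}.I only controls tail sums $\sum_{l\ge j}\sigma_l$ for indices $j\le n$; with $m=\bigO(\constantsig)$ you typically have $k+m>n$ and hence no usable bound on $\sigma_{k+m}$, while the crude estimate $\sum_{l>k+m}\sigma_l\le\sum_{l>k}\sigma_l\le\constantsig\sigma_k$ gives $n\constantsig\sigma_k$, too large by a factor of order $n$. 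The intrinsic-dimension Bernstein inequality is precisely what lets the paper avoid both this tail loss and the $\epsilon$-net union bound altogether.
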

\begin{proof}
Let $\bzeta_i = \proj_k^\perp \bx_i \bx_i^\sT \proj_k^\perp \in \real^{d \times d}$, we can write
\begin{align*}
	\bS_k := \bW_k^\sT \bW_k = \sum_{i=1}^n \proj_k^\perp \bx_i \bx_i^\sT \proj_k^\perp = \sum_{i=1}^n \bzeta_i \, .
\end{align*}
Since $\norm{\bzeta_i} = \norm{\proj_k^\perp \bx_i}^2$, we can apply Hanson-Wright inequality (cf.~Lemma~\ref{lem:hanson-wright}) and conclude that
\begin{align*}
	\Prb \prn{\left|\norm{\bzeta_i} - \Tr \prn{\proj_k^\perp \bSigma} \right| \geq t} & = \Prb \prn{\left|\bx_i^\sT \proj_k^\perp \bx_i - \Tr \prn{\proj_k^\perp \bSigma} \right| \geq t} \nonumber \\
	& \leq 2 \exp \left\{ - \Omega \prn{ \min \brc{\frac{t^2}{\constantx^4 \normF{\bSigma^{\frac 1 2} \proj_k^\perp \bSigma^{\frac 1 2} }^2}, \frac{t}{\constantx^2 \normop{\bSigma^{\frac 1 2} \proj_k^\perp \bSigma^{\frac 1 2} }}}} \right\} \, .
\end{align*}
For $t = \bigTht_{\constantx, D} (\|\bSigma^{\frac 1 2} \proj_k^\perp \bSigma^{\frac 1 2}\|_F \log n)$ we have with probability $1-\bigO(n^{-D})$ that for all $i=1,2,\cdots, n$ 
\begin{align*}
	\norm{\bzeta_i} \leq  \Tr \prn{\proj_k^\perp \bSigma} + \bigTht_{\constantx, D}  \prn{ \normF{\bSigma^{\frac 1 2} \proj_k^\perp \bSigma^{\frac 1 2}} \log n} = \bigO_{\constantx, D} \prn{ \Tr \prn{\proj_k^\perp \Sigma} \log n} \, ,
\end{align*}
where the last inequality follows from $\norm{\bSigma} = 1$ and 
\begin{align*}
	\normF{\bSigma^{\frac 1 2} \proj_k^\perp \bSigma^{\frac 1 2}} & = \sqrt{\Tr \prn{\proj_k^\perp \bSigma\proj_k^\perp \bSigma}}\leq \Tr \prn{\proj_k^\perp \bSigma} \, .
\end{align*}
%To be specific, we assume the above inequality $\norm{\zeta_i} \leq \constant_1' \Tr \prn{\proj_k^\perp \Sigma} \log n := L_k$ for some universal constant $\constant_1' > 0$. 

In the next step, we will adopt a standard truncation argument and apply a matrix concentration inequality. By setting $L_k := \bigTht_{\constantx, D} (\Tr \prn{\proj_k^\perp \bSigma} \log n )$, $\tilde{\bzeta}_i :=   \bzeta_i \ind \{\|\bzeta_i \| \leq L_k\}$ and considering
\begin{align*}
	\tilde{\bS}_k := \sum_{i=1}^k \tilde{\bzeta}_i = \sum_{i=1}^k \bzeta_i \ind \brc{\norm{\bzeta_i} \leq L_k} \, ,
\end{align*}
we have $\tilde{\bS}_k = \bS_k$ with probability $1-\bigO(n^{-D})$. It order to
bound $\|\tilde{\bS}_k\|$, we will use matrix Bernstein inequality. Since we 
know $\|\tilde{\bzeta}_i\|\le L_k$ by construction, we only need to upper bound the matrix variance. 
The $\tilde{\bzeta}_i$'s' are independent symmetric random matrices and therefore we have
\begin{align*}
	\Var (\tilde{\bS}_k) & \preceq \sum_{i=1}^n  \Ep [\tilde{\bzeta}_i^2] \stackrel{\mathrm{(i)}}{\preceq} \sum_{i=1}^n L_k \Ep [\tilde{\bzeta}_i] \stackrel{\mathrm{(ii)}}{\preceq}  \sum_{i=1}^n L_k \Ep \brk{\bzeta_i} \stackrel{\mathrm{(iii)}}{\preceq}  n L_k \cdot \proj_k^\perp \bSigma \proj_k^\perp =: \bV_k \, ,
\end{align*}
where in (i) we use $\|\tilde{\bzeta}_i\| \leq L_k$, in (ii) we apply 
$\tilde{\bzeta}_i \preceq \bzeta_i$ and lastly in (iii) we use 
$\Ep[\bzeta_i] = \Ep [\proj_k^\perp \bx_i \bx_i^\sT \proj_k^\perp]  = 
\proj_k^\perp \bSigma \proj_k^\perp$. It then follows that 
$\|\bV_k\| \leq nL_k \|\proj_k^\perp \bSigma \proj_k^\perp\| = n \sigma_{k+1} L_k \leq n \sigma_k L_k =: v_k$. Combine with the bound on the intrinsic dimension under Assumption~\ref{asmp:data-dstrb},
\begin{align*}
	\intdim \prn{\bV_k} & = \frac{\Tr \prn{\bV_k}}{\norm{\bV_k}} = \frac{\sum_{l=k+1}^\infty \sigma_l}{\sigma_{k+1}} \leq \constantsig \, ,
\end{align*}
we can thus deduce from the Bernstein inequality with intrinsic dimension~\cite[Theorem~7.3.1]{tropp2015introduction} that for $t \geq \sqrt{v_k} + L_k/3$        
\begin{align*}
	\Prb (\|\tilde{\bS}_k - \Ep [\tilde{\bS}_k]\| \geq t) & \leq 4 \constantsig \cdot \exp \prn{\frac{-t^2/2}{v_k + L_k t/3}} \, .
\end{align*}
Finally, by further bounding the mean
\begin{align*}
	\|\Ep [\tilde{\bS}_k]\| & \leq \|n\Ep [\tilde{\bzeta}_i]\|  \leq n \cdot \norm{\Ep \brk{\bzeta_i}} = n \cdot \norm{\proj_k^\perp \bSigma \proj_k^\perp} \leq n\sigma_k  \, ,
\end{align*}
we can obtain with probability $1-\bigO(n^{-D})$,
\begin{align}
	\|{\tilde{\bS}_k}\| & = \bigO_D \prn{ \prn{\sqrt{v_k} + L_k} \log (\constantsig n)}+ \|\Ep [\tilde{\bS}_k]\| \nonumber \\
	& \stackrel{\mathrm{(i)}}{=} \bigO_{\constantx, D} \prn{\brc{\sqrt{n\sigma_k \cdot \Tr \prn{\proj_k^\perp \bSigma} \log n } + \Tr \prn{\proj_k^\perp \bSigma} \log n} \cdot \log (\constantsig n) + n \sigma_k} \nonumber \\
	& \stackrel{\mathrm{(ii)}}{=} \bigO_{\constantx, D} \prn{\brc{\sqrt{n\sigma_k \cdot \constantsig \sigma_k  \log n } + \constantsig \sigma_k \log n} \cdot \log (\constantsig n) + n \sigma_k} \nonumber 
%	& \lesssim \brc{\constant_1' c_n \sigma_k \log n \cdot \sqrt{n\sigma_k} + \constant_1' c_n \sigma_k \log n} \cdot \log (c_n n) + n \sigma_k \, ,
\end{align}
where in (i) we make use of $v_k = n\sigma_k L_k$, and apply Assumption~\ref{asmp:data-dstrb} for the spectrum in (ii). Next by the fact that $\constantsig \geq n$, we can further write
\begin{align*}
		\|{\tilde{\bS}_k}\| & = \bigO_{\constantx, D} \prn{\constantsig \sigma_k \cdot \log n \log (\constantsig n)}\, .
\end{align*}
The proof is complete as $\bW_k \bW_k^\sT =  \bS_k = \tilde{\bS}_k =$ holds with probability $1-\bigO(n^{-D})$.
\end{proof}
To bound the norm of $\bSigma^{\frac 1 2} \prn{\zeta \bI + \bX^\sT \bX}^{-1} \bSigma^{\frac 1 2}$, we apply Lemmas~\ref{lem:cov-lower-bound-eigenspace} and~\ref{lem:W-k-norm-upper-bound} and obtain
\begin{align*}
	\zeta \bI + \bX^\sT \bX & \succeq \frac{\zeta}{2} \bI + \prn{1 + \frac{2\norm{\bW_k^\sT \bW_k}}{\zeta}}^{-1} \bU_k^\sT \bU_k \nonumber \\
	& \succeq \frac{\zeta}{2} \bI + \prn{1 + \frac{\bigO_{\constantx, D} \prn{\constantsig \sigma_k \cdot \log n \log (\constantsig n)}}{\zeta}}^{-1} \bU_k^\sT \bU_k \, .
\end{align*}
Therefore by block matrix inverse, we can further get
\begin{align}
	&\bSigma^{\frac 1 2} \prn{\zeta \bI + \bX^\sT \bX}^{-1} \bSigma^{\frac 1 2} \nonumber \\
	& \preceq \bSigma^{\frac 1 2} \prn{\frac{\zeta}{2} \bI + \prn{1 + \frac{\bigO_{\constantx, D} \prn{\constantsig \sigma_k \cdot \log n \log (\constantsig n)}}{\zeta}}^{-1} \bU_k^\sT \bU_k}^{-1} \bSigma^{\frac 1 2} \nonumber \\
	& = \bSigma^{\frac 1 2} \prn{\frac{\zeta}{2} \bI + \prn{1 + \frac{\bigO_{\constantx, D} \prn{\constantsig \sigma_k \cdot \log n \log (\constantsig n)}}{\zeta}}^{-1} \proj_k \bSigma^{\half} \bZ^\sT \bZ \bSigma^{\half} \proj_k}^{-1} \bSigma^{\frac 1 2} \nonumber \\
	& \preceq \prn{1 + \frac{\bigO_{\constantx, D} \prn{\constantsig \sigma_k \cdot \log n \log (\constantsig n)}}{\zeta}} \prn{\proj_k \bZ^\sT \bZ \proj_k}^\dagger + \frac{2\proj_k^\perp \bSigma \proj_k^\perp}{\zeta} \, , \label{eq:resolvent-norm-bound}
\end{align}
where $\bX = \bZ \bSigma^{\frac 1 2}$. Define the matrix $\bV_k = \begin{bmatrix} \bv_1 & \cdots & \bv_k \end{bmatrix} \in \real^{d \times k}$, we can then write $\proj_k = \bV_k \bV_k^\sT$. Thus by exploiting the block matrix structure, it follows that
\begin{align}
	\norm{\bSigma^{\frac 1 2} \prn{\zeta \bI + \bX^\sT \bX}^{-1} \bSigma^{\frac 1 2}} \leq \prn{1 + \frac{\bigO_{\constantx, D} \prn{\constantsig \sigma_k \cdot \log n \log (\constantsig n)}}{\zeta}} \lambda_{\min} \prn{\bV_k^\sT \bZ^\sT \bZ \bV_k}^{-1} + \frac{2 \sigma_k}{\zeta} \label{eq:mid-A-norm-bound-1} \, .
\end{align}
Substituting $\btheta = \bSigma^{-1/2} \boldbeta$ into Eq.~\eqref{eq:resolvent-norm-bound}, we also obtain
\begin{align}
	&\btheta^\sT \bSigma^{\frac 1 2} \prn{\zeta \bI + \bX^\sT \bX}^{-1} \bSigma^{\frac 1 2} \btheta \nonumber \\
	&\leq  \prn{1 + \frac{\bigO_{\constantx, D} \prn{\constantsig \sigma_k \cdot \log n \log (\constantsig n)}}{\zeta}} \lambda_{\min} \prn{\bV_k^\sT \bZ^\sT \bZ \bV_k}^{-1} \norm{\btheta_{\leq k}}^2 + \frac{2 \norm{\boldbeta_{>k}}^2}{\zeta} \, .\label{eq:mid-theta-resolvent-bound-1} 
\end{align}
\paragraph{Part II: Lower bounding the smallest eigenvalue \texorpdfstring{$\lambda_{\min} \prn{\bV_k^\sT \bZ^\sT \bZ \bV_k}$}{TEXT}}
The last step is then to provide a lower bound for the smallest eigenvalue of $\bV_k^\sT \bZ^\sT \bZ \bV_k$. 
 Consider $\tilde{\bZ} = \begin{bmatrix} \tilde{\bz}_1 & \cdots & \tilde{\bz}_n \end{bmatrix}^\sT \in \mathbb{R}^{n \times k}$ with 
\begin{align*}
	\tilde{\bz}_i = \bV_k^\sT \bz_i =  \begin{bmatrix} \langle \bz, \bv_1\rangle \\ \vdots \\ \langle \bz, \bv_{k-1} \rangle \end{bmatrix} \, .
\end{align*}
We therefore need to lower bound $\lambda_{\min}(\tilde{\bZ}^\sT \tilde{\bZ})$ 
where $\tilde{\bZ}$ has  i.i.d. rows $\tilde{\bz}_i$ in $\real^k$. 
An immediate consequence is that $\Ep [\tilde{\bz}_i] = \boldsymbol{0}$ and
 $\Var (\tilde{\bz}_i) = \bI_k$. Moreover, for any unit vector $\bvarphi \in \real^k$, 
 we can apply Hanson-Wright (cf.~Lemma~\ref{lem:hanson-wright}) and deduce that for any $t \geq 0$,
\begin{align}
	\Prb \prn{\left|\langle \tilde{\bz}_i, \bvarphi\rangle^2 - 1\right| \geq t} & = \Prb \prn{\left|\bz_i^\sT \bV_{k} \bvarphi \bvarphi^\sT \bV_{k}^\sT \bz_i - \Tr \prn{\bV_{k} \bvarphi \bvarphi^\sT \bV_{k}^\sT} \right| \geq t} \nonumber \\
	& \leq 2 \exp \left\{ - \bigOmg \prn{ \min \brc{\frac{t^2}{\constantx^4 \normF{\bV_{k} \bvarphi \bvarphi^\sT \bV_{k}^\sT }^2}, \frac{t}{\constantx^2 \normop{\bV_{k} \bvarphi \bvarphi^\sT \bV_{k}^\sT}}} } \right\} \nonumber \\
	& = 2 \exp \prn{-  \bigOmg_{\constantx} \prn{ \min \brc{t^2, t}}}, \label{eq:hanson-wright-for-z-top-k}
\end{align}
where we use the fact that $\normF{\bV_{k} \bvarphi \bvarphi^\sT \bV_{k}^\sT } = \normop{\bV_{k} \bvarphi\bvarphi^\sT \bV_{k}^\sT } \leq 1$. Thus we can bound the fourth moment of $\langle \tilde{\bz}_i, \bvarphi \rangle$ by
\begin{align*}
	\Ep \brk{\langle \tilde{\bz}_i, \bvarphi\rangle^4} &= \int_0^\infty 2t 	\Prb \prn{\langle \tilde{\bz}_i, \bvarphi\rangle^2 \geq t}  \de t  \nonumber \\
	 &\leq 1 + \int_0^\infty 2 (t + 1) 	\Prb \prn{\langle \tilde{\bz}_i, \bvarphi\rangle^2 \geq t + 1}  \de t \nonumber \\
	 & \leq 1 + 4 \int_0^\infty (t+1) \exp \prn{-  \bigOmg_{\constantx} \prn{ \min \brc{t^2, t}}} \de t = \bigO_{\constantx}(1) \, .
\end{align*}
Clearly the above bound holds uniformly for all $\bvarphi \in \mathbb{S}^{k-1}$ from the unit sphere in $\real^k$. Since the upper bound $\bigO_{\constantx}(1)$ does not depend on $k$, we can appeal to~\cite[Theorem~2.2]{yaskov2014lower} and obtain that with probability $1-\bigO(n^{-D})$
\begin{align*}
	\lambda_{\min} \prn{n^{-1} \tilde{\bZ}^\sT \tilde{\bZ}} \geq 1 - \bigO_{\constantx} \prn{\sqrt{\frac{k}{n}}} - \bigO_{D} \prn{\sqrt{\frac{ \log n}{n}}} \, .
\end{align*}
Therefore, if we choose $k = \lfloor \eta n\rfloor$ for some fixed $\eta$ such that $\bigO_{\constantx} (\sqrt{\eta}) \leq 1/4$,  it holds for $n = \Omega_D(1)$ that
\begin{align*}
	\lambda_{\min} \prn{n^{-1} \tilde{\bZ}^\sT \tilde{\bZ}} \geq 1 - \frac{1}{4} - \frac{1}{4} = \half \, ,
\end{align*}
and we therefore conclude the proof by taking $k = \lfloor \eta n \rfloor$ as above and substituting into Eq.~\eqref{eq:mid-A-norm-bound-1}
\begin{align*}
	\norm{\bSigma^{\frac 1 2} \prn{\zeta \bI + \bX^\sT \bX}^{-1} \bSigma^{\frac 1 2}} & \leq \frac{2}{n} \prn{1 + \frac{\bigO_{\constantx, D} \prn{\constantsig \sigma_k \cdot \log n \log (\constantsig n)}}{\zeta}}  + \frac{2 \sigma_{k}}{\zeta} \nonumber \\
	&= \frac{2}{n} \prn{1 + \frac{\bigO_{\constantx, D} \prn{\constantsig \sigma_k \cdot \log n \log (\constantsig n)}}{\zeta}} \, ,
\end{align*}
where in the last line we use the fact that $\constantsig \geq n$ and therefore $\sigma_{k} = \bigO (\constantsig \sigma_k \cdot \log n \log (\constantsig n) / n)$. 

Similarly for Eq.~\eqref{eq:mid-theta-resolvent-bound-1}, we have
\begin{align*}
	 \btheta^\sT \bSigma^{\frac 1 2} \prn{\zeta \bI + \bX^\sT \bX}^{-1} \bSigma^{\frac 1 2} \btheta  & \leq \frac{2}{n}\prn{1 + \frac{\bigO_{\constantx, D} \prn{\constantsig \sigma_k \cdot \log n \log (\constantsig n)}}{\zeta}}\norm{\btheta_{\leq k}}^2 + \frac{2 \norm{\boldbeta_{>k}}^2}{\zeta} \\
	 & \leq \frac{2}{n}\prn{1 + \frac{\bigO_{\constantx, D} \prn{\constantsig \sigma_k \cdot \log n \log (\constantsig n)}}{\zeta}}\norm{\btheta_{\leq n}}^2 + \frac{2 \norm{\boldbeta_{>n}}^2}{\zeta} \, ,
\end{align*}
where in the last line we use the fact that for all $k+1 \leq i \leq n$,
\begin{align*}
	\<\boldbeta, \bv_i \>^2 = \sigma_i \<\btheta, \bv_i \>^2 \leq \sigma_k \<\btheta, \bv_i \>^2 = \bigO (\constantsig \sigma_k \cdot \log n \log (\constantsig n) / n)  \<\btheta, \bv_i \>^2 \, .
\end{align*}
The proof is complete.

\subsection{Proof of Lemma~\ref{lem:from-F-to-E}} \label{proof:from-F-to-E}
We apply Hanson-Wright inequality in Lemma~\ref{lem:hanson-wright} and get
\begin{align*}
	\Prb \prn{\left|\bz_k^\sT \bB_{k-1} \bz_k - \Siter_{k-1}(\bI) \right| \geq t \mid \bB_{k-1}} & \leq 2 \exp \brc{-\Omega \prn{\min \brc{\frac{t^2}{\constantx^4 \normF{\bB_{k-1}}^2}, \frac{t}{\constantx^2 \norm{\bB_{k-1}}}}}}
\end{align*}
and
\begin{align*} 
	& \Prb \prn{\left|\bz_{k}^\sT \bB_{k-1} \bQ \bB_{k-1} \bz_{k} - \Tr \prn{\bQ\bB_{k-1}^2} \right| \geq t \mid \bB_{k-1}} \nonumber \\
	& \leq 2 \exp \brc{- \Omega \prn{ \min \brc{\frac{t^2}{\constantx^4 \normF{\bB_{k-1} \bQ \bB_{k-1}}^2}, \frac{t}{\constantx^2 \norm{\bB_{k-1} \bQ \bB_{k-1}}}}}}  \, . \nonumber 
	% &  \leq  2 \exp \brc{-c \min \prn{\frac{t^2}{\constant_1^4 \normF{\what{A}_{k-1} Q \what{A}_{k-1}}^2}, \frac{t}{\constant_1^2 \normF{\what{A}_{k-1} Q \what{A}_{k-1}}}}}\, ,
\end{align*}
In particular, on the event $\{T_F(\bQ) \geq k, T_F(\bI) \geq k\}$, we have
\begin{align*}
	\left|\Siter_{k-1}(\bI) - \Riter_0(\bI) \right| \leq \beta_1 \leq \frac 1 4 \Riter_0(\bI) \, , \qquad  \left|\Siter_{k-1}(\bQ) - \Riter_0(\bQ) \right| \leq \beta_1 \leq \frac 1 4 R_0(\bQ)\, , \qquad \norm{\bB_{k-1}} \leq \gamma \, ,
\end{align*} 
which further implies that
\begin{align*}
	\normop{\bB_{k-1}} \leq \normF{\bB_{k-1}} & = \sqrt{\Tr \prn{\bB_{k-1}^2}}  \leq \sqrt{\norm{\bB_{k-1}} \cdot \Siter_{k-1}(\bI)} = \bigO \prn{\sqrt{\gamma \Riter_0(\bI)}} \, , 
\end{align*}
and
\begin{align*}
	\normop{\bB_{k-1} \bQ\bB_{k-1}} \le \normF{\bB_{k-1} \bQ\bB_{k-1}} & = \sqrt{\Tr \prn{\bB_{k-1} \bQ\bB_{k-1}^2 \bQ\bB_{k-1}}} \nonumber \\
	&  \leq \sqrt{\norm{\bQ^{\frac 1 2}\bB_{k-1}^2 \bQ^{\frac 1 2}} \cdot \Tr \prn{\bB_{k-1} \bQ\bB_{k-1}}} \nonumber \\
	&  \leq \sqrt{\norm{\bB_{k-1}}^2 \cdot \Tr \prn{\bQ^{\frac 1 2}\bB_{k-1}^2 \bQ^{\frac 1 2}}} \nonumber \\
	& \stackrel{\mathrm{(i)}}{\leq} \sqrt{ \norm{\bB_{k-1}}^3 \cdot \Siter_{k-1}(\bQ)} = \bigO \prn{\sqrt{\gamma^3 \Riter_0(\bQ)}} \, ,
\end{align*}
Substituting the above bounds into the Hanson-Wright inequalities, we have conditioning on $H_k: = \{T_F(\bQ) \geq k, T_F(\bI) \geq k\}$ for some constant $\constant = \constant(\constantx, D)$ that
\begin{align*}
	\exp \brc{- \Omega \prn{\frac{\constant  \log n \cdot \sqrt{\gamma \Riter_0(\bI)}}{\constantx^2 \norm{\bB_{k-1}}}}} & = \bigO(n^{-D}) \, , \\
	\exp \brc{- \Omega \prn{ \frac{\constant \log n \cdot \sqrt{\gamma^3 \Riter_0(\bQ)}}{\constantx^2 \norm{\bB_{k-1} \bQ \bB_{k-1}}}}} & = \bigO(n^{-D}) \, ,
\end{align*} 
and therefore it holds with probability $1-\bigO(n^{-D})$ that
\begin{align*}
	\left|\bz_k^\sT \bB_{k-1} \bz_k - \Siter_{k-1}(\bI) \right| & \leq \constant  \log n \cdot \sqrt{\gamma \Riter_0(\bI)} = : \alpha_1 \, , \\
	\left|\bz_{k}^\sT \bB_{k-1} \bQ \bB_{k-1} \bz_{k} - \Tr \prn{\bQ\bB_{k-1}^2} \right|  & \leq \constant \log n \cdot \sqrt{\gamma^3 \Riter_0(\bQ)}  = : \alpha_2 \, .
\end{align*}
The Hanson-Wright inequalities also give the following upper bounds on the expectations conditioning on the tail event when $\|\bB_{k-1}\| \leq \gamma$. In particular, we would have
\begin{align}
	& \Ep_{k-1} \brk{\left| \bz_k^\sT \bB_{k-1} \bz_k - \Siter_{k-1}(\bI) \right| \ind\brc{\left| \bz_k^\sT \bB_{k-1} \bz_k - \Siter_{k-1}(\bI)\right| \geq  \alpha_1} } \ind (H_k) \nonumber \\
	& = \int_{ \constant \log n \cdot \sqrt{\gamma \Riter_0(\bI)}}^\infty \Prb \prn{\left|\bz_k^\sT \bB_{k-1} \bz_k - \Siter_{k-1}(\bI) \right| \geq t \mid \bB_{k-1}}\ind (H_k) \de t \nonumber \\
	& \leq \int_{ \constant \log n \cdot \sqrt{\gamma \Riter_0(\bI)}}^\infty 2 \exp \brc{-\Omega \prn{\min \brc{\frac{t^2}{\constantx^4 \normF{\bB_{k-1}}^2}, \frac{t}{\constantx^2 \norm{\bB_{k-1}}}}}} \ind (H_k)\de t \nonumber \\
	& \leq \int_{ \constant \log n \cdot \sqrt{\gamma R_0(I)}}^\infty 2 \exp \brc{- \Omega \prn{\frac{t}{\constantx^2 \normF{\bB_{k-1}}}}} \ind (H_k)\de t \nonumber \\
	& \leq \bigO\prn{\constantx^2 \normF{\bB_{k-1}}} \cdot \bigO(n^{-D})  = \bigO_{\constantx} \prn{n^{-D} \cdot \sqrt{ \gamma \Riter_0(\bI)}} \, . \nonumber
\end{align}
Similarly it also holds that
\begin{align}
	& \Ep_{k-1} \brk{\left| \bz_{k}^\sT \bB_{k-1} \bQ \bB_{k-1} \bz_{k} - \Tr \prn{\bQ\bB_{k-1}^2}  \right| \ind\brc{\left| \bz_{k}^\sT \bB_{k-1} \bQ \bB_{k-1} \bz_{k} - \Tr \prn{\bQ\bB_{k-1}^2}  \right| \geq  \alpha_2 }}\ind (H_k)  \nonumber \\
	& = \bigO_{\constantx} \prn{n^{-D} \cdot \sqrt{ \gamma^3 \Riter_0(\bQ)}} \, . \nonumber 
\end{align}

To finish the proof, we now only need to show $\normop{\bA_k} \leq \gamma$ holds with probability $1-\bigO(n^{-D})$. We provide upper bounds for small $k \leq n/2$ and large $k > n/2$ separately. Under the assumption $\beta_2 \leq \mu/2$, we make use of the fact that $H_k \subset F_{k-1}(\bQ)$ which enables us to derive
\begin{align}
	\left|\mu_k - \muefct(\zeta, \mu) + \frac{k}{1 +\Riter_0(\bI)}\right| & \leq \beta_2 \leq \frac{\mu}{2} \, , \label{eq:from-F-to-E-mid-1}
\end{align}
and thus for all $k \leq n/2$,
\begin{align*}
	\mu_k \geq \muefct(\zeta, \mu)  - \frac{k}{1 + \Riter_0(\bI)} - \frac{\mu}{2} = \frac{\mu}{2} + \frac{n-k}{1 + \Riter_0(\bI)} \geq \frac{\muefct(\zeta, \mu)}{2} \, ,
\end{align*}
which in particular implies for $k \leq n/2$ that
\begin{align*}
	\normop{\bA_k} \leq \normop{ \bSigma^{\frac 1 2} \prn{\zeta \bI + \mu_k \bSigma}^{-1} \bSigma^{\frac 1 2}} \leq \frac{2}{\muefct(\zeta, \mu)} \, .
\end{align*}

On the other hand, if $k > n/2$, we can still deduce from Eq.~\eqref{eq:from-F-to-E-mid-1} that $\mu_k \geq \mu/2 > 0$ and thus
\begin{align*}
	\bA_k & = \bSigma^{\frac 1 2} \prn{\zeta \bI + \mu_k \bSigma + \bX_k^\sT \bX_k}^{-1} \bSigma^{\frac 1 2} \preceq  \bSigma^{\frac 1 2} \prn{\zeta \bI + \bX_k^\sT \bX_k}^{-1} \bSigma^{\frac 1 2}  \, .
\end{align*}
Applying Lemma~\ref{lem:A-norm-bound}, we obtain  with probability $1 -\bigO(n^{-D})$ for all $k > n/2$,
\begin{align}
	\norm{\bA_k} \leq \norm{\bSigma^{\frac 1 2} \prn{\zeta \bI + \bX_{\lceil n/2 \rceil}^\sT \bX_{\lceil n/2 \rceil}}^{-1} \bSigma^{\frac 1 2}} = \frac{2}{n} \prn{1 + \frac{\bigO_{\constantx, D} \prn{\constantsig \sigma_{\lfloor \eta n\rfloor} \cdot \log n \log (\constantsig n)}}{\zeta}} \nonumber \, ,
\end{align}
for some $\eta = \eta(\constantx)$. Combine with the trivial bound $\|\bA_k\| \leq 1/ \zeta$, we conclude that 
$\|\bA_k\| \leq \gamma$ with probability  $1-\bigO(n^{-D})$, provided we take
\begin{align*}
	\gamma = \min \brc{\frac{2}{n} \prn{1 + \frac{\bigO_{\constantx, D} \prn{\constantsig \sigma_{\lfloor \eta n\rfloor} \cdot \log n \log (\constantsig n)}}{\zeta}} + \frac{2}{\muefct(\zeta, \mu)} , \frac{1}{\zeta}} \, .
\end{align*}
The proof is completed by noting that
\begin{align*}
		p_{k,k}(T_F, T_F, \bQ) - p_{k+1, k}(T_E, T_F, \bQ) =
		\prob(E_k(\bQ)^c;H_k)= \bigO(n^{-D}) \, .
	\end{align*}

\subsection{Proof of Lemma~\ref{lem:from-E-to-F}} \label{proof:from-E-to-F}
We begin by noticing that 
\begin{align}
	p_{k,k}(T_E, T_E, \bQ) - p_{k, k}(T_F, T_E, \bQ)& = 
	\prob\big(T_E(\bQ)\ge k,T_E(\bI)\ge k;F_{k-1}^c(\bQ)\big)\, .  \label{eq:E-to-F-goal}
\end{align}
We therefore need to control $|\Riter_{k-1}(\bQ) - \Riter_0(\bQ)|$ and 
$|\Siter_{k-1}(\bQ) - \Riter_0(\bQ)|$, as well as $|\mu_k - \wb{\mu}_k|$ and 
$\norm{\bB_{k-1}}$.

\paragraph{Part I: Decomposing into martingale part and bias part} Recall the calculations for Eq.~\eqref{eq:R-difference}, we have
\begin{align}
	\Riter_i(\bQ) - \Riter_{i-1}(\bQ) & =  - \frac{\Tr \prn{\bQ\bB_{i-1} \bz_i\bz_i^\sT \bB_{i-1}}}{1 + \bz_i^\sT \bB_{i-1} \bz_i} + \frac{\Tr \prn{\bQ \bB_{i-1} \bA_{i-1}}}{1 + \Siter_{i-1}(\bI)} \nonumber % \\
\end{align}
Define the stopping time
\begin{align*}
	\wb{T} = T_E(\bQ) \wedge T_E(\bI) \, ,
\end{align*} 
and on the event $\{T_E(\bQ) \geq k, T_E(\bI) \geq k\} = \{\wb{T} \geq k\}$, it holds
\begin{align*}
	\Riter_{k-1}(\bQ) - \Riter_0(\bQ) = \prn{\Riter_{k-1}(\bQ) - R_0(\bQ)} \ind \brc{\wb{T} \geq k} = \sum_{i=1}^{k-1} \prn{\Riter_i(\bQ) - \Riter_{i-1}(\bQ)} \ind\brc{\wb{T} \geq i+1} \, .
\end{align*}
For each of the summand, we can decompose it into two parts---the martingale difference part $D_i(\bQ, \wb{T})$ and a bias part $B_i(\bQ, \wb{T})$---to be specific, we can write
%Combine with Eqs.~\eqref{eq:R-difference-mid-2} and \eqref{eq:R-difference-mid-3}, we can therefore decompose Eq.~\eqref{eq:R-difference-mid-1} into two parts conditioned on $\wb{T} \geq i+1$, namely
\begin{align*}
	\prn{\Riter_i(\bQ) - \Riter_{i-1}(\bQ)} \ind\brc{\wb{T} \geq i+1} = D_{i}(\bQ, \wb{T} ) + B_{i}(\bQ, \wb{T} ) \, ,
\end{align*}
where by setting $G_i := F_{i}(\bQ) \cap F_{i}(\bI) \in \mathcal{F}_i$ and $S_i := \brc{\wb{T} = i} \cap G_{i-1} \in \mathcal{F}_i $, the explicit forms of $D_i$ and $B_i$ are
(recall that $\E_i(\;\cdot\;):=\E(\;\cdot\;|\cF_i)$):
\begin{align}
	D_i(\bQ, \wb{T} ) & : = -\frac{\Tr \prn{\bQ\bB_{i-1} \bz_i \bz_i^\sT \bB_{i-1}}}{1 + \bz_i^\sT \bB_{i-1} \bz_i} \ind\brc{\wb{T}  \geq i+1} - \frac{\Tr\prn{\bQ\bB_{i-1}^2}}{1 + \Siter_{i-1}(\bI)} \ind\prn{S_i} \nonumber \\
	&\qquad + \Ep_{i-1} \brk{\frac{\Tr \prn{\bQ\bB_{i-1} \bz_i \bz_i^\sT \bB_{i-1}}}{1 + \bz_i^\sT \bB_{i-1} \bz_i} \ind\brc{\wb{T}  \geq i+1} + \frac{\Tr\prn{\bQ\bB_{i-1}^2}}{1 + \Siter_{i-1}(\bI)} \ind\prn{S_i}} \, , \label{eq:D-i}
\end{align}
and
\begin{align}
	B_i(\bQ, \wb{T} ) & :=  \frac{\Tr\prn{\bQ\bB_{i-1} \bA_{i-1}}}{1 + \Siter_{i-1}(\bI)} \ind\brc{\wb{T}  \geq i+1}  +  \frac{\Tr\prn{\bQ\bB_{i-1}^2}}{1 + \Siter_{i-1}(\bI)} \ind\prn{S_i}\nonumber \\
	& \qquad -  \Ep_{i-1} \brk{\frac{\Tr \prn{\bQ\bB_{i-1} \bz_i \bz_i^\sT \bB_{i-1}}}{1 + \bz_i^\sT \bB_{i-1} \bz_i} \ind\brc{\wb{T}  \geq i+1} + \frac{\Tr\prn{\bQ\bB_{i-1}^2}}{1 + \Siter_{i-1}(\bI)} \ind\prn{S_i}}  \, .  \label{eq:B-i}
\end{align}
Since $\wb{T}$ is a stopping time, one can easily have that $D_i(\bQ, \wb{T})$ is a martingale difference sequence for $i=0,1,\cdots,n$.
(We note in passing that the above decomposition is similar but does not coincide with the standard Doob 
decomposition. In particular $B_i(\bQ, \wb{T})$ is not measurable on $\cF_{i-1}$. We find the present 
decomposition more convenient.)

\paragraph{Part II: Controlling the martingale part} We will show $D_i(\bQ, \wb{T})$ is bounded and thus by concentration 
inequality for bounded martingale differences, we can obtain an upper bound for the sum of 
the $D_i(\bQ, \wb{T})$'s. To this end, we use the fact that if for some $m_{i-1} \in \mathcal{F}_{i-1}$
\begin{align*}
	\left|D_i(\bQ, \wb{T})  - m_{i-1}\right| \leq M \, ,% \label{eq:D-i-approximate-mean}
\end{align*}
then $ \left|D_i(\bQ, \wb{T})\right| = \left|D_i(\bQ, \wb{T}) - \Ep \brk{D_i(\bQ, \wb{T}) \mid \mathcal{F}_{i-1}}\right| \leq 2M$. Substitute the following $m_{i-1} \in \mathcal{F}_{i-1}$
\begin{align*}
	m_{i-1} 	&=  -\frac{\Tr\prn{\bQ\bB_{i-1}^2}}{1 + \Siter_{i-1}(\bI)} \ind\brc{\wb{T}  \geq  i} \ind \prn{G_{i-1}} + \Ep_{i-1} \brk{\frac{\Tr \prn{\bQ\bB_{i-1} \bz_i \bz_i^\sT \bB_{i-1}}}{1 + \bz_{i}^\sT \bB_{i-1} \bz_i} \ind\brc{\wb{T}  \geq i+1} + \frac{\Tr\prn{\bQ\bB_{i-1}^2}}{1 + \Siter_{i-1}(\bI)} \ind\prn{S_i} } 
\end{align*}
into the previous display, we have
\begin{align*}
	&\left| D_i(\bQ, \wb{T}) - m_{i-1}\right| \nonumber \\
	& \stackrel{\mathrm{(i)}}{=} \left|\frac{\Tr \prn{\bQ\bB_{i-1} \bz_i\bz_i^\sT \bB_{i-1}}}{1 + \bz_i^\sT \bB_{i-1} \bz_i}- \frac{\Tr\prn{\bQ\bB_{i-1}^2}}{1 + \Siter_{i-1}(\bI)} \right|  \ind\brc{\wb{T}  \geq i+1} \nonumber \\
	&\leq \left|\frac{ \bz_i^\sT \bB_{i-1}\bQ\bB_{i-1} \bz_i -\Tr\prn{\bQ\bB_{i-1}^2} }{1 + \bz_i^\sT \bB_{i-1} \bz_i} \right|  \ind\brc{\wb{T}  \geq i+1}  + \left|\frac{ \Tr\prn{\bQ\bB_{i-1}^2} \cdot \prn{\bz_i^\sT \bB_{i-1} \bz_i -\Siter_{i-1}(\bI) } }{\prn{1 + \bz_i^\sT \bB_{i-1} \bz_i} \prn{1 + \Siter_{i-1}(\bI)} } \right|  \ind\brc{\wb{T}  \geq i+1} \, ,
\end{align*}
where in (i) we use $\{\wb{T} \geq i+1\} \subset F_{i-1}(\bQ) \cap F_{i-1}(\bI) = G_{i-1} $, and therefore
\begin{align*}
\ind\brc{\wb{T}  \geq  i} \ind \prn{G_{i-1}} = \ind\brc{\wb{T}  =  i} \ind \prn{G_{i-1}} + \ind\brc{\wb{T} \geq  i + 1} \ind \prn{G_{i-1}} = \ind\prn{S_i} + \ind\brc{\wb{T} \geq  i + 1} \, .
\end{align*}
Recalling our assumptions for $\alpha_1$, we observe that on the event $\{\wb{T} \geq i+1\} \subset E_i(\bQ)$, 
% \am{The assumption $\alpha_1\le \Riter_0(\bI)/4$ is used but not present in the statement of the lemma.}
\begin{align}
	\left|\bz_i^\sT \bB_{i-1} \bz_i - \Siter_{i-1}(\bI)\right| \leq \alpha_1 \leq \frac 1 4 \Riter_0(\bI) \, , \qquad \left|\bz_i^\sT \bB_{i-1}\bQ\bB_{i-1} \bz_i -\Tr\prn{\bQ\bB_{i-1}^2}\right|  \leq \alpha_2 \, , \label{eq:bias-main-ineq-1}
\end{align}
and on the event $\{\wb{T} \geq i+1\} \subset F_{i-1}(\bI)$ and $\{\wb{T} \geq i+1\} \subset F_{i-1}(\bQ)$ by assumptions on $\beta_1$,  
\begin{align}
	\left|\Siter_{i-1}(\bI) - \Riter_0(\bI) \right| \leq \beta_1 \leq \frac 1 4 \Riter_0(\bI) \, , \qquad  \left|\Siter_{i-1}(\bQ) - \Riter_0(\bQ) \right| \leq \beta_1 \leq \frac 1 4 \Riter_0(\bQ) \, , \label{eq:bias-main-ineq-2}
\end{align}
and finally on the event $\{\wb{T} \geq i+1\}  \subset G_{i-1}$ it holds
\begin{align}
	\Tr\prn{\bQ\bB_{i-1}^2} \leq \norm{\bB_{i-1}} \cdot \Tr \prn{\bQ \bB_{i-1}} \leq \gamma \Siter_{i-1}(\bQ) = \bigO \prn{\gamma \Riter_0(\bQ)}\, . \label{eq:bias-main-ineq-3}
\end{align}
Putting together bounds in Eqs.~\eqref{eq:bias-main-ineq-1}, \eqref{eq:bias-main-ineq-2}, \eqref{eq:bias-main-ineq-3} and making use of the fact that $\brc{\wb{T} \geq i+1} \subset E_i(\bQ) \cap F_{i-1}(\bI) \cap F_{i-1}(\bQ)$ yield
\begin{align*}
	\left| D_i(\bQ, \wb{T}) - m_{i-1}\right| & \leq  \left|\frac{ \alpha_2 }{1 + \frac{1}{2} \Riter_0(\bI)} \right|    + \left|\frac{\bigO(\gamma \Riter_0(\bQ))  \cdot \alpha_1 }{\prn{1 + \frac{1}{2} \Riter_0(\bI)} \prn{1 + \frac{3}{4} \Riter_0(\bI)} } \right| = \bigO \prn{\frac{\alpha_1 \gamma \Riter_0(\bQ) + \alpha_2 (1 + \Riter_0(\bI))}{1 + \Riter_0(\bI)^2}} \, .
\end{align*}
Then we can apply Azuma-Hoeffding inequality and obtain
\begin{align*}
	\max_{k\le n}\left|\sum_{i=1}^{k} D_i(\bQ, \wb{T})\right| = \bigO_{D} \prn{ \sqrt{n \log n} \cdot \frac{\alpha_1 \gamma \Riter_0(\bQ) + \alpha_2 (1 + \Riter_0(\bI))}{1 + \Riter_0(\bI)^2}} \, ,
\end{align*}
with probability $1 - \bigO(n^{-D})$.
\paragraph{Part III: Controlling the bias part} Now we proceed to bound the bias part $|B_i(\bQ, \wb{T})|$ in Eq.~\eqref{eq:B-i}. We can write an upper bound
\begin{align*}
	&\left|B_i(\bQ, \wb{T}) \right|
	 \leq \underbrace{\left| \frac{\Tr\prn{\bQ\bB_{i-1} \bA_{i-1}}}{1 + \Siter_{i-1}(\bI)} -  \frac{\Tr\prn{\bQ\bB_{i-1}^2}}{1 + \Siter_{i-1}(\bI)} \right| \ind\brc{\wb{T}  \geq i+1}}_{\mathrm{(I)}} \nonumber \\
	& \qquad + \underbrace{\left|\frac{\Tr\prn{\bQ\bB_{i-1}^2}}{1 + \Siter_{i-1}(\bI)} \ind\brc{\wb{T} \geq i} \ind \prn{G_{i-1}} -  \Ep_{i-1} \brk{\frac{\Tr \prn{\bQ\bB_{i-1} \bz_i \bz_i^\sT \bB_{i-1}}}{1 + \bz_{i}^\sT \bB_{i-1} \bz_i} \ind\brc{\wb{T}  \geq i+1} + \frac{\Tr\prn{\bQ\bB_{i-1}^2}}{1 + \Siter_{i-1}(\bI)} \ind\prn{S_i} }\right|}_{\mathrm{(II)}} \, .
\end{align*}
Using the fact that
\begin{align*}
	\bB_{i-1} - \bA_{i-1} = (\mu_{i-1} - \mu_i) \bB_{i-1} \bA_{i-1} = \frac{\bB_{i-1} \bA_{i-1}}{1 + \Siter_i(\bI)} \, ,
\end{align*}
we have
\begin{align*}
	\mathrm{(I)} & \leq \left| \frac{\Tr\prn{\bQ\bB_{i-1} \prn{\bA_{i-1} - \bB_{i-1}}}}{1 + \Siter_{i-1}(\bI)}\right| \ind\brc{\wb{T}  \geq i+1} = \left| \frac{\Tr\prn{\bQ\bB_{i-1}^2 \bA_{i-1}}}{\prn{1 + \Siter_{i-1}(\bI)}^2}\right| \ind\brc{\wb{T}  \geq i+1}  \, .
\end{align*}
Upper bounding the term $\Tr(\bQ\bB_{i-1}^2 \bA_{i-1})$ requires more careful treatment. Note that $\bB_{i-1}$ and $\bA_{i-1}$ commute, as follows from the observation that
\begin{align*}
	& \bB_{i-1} \bA_{i-1}  \nonumber \\
	& = (1 + \Siter_i(\bI)) \cdot \bSigma^{\frac 1 2} \brc{\prn{\zeta \bI + \mu_i \bSigma + \bX_{i-1}^\sT \bX_{i-1}}^{-1} \cdot \prn{\mu_{i-1} - \mu_i} \bSigma \cdot \prn{\zeta \bI + \mu_{i-1} \bSigma + \bX_{i-1}^\sT \bX_{i-1}}^{-1}} \bSigma^{\frac 1 2} \nonumber \\
	& =  (1 + \Siter_i(\bI))  \prn{\bB_{i-1} - \bA_{i-1}} \nonumber \\
	& =  (1 + \Siter_i(\bI)) \cdot \bSigma^{\frac 1 2} \brc{\prn{\zeta \bI + \mu_{i-1} \bSigma + \bX_{i-1}^\sT \bX_{i-1}}^{-1} \cdot \prn{\mu_{i-1} - \mu_i} \bSigma \cdot \prn{\zeta \bI + \mu_{i} \bSigma + \bX_{i-1}^\sT \bX_{i-1}}^{-1}} \bSigma^{\frac 1 2} \nonumber \\
	& = \bA_{i-1} \bB_{i-1}  \, .
\end{align*}
Since $\bA_{i-1}$ and $\bB_{i-1}$ are both p.s.d.\ compact self-adjoint operators in Hilbert space, commutativity implies they can be simultaneously orthogonally diagonalized, which further implies that $\bA_{i-1}^{\frac{1}{2}}$ and $\bB_{i-1}^{\frac{1}{2}}$ also commute. Therefore
\begin{align*}
	\Tr\prn{\bQ\bB_{i-1}^2 \bA_{i-1}} & = \Tr\prn{\bQ^{\frac 1 2} \bA_{i-1}^{\frac 1 2} \bB_{i-1}^2 \bA_{i-1}^{\frac 1 2} \bQ^{\frac 1 2}}  \leq \norm{\bB_{i-1}}^2 \cdot \Tr \prn{\bQ^{\frac 1 2} \bA_{i-1}\bQ^{\frac 1 2}} = \norm{\bB_{i-1}}^2 \cdot \Riter_{i-1}(\bQ) \, ,
\end{align*}
and thus
\begin{align}
	\mathrm{(I)}	& \leq\frac{\norm{\bB_{i-1}}^2 \cdot \Riter_{i-1}(\bQ) }{\prn{1 + \Siter_{i-1}(\bI)}^2} \ind\brc{\wb{T}  \geq i+1}  = \bigO \prn{ \frac{ \gamma^2 \Riter_0(\bQ)}{1 + \Riter_0(\bI)^2} }\, , \label{eq:B-difference-mid-0}
\end{align} 
where in the last inequality we use Eq.~\eqref{eq:bias-main-ineq-2} on the event $\{\wb{T} \geq i+1\}  \subset F_{i-1}(\bI)$, while $\{\wb{T} \geq i+1\}  \subset F_{i-1}(\bQ)$ also implies
\begin{align*}
	|\Riter_{i-1}(\bQ) - \Riter_0(\bQ)| \leq \beta_1 \leq \frac{1}{4} \Riter_0(\bQ) \, .
\end{align*}

Next to bound (II), we make use of the fact that
\begin{align*}
	\frac{\Tr\prn{\bQ\bB_{i-1}^2}}{1 + \Siter_{i-1}(\bI)} \ind\brc{\wb{T} \geq i} \ind \prn{G_{i-1}} & = \Ep_{i-1} \brk{\frac{\Tr\prn{\bQ\bB_{i-1} \bz_i \bz_i^\sT \bB_{i-1}}}{1 + \Siter_{i-1}(\bI)}  \ind\brc{\wb{T} \geq i} \ind \prn{ G_{i-1}}} \, ,
\end{align*}
and therefore
\begin{align}
	&\mathrm{(II)} \nonumber \\
	&= \Bigg|\Ep_{i-1} \Bigg[\frac{\Tr\prn{\bQ\bB_{i-1} \bz_i \bz_i^\sT \bB_{i-1}}}{1 + \Siter_{i-1}(\bI)} \ind\brc{\wb{T} \geq i} \ind \prn{G_{i-1}} - \frac{\Tr \prn{\bQ\bB_{i-1} \bz_i \bz_i^\sT \bB_{i-1}}}{1 + \bz_{i}^\sT \bB_{i-1} \bz_i} \ind\brc{\wb{T}  \geq i+1} \nonumber \\
	& \qquad - \frac{\Tr\prn{\bQ\bB_{i-1}^2}}{1 + \Siter_{i-1}(\bI)} \ind\prn{S_i} \Bigg] \Bigg| \nonumber \\
%	& = \left|\Ep_{i-1} \brk{\frac{ z_i^\sT \what{A}_{i-1} Q\what{A}_{i-1}z_i}{1 + \what{R}_{i-1}(I)} \ind\brc{\wb{T}  \geq i} \cap G_{i-1} - \frac{ z_i^\sT \what{A}_{i-1} Q\what{A}_{i-1}z_i}{1 + z_{i}^\sT \what{A}_{i-1} z_i} \ind\brc{\wb{T}  \geq i+1} - \frac{\Tr\prn{Q\what{A}_{i-1}^2}}{1 + \what{R}_{i-1}(I)} \ind\prn{S_i} } \right|  \nonumber \\
	& = \left|\Ep_{i-1} \brk{\frac{ \bz_i^\sT \bB_{i-1} \bQ\bB_{i-1} \bz_i \cdot \prn{\bz_{i}^\sT \bB_{i-1} \bz_i - \Siter_{i-1}(\bI)}}{\prn{1 + \Siter_{i-1}(\bI)}\prn{1 + \bz_{i}^\sT \bB_{i-1} \bz_i}} \ind\brc{\wb{T}  \geq i + 1} + \frac{\bz_i^\sT \bB_{i-1} \bQ\bB_{i-1}\bz_i - \Tr\prn{\bQ\bB_{i-1}^2}}{1 + \Siter_{i-1}(\bI)} \ind\prn{S_i}} \right| \nonumber \\
	&\leq \nonumber \\
	& \underbrace{\left|\Ep_{i-1} \brk{ \left(\frac{ \bz_i^\sT \bB_{i-1} \bQ\bB_{i-1}\bz_i \cdot \prn{\bz_{i}^\sT \bB_{i-1} \bz_i - \Siter_{i-1}(\bI)}}{\prn{1 + \Siter_{i-1}(\bI)}\prn{1 + \bz_{i}^\sT \bB_{i-1} \bz_i}} - \frac{\Tr \prn{\bQ \bB_{i-1}^2} \cdot \prn{\bz_{i}^\sT \bB_{i-1} \bz_i - \Siter_{i-1}(\bI)}}{\prn{1 + \Siter_{i-1}(\bI)}^2} \right) \ind\brc{\wb{T}  \geq i + 1} }  \right|}_{\mathrm{(III)}} \nonumber \\
	& \qquad + \underbrace{\left|\Ep_{i-1} \brk{\prn{ \frac{\Tr \prn{\bQ \bB_{i-1}^2} \cdot \prn{\bz_{i}^\sT \bB_{i-1} \bz_i - \Siter_{i-1}(\bI)}}{\prn{1 + \Siter_{i-1}(\bI)}^2} + \frac{\bz_i^\sT \bB_{i-1} \bQ\bB_{i-1}\bz_i - \Tr\prn{\bQ\bB_{i-1}^2}}{1 + \Siter_{i-1}(\bI)}} \ind\prn{S_i} } \right|}_{\mathrm{(IV)}} \, , \label{eq:B-difference-mid-2}
\end{align}
where in the last inequality we use that
\begin{align*}
	&\Ep_{i-1} \brk{\frac{\Tr \prn{\bQ \bB_{i-1}^2} \cdot \prn{\bz_{i}^\sT \bB_{i-1} \bz_i - \Siter_{i-1}(\bI)}}{\prn{1 + \Siter_{i-1}(\bI)}^2} \ind\brc{\wb{T} \geq i} \cap G_{i-1}} \nonumber \\
	&= \frac{\Tr \prn{\bQ \bB_{i-1}^2}}{\prn{1 + \Siter_{i-1}(\bI)}^2} \ind\big(\brc{\wb{T} \geq i} \cap G_{i-1}\big) \cdot \Ep_{i-1} \brk{\bz_{i}^\sT \bB_{i-1} \bz_i - \Siter_{i-1}(\bI)} = 0 \, .
\end{align*}

To control (III), we note that
\begin{align*}
	& \frac{ \bz_i^\sT \bB_{i-1} \bQ\bB_{i-1}\bz_i \cdot \prn{\bz_{i}^\sT \bB_{i-1} \bz_i - \Siter_{i-1}(\bI)}}{\prn{1 + \Siter_{i-1}(\bI)}\prn{1 + \bz_{i}^\sT \bB_{i-1} \bz_i}} - \frac{\Tr \prn{\bQ \bB_{i-1}^2} \cdot \prn{\bz_{i}^\sT \bB_{i-1} \bz_i - \Siter_{i-1}(\bI)}}{\prn{1 + \Siter_{i-1}(\bI)}^2}  \nonumber \\
	& = \frac{\brc{ \bz_i^\sT \bB_{i-1} \bQ\bB_{i-1}\bz_i \cdot \prn{1 + \Siter_{i-1}(\bI)} - \Tr \prn{\bQ \bB_{i-1}^2} \cdot \prn{1 + \bz_{i}^\sT \bB_{i-1} \bz_i}}\cdot \prn{\bz_{i}^\sT \bB_{i-1} \bz_i - \Siter_{i-1}(\bI)}}{\prn{1 + \Siter_{i-1}(\bI)}^2\prn{1 + \bz_{i}^\sT \bB_{i-1} \bz_i}} \nonumber \\
	& = \frac{1}{\prn{1 + \Siter_{i-1}(\bI)}^2\prn{1 + \bz_{i}^\sT \bB_{i-1} \bz_i}} \cdot \bigg\{ \prn{\bz_i^\sT \bB_{i-1} \bQ\bB_{i-1} \bz_i -\Tr \prn{\bQ \bB_{i-1}^2}} \cdot \prn{1 + \Siter_{i-1}(\bI)} \nonumber \\
	& \qquad -\Tr \prn{\bQ \bB_{i-1}^2} \cdot \prn{\bz_{i}^\sT \bB_{i-1} \bz_i - \Siter_{i-1}(\bI)} \bigg\} \cdot \prn{\bz_{i}^\sT \bB_{i-1} \bz_i - \Siter_{i-1}(\bI)} \, .
\end{align*}
We again make use of the bounds in Eqs.~\eqref{eq:bias-main-ineq-1}, \eqref{eq:bias-main-ineq-2} and~\eqref{eq:bias-main-ineq-3} on the event $\{\wb{T} \geq i+1\}$, which implies
\begin{align}
	\mathrm{(III)} & \leq \left|\Ep_{i-1} \brk{  \frac{\prn{ \alpha_2 \cdot \prn{1 + \Siter_{i-1}(\bI)} + \Tr \prn{\bQ \bB_{i-1}^2} \cdot \alpha_1 }\cdot \alpha_1}{\prn{1 + \Siter_{i-1}(\bI)}^2\prn{1 + \bz_{i}^\sT \bB_{i-1} \bz_i}} \ind\brc{\wb{T}  \geq i + 1}}  \right| \nonumber \\
	& = \bigO \prn{ \frac{\alpha_1 \alpha_2 \prn{1 + \Riter_0(\bI)} + \alpha_1^2 \gamma  \Riter_0(\bQ)}{1 + \Riter_0(\bI)^3}} \, . \label{eq:B-difference-mid-3}
\end{align}

Finally for term (IV) in Eq.~\eqref{eq:B-difference-mid-2}, we can control it by
\begin{align*}
	\mathrm{(IV)} & \leq \frac{ \bigO \prn{\gamma \Riter_0(\bQ)}}{1 + \Riter_0(\bI)^2} \cdot \Ep_{i-1} \brk{\left| \bz_{i}^\sT \bB_{i-1} \bz_i - \Siter_{i-1}(\bI) \right|  \ind\prn{S_i}} \nonumber \\
	& \qquad + \frac{1}{1 + \Riter_0(\bI)} \cdot \Ep_{i-1} \brk{\left|\bz_i^\sT \bB_{i-1} \bQ\bB_{i-1} \bz_i - \Tr\prn{\bQ\bB_{i-1}^2} \right|  \ind\prn{S_i} } \nonumber \, .
\end{align*}
Recall $S_i = \{\wb{T} = i\} \cap F_{i-1}(\bQ) \cap F_{i-1}(\bI)$, which implies $\{T_F(\bQ) \geq i, T_F(\bI) \geq i\}$ holds 
but at least one of $E_i(\bQ)$ and $E_i(\bI)$ doesn't hold. This allows us to invoke Lemma~\ref{lem:from-F-to-E} 
and conclude that $\Prb(S_i|\cF_{i-1}) = \bigO(n^{-D})$. Moreover, we can further deduce from Lemma~\ref{lem:from-F-to-E} that
\begin{align*}
	&\Ep_{i-1} \brk{\left| \bz_{i}^\sT \bB_{i-1} \bz_i - \Siter_{i-1}(\bI) \right|  \ind\prn{S_i}} \nonumber \\
	& \leq \alpha_1 \Prb(S_i|\cF_{i-1}) + \Ep_{i-1} \brk{\left| \bz_i^\sT \bB_{i-1} \bz_i - \Siter_{i-1}(\bI) \right| \ind\brc{\left| \bz_i^\sT \bB_{i-1} \bz_i - \Siter_{i-1}(\bI)\right| \geq  \alpha_1} } \ind \{T_F(\bQ) \geq i, T_F(\bI) \geq i\} \nonumber \\
	& = \bigO_{\constantx, D} \prn{n^{-D} \cdot \alpha_1} \, .
\end{align*}
Similarly, we also have
\begin{align*}
	\Ep_{i-1} \brk{\left|\bz_i^\sT \bB_{i-1} \bQ\bB_{i-1} \bz_i - \Tr\prn{\bQ\bB_{i-1}^2} \right|  \ind\prn{S_i} } & = \bigO_{\constantx, D} \prn{n^{-D} \cdot \alpha_2} \, .
\end{align*}
Combining the above displays, we obtain that
\begin{align*}
	\mathrm{(IV)} = \bigO_{\constantx, D} \prn{\frac{ n^{-D} \alpha_1 \gamma \Riter_0(\bQ)}{1 + \Riter_0(\bI)^2} 
	 + \frac{n^{-D} \alpha_2}{1 + \Riter_0(\bI)} }
\end{align*}
Now applying the assumption that 
\begin{align*}
	n^{-D} = \bigO \prn{ \frac{\alpha_1}{1 + \Riter_0(\bI)}} \, ,
\end{align*}
we obtain
\begin{align}
	\mathrm{(IV)} &  = \bigO_{\constantx, D} \prn{\frac{\alpha_1^2 \gamma \Riter_0(\bQ)}{1 + \Riter_0(\bI)^3} 
		+ \frac{\alpha_1 \alpha_2}{1 + \Riter_0(\bI)^2} } =\bigO_{\constantx, D} \prn{ \frac{\alpha_1 \alpha_2 \prn{1 + \Riter_0(\bI)} + \alpha_1^2 \gamma  \Riter_0(\bQ)}{1 + \Riter_0(\bI)^3}} \, . \label{eq:B-difference-mid-4}
\end{align}
Substitute Eqs.~\eqref{eq:B-difference-mid-3} and \eqref{eq:B-difference-mid-4} into Eq.~\eqref{eq:B-difference-mid-2} we have
\begin{align*}
	\mathrm{(II)} & = \bigO_{\constantx, D} \prn{ \frac{\alpha_1 \alpha_2 \prn{1 + \Riter_0(\bI)} + \alpha_1^2 \gamma  \Riter_0(\bQ)}{1 + \Riter_0(\bI)^3}} \, ,
\end{align*}
and together with Eq.~\eqref{eq:B-difference-mid-0} we obtain
\begin{align*}
	\left|B_i(\bQ, \wb{T}) \right| = \bigO_{\constantx, D} \prn{ \frac{\gamma^2 \Riter_0(\bQ) + \alpha_1 \alpha_2}{1 + \Riter_0(\bI)^2} + \frac{\alpha_1^2  \gamma \Riter_0(\bQ)}{1 + \Riter_0(\bI)^3}} \, .
\end{align*}
\paragraph{Part IV: Combining the results}
Hence, by combining results in part III and IV, we have with probability $1 - \bigO(n^{-D})$ that
\begin{align*}
	& \left| \prn{\Riter_{k-1}(\bQ) - \Riter_0(\bQ)} \ind\brc{\wb{T} \geq k} \right| \leq 	\left|\sum_{i=1}^{k-1} D_i(\bQ, \wb{T})\right| +\left| \sum_{i=1}^{k-1} B_i(\bQ, \wb{T}) \right| \nonumber  \\
%	& = \sqrt{n \log n} \cdot \frac{  \alpha_1 \gamma R_0(Q) + \alpha_2 R_0(I)}{1 + R_0(I)^2} + n \cdot \prn{\frac{\gamma^2 R_0(Q) + \alpha_1 \alpha_2}{1 + R_0(I)^2} + \frac{\alpha_1^2 \gamma R_0(Q)}{1 + R_0(I)^3}} \, .
	& \leq \bigO_{D} \prn{ \sqrt{n \log n} \cdot \frac{\alpha_1 \gamma \Riter_0(\bQ) + \alpha_2 (1 + \Riter_0(\bI))}{1 + \Riter_0(\bI)^2}} + n \cdot \bigO_{\constantx, D} \prn{ \frac{\gamma^2 \Riter_0(\bQ) + \alpha_1 \alpha_2}{1 + \Riter_0(\bI)^2} + \frac{\alpha_1^2  \gamma \Riter_0(\bQ)}{1 + \Riter_0(\bI)^3}} \nonumber \\
	& = \bigO_{\constantx, D} \prn{\sqrt{n \log n} \cdot \frac{\alpha_1 \gamma \Riter_0(\bQ) + \alpha_2 (1 + \Riter_0(\bI))}{1 + \Riter_0(\bI)^2} + n \cdot \brc{\frac{\gamma^2 \Riter_0(\bQ) + \alpha_1 \alpha_2}{1 + \Riter_0(\bI)^2} + \frac{\alpha_1^2  \gamma \Riter_0(\bQ)}{1 + \Riter_0(\bI)^3}}} \, . 
\end{align*}
We can first see $\normop{\bB_{k-1}} \leq \gamma$ holds with probability $1-\bigO(n^{-D})$, which follows via exactly the same argument as in Appendix~\ref{proof:from-F-to-E} for $\|\bA_k\| \leq \gamma$ by invoking Lemma~\ref{lem:A-norm-bound}.  Moreover, we have on $\{\wb{T} \geq k\}$ that
\begin{align*}
	\left|\Siter_{k-1}(\bQ) - \Riter_{k-1}(\bQ) \right| & = \left|\Tr \prn{\bQ \prn{\bB_{k-1} - \bA_{k-1}}}  \right| = \frac{\Tr \prn{\bQ \bB_{k-1} \bA_{k-1}}}{1 + \Siter_{k-1}(\bI)} \nonumber \\
	& \leq \frac{\norm{\bB_{k-1}} \Riter_{k-1}(\bQ)}{1 + \Siter_{k-1}(\bI)} \stackrel{\mathrm{(i)}}{\leq}  \frac{\gamma \Riter_{k-1}(\bQ)}{1 + \Riter_{k-1}(\bI)} = \bigO \prn{\frac{\gamma \Riter_0(\bQ)}{1 + \Riter_0(\bI)}} \, ,
\end{align*}
where in (i) we apply $\mu_{k-1} \geq \mu_k$ which indicates $\Riter_{k-1}(\bI) \leq \Siter_{k-1}(\bI)$.
%\am{The last step requires $F_{k-1}(\bQ)\cap F_{k-1}(\bI)$, but this event is not implied by $\{\wb{T}\le k\}$.}
Therefore, by setting a constant $\constant_\beta := \constant_\beta(\constantx, D)$ large enough and take
\begin{align*}
	\beta_1 & =  \constant_\beta  \prn{\sqrt{n \log n} \cdot \frac{\alpha_1 \gamma \Riter_0(\bQ) + \alpha_2 (1 + \Riter_0(\bI))}{1 + \Riter_0(\bI)^2} + n \cdot \brc{\frac{\gamma^2 \Riter_0(\bQ) + \alpha_1 \alpha_2}{1 + \Riter_0(\bI)^2} + \frac{\alpha_1^2  \gamma \Riter_0(\bQ)}{1 + \Riter_0(\bI)^3}} + \frac{\gamma \Riter_0(\bQ)}{1 + \Riter_0(\bI)}} \, , \\
	\beta_2 & = \frac{\constant_\beta n \beta_1}{1 + \Riter_0(\bI)^2} \, ,
\end{align*}
if this satisfies the assumption $\beta_1 \leq \Riter_0(\bI) /4$, we can conclude that with probability $1-\bigO(n^{-D})$,
 $$\left|(\Riter_{k-1}(\bQ) - \Riter_0(\bQ)) \ind \brc{\wb{T} \geq k} \right| \leq \beta_1 \, ,  \qquad 
 \left|(\Siter_{k-1}(\bQ) - \Riter_0(\bQ)) \ind \brc{\wb{T} \geq k} \right| \leq \beta_1 \, .$$ 
 
 Further, since
 \begin{align*}
 	\left|\prn{\mu_{k} - \wb{\mu}_k}\ind \brc{\wb{T} \geq k} \right| & = \left|\sum_{i=0}^{k-1} \prn{\frac{1}{1 + \Siter_i(\bI)} - \frac{1}{1 + \Riter_0(\bI)}} \ind \brc{\wb{T} \geq k} \right| \nonumber \\
 	& \leq \sum_{i=0}^{k-1}\frac{|\Siter_i(\bI) - \Riter_i(\bI)|}{(1 +\Siter_i(\bI) ) (1 + \Riter_i(\bI))} 
 	\ind \brc{\wb{T} \geq k}
 	= \bigO \prn{\frac{n \beta_1}{1 + \Riter_0(\bI)^2}} \, ,
 \end{align*}
taking $\constant_\beta$ large will guarantee $\left|\prn{\mu_{k} - \wb{\mu}_k}\ind \brc{\wb{T} \geq k} \right| \leq \beta_2$. Combining the above displays, we see on the event $\{\wb{T} \geq k\}$, it holds with probability $1 - \bigO(n^{-D})$ that
\begin{align*}
	\max \{\left|\Riter_{k-1}(\bQ) - \Riter_0(\bQ) \right|, \left|\Siter_{k-1}(\bQ) - \Riter_0(\bQ) \right|\} \leq \beta_1 \, , \qquad  \left|\mu_k- \wb{\mu}_k  \right| \leq \beta_2 \, , \qquad \norm{\bB_{k-1}} \leq \gamma \, ,
\end{align*}
which is exactly the event $F_{k-1}(\bQ)$ (cf.~Eq.~\eqref{eq:def-F-i}). Substituting into Eq.~\eqref{eq:E-to-F-goal} completes the proof.

\subsection{Proof of Lemma~\ref{lem:R-fct-approximation}} \label{proof:R-fct-approximation}

As $\beta_2>\mu/2$, we cannot directly apply Lemmas~\ref{lem:from-F-to-E} and \ref{lem:from-E-to-F}.
We will instead use a perturbation argument, reducing ourselves to the case $\beta_2\le \mu/2$.
We will define a second sequence  $\mu_i'$ following the recursion Eq.~\eqref{eq:iteration-mu} but
with a different initialization $\mu_0' = \muefct(\zeta, \mu')$ with $\mu':=64\beta_2 > \mu$. We use the notations
\begin{align*}
	\bA_i' & := \bSigma^{\half} \prn{\zeta \bI + \mu_i' \bSigma + \bX_i^\sT \bX_i}^{-1} \bSigma^{\half} \, ,  \\ 
	\bB_i' & := \bSigma^{\half} \prn{\zeta \bI + \mu_{i+1}' \bSigma + \bX_i^\sT \bX_i}^{-1} \bSigma^{\half} \, ,
\end{align*}
and also denote by $\Riter_i'(\bQ) = \Rfct_i(\zeta, \mu_i'; \bQ) := \Tr(\bQ \bA_i')$. For this second iteration, we 
define a parameter tuple $\Delta' = (\alpha_1', \alpha_2', \beta_1', \beta_2', \gamma)$ defined in Lemmas~\ref{lem:from-F-to-E} and \ref{lem:from-E-to-F} as
\begin{subequations}
\begin{align}
	\gamma' &= \min \brc{\frac{2}{n} \prn{1 + \frac{\constant_\gamma \constantsig \sigma_{\lfloor \eta n\rfloor} \cdot \log n \log (\constantsig n)}{\zeta}} + \frac{2}{\muefct(\zeta, \mu')} , \frac{1}{\zeta}} \, ,  \label{eq:gamma-prime}\\
	\alpha_1' &=  \constant_\alpha \log n \cdot \sqrt{\gamma' \Riter_0'(\bI)}\, , \label{eq:alpha-1-prime} \\
	\alpha_2' &=  \constant_\alpha \log n \cdot \sqrt{{\gamma'}^3 \Riter_0'(\bQ)} \, ,  \label{eq:alpha-2-prime} \\
	\beta_1' & = \constant_\beta \Bigg(\sqrt{n \log n} \cdot \frac{\alpha_1' \gamma' \Riter_0'(\bQ) + \alpha_2' (1 + \Riter_0'(\bI))}{1 + \Riter_0'(\bI)^2} + n \cdot \brc{\frac{{\gamma'}^2 \Riter_0'(\bQ) + \alpha_1' \alpha_2'}{1 + \Riter_0'(\bI)^2} + \frac{{\alpha_1'}^2  \gamma' \Riter_0'(\bQ)}{1 + \Riter_0'(\bI)^3}}  \nonumber \\
	& \qquad + \frac{\gamma' \Riter_0'(\bQ)}{1 + \Riter_0'(\bI)}\Bigg)  \, , \label{eq:beta-1-prime} \\
	\beta_2' & = \frac{\constant_\beta n \beta_1'}{1 + \Riter_0'(\bI)^2}\, . \label{eq:beta-2-prime}
\end{align}
\end{subequations}
We want to show $\alpha_1' \leq \Riter_0'(\bI)/4$, $\beta_1' \leq \Riter_0'(\bQ)/4$, $\beta_2' \leq \mu'/2$ and $n^{-D} = \bigO \prn{ \alpha_1' / \prn{1 + \Riter_0'(\bI)}}$ so that Lemmas~\ref{lem:from-F-to-E} and \ref{lem:from-E-to-F} are valid for $\zeta, \mu'$ and $\Delta'$. To prove this claim, we need the following result bounding the perturbation of $\muefct$.

\begin{lemma} \label{lem:muefct-derivative-bound}
	For any $\zeta >0$ and $\mu \geq 0$,
	\begin{align*}
		0\le \frac{\partial \muefct(\zeta, \mu)}{\partial \mu} \leq 1 + \Rfct_0(\zeta, \muefct(\zeta, \mu); \bI) \, .
	\end{align*}
\end{lemma}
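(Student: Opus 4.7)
The plan is to obtain the derivative by implicit differentiation and then bound it using a simple operator inequality. Let $F(\muefct,\mu) := \muefct - \mu - n/(1+\Rfct_0(\zeta,\muefct;\bI))$, so that the defining equation~\eqref{eq:mu-fixed-point} reads $F(\muefct(\zeta,\mu),\mu)=0$. Existence, uniqueness and monotonicity of $\muefct(\zeta,\cdot)$ have already been established (cf. the paragraph after Eq.~\eqref{eq:mu-fixed-point} and Lemma~\ref{lem:muefct-small-mu-bound}), so I am free to apply the implicit function theorem. First, I would compute
\begin{align*}
\frac{\partial F}{\partial \mu} = -1, \qquad \frac{\partial F}{\partial \muefct} = 1 - \frac{n\,\Tr\!\bigl(\bSigma^{2}(\zeta\bI+\muefct\bSigma)^{-2}\bigr)}{(1+\Rfct_0(\zeta,\muefct;\bI))^2}.
\end{align*}
Using the fixed point relation $n = (\muefct-\mu)(1+\Rfct_0(\zeta,\muefct;\bI))$ to eliminate the factor of $n$, the implicit function theorem yields
\begin{align*}
\frac{\partial \muefct(\zeta,\mu)}{\partial \mu} \;=\; \frac{1}{1 - \dfrac{(\muefct-\mu)\,\Tr\!\bigl(\bSigma^{2}(\zeta\bI+\muefct\bSigma)^{-2}\bigr)}{1+\Rfct_0(\zeta,\muefct;\bI)}}.
\end{align*}

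The next step is the key operator inequality. Since $\muefct\bSigma \preceq \zeta\bI+\muefct\bSigma$, we have $\muefct\bSigma(\zeta\bI+\muefct\bSigma)^{-1}\preceq \bI$, and multiplying this from both sides by $\bSigma^{1/2}(\zeta\bI+\muefct\bSigma)^{-1/2}$ (which commutes with the resolvent) gives
\begin{align*}
\muefct\,\bSigma^{2}(\zeta\bI+\muefct\bSigma)^{-2} \;\preceq\; \bSigma(\zeta\bI+\muefct\bSigma)^{-1},
\end{align*}
hence, taking traces, $\muefct\,\Tr(\bSigma^{2}(\zeta\bI+\muefct\bSigma)^{-2}) \le \Rfct_0(\zeta,\muefct;\bI)$. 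Because $0\le \muefct-\mu\le \muefct$, this yields $(\muefct-\mu)\,\Tr(\bSigma^{2}(\zeta\bI+\muefct\bSigma)^{-2}) \le \Rfct_0(\zeta,\muefct;\bI)$, so the denominator in the displayed derivative lies in $[(1+\Rfct_0)^{-1},\,1]$.

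Substituting these two-sided bounds for the denominator immediately produces $1 \le \partial\muefct/\partial\mu \le 1+\Rfct_0(\zeta,\muefct(\zeta,\mu);\bI)$, which is stronger than the claim. There is no real obstacle here; the only delicate point is the operator inequality controlling $\muefct\,\bSigma^{2}(\zeta\bI+\muefct\bSigma)^{-2}$ by $\bSigma(\zeta\bI+\muefct\bSigma)^{-1}$, which follows at once from $\muefct\bSigma(\zeta\bI+\muefct\bSigma)^{-1}\preceq\bI$.
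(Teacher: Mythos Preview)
Your proof is correct and follows essentially the same route as the paper: implicit differentiation of the fixed-point equation to obtain $\partial\muefct/\partial\mu = \bigl(1 - (\muefct-\mu)\Tr(\bSigma^2(\zeta\bI+\muefct\bSigma)^{-2})/(1+\Rfct_0)\bigr)^{-1}$, followed by the trace bound $\muefct\,\Tr(\bSigma^2(\zeta\bI+\muefct\bSigma)^{-2}) \le \Rfct_0(\zeta,\muefct;\bI)$. The only cosmetic difference is that the paper obtains this last inequality via the scalar identity $\Tr(\bSigma(\zeta\bI+\muefct\bSigma)^{-1}) - \muefct\,\Tr(\bSigma^2(\zeta\bI+\muefct\bSigma)^{-2}) = \zeta\,\Tr(\bSigma(\zeta\bI+\muefct\bSigma)^{-2}) \ge 0$, while you phrase it as the operator inequality $\muefct\bSigma^2(\zeta\bI+\muefct\bSigma)^{-2}\preceq \bSigma(\zeta\bI+\muefct\bSigma)^{-1}$; these are equivalent.
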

\begin{proof}
	Taking derivatives w.r.t.\ $\mu$ on both sides of
	\begin{align*}
		\mu = \muefct - \frac{n}{1 + \Tr(\bSigma(\zeta \bI + \muefct \bSigma)^{-1})}\, ,
	\end{align*}
	we have
	\begin{align*}
		1 = \frac{\partial \muefct}{\partial \mu} \cdot \prn{1 - \frac{(\muefct - \mu) \Tr \prn{\bSigma^2 (\zeta \bI + \muefct \bSigma)^{-2}}}{1 + \Tr \prn{\bSigma (\zeta \bI + \muefct \bSigma)^{-1}}}} \, .
	\end{align*}
	Further
	\begin{align*}
		1 - \frac{(\muefct - \mu) \Tr \prn{\bSigma^2 (\zeta \bI + \muefct \bSigma)^{-2}}}{1 + \Tr \prn{\bSigma (\zeta \bI + \muefct \bSigma)^{-1}}} & \geq 1 - \frac{\muefct \Tr \prn{\bSigma^2 (\zeta \bI + \muefct \bSigma)^{-2}}}{1 + \Tr \prn{\bSigma (\zeta \bI + \muefct \bSigma)^{-1}}} = \frac{1 + \zeta \Tr \prn{\bSigma (\zeta \bI + \muefct \bSigma)^{-2}}}{1 + \Tr \prn{\bSigma (\zeta \bI + \muefct \bSigma)^{-1}}} \nonumber \\
		& \geq \frac{1}{1 + \Rfct_0(\zeta, \muefct(\zeta, \mu); \bI)} \, ,
	\end{align*}
	which gives the desired bounds $0\le \partial \muefct /\partial \mu \leq 1 + \Rfct_0(\zeta, \muefct(\zeta, \mu); \bI)$. 
\end{proof}

Recalling that $\muefct(\zeta, \mu)$ is increasing in $\mu$ and therefore $ \Rfct_0(\zeta, \muefct(\zeta, \mu); \bI)$ is decreasing in $\mu$, as a direct consequence of Lemma~\ref{lem:muefct-derivative-bound} we have
\begin{align}
	0 & \leq \muefct(\zeta, \mu') - \muefct(\zeta, \mu) = \int_{\mu}^{\mu'} \frac{\partial \muefct}{\partial \nu} 
	(\zeta,\nu)\, \de \nu  \nonumber \\
	& \leq  \int_{\mu}^{\mu'} (1 +  \Rfct_0(\zeta, \muefct(\zeta, \nu); \bI)) \de \nu \leq (\mu' - \mu)(1 + \Rfct_0(\zeta, \muefct(\zeta, \mu); \bI)) \leq \mu' (1 + \Riter_0(\bI)) \, , \label{eq:mu-efct-diff-bound}
\end{align}
and further
\begin{align*}
	0 \leq \Riter_0(\bQ) - \Riter_0'(\bQ) = (\muefct(\zeta, \mu') - \muefct(\zeta, \mu)) \Tr (\bQ \bA_0 \bA_0') \leq  \gamma \mu' (1 + \Riter_0(\bI)) \Riter_0'(\bQ) \, ,
\end{align*}
where in the last inequality we used $\|\bA_0\| \leq \min\{ 1/\muefct(\zeta, \mu), 1/\zeta\} \leq \gamma$. 
%\am{Couldn't it be that $1/\lambda<1/\muefct(\lambda, \mu) $ and therefore 
%$1/\muefct(\lambda, \mu) > \gamma$?}
Substituting $\mu' = 64 \beta_2$ and using the condition $\gamma \beta_2(1 + \Riter_0(\bI)) \leq 1/64$, it then follows that
\begin{align*}
	\frac{1}{2} \Riter_0(\bQ) \leq \Riter_0'(\bQ) \leq \Riter_0(\bQ) \, , \qquad \forall \, \, \text{p.s.d.} \, \bQ \, .
\end{align*}
Using the last inequalities in Eqs.~\eqref{eq:gamma-prime} to \eqref{eq:alpha-2-prime}, it follows immediately that
\begin{align*}
	\gamma' \leq \gamma\, , \qquad \alpha_1' \leq \alpha_1 \, , \qquad \alpha_2' \leq \alpha_2 \, .
\end{align*}
We then first see that $\alpha_1 \leq \Riter_0(\bI)/8$ implies $\alpha_1' \leq \alpha_1 \leq \Riter_0'(\bI)/4$. For $\beta_1'$ and $\beta_2'$, using $1 + \Riter_0'(\bI)^k \geq 2^{-k} (1 + \Riter_0(\bI))$ with $k=1,2,3$,
 we can deduce from Eqs.~\eqref{eq:beta-1-prime} and \eqref{eq:beta-2-prime} that
\begin{align*}
	\beta_1' \leq 8 \beta_1 \, , \qquad \beta_2' \leq \frac{4\constant_\beta n \beta_1'}{1 + \Riter_0(\bI)^2} \leq 32 \beta_2 \, .
\end{align*}
The last inequality verifies $\beta_2' \leq 32 \beta_2 = \mu' / 2$. The condition $\beta_1 \leq \Riter_0(\bQ) / 64$
 implies $\beta_1' \leq 8 \beta_1 \leq \Riter_0(\bQ) / 8 \leq \Riter_0'(\bQ) / 4$.
 
 Finally we need to show $n^{-D} = \bigO \prn{ \alpha_1' / \prn{1 + \Riter_0'(\bI)}}$. From Eq.~\eqref{eq:mu-efct-diff-bound}, we can obtain that
 \begin{align*}
 	\muefct(\zeta, \mu') \leq \muefct(\zeta, \mu) + 64 \beta_2(1+ \Riter_0(\bI)) \leq \muefct(\zeta, \mu) + \frac{1}{\gamma} \, .
 \end{align*}
Recalling that $\gamma \leq 2/\muefct(\zeta, \mu)$, we then know $\muefct(\zeta, \mu') \leq 3 / \gamma$ and thus
\begin{align*}
	\gamma' \geq \min \brc{ \frac{2}{\muefct(\zeta, \mu')}, \frac{1}{\zeta}} \geq \frac{2}{3} \gamma \, .
\end{align*}
Together with $\Riter_0(\bI) = \bigTht(\Riter_0'(\bI))$, we then show $\alpha_1 = \bigO(\alpha_1')$ and further that $n^{-D} = \bigO \prn{ \alpha_1 / \prn{1 + \Riter_0(\bI)}} = \bigO \prn{ \alpha_1' / \prn{1 + \Riter_0'(\bI)}}$. Hence, we can apply Lemmas~\ref{lem:from-F-to-E} and \ref{lem:from-E-to-F}, and by Eq.~\eqref{eq:muLarger}
\begin{align}
	\left|\Rfct_n(\zeta, \mu'; \bQ) - \Rfct_0(\zeta, \muefct(\zeta, \mu'); \bQ) \right| = \bigO (\gamma' \beta_2' \Riter_0'(\bQ) + \beta_1') = \bigO (\gamma \beta_2 \Riter_0(\bQ) + \beta_1) \, . \label{eq:triangular-1}
\end{align}
In order to finish the perturbation argument, we bound
\begin{align}
	\left|\Rfct_n(\zeta, \mu; \bQ) - \Rfct_n(\zeta, \mu'; \bQ)\right| & \leq \left|(\mu' - \mu) \cdot \Tr \prn{\bQ \bSigma^{\half} (\zeta \bI + \mu \bSigma + \bX^\sT \bX)^{-1} \bSigma  (\zeta \bI + \mu' \bSigma + \bX^\sT \bX)^{-1}} \right| \nonumber \\
	& \leq \mu' \norm{\bSigma^{\half} (\zeta \bI + \bX^\sT \bX)^{-1} \bSigma} \Rfct_n(\zeta, \mu'; \bQ) \nonumber \\
	& \stackrel{\mathrm{(i)}}{=} \bigO \prn{\gamma \beta_2 \prn{ \Rfct_0(\zeta, \muefct(\zeta, \mu'); \bQ) +  \left|\Rfct_n(\zeta, \mu'; \bQ) -  \Rfct_0(\zeta, \muefct(\zeta, \mu'); \bQ)\right|}} \nonumber \\
	& = \bigO \prn{\gamma \beta_2  \prn{\Riter_0(\bQ) + \gamma \beta_2 \Riter_0(\bQ) + \beta_1}} \nonumber \\
	& = \bigO \prn{\gamma \beta_2  \Riter_0(\bQ)} \, ,  \label{eq:triangular-2}
\end{align}
where in (i) we apply Lemma~\ref{lem:A-norm-bound} and in the last line we use $\beta_1 = \bigO(\Riter_0(\bQ))$ and $\gamma \beta_2 = \bigO((1 + \Riter_0(\bI))^{-1}) = \bigO(1)$. Similarly, invoke Lemma~\ref{lem:muefct-derivative-bound} and we have
\begin{align}
	\left|\Rfct_0(\zeta, \muefct(\zeta, \mu'); \bQ) - \Rfct_0(\zeta, \muefct(\zeta, \mu); \bQ)\right| & \leq (\muefct(\zeta, \mu) - \muefct(\zeta, \mu')) \gamma \Rfct_0(\zeta, \muefct(\zeta, \mu); \bQ) \nonumber \\
	& \leq \mu'(1 + \Riter_0(\bI)) \gamma \Riter_0(\bQ) \nonumber \\
	& = \bigO \prn{\gamma \beta_2 (1 + \Riter_0(\bI)) \Riter_0(\bQ) } \, . \label{eq:triangular-3}
\end{align}
By triangular inequality, we deduce from Eqs.~\eqref{eq:triangular-1}, \eqref{eq:triangular-2} and \eqref{eq:triangular-3} that
\begin{align*}
	& |\Rfct_n(\zeta, \mu; \bQ) - \Rfct_0(\zeta, \muefct(\zeta, \mu); \bQ)| \nonumber \\
	& = \bigO \prn{ \gamma \beta_2 \Riter_0(\bQ) + \beta_1 } +  \bigO \prn{\gamma \beta_2 \Riter_0(\bQ)} +  \bigO \prn{\gamma \beta_2 (1 + \Riter_0(\bI)) \Riter_0(\bQ) } \nonumber \\
	& = \bigO \prn{\gamma \beta_2 \prn{1 + \Riter_0(\bI)} \Riter_0(\bQ) + \beta_1 } \, .
\end{align*}

\subsection{Proof of Corollary~\ref{cor:R-approximation-simplified}} \label{proof:R-approximation-simplified}
We first derive upper bounds for the parameter $\Delta = (\alpha_1, \alpha_2, \beta_1, \beta_2, \gamma)$ in 
Theorem~\ref{thm:R-approximation}. Since $\muefct(\zeta, 0) \leq \muefct(\zeta, \mu) \leq (1 - \const/2)^{-1} \muefct(\zeta, 0)$ when $\zeta = n\lambda$, we have
\begin{align*}
	\frac{n}{1 + \Riter_0(\bI)} \leq \muefct(\zeta, \mu) \leq \frac{1}{1 - \const/2} \muefct(\zeta, 0) = \frac{1}{1 - \const/2} \cdot \frac{n}{1 + \Rfct_0(\zeta, \muefct(\zeta, 0); \bI)} \leq \frac{1}{1 - \const/2} \cdot \frac{n}{1 + \Riter_0(\bI)} \, .
\end{align*}
On the other hand, by Eq.~\eqref{asmp:lambda} and the fact that at $\zeta = n \lambda$, $\muefct(\zeta, 0) = \zeta/ \lambdaefct$, we know
\begin{align*}
	\frac{1}{1 + \const^{-1}} \leq \const \leq \frac{\zeta}{n \lambdaefct} \leq \frac{1}{1 + \Riter_0(\bI)} \leq \frac{1}{1 - \const/2}\frac{\zeta}{n \lambdaefct} = \frac{1 - \const}{1 - \const/2} \leq \frac{1}{1 + \const/2} \, ,
\end{align*}
which implies $\const/2 \leq \Riter_0(\bI) \leq \const^{-1}$. 
Generalizing the definition of Eq.~\eqref{eq:RhoLambda} to $\mu>0$ and arbitrary $\bQ$, we let 
$\rho := \Riter_0(\bQ) / \Riter_0(\bI)  \in (0, 1]$.

\paragraph{Upper bound for $\gamma$} First we notice that $\constantsig \geq n$ by Assumption~\ref{asmp:data-dstrb} and therefore
\begin{align*}
	\log n \log (\constantsig n) \leq \log (\constantsig) \cdot \log (\constantsig^2) = \bigO \prn{\log^2 (\constantsig)}\, ,
\end{align*}
which yields
\begin{align*}
	\gamma  = \bigO \prn{\frac{2}{n} \brc{1 + \frac{\bigO_{\constantx, D} \prn{\sigma_{\lfloor \eta n\rfloor} \constantsig  \log^2 (\constantsig)}}{\zeta}} + \frac{2}{\muefct(\zeta, \mu)} } \, .
\end{align*}
Since $\muefct(\zeta, \mu) \geq \muefct(\zeta, 0) \geq n \const$, we can write
\begin{align}
	\gamma  = \frac{1}{n \const} \cdot \bigO_{\constantx, D} \prn{1 + \frac{\sigma_{\lfloor \eta n\rfloor} \constantsig  \log^2 (\constantsig)}{\zeta}}  =  \bigO_{\constantx, D} \prn{ \frac{\chi_n(\zeta)}{n \const}}\, . \label{eq:gamma-bound}
\end{align}

\paragraph{Upper bounds for $\alpha_1$ and $\alpha_2$} We know that $\Riter_0(\bI) \leq \const^{-1}$ and 
$\Riter_0(\bQ) = \rho \Riter_0(\bI)$. As a consequence, we have
\begin{align}
	\alpha_1 & = \bigO_{\constantx, D} \prn{ \log n \cdot \sqrt{\frac{\gamma}{\const}}}\, , \qquad \alpha_2 = \bigO_{\constantx, D} \prn{\log n \cdot \gamma \sqrt{\frac{\gamma \rho}{\const}}} \, . \label{eq:alpha-bound}
\end{align}

\paragraph{Upper bounds for $\beta_1$ and $\beta_2$} Using the bounds in the previous displays, we can write
\begin{align*}
\beta_1 & = \constant_\beta \prn{\sqrt{n \log n} \cdot \frac{\alpha_1 \gamma \Riter_0(\bQ) + \alpha_2 (1 + \Riter_0(\bI))}{1 + \Riter_0(\bI)^2} + n \cdot \brc{\frac{\gamma^2 \Riter_0(\bQ) + \alpha_1 \alpha_2}{1 + \Riter_0(\bI)^2} + \frac{\alpha_1^2  \gamma \Riter_0(\bQ)}{1 + \Riter_0(\bI)^3}} + \frac{\gamma \Riter_0(\bQ)}{1 + \Riter_0(\bI)}}  \nonumber \\
& = \bigO_{ \constantx, D}\prn{\sqrt{n \log n} \cdot \prn{\alpha_1 \gamma \rho + \alpha_2} + n \cdot \brc{\prn{\gamma^2 \rho  + \alpha_1 \alpha_2} + \alpha_1^2  \gamma \rho } + \gamma \rho} \nonumber \\
& = \bigO_{ \constantx, D}\prn{\sqrt{n \log n} \cdot \bigO_{\constantx, D} \prn{ \log n \cdot \gamma \sqrt{\frac{\gamma \rho}{\const}}} + n \cdot \bigO_{ \constantx, D} \prn{\log^2 n \cdot \gamma^2 \sqrt{\frac{\rho}{\const}}} + \gamma \rho} \nonumber \\
& = \bigO_{\constantx, D} \prn{ \sqrt{n (\log n)^3 \gamma^3} + n (\log n)^2 \gamma^2 + \gamma \sqrt{\const}} \cdot \sqrt{\frac{\rho}{\const}} \, .
\end{align*}
Substituting in Eq.~\eqref{eq:gamma-bound}, we can further bound
\begin{align}
	\beta_1 & =  \bigO_{\constantx, D} \prn{\frac{\sqrt{\rho} \chi_n(\lambda)^2 \log^2 n}{n \const^{2.5}}}   \, . \label{eq:beta-1-bound}
\end{align}
As for $\beta_2$, we simply bound it by
\begin{align}
	\beta_2 = \bigO_{\constantx, D} \prn{n \beta_1} = \bigO_{\constantx, D} \prn{\frac{\sqrt{\rho} \chi_n(\lambda)^2 \log^2 n}{\const^{2.5}} } \, . \label{eq:beta-2-bound}
\end{align}

\paragraph{Upper bound for resolvent approximation} Recall the approximation bound we have in Theorem~\ref{thm:R-approximation},
\begin{align*}
	|\Rfct_n(\zeta, \mu; \bQ) - \Rfct_0(\zeta, \muefct(\zeta, \mu); \bQ)|
	& = \bigO \prn{\gamma \beta_2 \prn{1 +   \Rfct_0(\zeta, \muefct(\zeta, \mu); \bI)}  \Rfct_0(\zeta, \muefct(\zeta, \mu); \bQ) + \beta_1 } \nonumber \\
	& = \bigO \prn{\gamma \beta_2 \const^{-1}  \Rfct_0(\zeta, \muefct(\zeta, \mu); \bQ) + \beta_1 } \, , 
\end{align*}
because $\Rfct_0(\zeta, \muefct(\zeta, \mu); \bI) = \Riter_0(\bI) \leq \const^{-1}$ at $\zeta = n\lambda$.
And thus
\begin{align*}
	& |\Rfct_n(\zeta, \mu; \bQ) - \Rfct_0(\zeta, \muefct(\zeta, \mu); \bQ)| = \bigO \prn{ \gamma \beta_2 \const^{-1} \cdot \rho \const^{-1} + \beta_1} \nonumber \\
	& = \bigO_{\constantx, D} \prn{\frac{\sqrt{\rho^3} \chi_n(\lambda)^3 \log^2 n}{n \const^{5.5}}} + \bigO_{\constantx, D} \prn{\frac{\sqrt{\rho} \chi_n(\lambda)^2 \log^2 n}{n \const^{2.5}}}   \nonumber \\
	& = \bigO_{\constantx, D} \prn{\frac{\sqrt{\rho} \chi_n (\lambda)^3 \log^2 n}{n \const^{5.5}}}\, .
\end{align*}
As $\Riter_0(\bQ) = \rho \Riter_0(\bI) \geq \rho \const / 2$, we can also write
\begin{align*}
	& |\Rfct_n(\zeta, \mu; \bQ) - \Rfct_0(\zeta, \muefct(\zeta, \mu); \bQ)|  	 = \bigO_{ \constantx, D} \prn{\frac{\chi_n (\lambda)^3 \log^2 n}{n \sqrt{\rho} \cdot \const^{6.5}} } \Rfct_0(\zeta, \muefct(\zeta, \mu); \bQ) \, .
\end{align*}
\paragraph{Simplifying the conditions} Finally we conclude the proof by simplifying the conditions 
$\alpha_1 \leq \Riter_0(\bI)/8$, $\beta_1 \leq \Riter_0(\bQ)/64$, $\gamma \beta_2(1 + \Riter_0(\bI)) \leq 1/64$ and $n^{-D} = \bigO(\alpha_1/(1 + \Riter_0(\bI)))$. As $\Riter_0(\bI) \geq \const/2$, by Eq.~\eqref{eq:alpha-bound} it is sufficient to have the first condition once
\begin{align*}
	\frac{\chi_n(\lambda) \log^2 n}{\const^4}  \leq \constant n \, ,
\end{align*}
for some sufficiently small constant $\constant = \constant(\constantx, D)$. Recall that $\Riter_0(\bQ) \geq \rho \const/2$. Therefore, by Eq.~\eqref{eq:beta-1-bound}, the second requirement can be deduced from
\begin{align*}
	\frac{\chi_n(\lambda)^2 \log^2 n}{ \const^{3.5}}  \leq \constant' n \sqrt{\rho}\, ,
\end{align*}
for some sufficiently small constant $\constant' = \constant'(\constantx, D)$. By Eqs.~\eqref{eq:gamma-bound} and \eqref{eq:beta-2-bound}, we can derive $\gamma \beta_2(1 + \Riter_0(\bI)) \leq 1/64$ from 
\begin{align*}
	 \frac{\chi_n(\lambda)^3 \log^2 n}{\const^{4.5}}   \leq \constant'' n / \sqrt{\rho} \, , 
\end{align*}
for some constant $\constant'' = \constant''(\constantx, D)$. For the last condition, we need a lower bound for $\alpha_1 = \constant_\alpha \log n \cdot \sqrt{\gamma \Riter_0(\bI)}$. As $\zeta = n\lambda$ and  $\muefct(\zeta, \mu) \leq (1  - \const/2)^{-1} \muefct(\zeta, 0) \leq (1  - \const/2)^{-1} n \leq 2n$, it follows that
\begin{align*}
	\gamma & = \min \brc{\frac{2}{n} \prn{1 + \frac{\constant_\gamma\constantsig \sigma_{\lfloor \eta n\rfloor} \cdot \log n \log (\constantsig n)}{\zeta}} + \frac{2}{\muefct(\zeta, \mu)} , \frac{1}{\zeta}} \nonumber \\
	& = \bigOmg \prn{\min \brc{\frac{1}{\muefct(\zeta, \mu)}, \frac{1}{\zeta}} } = \bigOmg \prn{\min \brc{\frac{1}{n}, \frac{1}{\zeta}}} \, .
\end{align*}
With $\const/2 \leq \Riter_0(\bI) \leq \const^{-1}$, we obtain
\begin{align*}
\frac{\alpha_1}{1 +\Riter_0(\bI)} = \bigOmg \prn{ \sqrt{\frac{\const^3 \log^2 n}{\max \brc{n, n\lambda}}}} \, .
\end{align*}
It is then sufficient to have
\begin{align*}
	n^{-D} = \bigO \prn{ \sqrt{\frac{\const^3 \log^2 n}{n\max \brc{1, \lambda}}}} \, .
\end{align*}
	\section{Proof of Proposition~\ref{propo:Nu}} \label{proof:Nu}
By Eq.~\eqref{eq:lambda-fixed-point}, we have
\begin{align*}
	\nu = \frac{\lambda}{ \lambdaefct(\lambda)} = 1 - \frac{1}{n} \Tr \prn{\bSigma(\bSigma + \lambdaefct(\lambda) \bI)^{-1}}.
\end{align*}
Let $\lambdaefct(\lambda) = C_1' \sigma_{2n} = C_2' \sigma_{\constantsig^{-1}(n/2)}$. On the one hand, we have
\begin{align*}
	\frac{\lambda}{ \lambdaefct(\lambda)} & \geq \frac{\sigma_{k_{\#}(n)}}{C' C_2' \sigma_{\constantsig^{-1}(n/2)}} \geq \frac{\sigma_{k_{\#}(2n)}}{C' C_2' \sigma_{\sigma_{k_{\#}(n/2)}}} \geq \frac{1}{C C'C_2'}, \\
	\frac{\lambda}{ \lambdaefct(\lambda)} & \leq  \frac{C'\sigma_{k_{\#}(n)}}{C_1' \sigma_{2n}} \leq  \frac{C'\sigma_{k_{\#}(n/2)}}{C_1' \sigma_{k_{\#}(2n)}} \leq \frac{CC'}{C_1'}.
\end{align*}
On the other hand,
\begin{align*}
	\frac{1}{n} \Tr \prn{\bSigma(\bSigma + \lambdaefct(\lambda) \bI)^{-1}} & = \frac{1}{n}\sum_{k=1}^d \frac{\sigma_k}{\sigma_k + C_1' \sigma_{2n} } \geq \frac{1}{n}\sum_{k=1}^{2n} \frac{\sigma_k}{\sigma_k + C_1' \sigma_{2n} } \geq  \frac{2}{1 + C_1'} \, , \\
		\frac{1}{n} \Tr \prn{\bSigma(\bSigma + \lambdaefct(\lambda) \bI)^{-1}} & = \frac{1}{n}\sum_{k=1}^d \frac{\sigma_k}{\sigma_k + C_2' \sigma_{\constantsig^{-1}(n/2)} } \leq \frac{1}{n} \prn{k_{\#}(n/2) + \frac{1}{C_2'}\sum_{k=k_{\#}(n/2)}^d \frac{\sigma_k}{\sigma_{k_{\#}(n/2)}} }  \nonumber \\
		& \leq \frac{1}{n} \prn{\frac{n}{2} + \frac{1}{C_2'} \frac{n}{2}} = \frac{1 + C_2'}{2C_2'} \, .
\end{align*}
Combining both set of inequalities yield
\begin{align*}
	\frac{1}{CC'C_2'} & \leq \frac{\lambda}{ \lambdaefct(\lambda)} = 1 - \frac{1}{n} \Tr \prn{\bSigma(\bSigma + \lambdaefct(\lambda) \bI)^{-1}} \leq 1 - \frac{2}{1 + C_1'} = \frac{C_1' - 1}{C_1' + 1} \, , \\
	\frac{CC'}{C_1'} & \geq \frac{\lambda}{ \lambdaefct(\lambda)} = 1 - \frac{1}{n} \Tr \prn{\bSigma(\bSigma + \lambdaefct(\lambda) \bI)^{-1}} \geq 1 - \frac{1 + C_2'}{2C_2'} = \frac{C_2' - 1}{2C_2'} \, .
\end{align*}
We then use the native bound $C_1' \leq C_2'$ to obtain
\begin{align*}
	\frac{1}{CC'C_1'} \leq  \frac{C_1' - 1}{C_1' + 1} \, , \qquad \frac{CC'}{C_2'}  \geq \frac{C_2' - 1}{2C_2'} \, ,
\end{align*}
which implies $C_1' = \bigO_{C, C'}(1)$ and $C_2' = \bigOmg_{C, C'}(1)$. With
\begin{align*}
	\frac{1}{CC'C_1'} \leq 	\frac{1}{CC'C_2'} \leq \nu \leq \frac{C_1' - 1}{C_1' + 1},
\end{align*}
we conclude the proof.

\section{Proof of Proposition~\ref{prop:control-lambda-zero}} \label{proof:control-lambda-zero}
Note that $\Tr \prn{\bSigma (\bSigma + \zeta \bI)^{-1}}$ is a decreasing function in $\zeta$, it suffices to show
\begin{align*}
	\Tr \prn{\bSigma (\bSigma + \sigma_{2n} \bI)^{-1}} \geq n \geq  \Tr \prn{\bSigma (\bSigma + \sigma_{\constantsig^{-1}(n/2)} \bI)^{-1}} \, .
\end{align*}
The left hand side follows from
\begin{align*}
	\Tr \prn{\bSigma (\bSigma + \sigma_{2n} \bI)^{-1}} & = \sum_{k=1}^d \frac{\sigma_k}{\sigma_k + \sigma_{2n}} \geq  \sum_{k=1}^{2n} \frac{\sigma_k}{\sigma_k + \sigma_{2n}} \geq \sum_{k=1}^{2n} \frac{\sigma_k}{\sigma_k + \sigma_k}  \nonumber \\
	& = 2n \cdot \frac{1}{2} = n \, , 
\end{align*}
and similarly the right hand side is a consequence of 
\begin{align*}
	\Tr \prn{\bSigma (\bSigma +\sigma_{\constantsig^{-1}(n/2)} \bI)^{-1}} & = \sum_{k=1}^d \frac{\sigma_k}{\sigma_k + \sigma_{\constantsig^{-1}(n/2)}} \leq  \constantsig^{-1}(n/2) + \sum_{k=\constantsig^{-1}(n/2)}^d \frac{\sigma_k}{\sigma_{\constantsig^{-1}(n/2)}} \nonumber \\
	& \leq \frac{n}{2} + \frac{n}{2} = n \, ,
\end{align*}
where we use the fact that $\constantsig(n) \geq n$ and hence $\constantsig^{-1}(n) \leq n$. 
	\section{Proof of Theorem~\ref{thm:main-Bis}}  \label{proof:main-Bis}
Under the `non-negligible regularization' regime, we can upper bound
\begin{align*}
	\chi_n (\lambda) & = 1 + \frac{\sigma_{\lfloor \eta n\rfloor} \constantsig  \log^2 (\constantsig)}{n\lambda} \leq 1 + \frac{\sigma_{\lfloor \eta n\rfloor} \tconstantsig(n)}{n \lambdaefct(\lambda) / C}  = 1 + \frac{\sigma_{\lfloor \eta n\rfloor} \tconstantsig(n)}{n \lambda_0(\nu(1-n)) / C}  \, ,
\end{align*}
and by Proposition~\ref{prop:control-lambda-zero}, $\lambda_0(n(1-\nu)) \geq \sigma_{2n(1-\nu)} \geq \sigma_{2n}$, implying
\begin{align*}
	\chi_n (\lambda) \leq 1 + \frac{\sigma_{\lfloor \eta n\rfloor} \tconstantsig(n)}{n \sigma_{2n} / C}
\end{align*}
By $\tconstantsig(n)\le (\sigma_{2n}/\sigma_{\lfloor\eta n\rfloor})n^{4/3}(\log n)^{-2/3-\epsilon}$, it then holds that
\begin{align*}
	\chi_n (\lambda) \leq 1 + \frac{C'n^{1/3}}{(\log n)^{2/3 + \epsilon}} = \bigO \prn{n^{1/3}(\log n)^{-2/3-\epsilon}} \, , 
\end{align*}
and consequently the condition $\chi_n(\lambda)^3 \log^2 n \leq \constant n \const^{4.5}$ is met in Theorem~\ref{thm:main}. Further $\const = \min (\nu, 1 - \nu) = 1/C$, taking $D=11, k=100$ in Theorem~\ref{thm:main}, it holds
\begin{align*}
	n^{-2D + 1} = \bigO \prn{ \sqrt{\frac{\const^3 \log^2 n}{n\max \brc{1, \lambda}}}} \, , \qquad \left|\var_\bX(\lambda) - \VAR_n(\lambda)\right| &  
		= \bigO\prn{\frac{1}{n^{0.99}}
			\left(\frac{\tconstantsig(n)\sigma_{\lfloor\eta n\rfloor }}
			{n \sigma_{2n}}\right)^3 }\cdot \VAR_n(\lambda) \, .
\end{align*}
To apply the bias approximation result, we verify the conditions. Firstly,
\begin{align*}
	\rho(\lambda) & = \frac{\Rfct_0(\lambdaefct, 1; \btheta \btheta^\sT/\norm{\btheta}^2)}{\Rfct_0(\lambdaefct,1; \bI)} = \frac{\<\boldbeta, (\lambdaefct \bI +  \bSigma)^{-1} \boldbeta\> / \norm{\boldbeta}_{\bSigma^{-1}}^2}{\Tr \prn{\bSigma(\lambdaefct \bI +  \bSigma)^{-1}}} \nonumber \\
	& =\frac{\<\boldbeta, (\lambdaefct \bI +  \bSigma)^{-1} \boldbeta\> / \norm{\boldbeta}_{\bSigma^{-1}}^2}{n(1 - \nu)} =\frac{\<\boldbeta, (\lambda_0(n(1-\nu)) \bI +  \bSigma)^{-1} \boldbeta\> / \norm{\boldbeta}_{\bSigma^{-1}}^2}{n(1 - \nu)}.
\end{align*}
Apply Proposition~\ref{prop:control-lambda-zero} and recall $\nu \in [1/C, 1- 1/C]$, we know $\rho(\lambda) = \bigOmg(n^{-1})$. Therefore given $\tconstantsig(n)\le (\sigma_{2n}/\sigma_{\lfloor\eta n\rfloor})n^{7/6}(\log n)^{-2/3-\epsilon}$,
\begin{align*}
	\chi_n (\lambda)^3 \log^2 n = \bigO \prn{n^{1/2}(\log n)^{-3\epsilon}} = \bigO\prn{\constant n \const^{4.5} \sqrt{\rho(\lambda)}}\, .
\end{align*}
The other condition $\lambda k n^{1-\frac{1}{k}} \leq n \const /2 $ holds evidently. Again taking $D=11, k =100$ and substitute into Eq.~\eqref{eq:MainBiasApprox}, we complete the proof for the bias approximation.
	\section{Proof of Theorem~\ref{thm:main-ridgeless}} \label{proof:main-ridgeless}
To apply triangle inequalities
\begin{equation} 
	\label{eq:V-B-approx-triangle}
	\begin{aligned}
	\left|\var_\bX(0) - \VAR_n(0)\right| &\leq \left|\var_\bX(\lambda) - \VAR_n(\lambda) \right| + \left|\var_\bX(0) - \var_\bX(\lambda) \right| + \left|\VAR_n(0) - \VAR_n(\lambda) \right| \, , \\
	\left|\bias_\bX(0) - \BIAS_n(0)\right| &\leq \left|\bias_\bX(\lambda) - \BIAS_n(\lambda) \right| + \left|\bias_\bX(0) - \bias_\bX(\lambda) \right| + \left|\BIAS_n(0) - \BIAS_n(\lambda) \right| \, ,
	\end{aligned}
\end{equation}
we define $\lambda = \const  \lambdaefct$ and bound each term separately. By homogeneity, we will assume $\norm{\btheta} = 1$ throughout the proof.

\paragraph{Part I: Bounding \texorpdfstring{$\left|\var_\bX(0) - \var_\bX(\lambda) \right|$}{TEXT} and \texorpdfstring{$\left|\bias_\bX(0) - \bias_\bX(\lambda) \right|$}{TEXT}} Assume $\bX^\sT \bX$ has rank $r$ with eigendecomposition $\bX^\sT \bX = \bU \bD \bU^\sT$ where $\bU \in \real^{d \times r}$ has orthonormal columns and $\bD$ is a diagonal matrix with entries $s_1 \geq \cdots \geq s_r > 0$. Note that $s_r = n \smin$.

For the variance term, by the elementary inequality $|1/x - x/(x+\zeta)^2| \leq 2 \zeta/ x^2$ for all $x, \zeta >0$, we have by Eq.~\eqref{eq:var-X}
\begin{align}
	\left|\var_\bX(0) - \var_\bX(\lambda) \right| & = \left|\vareps^2 \Tr \prn{\bSigma \bU \prn{\bD^{-1} - \bD (\bD + n\lambda \bI)^{-2}} \bU^\sT} \right| \nonumber \\
	& \leq \vareps^2 \Tr \prn{\frac{2 n\lambda}{s_r} \cdot \bSigma \bU \bD^{-1} \bU^\sT} = \frac{2\const \lambdaefct(\lambda)}{\smin} \cdot \var_\bX(0) \, ,  \label{eq:var-X-bound-mid-1}
\end{align}
where in the last equality we use $\lambda = \const \lambdaefct(\lambda)$ and $s_r = n \smin$. The next lemma bounds the difference between $\lambdaefct(0)$ and $\lambdaefct(\lambda)$.
\begin{lemma} \label{lem:lambdaefct-local-bound}
	Under the assumptions of Theorem~\ref{thm:main-ridgeless}, for $\lambda$ such that $\lambda = \const  \lambdaefct(\lambda)$ it holds that
	\begin{align*}
		\lambdaefct(0) \leq \lambdaefct(\lambda) \leq \prn{1 + \frac{2 \const}{\constantlambdaefct}}\lambdaefct(0) \leq 2 \lambdaefct(0) \, .
	\end{align*}
\end{lemma}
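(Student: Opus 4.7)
The plan is to recast the defining identity for $\lambdaefct(\lambda)$ as $h(\lambdaefct(\lambda))=\lambda$ for an explicit scalar function $h$, then exploit the convexity of $h$ together with the fact that $h'(\lambdaefct(0))$ is exactly $\constantlambdaefct$.

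For the lower bound $\lambdaefct(0)\le\lambdaefct(\lambda)$, I would invoke monotonicity directly from Eq.~\eqref{eq:lambda-fixed-point}: for $\lambda>0$ the left-hand side $n(1-\lambda/\lambdaefct(\lambda))$ is strictly less than $n$, while at $\lambda=0$ it equals $n=\Tr(\bSigma(\bSigma+\lambdaefct(0)\bI)^{-1})$, and since $\mu\mapsto\Tr(\bSigma(\bSigma+\mu\bI)^{-1})$ is strictly decreasing, $\lambdaefct(\lambda)\ge\lambdaefct(0)$.

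For the upper bound, set $f(\mu):=\Tr(\bSigma(\bSigma+\mu\bI)^{-1})$ and $h(\mu):=\mu-\mu f(\mu)/n$. Rewriting Eq.~\eqref{eq:lambda-fixed-point} as $n\lambda=\lambdaefct(n-f(\lambdaefct))$ shows $h(\lambdaefct(\lambda))=\lambda$ and, with $\lambda=0$, $h(\lambdaefct(0))=0$. The identity $\bSigma=(\bSigma+\mu\bI)-\mu\bI$ gives
$$f(\mu)=\Tr(\bSigma^2(\bSigma+\mu\bI)^{-2})+\mu\Tr(\bSigma(\bSigma+\mu\bI)^{-2}),$$
so a direct differentiation yields
$$h'(\mu)=1-\frac{1}{n}\Tr(\bSigma^2(\bSigma+\mu\bI)^{-2}).$$
Since each eigenvalue contribution $\sigma_i^2/(\sigma_i+\mu)^2$ is decreasing in $\mu$, $h'$ is non-decreasing and hence $h$ is convex; moreover $h'(\lambdaefct(0))=\constantlambdaefct$ by definition.

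Convexity then gives
$$\lambda=h(\lambdaefct(\lambda))-h(\lambdaefct(0))\ge h'(\lambdaefct(0))\bigl(\lambdaefct(\lambda)-\lambdaefct(0)\bigr)=\constantlambdaefct\bigl(\lambdaefct(\lambda)-\lambdaefct(0)\bigr).$$
Substituting $\lambda=\const\,\lambdaefct(\lambda)$ and rearranging produces $\lambdaefct(\lambda)(1-\const/\constantlambdaefct)\le\lambdaefct(0)$. The hypothesis $\const\le\constantlambdaefct^2/8$ combined with $\constantlambdaefct\in(0,1)$ forces $\const/\constantlambdaefct\le 1/8$, so the elementary bound $1/(1-x)\le 1+2x$ on $[0,1/2]$ delivers $\lambdaefct(\lambda)\le(1+2\const/\constantlambdaefct)\lambdaefct(0)$; finally $2\const/\constantlambdaefct\le\constantlambdaefct/4\le 1/4$ gives $\lambdaefct(\lambda)\le 2\lambdaefct(0)$.

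No step is genuinely hard: the main subtlety is choosing the right function $h$ so that $h'(\lambdaefct(0))$ reproduces $\constantlambdaefct$ exactly, after which convexity and one linear lower bound do all the work. The hypothesis $\const\le\constantlambdaefct^2/8$ is used only at the very end, to invert the linear bound with clean multiplicative constants.
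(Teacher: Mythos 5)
Your proof is correct and is essentially the same as the paper's: the paper writes the fixed-point relation as $n\lambda = \lambdaefct(n - \Tr(\bSigma(\bSigma+\lambdaefct\bI)^{-1}))$, computes $\partial(n\lambda)/\partial\lambdaefct = n - \Tr(\bSigma^2(\bSigma+\lambdaefct\bI)^{-2})$, bounds it below by $\constantlambdaefct n$ on the relevant interval (using exactly the monotonicity you invoke), and integrates—this is the integral form of your convexity/tangent-line inequality for $h(\mu)=\mu-\mu f(\mu)/n$. The only cosmetic differences are that you phrase the derivative bound via convexity rather than an explicit integral, and you spell out the lower bound $\lambdaefct(0)\le\lambdaefct(\lambda)$ that the paper uses implicitly.
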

\begin{proof}
	Since
	\begin{align*}
		n\lambda = \lambdaefct \cdot \prn{n - \Tr \prn{\bSigma(\bSigma + \lambdaefct \bI)^{-1}}} \, ,
	\end{align*}
	we can compute that by change of variable $\zeta = n \lambda$,
	\begin{align*}
		\frac{\partial \zeta}{\partial \lambdaefct} & = \lambdaefct \cdot \Tr \prn{\bSigma(\bSigma + \lambdaefct \bI)^{-2}} + n - \Tr \prn{\bSigma(\bSigma + \lambdaefct \bI)^{-1}} = n - \Tr \prn{\bSigma^2(\bSigma + \lambdaefct(\lambda) \bI)^{-2}} \nonumber \\
		& \geq n - \Tr \prn{\bSigma^2(\bSigma + \lambdaefct(0) \bI)^{-2}}  \geq \constantlambdaefct n \, ,
	\end{align*}
	and thus
	\begin{align*}
		\lambdaefct(\lambda) & = \lambdaefct(0) + \int_0^\lambda \frac{\partial \lambdaefct(\lambda)}{\partial \lambda} \de \lambda = \lambdaefct(0) + \int_0^{n \lambda} \frac{\partial \lambdaefct(\zeta)}{\partial \zeta} \de \zeta \nonumber \\
		&  \leq \lambdaefct(0) + \frac{n \lambda}{\constantlambdaefct n} = \lambdaefct(0) + \frac{\lambda}{\constantlambdaefct  \lambdaefct(\lambda)} \cdot \lambdaefct(\lambda)  = \lambdaefct(0) + \frac{\const}{\constantlambdaefct} \cdot \lambdaefct(\lambda) \, .
	\end{align*}
	Rearranging terms, using $\const \leq \constantlambdaefct^2/8 \leq \constantlambdaefct/2$ and the fact that $(1-x)^{-1} \leq 1 +2x$ for $0 \leq x \leq 1/2$ conclude the proof.
\end{proof} 
Returning to the bound of the variance term, we can thus further derive the upper bound
\begin{align*}
	\left|\var_\bX(0) - \var_\bX(\lambda) \right| & \leq  \frac{4\const \lambdaefct(0)}{\smin} \cdot \var_\bX(0) \, .
\end{align*}Using the fact that $\const \leq \smin / (8 \lambdaefct(0))$, we further have
\begin{align}
	\left|\var_\bX(0) - \var_\bX(\lambda) \right| & \leq \prn{1 - \frac{4\const \lambdaefct(0)}{\smin}}^{-1} \cdot \frac{4\const \lambdaefct(0)}{\smin} \cdot \var_\bX(\lambda)  \leq \frac{8\const \lambdaefct(0)}{\smin} \cdot \var_\bX(\lambda) \, .\label{eq:V-approx-lambda-1}
\end{align}

Now we look at the bias term. From Eq.~\eqref{eq:bias-X}, we first have
\begin{align*}
	\bias_\bX(0) & = \lim_{\zeta \downarrow 0} \zeta^2 \Tr \prn{\boldbeta \boldbeta^\sT (\bX^\sT \bX + \zeta \bI)^{-1} \bSigma (\bX^\sT \bX + \zeta \bI)^{-1}} \nonumber \\
	& = \lim_{\zeta \downarrow 0}  \norm{ (\bX^\sT \bX / \zeta + \bI)^{-1} \boldbeta}_{\bSigma}^2 \nonumber \\
	& =\lim_{\zeta \downarrow 0}  \norm{ \prn{\bI + \bU \prn{(\bD / \zeta + \bI)^{-1} - \bI } \bU^\sT} \boldbeta}_{\bSigma}^2   = \norm{ \prn{\bI - \bU  \bU^\sT} \boldbeta}_{\bSigma}^2 \, .
\end{align*}
By triangle inequality, it thus follows
\begin{align*}
	\left|\bias_\bX(0)^{\half} - \bias_\bX(\lambda)^{\half} \right| & \leq \norm{ \bU (\bD / n\lambda + \bI)^{-1} \bU^\sT  \boldbeta}_{\bSigma} \leq \frac{n\lambda}{n\lambda + s_r} \norm{\boldbeta} = \frac{\const \lambdaefct(\lambda)}{\const \lambdaefct(\lambda) + \smin} \norm{\boldbeta} \nonumber \\
	& \leq \frac{2\const \lambdaefct(0)}{2\const \lambdaefct(0) + \smin} \norm{\boldbeta} \, ,
\end{align*}
where in the last line we invoke Lemma~\ref{lem:lambdaefct-local-bound} and use $\lambdaefct(\lambda) \leq 2 \lambdaefct(0)$. 

Additionally with  $\bias_\bX(0) \leq \norm{\boldbeta}^2$ and
\begin{align*}
	& \bias_\bX(\lambda) = n^2\lambda^2 \Tr \prn{\boldbeta \boldbeta^\sT (\bX^\sT \bX + n\lambda \bI)^{-1} \bSigma (\bX^\sT \bX + n\lambda \bI)^{-1}} \nonumber \\
	&  \leq \Tr \prn{\boldbeta \boldbeta^\sT (\bX^\sT \bX / n\lambda + \bI)^{-2}} \leq \Tr \prn{\boldbeta \boldbeta^\sT} = \norm{\boldbeta}^2 \, ,
\end{align*}
we conclude that
\begin{align}
	& \left|\bias_\bX(0) - \bias_\bX(\lambda) \right| \nonumber \\
	& = \left|\bias_\bX(0)^{\half} + \bias_\bX(\lambda)^{\half} \right| \cdot \left|\bias_\bX(0)^{\half} - \bias_\bX(\lambda)^{\half} \right| \leq \frac{4\const \lambdaefct(0)  \norm{\boldbeta}^2}{2\const \lambdaefct(0) + \smin} \leq \frac{4\const \lambdaefct(0)  \norm{\boldbeta}^2}{\smin} \, . \label{eq:B-approx-lambda-1-a}
\end{align}

We obtain an alternative upper bound for $\left|\bias_\bX(0) - \bias_\bX(\lambda) \right|$ in the following way. Note that
\begin{align*}
	& \left|\bias_\bX(0)^{\half} - \bias_\bX(\lambda)^{\half} \right|  \leq \norm{ \bU (\bD / n\lambda + \bI)^{-1} \bU^\sT  \boldbeta}_{\bSigma} = n\lambda \norm{ \bSigma^{1/2} (\bX^\sT \bX + n\lambda \bI)^{-1} \bX^\sT (\bX \bX^\sT)^{-1} \bX \bSigma^{1/2} \btheta} \\
	& = n\lambda \sqrt{\btheta^\sT \bSigma^{1/2} \bX^\sT (\bX \bX^\sT)^{-1} \bX (\bX^\sT \bX + n\lambda \bI)^{-1} \bSigma (\bX^\sT \bX + n\lambda \bI)^{-1} \bX^\sT (\bX \bX^\sT)^{-1} \bX \bSigma^{1/2} \btheta}  \\
	& \leq n\lambda \sqrt{\norm{(\bX^\sT \bX + n\lambda \bI)^{-\half}  \bSigma (\bX^\sT \bX + n\lambda \bI)^{-\half} }} \cdot \sqrt{\btheta^\sT \bSigma^{1/2} (\bX^\sT \bX + n\lambda \bI)^{-1}  \bSigma^{1/2} \btheta} \\
	& = n\lambda \sqrt{\norm{\bSigma^{\half} (\bX^\sT \bX + n\lambda \bI)^{-1}  \bSigma^{\half} }} \cdot \sqrt{\btheta^\sT \bSigma^{1/2} (\bX^\sT \bX + n\lambda \bI)^{-1}  \bSigma^{1/2} \btheta} \, .
\end{align*}
%{\color{blue} This bound does not really use the fact that we are computing a difference}
We next apply Lemma~\ref{lem:A-norm-bound}, which implies that with probability $1- \bigO(n^{-D})$
\begin{align*}
	\left|\bias_\bX(0)^{\half} - \bias_\bX(\lambda)^{\half} \right| & \leq \sqrt{\frac{1}{n} \bigO_{\constantx, D}(n\lambda \chi_n'(\const))} \cdot \sqrt{\frac{1}{n} \bigO_{\constantx, D}(n\lambda \chi_n'(\const))\norm{\btheta_{\leq n}}^2 +  2 \norm{\boldbeta_{>n}}^2} \nonumber \\
	& = \sqrt{ \bigO_{\constantx, D}(\const^2 \lambdaefct(0)^2 \chi_n'(\const)^2)\norm{\btheta_{\leq n}}^2 +  \bigO_{\constantx, D}(\const \lambdaefct(0) \chi_n'(\const)) \norm{\boldbeta_{>n}}^2} \, .
\end{align*}
Using the same argument, we can also bound
\begin{align*}
	\bias_\bX(\lambda) & = n^2\lambda^2 \Tr \prn{\boldbeta \boldbeta^\sT (\bX^\sT \bX + n\lambda \bI)^{-1} \bSigma (\bX^\sT \bX + n\lambda \bI)^{-1}} \nonumber \\
	&  \leq n^2\lambda^2 \norm{\bSigma^{1/2} (\bX^\sT \bX + n\lambda \bI)^{-1}  \bSigma^{1/2}} \cdot \btheta^\sT \bSigma^{1/2} (\bX^\sT \bX + n\lambda \bI)^{-1}  \bSigma^{1/2} \btheta \nonumber \\
	& = \bigO_{\constantx, D}(\const^2 \lambdaefct(0)^2 \chi_n'(\const)^2)\norm{\btheta_{\leq n}}^2 +  \bigO_{\constantx, D}(\const \lambdaefct(0) \chi_n'(\const)) \norm{\boldbeta_{>n}}^2 \, ,
\end{align*}
and we can therefore conclude that
\begin{align}
	\left|\bias_\bX(0) - \bias_\bX(\lambda) \right| & = \left|\bias_\bX(0)^{\half} + \bias_\bX(\lambda)^{\half} \right| \cdot \left|\bias_\bX(0)^{\half} - \bias_\bX(\lambda)^{\half} \right| \nonumber \\
	& \leq \prn{\left|\bias_\bX(0)^{\half} - \bias_\bX(\lambda)^{\half} \right| + 2 \bias_\bX(\lambda)^{\half}} \cdot \left|\bias_\bX(0)^{\half} - \bias_\bX(\lambda)^{\half} \right| \nonumber \\
	& = \bigO_{\constantx, D}(\const^2 \lambdaefct(0)^2 \chi_n'(\const)^2)\norm{\btheta_{\leq n}}^2 +  \bigO_{\constantx, D}(\const \lambdaefct(0) \chi_n'(\const)) \norm{\boldbeta_{>n}}^2  \, . \label{eq:B-approx-lambda-1-b}
\end{align}
Combining Eqs.~\eqref{eq:B-approx-lambda-1-a} and \eqref{eq:B-approx-lambda-1-b}, we finally have
\begin{align}
	& \left|\bias_\bX(0) - \bias_\bX(\lambda) \right| & \nonumber \\
	&=  \min \brc{\bigO\prn{\frac{\const \lambdaefct(0)  \norm{\boldbeta}^2}{\smin}}, \bigO_{\constantx, D}(\const^2 \lambdaefct(0)^2 \chi_n'(\const)^2)\norm{\btheta_{\leq n}}^2 +  \bigO_{\constantx, D}(\const \lambdaefct(0) \chi_n'(\const)) \norm{\boldbeta_{>n}}^2} \, .  \label{eq:B-approx-lambda-1}
\end{align}

\paragraph{Part II: Bounding $\left|\VAR_n(0) - \VAR_n(\lambda) \right|$ and $\left|\BIAS_n(0) - \BIAS_n(\lambda) \right|$} Note that
\begin{align*}
	0 \geq \frac{\partial \Tr \prn{\bSigma^2(\bSigma + \lambdaefct \bI)^{-2}}}{\partial \lambdaefct} = -2 \Tr \prn{\bSigma^2(\bSigma + \lambdaefct \bI)^{-3}} \geq -\frac{2}{\lambdaefct(0)} \Tr \prn{\bSigma^2(\bSigma + \lambdaefct(0) \bI)^{-2}} \, ,
\end{align*}
we can apply Lemma~\ref{lem:lambdaefct-local-bound} and obtain
\begin{align*}
	\Tr \prn{\bSigma^2(\bSigma + \lambdaefct(0) \bI)^{-2}} & \geq \Tr \prn{\bSigma^2(\bSigma + \lambdaefct(\lambda) \bI)^{-2}} \nonumber \\
	& \geq \Tr \prn{\bSigma^2(\bSigma + \lambdaefct(0) \bI)^{-2}} -\frac{2\prn{\lambdaefct(\lambda) - \lambdaefct(0)}}{\lambdaefct(0)} \Tr \prn{\bSigma^2(\bSigma + \lambdaefct(0) \bI)^{-2}} \nonumber \\
	& \geq \prn{1 - \frac{4 \const}{\constantlambdaefct}}  \cdot \Tr \prn{\bSigma^2(\bSigma + \lambdaefct(0) \bI)^{-2}} \, .
\end{align*}
We then have
\begin{align*}
	\VAR_n(0) \geq \VAR_n(\lambda) & = \frac{n - \Tr \prn{\bSigma^2(\bSigma + \lambdaefct(0) \bI)^{-2}}}{n - \Tr \prn{\bSigma^2(\bSigma + \lambdaefct(\lambda) \bI)^{-2}}} \cdot \frac{\Tr \prn{\bSigma^2(\bSigma + \lambdaefct(\lambda) \bI)^{-2}}}{\Tr \prn{\bSigma^2(\bSigma + \lambdaefct(0) \bI)^{-2}}} \cdot \VAR_n(0) \nonumber \\
	& \stackrel{\mathrm{(i)}}{\geq} \frac{\constantlambdaefct n}{\constantlambdaefct n + \frac{4\const}{\constantlambdaefct} \cdot \Tr \prn{\bSigma^2(\bSigma + \lambdaefct(0) \bI)^{-2}}} \cdot \prn{1 - \frac{4 \const}{\constantlambdaefct}} \cdot \VAR_n(0) \nonumber \\
	& \stackrel{\mathrm{(ii)}}{\geq} \frac{\constantlambdaefct^2}{\constantlambdaefct^2 + 4\const} \cdot \prn{1 - \frac{4\const}{\constantlambdaefct}} \cdot \VAR_n(0) \, ,
\end{align*}
where we use $n - \Tr \prn{\bSigma^2(\bSigma + \lambdaefct(0) \bI)^{-2}} \geq \constantlambdaefct n$ in (i) and $n \geq \Tr \prn{\bSigma^2(\bSigma + \lambdaefct(0) \bI)^{-2}} $ in (ii). By the elementary inequality $1-(1-a)(1-b) \leq a + b$ for all $0 \leq a, b \leq 1$, we can thus derive that
\begin{align}
	\left|\VAR_n(0) - \VAR_n(\lambda) \right| & \leq \brc{1 - \prn{1 - \frac{4 \const}{\constantlambdaefct^2 + 4\const}} \cdot \prn{1 - \frac{4\const}{\constantlambdaefct}}  }  \cdot \VAR_n(0) \nonumber \\
	& \leq \prn{\frac{4 \const}{\constantlambdaefct^2 + 4 \const} + \frac{4 \const}{\constantlambdaefct}} \cdot \VAR_n(0) \leq \frac{8 \const}{\constantlambdaefct^2} \cdot \VAR_n(0) \, . \label{eq:V-approx-lambda-2}
\end{align}

For the bias term, we first similarly derive
\begin{align*}
	\boldbeta^\sT \prn{\bSigma + \lambdaefct(0) \bI}^{-2} \bSigma \boldbeta \geq \boldbeta^\sT \prn{\bSigma + \lambdaefct(\lambda) \bI}^{-2} \bSigma \boldbeta \geq \prn{1 - \frac{4 \const}{\constantlambdaefct}}  \cdot \boldbeta^\sT \prn{\bSigma + \lambdaefct(0) \bI}^{-2} \bSigma \boldbeta \, .
\end{align*}
Note that
\begin{align*}
	\left|\frac{\BIAS_n(\lambda)}{\BIAS_n(0)} - 1 \right| & = \left|\frac{n - \Tr \prn{\bSigma^2(\bSigma + \lambdaefct(0) \bI)^{-2}}}{n - \Tr \prn{\bSigma^2(\bSigma + \lambdaefct(\lambda) \bI)^{-2}}} \cdot \frac{\boldbeta^\sT \prn{\bSigma + \lambdaefct(\lambda) \bI}^{-2} \bSigma \boldbeta}{\boldbeta^\sT \prn{\bSigma + \lambdaefct(0) \bI}^{-2} \bSigma \boldbeta} \cdot \frac{\lambdaefct(\lambda)^2}{\lambdaefct(0)^2} - 1\right|\nonumber \\
	& \leq \max \brc{1 - \frac{n - \Tr \prn{\bSigma^2(\bSigma + \lambdaefct(0) \bI)^{-2}}}{n - \Tr \prn{\bSigma^2(\bSigma + \lambdaefct(\lambda) \bI)^{-2}}} \cdot \frac{\boldbeta^\sT \prn{\bSigma + \lambdaefct(\lambda) \bI}^{-2} \bSigma \boldbeta}{\boldbeta^\sT \prn{\bSigma + \lambdaefct(0) \bI}^{-2} \bSigma \boldbeta}; \frac{\lambdaefct(\lambda)^2}{\lambdaefct(0)^2} - 1} \ , ,
\end{align*}
From the previous calculations for the variance term, we know
\begin{align*}
	1 - \frac{n - \Tr \prn{\bSigma^2(\bSigma + \lambdaefct(0) \bI)^{-2}}}{n - \Tr \prn{\bSigma^2(\bSigma + \lambdaefct(\lambda) \bI)^{-2}}} \cdot \frac{\boldbeta^\sT \prn{\bSigma + \lambdaefct(\lambda) \bI}^{-2} \bSigma \boldbeta}{\boldbeta^\sT \prn{\bSigma + \lambdaefct(0) \bI}^{-2} \bSigma \boldbeta} \leq \frac{8 \const}{\constantlambdaefct^2} \, ,
\end{align*}
and by Lemma~\ref{lem:lambdaefct-local-bound} we have
\begin{align*}
	\frac{\lambdaefct(\lambda)^2}{\lambdaefct(0)^2} - 1 & \leq \prn{1 + \frac{2 \const}{\constantlambdaefct}}^2 - 1 \leq \frac{4 \const}{\constantlambdaefct} + \frac{2 \const}{\constantlambdaefct} \cdot \frac{2 \const}{\constantlambdaefct} \leq \frac{6 \const}{\constantlambdaefct} \, .
\end{align*}
In the last inequality, recall $\const \leq \constantlambdaefct^2 /8 \leq \constantlambdaefct / 2$. Putting together, we have error of the bias term bounded by
\begin{align}
	\left|\BIAS_n(0) - \BIAS_n(\lambda) \right| & \leq \frac{8 \const}{\constantlambdaefct^2} \cdot \BIAS_n(0) \, . \label{eq:B-approx-lambda-2}
\end{align}

\paragraph{Part III: Variance approximation when $\lambda = 0$} Recalling that
$\lambda = \const  \lambdaefct(\lambda)$, we want to invoke Theorem~\ref{thm:main} to bound $|\var_\bX(\lambda) - \VAR_n(\lambda)|$. Note that by Lemma~\ref{lem:lambdaefct-local-bound} it holds $\lambdaefct(\lambda) = \bigTht(\lambdaefct(0))$ and thus
\begin{align*}
	\chi_n(\lambda) & =  1 + \frac{\sigma_{\lfloor \eta n\rfloor} \constantsig  \log^2 (\constantsig)}{n\lambda} =   1 + \frac{\sigma_{\lfloor \eta n\rfloor} \constantsig  \log^2 (\constantsig)}{\const n \lambdaefct(\lambda)} = \bigTht \prn{1 + \frac{\sigma_{\lfloor \eta n\rfloor} \constantsig  \log^2 (\constantsig)}{\const n \lambdaefct(0)} } \nonumber \\
	& = \bigTht(\chi_n'(\const)) \, .
\end{align*}
 Hence the conditions hold for Theorem~\ref{thm:main} by taking $\constant_1 = \bigTht(\constant)$, and we have for some constant $\constant' := \constant'(k, \constantx, D) > 0$,
	\begin{align*}
	\left|\var_\bX(\lambda) - \VAR_n(\lambda)\right| &  
	\leq \constant' \cdot \frac{\chi_n' (\const)^3 \log^2 n}{n^{1- \frac{1}{k}} \const^{9.5}} \cdot \VAR_n(\lambda) \, .
\end{align*}
Substituting the above display and Eqs.~\eqref{eq:V-approx-lambda-1}, \eqref{eq:V-approx-lambda-2} into Eq.~\eqref{eq:V-B-approx-triangle} yields
\begin{align*}
	\left|\var_\bX(0) - \VAR_n(0)\right| &\leq \constant' \cdot \frac{\chi_n' (\const)^3 \log^2 n}{n^{1- \frac{1}{k}} \const^{9.5}} \cdot \VAR_n(\lambda) +\frac{8\const \lambdaefct(0)}{\smin} \cdot \var_\bX(\lambda)  + \frac{8 \const}{\constantlambdaefct^2} \cdot \VAR_n(0) \nonumber \\
	& \leq \constant' \cdot \frac{\chi_n' (\const)^3 \log^2 n}{n^{1- \frac{1}{k}} \const^{9.5}} \cdot \VAR_n(\lambda) +\frac{8\const \lambdaefct(0)}{\smin} \cdot \prn{1+\constant' \cdot \frac{\chi_n' (\const)^3 \log^2 n}{n^{1- \frac{1}{k}} \const^{9.5}}} \VAR_n(\lambda)  + \frac{8 \const}{\constantlambdaefct^2} \cdot \VAR_n(0) \nonumber \\
	& = \brc{\prn{1 + \frac{8\const \lambdaefct(0)}{\smin}} \prn{1+\constant' \cdot \frac{\chi_n' (\const)^3 \log^2 n}{n^{1- \frac{1}{k}} \const^{9.5}}} - 1} \cdot \VAR_n(\lambda) + \frac{8 \const}{\constantlambdaefct^2} \cdot \VAR_n(0) \nonumber \\
	& \leq \brc{\prn{1 + \frac{8\const \lambdaefct(0)}{\smin}} \prn{1+\constant' \cdot \frac{\chi_n' (\const)^3 \log^2 n}{n^{1- \frac{1}{k}} \const^{9.5}}} - 1} \cdot \prn{1 + \frac{8 \const}{\constantlambdaefct^2}} \VAR_n(0) + \frac{8 \const}{\constantlambdaefct^2} \cdot \VAR_n(0)  \nonumber \\
	& \leq \brc{\prn{1 + \frac{8\const \lambdaefct(0)}{\smin}} \prn{1+\constant' \cdot \frac{\chi_n' (\const)^3 \log^2 n}{n^{1- \frac{1}{k}} \const^{9.5}}}  \prn{1 + \frac{8 \const}{\constantlambdaefct^2}}  - 1} \cdot \VAR_n(0) \nonumber \\
	& \leq \prn{\exp \prn{\frac{8\const \lambdaefct(0)}{\smin} + \frac{8 \const}{\constantlambdaefct^2} + \constant' \cdot \frac{\chi_n' (\const)^3 \log^2 n}{n^{1- \frac{1}{k}} \const^{9.5}}} - 1} \cdot \VAR_n(0) \, .
\end{align*}
Since $\const \leq \smin / (8 \lambdaefct(0))$ and $\const \leq \constantlambdaefct^2 /8$, if we additionally assume
\begin{align*}
	\frac{\chi_n' (\const)^3 \log^2 n}{n^{1- \frac{1}{k}} \const^{9.5}}  \leq \frac{1}{\constant'} \, ,
\end{align*}
we can then conclude that 
\begin{align*}
	\exp \prn{\frac{8\const \lambdaefct(0)}{\smin} + \frac{8 \const}{\constantlambdaefct^2} + \constant' \cdot \frac{\chi_n' (\const)^3 \log^2 n}{n^{1- \frac{1}{k}} \const^{9.5}}} - 1 = \bigO \prn{\frac{8\const \lambdaefct(0)}{\smin} + \frac{8 \const}{\constantlambdaefct^2} + \constant' \cdot \frac{\chi_n' (\const)^3 \log^2 n}{n^{1- \frac{1}{k}} \const^{9.5}}} \, ,
\end{align*}
and 
\begin{align*}
	\left|\var_\bX(0) - \VAR_n(0)\right| & = \bigO_{k, \constantx, D} \prn{ \const \cdot \prn{\frac{ \lambdaefct(0)}{\smin} + \frac{1}{\constantlambdaefct^2}} + \frac{\chi_n' (\const)^3 \log^2 n}{n^{1- \frac{1}{k}} \const^{9.5}}} \cdot \VAR_n(0) \, .
\end{align*}
We meet this assumption by setting $\constant_2 = 1/\constant'$.

\paragraph{Part IV: Bias approximation when $\lambda = 0$} To apply Theorem~\ref{thm:main} when 
$\lambda = \const \lambdaefct(\lambda)$, we first note that the condition $\lambda kn^{1-\frac{1}{k}} \leq n\const/2$
 is equivalent to $\lambdaefct(\lambda) k n^{-\frac{1}{k}} \leq 1/2 $, and by Lemma~\ref{lem:lambdaefct-local-bound} 
 it suffices to have $\lambdaefct(0) k n^{-\frac{1}{k}} \leq 1/4$, which holds by assumption. 
 Since we know $\chi_n'(\const) = \bigTht(\chi_n(\lambda))$ from the previous part of the proof, 
 we only need to additionally verify that $\lambdaefct(0) = \bigTht(\lambdaefct(\lambda))$ and $\rho(0) = \bigTht(\rho(\lambda))$. 
 The first relation is a direct consequence of Lemma~\ref{lem:lambdaefct-local-bound}, and for the second claim we observe that
\begin{align*}
	\rho(\lambda) & =  \frac{\Rfct_0(\lambdaefct(\lambda), 1; \btheta \btheta^\sT)}{\Rfct_0(\lambdaefct(\lambda), 1; \bI)}
	 = \frac{\Tr \prn{\bSigma^\half \btheta \btheta^\sT \bSigma^\half \prn{\bSigma + \lambdaefct(\lambda) \bI}^{-1}}}{\Tr \prn{\bSigma\prn{\bSigma + \lambdaefct(\lambda) \bI}^{-1}}}  \, .
\end{align*}
As for any p.s.d.\ $\bQ$,
\begin{align*}
	0 \geq \frac{\partial \Rfct_0(\lambdaefct, 1; \bQ)}{\partial \lambdaefct} = - \Tr \prn{\bSigma^\half \bQ \bSigma^\half \prn{\bSigma + \lambdaefct \bI}^{-2}} \geq - \frac{1}{\lambdaefct(0)} \Tr \prn{\bSigma^\half \bQ \bSigma^\half \prn{\bSigma + \lambdaefct(0) \bI}^{-1}} \, ,
\end{align*}
we have
\begin{align*}
	\Rfct_0(\lambdaefct(0), 1; \bQ) \geq \Rfct_0(\lambdaefct(\lambda), 1; \bQ) \geq \Rfct_0(\lambdaefct(0), 1; \bQ) - \frac{\lambdaefct(\lambda) - \lambdaefct(0)}{\lambdaefct(0)} \cdot \Rfct_0(\lambdaefct(0), 1; \bQ) \, .
\end{align*}
Therefore by Lemma~\ref{lem:lambdaefct-local-bound} and $\const \leq \constantlambdaefct^2/ 8$, we can obtain
\begin{align*}
	\left|\frac{\Rfct_0(\lambdaefct(\lambda), 1; \bQ)}{\Rfct_0(\lambdaefct(0), 1; \bQ)} - 1 \right| & \leq \left|\frac{\lambdaefct(\lambda)}{\lambdaefct(0)} - 1 \right| \leq \frac{2 \const}{\constantlambdaefct} \leq \frac{\constantlambdaefct}{4}  \leq \frac{1}{4} \, ,
\end{align*}
which implies $\Rfct_0(\lambdaefct(\lambda), 1; \bQ) / \Rfct_0(\lambdaefct(0), 1; \bQ) = \bigTht(1)$ and therefore $\rho(0) = \bigTht(\rho(\lambda))$. Now we are able to invoke Theorem~\ref{thm:main-ridgeless}, yielding for some constant $\constant' := \constant'(k, \constantx, D) > 0$,
\begin{align*}
	\left|\bias_\bX(\lambda) - \BIAS_n(\lambda)\right| & %= \bigO_{k, \const, \constantx, D} \prn{\frac{ \lambdaefct^{k+1}}{n} + \frac{\chi_n (\lambda)^3 \log^2 n}{\sqrt{\rho(\lambda)} n^{1- \frac{1}{k}} }  + \frac{\chi_n (\lambda)^5 \log^4 n}{n^{2- \frac{1}{k}}}} \cdot \BIAS_n(\lambda) 
	\leq \constant' \cdot \prn{\frac{\lambdaefct(0)^{k+1}}{n \const^3} + \frac{\chi_n' (\const)^3 \log^2 n}{\sqrt{\rho(0)} n^{1- \frac{1}{k} } \const^{8.5}}}  \cdot \BIAS_n(\lambda)\, .
\end{align*}

Now we can substitute the above bound and Eq.~\eqref{eq:B-approx-lambda-2} into Eq.~\eqref{eq:V-B-approx-triangle},
\begin{align*}
	& \left|\bias_\bX(0) - \BIAS_n(0)\right|\nonumber \\
	 &\leq \constant' \cdot \prn{\frac{\lambdaefct(0)^{k+1}}{n \const^3} + \frac{\chi_n' (\const)^3 \log^2 n}{\sqrt{\rho(0)} n^{1- \frac{1}{k} } \const^{8.5}}}  \cdot \BIAS_n(\lambda) +\left|\bias_\bX(0) - \bias_\bX(\lambda)\right| + \frac{8 \const}{\constantlambdaefct^2} \cdot \BIAS_n(0) \nonumber \\
	& \leq \brc{\prn{1 + \constant' \cdot \prn{\frac{\lambdaefct(0)^{k+1}}{n \const^3} + \frac{\chi_n' (\const)^3 \log^2 n}{\sqrt{\rho(0)} n^{1- \frac{1}{k} } \const^{8.5}}} } \prn{1 + \frac{8 \const}{\constantlambdaefct^2}} - 1}\cdot \BIAS_n(0)  + \left|\bias_\bX(0) - \bias_\bX(\lambda)\right| \nonumber \\
	& \leq \prn{\exp \prn{ \constant' \cdot \prn{\frac{\lambdaefct(0)^{k+1}}{n \const^3} + \frac{\chi_n' (\const)^3 \log^2 n}{\sqrt{\rho(0)} n^{1- \frac{1}{k} } \const^{8.5}}}  +\frac{8 \const}{\constantlambdaefct^2} } - 1 } \cdot \BIAS_n(0) + \left|\bias_\bX(0) - \bias_\bX(\lambda)\right| \, .
\end{align*}
%\cc{Replace $\frac{4\const \lambdaefct(0)  \norm{\boldbeta}^2}{\smin}$ by $\bigO_{\constantx, D}( \const^2 \lambdaefct(0)^2 \chi_n'(\const)^2) \norm{\btheta}^2$.}

Similar to previous calculations for the variance approximation, setting $\constant_3 = 1/\constant'$ and thus
\begin{align*}
	\frac{\lambdaefct(0)^{k+1}}{n \const^3} + \frac{\chi_n' (\const)^3 \log^2 n}{\sqrt{\rho(0)} n^{1- \frac{1}{k} } \const^{8.5}} \leq \frac{1}{\constant'} \, .
\end{align*}
Substituting in Eq.~\eqref{eq:B-approx-lambda-1}, it then holds that
\begin{align*}
	& \left|\bias_\bX(0) - \BIAS_n(0)\right| \nonumber \\
	& = \bigO_{k, \constantx, D} \prn{ \frac{\const}{\constantlambdaefct^2} + \frac{\lambdaefct(0)^{k+1}}{n \const^3}  +  \frac{\chi_n' (\const)^3 \log^2 n}{\sqrt{\rho(0)} n^{1- \frac{1}{k}} \const^{8.5}}} \cdot \BIAS_n(0) +  \left|\bias_\bX(0) - \bias_\bX(\lambda)\right| \nonumber \\
	& = \bigO_{k, \constantx, D} \prn{ \frac{\const}{\constantlambdaefct^2} + \frac{\lambdaefct(0)^{k+1}}{n \const^3}  +  \frac{\chi_n' (\const)^3 \log^2 n}{\sqrt{\rho(0)} n^{1- \frac{1}{k}} \const^{8.5}}} \cdot \BIAS_n(0) \nonumber \\
	& \qquad + \min \brc{\bigO\prn{\frac{\const \lambdaefct(0)  \norm{\boldbeta}^2}{\smin}}, \bigO_{\constantx, D}(\const^2 \lambdaefct(0)^2 \chi_n'(\const)^2)\norm{\btheta_{\leq n}}^2 +  \bigO_{\constantx, D}(\const \lambdaefct(0) \chi_n'(\const)) \norm{\boldbeta_{>n}}^2} \, .
\end{align*}
%\cc{
%\begin{align*}
%	& \left|\bias_\bX(0) - \BIAS_n(0)\right|  \nonumber \\
%	&  = \bigO_{k, \constantx, D} \prn{ \frac{\const}{\constantlambdaefct^2} + \frac{\lambdaefct(0)^{k+1}}{n \const^3}  +  \frac{\chi_n' (\const)^3 \log^2 n}{\sqrt{\rho(0)} n^{1- \frac{1}{k}} \const^{8.5}}} \cdot \BIAS_n(0) +  \bigO_{\constantx, D}\prn{ \const^2 \lambdaefct(0)^2 \chi_n'(\const)^2 \norm{\btheta}^2} \, .
%\end{align*}
%}

\paragraph{Part V: Lower bounding the minimum eigenvalue $\smin$} To obtain the first bound, we apply known results on the minimum eigenvalue of sample covariance matrices with sub-Gaussian entries~\cite{bai2008limit, rudelson2009smallest}. Thus when $n = \bigOmg_{\constantx, \eps, D}(1)$, with probability $1 - \bigO(n^{-D+1})$ we have $\smin = \bigOmg_{\constantx, \eps,  D}(\sigma_d)$.

To obtain the other lower bound for $\smin$, we without loss of generality assume $n=\infty$ and use Cauchy interlacing theorem which implies
\begin{align*}
	\smin & \geq \lambda_n \prn{\frac{\bX^\sT \bX}{n}} = \lambda_n \prn{\frac{\bSigma^\half \bZ^\sT \bZ \bSigma^\half}{n}} \geq \lambda_n \prn{\frac{\proj_k \bSigma^\half \bZ^\sT \bZ \bSigma^\half \proj_k}{n}} \nonumber \\
	&  = \lambda_n \prn{\frac{\bZ \proj_k \bSigma \proj_k \bZ^\sT}{n}} \, .
\end{align*}
where $\proj_k$ is the projection to the space spanned by the top $k$ eigenvectors. Let $k \geq n$, we further have
\begin{align*}
	\smin \geq \sigma_k \cdot \lambda_n \prn{\frac{\bZ \bV_k \bV_k^\sT \bZ^\sT}{n}} \, ,
\end{align*}
where $\proj_k = \bV_k \bV_k^\sT$ and $\bV_k = \begin{bmatrix} \bv_1 & \cdots & \bv_k \end{bmatrix} \in \real^{d \times k}$ with $\bv_i$ being the $i$-th eigenvector of $\bSigma$. Since $\bZ \bV_k$ is a $n \times k$ random matrix with i.i.d.\ isotropic and sub-Gaussian rows. When $k \geq n$, by~\cite[Thm.~5.58, generalized version in Sec.~5.7]{vershynin2010introduction}, we have
\begin{align*}
	\lambda_n \prn{\frac{\bZ \bV_k \bV_k^\sT \bZ^\sT}{n}} & \geq \prn{(1 - \zeta)\sqrt{\frac{k}{n}} - \bigO_{\constantx}(1) - \bigO_{\constantx,D}\prn{\sqrt{\frac{\log n}{n}}} }^2 \, ,
\end{align*}
with probability at least $1 - \bigO(n^{-D+1})$, where $\zeta$ is the random variable 
\begin{align*}
	\zeta:= \max_{1 \leq i \leq n} \left|\frac{\norm{\bV_k^\sT \bz_i}^2}{k} - 1\right| \, .
\end{align*}
By Hanson-Wright in Lemma~\ref{lem:hanson-wright} and similar to the argument in Eq.~\eqref{eq:hanson-wright-for-z-top-k}, we have
\begin{align*}
	\Prb\prn{\left|\frac{\norm{\bV_k^\sT \bz_i}^2}{k} - 1\right| \geq t} = 2 \exp \prn{-\bigOmg_{\constantx} \prn{k \cdot \min \brc{t^2, t}}} \, .
\end{align*}
Given the above sharp concentration of $\zeta$, we can therefore conclude by taking $k = \lfloor\constant(\constantx) n\rfloor$ for some $\constant > 0$, and $n = \bigOmg_{\constantx, D}(1)$, we have with probability $1 - \bigO(n^{-D+1})$ that
\begin{align*}
	\lambda_n \prn{\frac{\bZ \bV_k \bV_k^\sT \bZ^\sT}{n}} \geq 1\, ,
\end{align*}
and therefore $\smin \geq \sigma_k = \sigma_{\lfloor\constant(\constantx) n\rfloor} $.
	\section{Proof of Theorem~\ref{thm:main-ridgeless-underparameterized}} \label{proof:main-ridgeless-underparameterized}
We follow the same proof strategy in Appendix~\ref{proof:main-ridgeless} for the overparameterized regime, taking 
$\lambda = \constu $. %\am{I changed $\const$ to $\constu$ in this section because the meaning
%is different, and to avoid conflicts. Please double check.}
We state and prove the following more general result first.
\begin{theorem} \label{thm:main-ridgeless-underparameterized-raw}
	Suppose Assumption~\ref{asmp:data-dstrb} holds with $n>d$, and further assume
	\begin{align*}
			\nu \:= \min\Big(\frac{d}{n},1-\frac{d}{n}\Big)\in (0,1)\, .
		\end{align*}
		For any positive integers $k$ and $D$, there exist constants 
		 $\eta = \eta(\constantx)>0$
	 $\constant_1 = \constant_1(\constantx, D) > 0$,
	   $\constant_2 = \constant_2(k,\constantx, D) > 0$,  such that the following hold.
	   
	 If $\bX$ has rank $d$ and $\smin$ is the minimum eigenvalue of the sample
	 covariance $\bX^\sT \bX/n$, then the following hold:

	\begin{enumerate}
			\item \textbf{Variance approximation.} Let $\constu$ be such that  
		\begin{align*}
				\constu \leq  \constantlambdaefct^2 \sigma_d/4 \, , \qquad  n^{-2D + 1} =\bigO \prn{ \sqrt{\frac{\constantlambdaefct^3 \log^2 n}{n\max \brc{1, \eps}}}} \, ,\\
				\qquad \chi_n(\constu n)^3 \log^2n \leq \constant_1 n \constantlambdaefct^{4.5} \, , \qquad  \chi_n (\constu n)^3 \log^2 n \leq  \constant_2n^{1- \frac{1}{k}} \constantlambdaefct^{9.5} \, .
			\end{align*}
		Then, on the event $\{\smin \geq 2 \constu \}$, with probability $1-\bigO_k(n^{-D+1})$:
			\begin{align*}
					\left|\var_\bX(0) - \VAR_n(0)\right| & = \bigO_{k, \constantx, D} \prn{ \constu \cdot \prn{\frac{1}{\smin} + \frac{1}{\constantlambdaefct^2 \sigma_d}} + \frac{\chi_n (\constu n)^3 \log^2 n}{n^{1- \frac{1}{k}} \constantlambdaefct^{9.5}}} \cdot \VAR_n(0) \, .
				\end{align*}
		
			\item \textbf{Bias approximation.} $\bias_\bX(0) = \BIAS_n(0) = 0$ (this holds deterministically on the event
			$\rank(\bX) = d$).
		\end{enumerate}
\end{theorem}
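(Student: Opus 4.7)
The bias claim is straightforward. On the full-probability event $\{\rank(\bX)=d\}$, the ridgeless estimator equals the unique ordinary least-squares solution and is unbiased, so $\bias_\bX(0)=0$. And since $\Tr(\bSigma(\bSigma+\lambdaefct\bI)^{-1})\leq d<n$ forces $\lim_{\lambda\downarrow 0}\lambdaefct(\lambda)=0$, Eq.~\eqref{eq:BIAS-n} immediately gives $\BIAS_n(0)=0$.

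For the variance approximation I will mimic the triangle-inequality strategy used for the overparameterized case in Appendix~\ref{proof:main-ridgeless}. Picking $\lambda=\constu$ satisfying the theorem's hypotheses, write
\begin{align*}
|\var_\bX(0)-\VAR_n(0)|\leq |\var_\bX(\constu)-\VAR_n(\constu)|+|\var_\bX(0)-\var_\bX(\constu)|+|\VAR_n(0)-\VAR_n(\constu)|.
\end{align*}
The middle term is handled by diagonalizing $\bX^\sT\bX$ and reusing the argument of Eq.~\eqref{eq:var-X-bound-mid-1}: the elementary inequality $|1/x-x/(x+\zeta)^2|\leq 2\zeta/x^2$ gives $|\var_\bX(0)-\var_\bX(\constu)|\leq(2\constu/\smin)\,\var_\bX(0)$, which on the event $\{\smin\geq 2\constu\}$ contributes the $\constu/\smin$ piece to the stated error.

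The last term requires a replacement for Lemma~\ref{lem:lambdaefct-local-bound}, which assumed $\lambdaefct(0)>0$. Implicit differentiation of~\eqref{eq:lambda-fixed-point} yields
\begin{align*}
\partial(n\lambda)/\partial\lambdaefct = n-\Tr\!\left(\bSigma^2(\bSigma+\lambdaefct\bI)^{-2}\right).
\end{align*}
For $\lambdaefct\leq\sigma_d$ the trace lies in $[d/4,d]$ (since $\sigma_i+\lambdaefct\leq 2\sigma_i$ for every $i\leq d$), so the derivative is at least $n-d=n\constantlambdaefct$, giving the Lipschitz bound $\lambdaefct(\constu)\leq\constu/\constantlambdaefct$. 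The hypothesis $\constu\leq\constantlambdaefct^2\sigma_d/4$ then makes this bound self-consistent via a short bootstrap ($\lambdaefct(\constu)\leq\constu/\constantlambdaefct\leq\constantlambdaefct\sigma_d/4\leq\sigma_d$). A parallel calculation shows $|d-\Tr(\bSigma^2(\bSigma+\lambdaefct\bI)^{-2})|\leq 3\lambdaefct d/\sigma_d$ for $\lambdaefct\leq\sigma_d$, and propagating this through the definition of $\VAR_n$ yields $|\VAR_n(0)-\VAR_n(\constu)|=O(\constu/(\constantlambdaefct^2\sigma_d))\,\VAR_n(0)$, producing the second contribution.

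The first term is controlled by Theorem~\ref{thm:main} applied at $\lambda=\constu$. The same local analysis shows that $\constu/\lambdaefct(\constu)$ stays within a constant factor of $\constantlambdaefct$, so $\const=\min(\constu/\lambdaefct,1-\constu/\lambdaefct)$ is bounded below by a constant multiple of $\constantlambdaefct\wedge(1-\constantlambdaefct)$; this is what produces the $\constantlambdaefct^{9.5}$ factor in the denominator of the final bound, while the $\chi_n$ factor is inherited verbatim from Theorem~\ref{thm:main}. Combining the three pieces via triangle inequality and absorbing multiplicative errors as in the final lines of the overparameterized proof yields the stated estimate. The main obstacle is exactly this replacement of Lemma~\ref{lem:lambdaefct-local-bound}: in the overparameterized proof $\lambdaefct(0)>0$ was a stable anchor and the perturbation was a small fraction of it, whereas here we must build the control from scratch with $\sigma_d$ as the relevant scale, which is what forces the hypothesis $\constu\leq\constantlambdaefct^2\sigma_d/4$.
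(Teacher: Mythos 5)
Your proposal is correct and follows essentially the same route as the paper's proof: the same triangle-inequality decomposition at $\lambda=\constu$, the same bound $\frac{2\constu}{\smin}\var_\bX(0)$ on $|\var_\bX(0)-\var_\bX(\constu)|$, an analogue of Lemma~\ref{lem:lambdaefct-local-bound-underparameterized} giving $\lambdaefct(\constu)\le\constu/\constantlambdaefct$, propagation of the trace perturbation through $\VAR_n$ to get the $O(\constu/(\constantlambdaefct^2\sigma_d))$ piece, and finally Theorem~\ref{thm:main} at $\lambda=\constu$ after verifying Eq.~\eqref{asmp:lambda}. One small observation: the bootstrap via $\lambdaefct\le\sigma_d$ is not actually needed to obtain the Lipschitz bound $\lambdaefct(\constu)\le\constu/\constantlambdaefct$, because $\Tr(\bSigma^2(\bSigma+\lambdaefct\bI)^{-2})\le d$ holds unconditionally (each of the $d$ summands is $\le1$), so the derivative lower bound $\partial(n\lambda)/\partial\lambdaefct\ge n-d=n\constantlambdaefct$ holds for all $\lambdaefct\ge0$; the restriction to $\lambdaefct\le\sigma_d$ is only needed afterwards, to control the size of the trace deficit $d-\Tr(\bSigma^2(\bSigma+\lambdaefct\bI)^{-2})$, which is exactly the second step you perform.
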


\paragraph{Part I: Bounding \texorpdfstring{$\left|\var_\bX(0) - \var_\bX(\lambda) \right|$}{TEXT}} As we assume $\bX^\sT \bX$ has rank $d$, we can write its eigendecomposition $\bX^\sT \bX = \bU \bD \bU^\sT$ with $\bU \in \real^{d \times d}$ an orthogonal matrix and $\bD$ is a diagonal matrix with entries $s_1 \geq \cdots \geq s_d > 0$. In this case, $s_d = n \smin$.
Substitute $\lambda = \constu $ into Eq.~\eqref{eq:var-X-bound-mid-1} instead of $\lambda = \const  \lambdaefct(\lambda)$,
 we have
\begin{align}
\left|\var_\bX(0) - \var_\bX(\lambda) \right| & \leq \frac{2\constu}{\smin} \cdot \var_\bX(0) \, .\label{eq:V-approx-lambda-under-1}
\end{align}

\paragraph{Part II: Bounding $\left|\VAR_n(0) - \VAR_n(\lambda) \right|$}  Similar to the overparameterized case, we can control the growth of $\lambdaefct(\lambda)$ by
\begin{lemma} \label{lem:lambdaefct-local-bound-underparameterized}
	Under the assumptions of Theorem~\ref{thm:main-ridgeless-underparameterized}, for $\lambda$ such that $\lambda = \constu  $ it holds that
	\begin{align*}
		0 = \lambdaefct(0) \leq \lambdaefct(\lambda) \leq \frac{\constu}{\constantlambdaefct} \, .
	\end{align*}
\end{lemma}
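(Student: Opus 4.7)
The plan is to mimic the two-step argument of Lemma~\ref{lem:lambdaefct-local-bound} from the overparameterized case, adapted to the underparameterized setting where $\lambdaefct(0)$ now vanishes exactly rather than being merely positive. The key identity I would use is the rearranged fixed-point equation
\begin{align*}
n\lambda \;=\; \lambdaefct(\lambda)\,\Bigl(n - \Tr\bigl(\bSigma(\bSigma+\lambdaefct(\lambda)\bI)^{-1}\bigr)\Bigr),
\end{align*}
which is equivalent to \eqref{eq:lambda-fixed-point} for $\lambda > 0$ and extends continuously to $\lambda = 0$.

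First, I would argue $\lambdaefct(0) = 0$. Setting $t := \lambdaefct$, the map $t \mapsto t\bigl(n - \Tr(\bSigma(\bSigma + t\bI)^{-1})\bigr)$ vanishes at $t=0$, is continuous, and is strictly increasing on $[0,\infty)$ (this follows by direct differentiation, since $t \mapsto \Tr(\bSigma(\bSigma+t\bI)^{-1})$ is strictly decreasing). In the underparameterized case $n > d$, one has $\Tr(\bSigma(\bSigma+t\bI)^{-1}) \leq \rank(\bSigma) \leq d < n$ uniformly in $t \geq 0$, so this map is a bijection from $[0,\infty)$ onto $[0,\infty)$. Hence $\lambda \mapsto \lambdaefct(\lambda)$ is continuous and strictly increasing on $[0,\infty)$, and in particular $\lambdaefct(0) = 0$.

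Second, for the upper bound, I would use the trivial inequality $\Tr(\bSigma(\bSigma+\lambdaefct(\lambda)\bI)^{-1}) \leq \rank(\bSigma)$ in the rearranged fixed-point equation:
\begin{align*}
\lambdaefct(\lambda) \;=\; \frac{n\lambda}{n - \Tr(\bSigma(\bSigma+\lambdaefct(\lambda)\bI)^{-1})} \;\leq\; \frac{n\lambda}{n - \rank(\bSigma)}.
\end{align*}
To match the stated bound, I observe that with $\lambdaefct(0) = 0$ the definition of $\constantlambdaefct$ from Theorem~\ref{thm:main-ridgeless} specializes to
\begin{align*}
\constantlambdaefct \;=\; 1 - \frac{1}{n}\Tr\bigl(\bSigma^2(\bSigma + 0 \cdot \bI)^{-2}\bigr) \;=\; 1 - \frac{\rank(\bSigma)}{n},
\end{align*}
where we interpret $\bSigma^2 (\bSigma)^{-2}$ as the orthogonal projector onto the range of $\bSigma$, whose trace equals $\rank(\bSigma)$. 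Substituting $\lambda = \constu$ then yields $\lambdaefct(\lambda) \leq \constu/\constantlambdaefct$, as claimed.

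I do not expect any substantive obstacle: the argument is pure bookkeeping on the fixed-point equation, and in fact is simpler than the overparameterized analogue since there is no need for a perturbative comparison between $\lambdaefct(0)$ and $\lambdaefct(\lambda)$. The only minor care needed is the convention for $\bSigma^2(\bSigma)^{-2}$ when $\bSigma$ is rank-deficient, which is resolved by the projector interpretation above and makes the bound $n - \rank(\bSigma) = \constantlambdaefct \cdot n$ on the denominator consistent.
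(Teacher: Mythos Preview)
Your proof is correct and yields exactly the stated bound. The paper takes a marginally different route: following the overparameterized Lemma~\ref{lem:lambdaefct-local-bound} verbatim, it computes the derivative $\partial\zeta/\partial\lambdaefct = n - \Tr\bigl(\bSigma^2(\bSigma+\lambdaefct\bI)^{-2}\bigr) \geq n-d = \constantlambdaefct n$ and then integrates the inverse derivative from $0$ to $n\lambda$. You instead plug the trivial bound $\Tr\bigl(\bSigma(\bSigma+\lambdaefct\bI)^{-1}\bigr)\leq \rank(\bSigma)$ directly into the rearranged fixed-point identity, avoiding the differentiate-then-integrate detour altogether. Both arguments rest on the same fact that traces of this form are bounded by $d$ and that $\constantlambdaefct = 1 - d/n$ in the underparameterized case; your version is a touch more elementary, while the paper's version has the virtue of reusing the overparameterized computation verbatim.
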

\begin{proof}
	By the proof of Lemma~\ref{lem:lambdaefct-local-bound}, we have for $\zeta = n \lambda$,
	\begin{align*}
		\frac{\partial \zeta}{\partial \lambdaefct} & = n - \Tr \prn{\bSigma^2(\bSigma + \lambdaefct(\lambda) \bI)^{-2}}  \geq n - d \geq \constantlambdaefct n \, ,
	\end{align*}
	and thus
	\begin{align*}
		0 \leq \lambdaefct(0) \leq \lambdaefct(\lambda) & =  \int_0^{n \lambda} \frac{\partial \lambdaefct(\zeta)}{\partial \zeta} \de \zeta  \leq \frac{n\lambda}{\constantlambdaefct n} = \frac{\constu}{\constantlambdaefct}  \, .
	\end{align*}
\end{proof}  
In this case, note that
\begin{align*}
	0 \geq \frac{\partial \Tr \prn{\bSigma^2(\bSigma + \lambdaefct \bI)^{-2}}}{\partial \lambdaefct} = -2 \Tr \prn{\bSigma^2(\bSigma + \lambdaefct \bI)^{-3}} \geq -\frac{2}{\sigma_d} \Tr \prn{\bSigma^2(\bSigma + \lambdaefct(0) \bI)^{-2}} = - \frac{2d}{\sigma_d} \, ,
\end{align*}
we can apply Lemma~\ref{lem:lambdaefct-local-bound-underparameterized} and obtain for $\lambdaefct(0) = 0$,
\begin{align*}
	d = \Tr \prn{\bSigma^2(\bSigma + \lambdaefct(0) \bI)^{-2}} & \geq \Tr \prn{\bSigma^2(\bSigma + \lambdaefct(\lambda) \bI)^{-2}}  \geq \prn{1 - \frac{2 \constu}{\constantlambdaefct \sigma_d}}  \cdot d \, .
\end{align*}
We then have
\begin{align*}
	\VAR_n(0) \geq \VAR_n(\lambda) & = \frac{n - \Tr \prn{\bSigma^2(\bSigma + \lambdaefct(0) \bI)^{-2}}}{n - \Tr \prn{\bSigma^2(\bSigma + \lambdaefct(\lambda) \bI)^{-2}}} \cdot \frac{\Tr \prn{\bSigma^2(\bSigma + \lambdaefct(\lambda) \bI)^{-2}}}{\Tr \prn{\bSigma^2(\bSigma + \lambdaefct(0) \bI)^{-2}}} \cdot \VAR_n(0) \nonumber \\
	& = \frac{n -d}{n - \Tr \prn{\bSigma^2(\bSigma + \lambdaefct(\lambda) \bI)^{-2}}} \cdot \frac{\Tr \prn{\bSigma^2(\bSigma + \lambdaefct(\lambda) \bI)^{-2}}}{d} \cdot \VAR_n(0) \nonumber \\
	& \geq \frac{\constantlambdaefct n}{\constantlambdaefct n + \frac{2\constu}{\constantlambdaefct \sigma_d} \cdot d} \cdot \prn{1 - \frac{2 \constu}{\constantlambdaefct \sigma_d}} \cdot \VAR_n(0) \nonumber \\
	& \geq \frac{\constantlambdaefct^2 \sigma_d}{\constantlambdaefct^2 \sigma_d + 2\constu} \cdot \prn{1 - \frac{2\constu}{\constantlambdaefct \sigma_d}} \cdot \VAR_n(0) \, ,
\end{align*}
where in the last line we use $n \geq d$. Again by the elementary inequality $1-(1-a)(1-b) \leq a + b$ for all $0 \leq a, b \leq 1$,
\begin{align}
	\left|\VAR_n(0) - \VAR_n(\lambda) \right| & \leq \brc{1 - \prn{1 - \frac{2 \constu}{\constantlambdaefct^2 \sigma_d + 2\constu}} \cdot \prn{1 - \frac{2\constu}{\constantlambdaefct \sigma_d}}  }  \cdot \VAR_n(0) \nonumber \\
	& \leq \prn{\frac{2 \constu}{\constantlambdaefct^2 \sigma_d + 2 \constu} + \frac{2 \constu}{\constantlambdaefct \sigma_d}} \cdot \VAR_n(0) \leq \frac{4 \constu}{\constantlambdaefct^2 \sigma_d} \cdot \VAR_n(0) \, . \label{eq:V-approx-lambda-under-2}
\end{align}

\paragraph{Part III: Variance approximation} Taking $\lambda = \constu$, we want to invoke Theorem~\ref{thm:main} to bound $|\var_\bX(\lambda) - \VAR_n(\lambda)|$. Using Lemma~\ref{lem:lambdaefct-local-bound-underparameterized}, we know
\begin{align*}
	\constantlambdaefct \leq 1 - \frac{d}{n} \leq  1 - \frac{1}{n}\Tr \prn{\bSigma(\bSigma + \lambdaefct(\lambda) \bI)^{-1}} \leq 1 - \frac{d}{n} \cdot \frac{\sigma_d}{\sigma_d + \constu/ \constantlambdaefct} \leq 1 - \frac{\constantlambdaefct^2 \sigma_d}{\constantlambdaefct \sigma_d + \constu} \, .
\end{align*}
Since by assumption $\constu \leq \constantlambdaefct^2 \sigma_d / 4 \leq \constantlambdaefct \sigma_d$, Eq.~\eqref{asmp:lambda}  holds with $\kappa=\constantlambdaefct/2$,
because
\begin{align*}
	\constantlambdaefct \leq \frac{\lambda}{ \lambdaefct(\lambda)} & = 1 - \frac{1}{n}\Tr \prn{\bSigma(\bSigma + \lambdaefct(\lambda) \bI)^{-1}} \leq 1- \frac{\constantlambdaefct}{2} \, .
\end{align*}
Thus by Theorem~\ref{thm:main}, we have for some constant $\constant' := \constant'(k, \constantx, D) > 0$,
\begin{align*}
	\left|\var_\bX(\lambda) - \VAR_n(\lambda)\right| &  
	\leq \constant' \cdot \frac{\chi_n (\constu n )^3 \log^2 n}{n^{1- \frac{1}{k}} \constantlambdaefct^{9.5}} \cdot \VAR_n(\lambda) \, .
\end{align*}
Combining the above display with Eqs.~\eqref{eq:V-approx-lambda-under-1}, \eqref{eq:V-approx-lambda-under-2} yields
\begin{align*}
	\left|\var_\bX(0) - \VAR_n(0)\right| &\leq \constant' \cdot \frac{\chi_n (\constu n )^3 \log^2 n}{n^{1- \frac{1}{k}} \constantlambdaefct^{9.5}} \cdot \VAR_n(\lambda)  +\frac{2\constu}{\smin} \cdot \var_\bX(\lambda)  + \frac{4\constu}{\constantlambdaefct^2 \sigma_d} \cdot \VAR_n(0) \nonumber \\
	& \leq  \brc{\prn{1 + \frac{2\constu}{\smin}} \prn{1+\constant' \cdot \frac{\chi_n (\constu n)^3 \log^2 n}{n^{1- \frac{1}{k}} \constantlambdaefct^{9.5}}} - 1} \cdot \VAR_n(\lambda) + \frac{4 \constu}{\constantlambdaefct^2 \sigma_d} \cdot \VAR_n(0) \nonumber \\
	& \leq \brc{\prn{1 + \frac{2\constu}{\smin}} \prn{1+\constant' \cdot \frac{\chi_n (\constu n)^3 \log^2 n}{n^{1- \frac{1}{k}} \constantlambdaefct^{9.5}}}  \prn{1 + \frac{4\constu}{\constantlambdaefct^2 \sigma_d}}  - 1} \cdot \VAR_n(0) \nonumber \\
	& \leq \prn{\exp \prn{\frac{2\constu}{\smin} + \frac{4\constu}{\constantlambdaefct^2 \sigma_d} +\constant' \cdot \frac{\chi_n (\constu n)^3 \log^2 n}{n^{1- \frac{1}{k}} \constantlambdaefct^{9.5}}} - 1} \cdot \VAR_n(0) \, .
\end{align*}
Since $\constu \leq \smin / 2$ and $\constu \leq \constantlambdaefct^2 \sigma_d /4$, if we additionally assume
\begin{align*}
	\frac{\chi_n (\constu n)^3 \log^2 n}{n^{1- \frac{1}{k}} \constantlambdaefct^{9.5}}  \leq \frac{1}{\constant'} \, ,
\end{align*}
we can then conclude that 
\begin{align*}
	\exp \prn{\frac{2\constu}{\smin} + \frac{4\constu}{\constantlambdaefct^2 \sigma_d} +\constant' \cdot \frac{\chi_n (\constu n)^3 \log^2 n}{n^{1- \frac{1}{k}} \constantlambdaefct^{9.5}}} - 1 = \bigO \prn{\frac{\constu}{\smin} + \frac{\constu}{\constantlambdaefct^2 \sigma_d} +\constant' \cdot \frac{\chi_n (\constu n)^3 \log^2 n}{n^{1- \frac{1}{k}} \constantlambdaefct^{9.5}}} \, ,
\end{align*}
and the proof is complete with $\constant_2 = 1/\constant'$. 

\paragraph{Part IV: Applying Theorem~\ref{thm:main-ridgeless-underparameterized-raw}}
Finally, we apply Theorem~\ref{thm:main-ridgeless-underparameterized-raw} and obtain the statement of Theorem~\ref{thm:main-ridgeless-underparameterized}. We first notice that in the underparameterized regime with $\zeta = n \lambda$,
\begin{align*}
	\var_\bX(0) &= \lim_{\zeta \downarrow 0} \vareps^2 \Tr \prn{\bSigma \bX^\sT \bX (\bX^\sT \bX + \zeta \bI)^{-2}} =  \lim_{\zeta \downarrow 0} \vareps^2 \Tr \prn{\bSigma (\bX^\sT \bX)^{-1} } =  \vareps^2 \Tr \prn{ (\bZ^\sT \bZ)^{-1} } \, , \\
	\VAR_n(0) & = \lim_{\zeta \downarrow 0} \frac{\vareps^2 \Tr \prn{\bSigma^2 (\bSigma + \lambdaefct \bI)^{-2}}}{n -\Tr \prn{\bSigma^2 (\bSigma + \lambdaefct \bI)^{-2}} } = \frac{\vareps^2 d}{n -d } \, .
\end{align*}
Both $\var_\bX(0)$ and $\VAR_n(0)$ do not depend on the spectrum $\bSigma$. We can therefore without loss of generality assume $\bSigma = \bI$. Next, we identify that
\begin{align*}
	\constantlambdaefct = 
	1 - \frac{1}{n}\Tr \prn{\bSigma^2(\bSigma + \lambdaefct(0) \bI )^{-2}} = 1 - \frac{d}{n} \geq \nu.
\end{align*}
Setting $\constu = n^{-\delta} \nu^2$ in Theorem~\ref{thm:main-ridgeless-underparameterized-raw} for some $\delta > 0$ to be determined, the conditions 
\begin{align*}
	\constu \leq  \constantlambdaefct^2 /4 \, , \qquad  n^{-2D + 1} =\bigO \prn{ \sqrt{\frac{\constantlambdaefct^3 \log^2 n}{n\max \brc{1, \eps}}}} \, 
\end{align*}
hold true provided that
\begin{align*}
	n^{-\delta} \nu^2 \leq  \nu^2 /4 \, , \qquad  n^{-2D + 1} =\bigO \prn{ \sqrt{\frac{\nu^3 \log^2 n}{n\max \brc{1, n^{-\delta} \nu^2 }}}} \, . 
\end{align*}
Those are apparent as long as $\delta > 0$ when $n = \bigOmg_{\delta}(1)$. To verify the remaining conditions, note that
\begin{align*}
	\chi_n (\constu n) = 1 + \frac{\constantsig(\constu n)  \log^2 (\constantsig(\constu n))}{\constu n} \leq  1 + \frac{ n  \log^2 n}{n^{1-\delta} \nu^2} = \bigO \prn{\frac{n^\delta \log^2 n }{\nu^2 }}.
\end{align*}
Let $\delta = \prn{\frac{1}{4}+\frac{\epsilon}{3}} \prn{1 - \frac{1}{k}}$, it then follows 
\begin{align*}
	\chi_n(\constu n)^3 \log^2n \leq \constant_1 n \constantlambdaefct^{4.5} \, , \qquad  \chi_n (\constu n)^3 \log^2 n \leq  \constant_2n^{1- \frac{1}{k}} \constantlambdaefct^{9.5} \, .
\end{align*}
The event $\{\smin \geq 2 \eps\}$ holds with $1 - \bigO(n^{-D+1})$ by Eq.~\eqref{eq:SigmaMin_LB} in Theorem~\ref{thm:main-ridgeless}, and we can finally conclude that
\begin{align*}
	\left|\var_\bX(0) - \VAR_n(0)\right| & = \bigO_{k, \constantx, D} \prn{ \constu \cdot \prn{\frac{1}{\smin} + \frac{1}{\constantlambdaefct^2 }} + \frac{\chi_n (\constu n)^3 \log^2 n}{n^{1- \frac{1}{k}} \constantlambdaefct^{9.5}}} \cdot \VAR_n(0)  \\
	& = \bigO_{k, \constantx, D} \prn{ n^{-\delta} \nu^2 \cdot \frac{1}{\nu^2} + \frac{n^{3\delta} \log^8 n}{n^{1- \frac{1}{k}} \nu^6 \cdot \nu^{9.5}} } \cdot \VAR_n(0) \nonumber \\
	& = \bigO_{k, \constantx,  D} \prn{ \frac{\log^8 n}{n^{\prn{\frac{1}{4} - \epsilon} \prn{1- \frac{1}{k}}} \nu^{15.5} }  } \cdot \VAR_n(0)
\end{align*}
	\section{Proofs for proportional regime} \label{sec:proportional}

\subsection{Proof of Proposition~\ref{prop:proportional-ridge}} \label{proof:proportional-ridge}

To apply Theorem~\ref{thm:main}, we first provide upper bounds for $\constantsig(n)$ and $\const$ implying 
that Assumptions~\ref{asmp:data-dstrb} and Eq.~\eqref{asmp:lambda} hold.
Throughout we use the shorthand $\lambdaprop = \lambda/n\in [1/M,M]$.

\begin{lemma} \label{lem:proportional-ridge-assumption-bounds}
	Under Assumption~\ref{asmp:Sigma-proportional} and $\lambda =  \lambdaprop$, Assumptions~\ref{asmp:data-dstrb} 
	and \eqref{asmp:lambda} hold for
	\begin{align*}
		\constantsig(n) & = \bigO_{M}(n) \, , \\
		\const & = \bigOmg_{M}(1) \, .
	\end{align*}
	For such $\constantsig$ and $\const$, $\chi_n(\lambda)  = \bigO_{\lambdaprop, M}(\log^2 n)$.
\end{lemma}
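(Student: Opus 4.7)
} The plan is to verify the three claims in sequence by directly invoking the quantitative hypotheses of Assumption~\ref{asmp:Sigma-proportional} (namely $M^{-1} \le d/n \le M$, $\sigma_d \ge M^{-1}$) together with the normalization $\sigma_1 = \norm{\bSigma} = 1$. None of these steps requires any random matrix input — everything is a deterministic manipulation of the spectrum of $\bSigma$ and of the fixed point equation~\eqref{eq:lambda-fixed-point}.

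For the effective-rank bound, I would observe that for every $1 \le k \le \min\{n,d\}$ one has
\begin{align*}
\sum_{l=k}^d \sigma_l \;\le\; d \sigma_1 \;\le\; M n,
\qquad
\sigma_k \;\ge\; \sigma_d \;\ge\; M^{-1}.
\end{align*}
Hence $\sum_{l=k}^d \sigma_l \le M^2 n \, \sigma_k$, so one may take $\constantsig(n) = M^2 n$, which satisfies $\constantsig \ge n$ and gives $\constantsig = \bigO_M(n)$ as required. This is the easiest of the three claims.

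The main content is controlling $\const = \min(\lambda/\lambdaefct,\, 1-\lambda/\lambdaefct)$. Using Eq.~\eqref{eq:lambda-fixed-point} in the form
\begin{align*}
n\bigl(1 - \tfrac{\lambda}{\lambdaefct}\bigr) \;=\; \sum_{i=1}^d \frac{\sigma_i}{\sigma_i + \lambdaefct},
\end{align*}
I would first bound $\lambdaefct$ from above: since each summand is at most $\sigma_i/\lambdaefct$, we get $n(1-\lambda/\lambdaefct) \le \Tr(\bSigma)/\lambdaefct \le d/\lambdaefct$, i.e.\ $\lambdaefct - \lambda \le d/n \le M$, yielding $\lambdaefct \le \lambda + M \le 2M$. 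This immediately gives $\lambda/\lambdaefct \ge (1/M)/(2M) = 1/(2M^2)$. For the complementary bound $1 - \lambda/\lambdaefct \ge c(M) > 0$, I would use $\sigma_i \ge \sigma_d \ge 1/M$ and the just-established $\lambdaefct \le 2M$ to write
\begin{align*}
n\bigl(1 - \tfrac{\lambda}{\lambdaefct}\bigr) \;\ge\; \frac{d \sigma_d}{\sigma_d + \lambdaefct} \;\ge\; \frac{(n/M) \cdot (1/M)}{1 + 2M} \;=\; \frac{n}{M^2(1+2M)},
\end{align*}
giving $1 - \lambda/\lambdaefct \ge 1/(M^2(1+2M))$. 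Combining the two lower bounds proves $\const = \bigOmg_M(1)$. I do not expect any technical obstacle here — it is a short computation once one remembers to upper bound $\lambdaefct$ first in order to lower bound the trace sum.

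For the final claim on $\chi_n(\lambda)$, I would plug the previous estimates directly into the definition~\eqref{eq:def-chi-n}: with $\sigma_{\lfloor \eta n\rfloor} \le \sigma_1 = 1$, $\constantsig \le M^2 n$, $\log^2(\constantsig) = \bigO_M(\log^2 n)$, and $\lambda \ge 1/M$, we get
\begin{align*}
\chi_n(\lambda) \;\le\; 1 + \frac{\bigO_M(n \log^2 n)}{n\lambda} \;=\; \bigO_{\lambdaprop, M}(\log^2 n),
\end{align*}
concluding the proof.
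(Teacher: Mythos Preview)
Your proposal is correct and follows essentially the same approach as the paper's proof. The only cosmetic difference is in the effective-rank bound: the paper uses the sortedness of the eigenvalues directly via $\sum_{l=k}^d \sigma_l \le d\sigma_k \le Mn\,\sigma_k$ (giving $\constantsig = Mn$), whereas you bound numerator and denominator separately using $\sigma_1=1$ and $\sigma_d\ge 1/M$ (giving $\constantsig = M^2 n$); both yield $\constantsig = \bigO_M(n)$, and the remaining two steps are substantively identical.
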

\begin{proof}
	By Assumption~\ref{asmp:Sigma-proportional} we know $d \leq Mn$ and therefore for any $1 \leq k \leq \min \brc{n, d}$, 
	\begin{align*}
		\sum_{l=k}^d \sigma_l \leq d \sigma_k \leq M n \sigma_k =: \constantsig \sigma_k \, .
	\end{align*}
	Using $\lambda  = \lambdaprop$ into Eq.~\eqref{eq:lambda-fixed-point}, we have
	\begin{align*}
		1 -\frac{\lambdaprop}{\lambdaefct} = \frac{1}{n} \Tr \prn{\bSigma(\bSigma + \lambdaefct \bI)^{-1}} \, . 
	\end{align*}
	which implies
	\begin{align*}
		 1 -\frac{\lambdaprop}{\lambdaefct} \leq \frac{d}{n} \cdot \frac{1}{1 + \lambdaefct} \leq \frac{M}{\lambdaefct}  \, .
	\end{align*}
	This implies $\lambdaefct \leq \lambdaprop + M$ and therefore
	\begin{align*}
	 1 - \frac{\lambda}{\lambdaefct} = 1 - \frac{\lambdaprop}{\lambdaefct} \leq 1 - \frac{\lambdaprop}{\lambdaprop + M} \, .
	\end{align*}
	On the other hand,
	\begin{align*}
		1 - \frac{\lambda}{\lambdaefct} = 1 - \frac{\lambdaprop}{\lambdaefct} \geq \frac{d}{n} \cdot \frac{\sigma_d}{\sigma_d + \lambdaefct} \geq \frac{1}{M + M^2\lambdaefct} \geq \frac{1}{M + M^2 \lambdaprop + M^3} \, ,
	\end{align*}
	and therefore we have
	\begin{align*}
		\const : = \min \brc{\frac{\lambdaprop}{\lambdaprop + M}, \frac{1}{M + M^2 \lambdaprop + M^3}} = \bigOmg_{M}(1) \, .
	\end{align*}
	
	Finally, we can bound $\chi_n(\lambda)$ as $\constantsig = \bigO_{M}(n)$, and thus
	\begin{align*}
		\chi_n(\lambda) & = 1 + \frac{\sigma_{\lfloor \eta n\rfloor} \constantsig  \log^2   (\constantsig)}{n \lambdaprop} = \bigO_{M} \prn{1 + \frac{\log^2 n}{\lambdaprop}} = \bigO_{M} \prn{\log^2 n} \, .
	\end{align*}
\end{proof}

For any unit vector $\bu \in \real^d$, since
\begin{align}
	\Ffct_0(n\lambda, \muefct(n\lambda, 0); \bu \bu^\sT) & =n \lambda \Tr \prn{ \bu \bu^\sT \bSigma (n\lambda \bI + \muefct(n\lambda, 0) \bSigma)^{-1}} \nonumber \\
	& \geq \frac{n\lambda}{n\lambda M + \muefct(n\lambda, 0)} \Tr \prn{\bu \bu^\sT }= \frac{n\lambda}{d(n\lambda M + \muefct(n\lambda, 0))} \Tr (\bI) \nonumber \\
	& \geq \frac{n\lambda + \muefct(n\lambda, 0)}{d(n\lambda M + \muefct(n\lambda, 0))} \cdot n\lambda \Tr \prn{\bSigma(n\lambda \bI + \muefct(n\lambda, 0) \bSigma)^{-1}} \nonumber \\
	& \geq \frac{1}{dM}	\Ffct_0(n\lambda, \muefct(n\lambda, 0); \bI) \geq \frac{1}{n M^2}\Ffct_0(n\lambda, \muefct(n\lambda, 0); \bI)  \, , \label{eq:rho-lambda-bound-proportional}
\end{align}
we have $\rho(\lambda) = \bigOmg_{M}(n^{-1})$. Together with Lemma~\ref{lem:proportional-ridge-assumption-bounds}, 
since  $n = \bigOmg_{M, \constantx, D}(1)$, the following conditions 
in Theorem~\ref{thm:main} hold
\begin{align*}
	\chi_n(\lambda)^3 \log^2 n \leq \constant n \const^{4.5} \min \brc{ 1,  \sqrt{\rho(\lambda)}} \, , \qquad  n^{-2D + 1} =  \bigO \prn{ \sqrt{\frac{\const^3 \log^2 n}{n\max \brc{1, \lambda}}}} \, .
\end{align*}
Additionally, $\lambda k n^{1-\frac{1}{k}} \leq n \const /2 $ is equivalent to $\lambdaprop k n^{-\frac{1}{k}} \leq \const / 2$, which holds for $n = \bigOmg_{k, M}(1)$. Finally, by using 
$\lambdaefct(\lambda) \leq \lambdaprop+M = \bigO_{M}(1)$, as shown above, we can conclude from Theorem~\ref{thm:main}  and Lemma~\ref{lem:proportional-ridge-assumption-bounds} that,
for $n = \bigOmg_{k, M, \constantx, D}(1)$, with probability $1 - \bigO_{k}(n^{-D+1})$,
\begin{align*}
	\left|\var_\bX(\lambda) - \VAR_n(\lambda)\right| &  = \bigO_{k, \constantx, D} \prn{\frac{\chi_n (\lambda)^3 \log^2 n}{n^{1- \frac{1}{k}} \const^{9.5}}} \cdot \VAR_n(\lambda) = \bigO_{k,  M, \constantx, D} \prn{\frac{\log^8 n}{n^{1 - \frac{1}{k}}}} \cdot \VAR_n(\lambda) \, , \\
	\left|\bias_\bX(\lambda) - \BIAS_n(\lambda)\right| & 	= \bigO_{k, \constantx, D} \prn{\frac{\lambdaefct(\lambda)^{k+1}}{n \const^3} + \frac{\chi_n (\lambda)^3 \log^2 n}{\sqrt{\rho(\lambda)} n^{1- \frac{1}{k} } \const^{8.5}}}  \cdot \BIAS_n(\lambda) = \bigO_{k, M, \constantx, D} \prn{\frac{\log^8 n}{n^{\half - \frac{1}{k}}}}  \cdot \BIAS_n(\lambda) \, .
\end{align*}
The proof is complete.

\subsection{Proof of Proposition~\ref{prop:proportional-ridgeless}} \label{proof:proportional-ridgeless}

\paragraph{Overparameterized regime} When $d /n \geq 1 + M^{-1}$, by
\begin{align*}
	n = \Tr \prn{\bSigma(\bSigma + \lambdaefct(0)\bI)^{-1}} \geq \frac{d }{1 + M \lambdaefct(0)} \, ,
\end{align*}
we can deduce that $\lambdaefct(0) \geq M^{-1} \cdot \prn{d/n - 1} \geq M^{-2}$. Hence, 
\begin{align*}
	n - \Tr \prn{\bSigma^2(\bSigma + \lambdaefct(0)\bI)^{-2}} \geq n - \frac{1}{1 + \lambdaefct(0)} \cdot \Tr \prn{\bSigma(\bSigma + \lambdaefct(0)\bI)^{-1}} \geq \frac{\lambdaefct(0)}{1 + \lambdaefct(0)} \cdot n \geq  \frac{1}{M^2 + 1} \cdot n \, ,
\end{align*}
and therefore, in Theorem~\ref{thm:main-ridgeless} we can take $\constantlambdaefct \ge 1/(M^2 + 1) = \bigTht_M(1)$. By Eq.~\eqref{eq:rho-lambda-bound-proportional} we know $\rho(0) = \Omega_M(n^{-1})$. By~\cite{bai2008limit, rudelson2009smallest}, we know when $n = \bigOmg_{\constantx, M, D}(1)$, with probability $1 - \bigO(n^{-D+1})$ we have $\smin = \bigOmg_{M, \constantx, D}(1)$. Substituting $\lambdaefct(0) = \bigOmg_{M}(1)$ and $\constantsig(n) = \bigO_M(n)$ (c.f.~Lemma~\ref{lem:proportional-ridge-assumption-bounds}) into $\chi_n'(\const)$, we get for $\const = \bigO(1)$,
\begin{align*}
	\chi_n'(\const) = \bigO_M \prn{\frac{\log^2 n}{\const}} \, .
\end{align*}

Thus, by taking $\const = n^{-1/14}$, the conditions below hold for $n = \bigOmg_{k, M, \constantx, D}(1)$ given $k \geq 15$,
\begin{align*}
	& \const \leq \min \brc{\smin / (8 \lambdaefct(0)), \constantlambdaefct^2/8} \, , \qquad  n^{-2D + 1} =  \bigO \prn{ \sqrt{\frac{\const^3 \log^2 n}{n\max \brc{1, \lambda}}}}\, , \nonumber \\
	&  \chi_n' (\const)^3 \log^2 n \leq  \constant_2n^{1- \frac{1}{k}} \const^{9.5} \, ,
\end{align*}
and by taking $\const = n^{-1/28}$, the following additional conditions hold when $n = \bigOmg_{k, M, \constantx, D}(1)$ given $k \geq 29$,
\begin{align*}
	\chi_n'(\const)^3 \log^2n \leq \constant_1 n \const^{4.5} \min \brc{1, \sqrt{\rho(0)}}\, , \qquad  \frac{\lambdaefct(0)^{k+1}}{n \const^3}  +  \frac{\chi_n' (\const)^3 \log^2 n}{\sqrt{\rho(0)} n^{1- \frac{1}{k}} \const^{8.5}} \leq \constant_3 \, .
\end{align*}
We can then invoke Theorem~\ref{thm:main-ridgeless} by taking $\const = n^{-1/14}$ for variance approximation and $n^{-1/28}$ for bias approximation. Therefore, we can conclude that for $k \geq 29$,
\begin{align*}
	\left|\var_\bX(0) - \VAR_n(0)\right| & = \bigO_{k, M, \constantx, D} \prn{ n^{-1/14} + \frac{\log^8 n}{n^{1.5/14- \frac{1}{k}} }} \cdot \VAR_n(0) \, , \nonumber \\
		\left|\bias_\bX(0) - \BIAS_n(0)\right| & = \bigO_{k, M, \constantx, D} \prn{ n^{-1/28} + n^{-25/28}  +  \frac{\log^8 n}{ n^{2.5/28- \frac{1}{k}} }} \cdot \BIAS_n(0) +  \bigO \prn{ \norm{\boldbeta}^2 n^{-1/28}} \, .
\end{align*}
Use again $\lambdaefct(0) = \bigOmg_M(1)$ and $\constantlambdaefct = \bigOmg_M(1)$, we know
\begin{align*}
	 \BIAS_n(0) & = \frac{\boldbeta^\sT \prn{\bSigma / \lambdaefct(0)+ \bI}^{-2} \bSigma \boldbeta}{1 - n^{-1} \Tr \prn{\bSigma^2 (\bSigma +\lambdaefct(0) \bI)^{-2}}} = \bigOmg_{M}(\norm{\boldbeta}^2) \, .
\end{align*}
We conclude the proof by fixing $k \geq 57$, and thus
\begin{align*}
	\left|\var_\bX(0) - \VAR_n(0)\right| & = \bigO_{ M, \constantx, D} \prn{ n^{-1/14}} \cdot \VAR_n(0) \, , \nonumber \\
	\left|\bias_\bX(0) - \BIAS_n(0)\right| & = \bigO_{M, \constantx, D} \prn{n^{-1/28}} \cdot \BIAS_n(0)\, .
\end{align*}
\paragraph{Underparameterized regime} Suppose $M^{-1} \leq d/ n \leq 1 - M^{-1}$, we can invoke
 Theorem~\ref{thm:main-ridgeless-underparameterized} with $\constantlambdaefct = M^{-1}$. By~\cite{bai2008limit}
  we have $\smin = \bigOmg_{M, \constantx, D}(1)$. Also as we can take $\constantsig(n) = n$ in this case, we have
\begin{align*}
	\chi_n(\constu n) \leq 1 + \frac{n \log^2 n}{\constu n} = \bigO \prn{\frac{\log^2 n}{\const}},
\end{align*}
and therefore the conditions below hold for $n = \bigOmg_{k, M, \constantx, D}(1)$ by taking $\constu = n^{-1/4}$ when $k \geq 5$,
\begin{align*}
	&\constu \leq \min \brc{\smin / 2, \constantlambdaefct^2 \sigma_d/4} \, , \qquad \chi_n(\constu n)^3 \log^2n \leq \constant_1 n \constantlambdaefct^{4.5} \, , \qquad  n^{-2D + 1} = \bigO \prn{ \sqrt{\frac{\constantlambdaefct^3 \log^2 n}{n\max \brc{1, \lambda}}}} \, ,  \nonumber \\
	& \chi_n (\constu n)^3 \log^2 n \leq  \constant_2n^{1- \frac{1}{k}} \constantlambdaefct^{9.5} \, .
\end{align*}
We thus have
\begin{align*}
	\left|\var_\bX(0) - \VAR_n(0)\right| & = \bigO_{k, \constantx, D} \prn{ n^{-1/4} \cdot \prn{\frac{1}{\smin} + \frac{1}{\constantlambdaefct^2 \sigma_d}} + \frac{\log^8 n}{n^{\frac{1}{4}- \frac{1}{k}} \constantlambdaefct^{9.5}}} \cdot \VAR_n(0) \, .
\end{align*}
By fixing $k > 20$, we know for all $n = \bigOmg_{ M, \constantx, D}(1)$,
\begin{align*}
	\left|\var_\bX(0) - \VAR_n(0)\right| & = \bigO_{M, \constantx, D} \prn{\frac{1}{n^{\frac{1}{5}}}} \cdot \VAR_n(0) \, .
\end{align*}

	\section{Proofs for polynomially varying spectrum regime} \label{sec:bounded-varying}

\subsection{Proof of Proposition~\ref{prop:bounded-varying-ridge}} \label{proof:bounded-varying-ridge}

Throughout this proof, we will use the shorthand $\lambdabv:=\lambda/ \lambdaefct(0)$.
We begin by controlling $\lambdaefct(0)$. Since
\begin{equation} \label{eq:lambdaefct-zero-lower-bound}
\begin{aligned}
	\Tr \prn{\bSigma(\bSigma + \lambdaefct(0)\bI)^{-1}} & = n \, , \\
	\Tr \prn{\bSigma(\bSigma + \sigma_{2n} \bI)^{-1}} & \geq \sum_{i=1}^{2n} \frac{\sigma_i}{\sigma_i + \sigma_{2n}}  \geq n \, ,
\end{aligned}
\end{equation}
we know that $\lambdaefct(0) \geq \sigma_{2n}$ and therefore $\psi(\delta) \lambdaefct(0) \geq \psi(\delta) \sigma_{2n} \geq \sigma_{\lfloor 2\delta n \rfloor}$ for any $\delta \in (0, 1]$. We then have
\begin{align*}
	\Tr \prn{\bSigma(\bSigma + \psi(\delta)\lambdaefct(0)\bI)^{-1}} & \leq  2 \delta n + \sum_{i=\lfloor 2 \delta n \rfloor + 1}^{\infty} \frac{\sigma_i}{\sigma_i +\psi(\delta) \lambdaefct(0)} \nonumber \\
	& \leq 2 \delta n + \frac{\sigma_{\lfloor 2 \delta n \rfloor} + \lambdaefct(0)}{\sigma_{\lfloor 2 \delta n \rfloor} + \psi(\delta) \lambdaefct(0)} 	\cdot \Tr \prn{\bSigma(\bSigma + \lambdaefct(0)\bI)^{-1}} \nonumber \\
	& \leq  \prn{\frac{1}{2} + 2 \delta  + \frac{1}{\psi(\delta)} } \cdot n \, ,
\end{align*}
where in the last inequality we use $\psi(\delta) \lambdaefct(0) \geq \sigma_{\lfloor 2 \delta n \rfloor}$. Further
\begin{align*}
	n - \frac{n\lambda}{\psi(\delta) \lambdaefct(0)} & = \prn{1 - \frac{\lambdabv}{\psi(\delta)}} \cdot n \, ,
\end{align*}
and therefore, using the previous inequality, we conclude the following. If $\delta>0$ is such that
\begin{align*}
1 - \frac{\lambdabv}{\psi(\delta)} \ge \frac{1}{2} + 2 \delta  + \frac{1}{\psi(\delta)} \, ,
\end{align*}
then $\lambdaefct(\lambda)\le \psi(\delta) \lambdaefct(0)$.
Let $\delta_0=\delta_0(M,\psi)$ be defined follows
\begin{align*}
\delta_0:=\sup\Big\{  \delta\in(0,1/2):\;	2 \delta + \frac{1 + M}{\psi(\delta)} \le \frac{1}{2} \Big\}\, .
\end{align*}
Then $\lambdaefct(0) \leq \lambdaefct(\lambda) \leq \psi(\delta_0) \lambdaefct(0)$. Hence
\begin{align*}
	 \lambdabv = \frac{\lambda}{\lambdaefct(0)} \geq \frac{\lambda}{\lambdaefct(\lambda)} & = \frac{\lambdaefct(0) \lambdabv}{\lambdaefct(\lambda)} \geq \frac{\lambdabv}{\psi(\delta_0)} \, .
\end{align*}
Combined with $\const = \min \prn{\frac{\lambda}{\lambdaefct(\lambda)}, 1 - \frac{\lambda}{ \lambdaefct(\lambda)}} \in [1/M, 1-1/M] $ we have $\lambdaefct(\lambda) = \bigTht_{\psi, M}(\lambdaefct(0))$ and $\lambdabv =  \bigTht_{\psi, M} \prn{\frac{\lambda}{ \lambdaefct(\lambda)}} =  \bigTht_{\psi, M}(1)$.
%and
%\begin{align*}
%	1 - \frac{\lambda}{n \lambdaefct(\lambda)}  & = \frac{1}{n}\Tr \prn{\bSigma(\bSigma + \lambdaefct(\lambda) \bI)^{-1}} \geq \frac{1}{n}\Tr \prn{\bSigma(\bSigma + \psi(\delta_0)\lambdaefct(0) \bI)^{-1}}  \geq \frac{1}{\psi(\delta_0)}  \, .
%\end{align*}
%Putting together the above displays, we conclude Eq~\eqref{asmp:lambda} holds for $\const \ge \min \{\lambdabv, 1\} / \psi(\delta_0) = 
%\bigOmg_{M, \psi}(1)$ when $n = \bigOmg_{M, \psi}(1)$ (recall that we need $2\delta_0 n \geq 1$).

To verify the conditions of Theorem~\ref{thm:main}, we first assume $\constantsig = \bigO(n^{1 + \gamma})$ for
 $ \gamma \in[0, 1/3)$,
\begin{align*}
	\chi_n (\lambdaefct(0) \lambdabv) & = 1 + \frac{\sigma_{\lfloor \eta n\rfloor} \constantsig  \log^2 (\constantsig)}{n \lambdaefct(0) \lambdabv} \leq 1 + \frac{\sigma_{\lfloor \eta n\rfloor} \constantsig  \log^2 (\constantsig)}{n \sigma_{2n} \lambdabv} \leq 1 + \frac{\psi(\eta/4) \constantsig  \log^2 (\constantsig)}{n  \lambdabv} \nonumber \\
	& = \bigO_{M, \psi, \constantx} \prn{\frac{\constantsig \log^2 n}{n}} = \bigO_{M, \psi, \constantx} \prn{n^{\gamma } \log^2 n}\, ,
\end{align*}
and with $\const = \bigOmg_{M}(1)$, the conditions
\begin{align*}
	\chi_n( \lambdaefct(0) \lambdabv)^3 \log^2 n \leq \constant n \const^{4.5} \, , \qquad  n^{-2D + 1} = \bigO \prn{ \sqrt{\frac{\const^3 \log^2 n}{n\max \brc{1, \lambda}}}} \, ,
\end{align*}
hold if $n = \bigOmg_{M, \psi, \gamma, \constantx, D}(1)$. We then can apply Theorem~\ref{thm:main} to
approximate the variance. Given any positive integer $k$, if $n = \bigOmg_{k, M, \psi, \gamma, \constantx, D}(1)$, it 
holds with probability $1 - \bigO_k(n^{-D+1})$ that
\begin{align*}
	\left|\var_\bX(\lambda) - \VAR_n(\lambda)\right| &  
	= \bigO_{k, M, \psi, \constantx, D} \prn{\frac{(\constantsig/n)^3 \log^8 n}{n^{1- \frac{1}{k}} }} \cdot \VAR_n(\lambda) \, .
\end{align*}

If additionally  $\constantsig = \bigO_{M, \psi, \constantx}(n^{1 + \gamma} (\rho(\lambda))^{1/6})$, we have 
$$\chi_n(\lambda)^3 \log^2 n \leq \constant n \const^{4.5} \sqrt{\rho(\lambda)} \, ,$$ when 
$n = \bigOmg_{M, \psi, \gamma, \constantx, D}(1)$. The condition $\lambda k n^{1-\frac{1}{k}} \leq n \const /2$ is equivalent to $\lambdaefct(0) \lambdabv kn^{-1/k} \leq \const / 2$, which holds when $n = \bigOmg_{k, M, \psi}(1)$ since we have assumed $\lambdaefct(0) = \bigO(1)$. Therefore, we can appeal to the bias approximation result in Theorem~\ref{thm:main}, yielding
\begin{align*}
	\left|\bias_\bX(\lambda) - \BIAS_n(\lambda)\right| & 
	= \bigO_{k, M, \psi, \constantx, D} \prn{\frac{(\constantsig / n)^3 \log^8 n}{\sqrt{\rho(\lambda)} n^{1- \frac{1}{k} }}}  \cdot \BIAS_n(\lambda)\, .
\end{align*}

\subsection{Proof of Proposition~\ref{prop:bounded-varying-ridgeless}} \label{proof:bounded-varying-ridgeless}
We provide the following bounds for the quantities in Theorem~\ref{thm:main-ridgeless}.
\begin{lemma}  \label{lem:bounded-varying-ridgeless-assumption-bounds}
	Under the same Assumptions of Theorem~\ref{thm:main-ridgeless}, we can take
	\begin{align*}
		\constantlambdaefct = \bigOmg_{\psi}(1) \, ,
	\end{align*}
	when $n = \bigOmg(1)$. For $\const = \bigO(1)$, we have 
	\begin{align*}
			\chi_n'(\const) = \bigO_{\psi} \prn{\frac{ \log^{2 + \bigO(1)} n}{\const}} \, .
	\end{align*} 
	In addition, $\smin = \bigOmg_{\psi, \constantx}(\sigma_n)$ with probability $1 - \bigO(n^{-D+1})$ .
\end{lemma}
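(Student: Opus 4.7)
The lemma comprises three independent assertions, and my plan is to handle each by translating the statement into an inequality on the eigenvalues of $\bSigma$ and then exploiting the polynomially varying assumption together with the hypotheses $\lambdaefct(0)/\sigma_n = \bigO(\log^{\bigO(1)} n)$ and $\constantsig(n) = \bigO(n \log^{\bigO(1)} n)$ given in Proposition~\ref{prop:bounded-varying-ridgeless}.

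For the lower bound on $\constantlambdaefct$, the key identity is
\[
n \constantlambdaefct = \sum_i \frac{\sigma_i \lambdaefct(0)}{(\sigma_i + \lambdaefct(0))^2} = \sum_i \frac{r_i}{(1 + r_i)^2}, \qquad r_i := \sigma_i/\lambdaefct(0),
\]
together with the fixed-point equation $n = \sum_i r_i/(1 + r_i)$. Let $k_1 := \max\{i : \sigma_i \ge \lambdaefct(0)\}$; since each term with $i \le k_1$ contributes at least $1/2$ to the latter sum, $k_1 \le 2n$. I then distinguish two cases. If $k_1 \le n/2$, the residual mass $\sum_{i > k_1} r_i \ge \sum_{i > k_1} r_i/(1 + r_i) = n - \sum_{i \le k_1} r_i/(1 + r_i) \ge n/2$, and the bound $r/(1 + r)^2 \ge r/4$ valid for $r \le 1$ yields $n \constantlambdaefct \ge n/8$. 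If $k_1 > n/2$, I apply polynomially varying twice: first to $2k_1$ (with $\delta = 1/2$) to get $\sigma_{k_1} \le \psi(1/2)\, \sigma_{2 k_1} \le \psi(1/2)\, \sigma_{k_1 + 1} < \psi(1/2)\, \lambdaefct(0)$, and then to $k_1$ to get $\sigma_{\lfloor k_1/2 \rfloor} \le \psi(1/2)^2\, \lambdaefct(0)$, so that $1/r_i \ge 1/\psi(1/2)^2$ for every $i \in [\lfloor k_1/2\rfloor, k_1]$; combining this with $r/(1 + r)^2 \ge 1/(4 r)$ for $r \ge 1$ gives $n \constantlambdaefct \ge (k_1/2)/(4\psi(1/2)^2) \ge n/(16\, \psi(1/2)^2)$. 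Either way $\constantlambdaefct \ge 1/(16\, \psi(1/2)^2) = \bigOmg_\psi(1)$, and this constant is independent of the polylogarithmic inflation factors.

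For $\chi_n'(\const)$, I use $\lambdaefct(0) \ge \sigma_{2n}$ from Proposition~\ref{prop:control-lambda-zero} and polynomially varying with $\delta = \eta/2$ to bound $\sigma_{\lfloor \eta n \rfloor}/\lambdaefct(0) \le \sigma_{\lfloor \eta n\rfloor}/\sigma_{2n} \le \psi(\eta/2)$; plugging this and the hypothesis $\constantsig = \bigO(n\log^{\bigO(1)} n)$ into the definition~\eqref{eq:def-chi-n-prime} gives $\chi_n'(\const) = \bigO_\psi(\log^{2 + \bigO(1)} n/\const)$ directly. For the bound on $\smin$, I invoke Eq.~\eqref{eq:SigmaMin_LB} of Theorem~\ref{thm:main-ridgeless}: in the infinite-dimensional (or strongly overparametrized) regime the separation condition $\min\{|d/n - 1|, d/n\} \ge \varepsilon$ is satisfied trivially, so with probability $1 - \bigO(n^{-D+1})$ we have $\smin \ge \sigma_{\lfloor \constant_5 n\rfloor}$. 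A final application of polynomially varying with $\delta = \min(1/\constant_5, 1)$ gives $\sigma_{\lfloor \constant_5 n\rfloor} \ge \sigma_n/\psi(\min(1/\constant_5, 1))$, proving $\smin = \bigOmg_{\psi, \constantx}(\sigma_n)$.

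The main technical obstacle is the case $k_1 > n/2$ in the first item, where $\lambdaefct(0)/\sigma_n$ may genuinely grow like a polylog (as happens in the regularly varying $\alpha = 1$ setting of Theorem~\ref{thm:bounded-varying-ridge-check-assumptions}), so many $r_i$'s are large and the naive bound on small-$r_i$ terms is wasteful. The polynomially varying hypothesis turns out to exactly rescue the argument by forcing $r_i$ to remain bounded above by $\psi(1/2)^2$ on a constant fraction of indices below $k_1$, which is why the final estimate depends only on $\psi$ and not on the polylog constants.
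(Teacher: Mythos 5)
Your three-part strategy is correct, and your bounds for $\chi_n'(\const)$ and $\smin$ are essentially the paper's: plug $\lambdaefct(0)\ge\sigma_{2n}$ and the polynomially varying hypothesis into the definition of $\chi_n'$, and combine Eq.~\eqref{eq:SigmaMin_LB} with a single application of polynomial variation to pass from $\sigma_{\lfloor\constant_5 n\rfloor}$ to $\sigma_n$.

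For the lower bound on $\constantlambdaefct$, you take a genuinely different route from the paper. You rewrite $n\constantlambdaefct=\sum_i r_i/(1+r_i)^2$ with $r_i=\sigma_i/\lambdaefct(0)$, set $k_1:=\max\{i:r_i\ge 1\}$ (so $k_1\le 2n$ by the fixed-point equation), and argue by cases: if $k_1\le n/2$ the tail mass $\sum_{i>k_1}r_i/(1+r_i)\ge n/2$ carries the bound via $r/(1+r)^2\ge r/4$ on $r\le 1$, whereas if $k_1>n/2$ the polynomially varying condition (applied at $2k_1$ and then $k_1$) pins $r_i$ into $[1,\psi(1/2)^2]$ on a constant fraction of indices below $k_1$, giving a per-index lower bound $1/(4\psi(1/2)^2)$. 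The paper's argument avoids the case split entirely: it splits the quadratic trace at index $\lfloor n/2\rfloor$ and extracts the common factor $\sigma_{\lfloor n/2\rfloor}/(\sigma_{\lfloor n/2\rfloor}+\lambdaefct(0))$ out of the tail, reducing the claim to the facts that the tail of $\Tr(\bSigma(\bSigma+\lambdaefct(0)\bI)^{-1})$ carries at least $n/2$ and that $\lambdaefct(0)/(\sigma_{\lfloor n/2\rfloor}+\lambdaefct(0))\ge 1/(\psi(1/4)+1)$ by polynomial variation plus $\lambdaefct(0)\ge\sigma_{2n}$. Both arguments are elementary, produce a constant depending only on $\psi$, and correctly avoid using the polylogarithmic hypotheses on $\lambdaefct(0)/\sigma_n$ and $\constantsig$; the paper's factor-extraction is tighter in that it treats all indices above $n/2$ uniformly, while your split at the critical index $k_1$ makes the mechanism more transparent at the cost of a two-case analysis. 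Your estimate is correct.
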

\begin{proof}
	Since
	\begin{align*}
		n - \Tr \prn{\bSigma^2 \prn{\bSigma + \lambdaefct(0) \bI}^{-2}} & \geq n - \sum_{i=1}^{\lfloor n / 2 \rfloor} \frac{\sigma_i}{\sigma_i + \lambdaefct(0)} - \frac{\sigma_{\lfloor n/2 \rfloor}}{\sigma_{\lfloor n/2 \rfloor} + \lambdaefct(0)} \sum_{i=\lfloor n / 2 \rfloor + 1}^{\infty} \frac{\sigma_i}{\sigma_i + \lambdaefct(0)} \nonumber \\
		& =  \frac{\lambdaefct(0)}{\sigma_{\lfloor n/2 \rfloor} + \lambdaefct(0)} \sum_{i=\lfloor n / 2 \rfloor + 1}^{\infty} \frac{\sigma_i}{\sigma_i + \lambdaefct(0)} \, , 
	\end{align*}
	where in the last line we use $\Tr \prn{\bSigma(\bSigma + \lambdaefct(0) \bI)^{-1}} = n$. Since
	\begin{align*}
		\sum_{i=\lfloor n / 2 \rfloor + 1}^{\infty} \frac{\sigma_i}{\sigma_i + \lambdaefct(0)} & = n - \sum_{i=1}^{\lfloor n / 2 \rfloor} \frac{\sigma_i}{\sigma_i + \lambdaefct(0)} \geq n - \frac{n}{2} = \frac{n}{2} \, ,
	\end{align*}
	and $\lambdaefct(0) \geq \sigma_{2n}$ from Eq.~\eqref{eq:lambdaefct-zero-lower-bound}, we know
	\begin{align*}
		n - \Tr \prn{\bSigma^2 \prn{\bSigma + \lambdaefct(0) \bI}^{-2}} & \geq \frac{\sigma_{2n}}{\sigma_{\lfloor n/2 \rfloor} + \sigma_{2n}} \cdot \frac{n}{2}  \geq \frac{1}{2\psi(1/4) + 2}  \cdot n \, .
	\end{align*}
	We can hence take $\constantlambdaefct := \prn{2\psi(1/4) + 2}^{-1} = \bigOmg_{\psi}(1)$.
	
	Substituting $\lambdaefct(0) \geq \sigma_{2n}$ and $\constantsig = \bigO(n^{1 + \gamma})$ for some $1 \leq \gamma < 1/2$ into Eq.~\eqref{eq:def-chi-n-prime}, we have for $\const = \bigO(1)$,
	\begin{align*} 
		\chi_n'(\const) & = \bigO \prn{ \frac{\sigma_{\lfloor \eta n\rfloor} \constantsig  \log^2 n}{\const n \sigma_{2n}}} = \bigO_{\psi} \prn{\frac{\log^{2 + \bigO(1)} n}{\const}} \, .
	\end{align*} 
	
	We then finally apply Eq.~\eqref{eq:SigmaMin_LB} in Theorem~\ref{thm:main-ridgeless} and conclude that by taking $k = \lfloor\constant(\constantx) n\rfloor$ for some $\constant > 0$, and $n = \bigOmg_{\constantx, D}(1)$, $\smin \geq \sigma_k \geq \sigma_n / \psi(\constant)$ by Assumption~\ref{asmp:Sigma-bounded-varying}.

\end{proof}

By Lemma~\ref{lem:bounded-varying-ridgeless-assumption-bounds} and the assumption $\lambdaefct(0) / \sigma_n = \bigO(\log^{\bigO(1)} n)$, we know by taking $\const = n^{-1/14}$, the conditions below hold for $n = \bigOmg_{k, \psi, \constantx, D}(1)$ whenever $k \geq 15$,
\begin{align*}
	&\const \leq \min \brc{\smin / (8 \lambdaefct(0)), \constantlambdaefct^2/8} \, , \qquad \chi_n'(\const)^3 \log^2n \leq \constant_1 n \const^{4.5} \, , \qquad  n^{-2D + 1} = \bigO \prn{ \sqrt{\frac{\const^3 \log^2 n}{n\max \brc{1, \lambda}}}}\, , \nonumber \\
	&\chi_n' (\const)^3 \log^2 n \leq  \constant_2n^{1- \frac{1}{k}} \const^{9.5} \, .
\end{align*}
Therefore by the variance approximation in Theorem~\ref{thm:main-ridgeless}, it holds
\begin{align*}
	\left|\var_\bX(0) - \VAR_n(0)\right| & = \bigO_{k, \psi, \constantx, D} \prn{ n^{-1/14} \log^{\bigO(1)}(n) + \frac{\log^{8 + \bigO(1)} n}{n^{1.5/14- \frac{1}{k}} }} \cdot \VAR_n(0) \, .
\end{align*}
Fixing $k \geq 29$, we have with probability $1 - \bigO(n^{-D+1})$, we know as $n = \bigOmg_{ \psi, \constantx, D}(1)$,
\begin{align*}
	\left|\var_\bX(0) - \VAR_n(0)\right| & = \bigO_{\psi, \constantx, D} \prn{ n^{-1/15}} \cdot \VAR_n(0) \, .
\end{align*}

For the bias approximation with the assumption $\rho(0) = \bigOmg(n^{-2 + \gamma})$, the following additional conditions hold by taking $\const = n^{-\gamma/28}$ when $n = \bigOmg_{k, \psi, \constantx, D}(1)$ given $k \geq 29/\gamma$,
\begin{align*}
	\chi_n'(\const)^3 \log^2n \leq \constant_1 n \const^{4.5} \min \brc{1, \sqrt{\rho(0)}}\, , \qquad  \frac{\lambdaefct(0)^{k+1}}{n \const^3}  +  \frac{\chi_n' (\const)^3 \log^2 n}{\sqrt{\rho(0)} n^{1- \frac{1}{k}} \const^{8.5}} \leq \constant_3 \, .
\end{align*}
In verifying the second condition above, we use $\lambdaefct(0) =  \bigO(\sigma_n\log^{\bigO(1)} n ) = \bigO(\sigma_n\log^{\bigO(1)} n)$. By Lemma~\ref{lem:bounded-varying-ridgeless-assumption-bounds}, we also have
\begin{align*}
	\const \lambdaefct(0) \chi_n'(\const) = \bigO_{\psi} (\sigma_n \log^{2 + \bigO(1)}) \, .
\end{align*} 
We can then write out the bias approximation result applying Theorem~\ref{thm:main-ridgeless}
\begin{align*}
	\left|\bias_\bX(0) - \BIAS_n(0)\right| & = \bigO_{k, \psi, \constantx, D} \prn{ n^{-\gamma/28} + n^{-25\gamma/28} \log^{\bigO(k)}n  +  \frac{\log^{8 + \bigO(1)} n}{ n^{2.5\gamma /28- \frac{1}{k}} }} \cdot \BIAS_n(0) \nonumber \\
	& \qquad + \bigO_{\psi, \constantx, D} \prn{\sigma_n^2 \log^{4 + \bigO(1)} \norm{\btheta_{\leq n}}^2 + \sigma_n \log^{2 + \bigO(1)} \norm{\boldbeta_{> n}}^2} \, .
\end{align*}
%\cc{
%\begin{align*}
%	\left|\bias_\bX(0) - \BIAS_n(0)\right| & = \bigO_{k, \psi, \constantx, D} \prn{ n^{-\gamma/28} + n^{-25\gamma/28} \log^{\bigO(k)}n  +  \frac{\log^{8 + \bigO(1)} n}{ n^{2.5\gamma /28- \frac{1}{k}} }} \cdot \BIAS_n(0) \nonumber \\
%	& \qquad +  \bigO_{\psi, \constantx, D} \prn{\sigma_n^2 \log^{4 + \bigO(1)} \norm{\btheta}^2 } \, .
%\end{align*}
%}
Fixing $k \geq 57/\gamma$, we conclude the proof with
\begin{align*}
	\left|\bias_\bX(0) - \BIAS_n(0)\right| & = \bigO_{\psi, \constantx, D} \prn{ n^{-\gamma/29} } \cdot \BIAS_n(0) + \log^{\bigO(1)} \cdot  \bigO_{\psi, \constantx, D} \prn{\sigma_n^2 \norm{\btheta_{\leq n}}^2 + \sigma_n  \norm{\boldbeta_{> n}}^2}   \, .
\end{align*}

\section{Proof of Theorem~\ref{thm:bounded-varying-ridge-check-assumptions}} \label{proof:bounded-varying-ridge-check-assumptions}
Define the following increasing function in $t$,
\begin{align*}
	f_n(t; \lambda) = 1 - \frac{\lambda}{ t \sigma_n} - \frac{1}{n} \Tr \prn{\bSigma (\bSigma + t  \sigma_n \bI)^{-1}} \, .
\end{align*}
\paragraph{Case I: regularly varying spectrum when $\alpha > 1$}  In the first case, we set $\lambda = \nu  \sigma_n$. For any $t > 0$, we can compute that
\begin{align*}
	f_n(t; \lambda) = 1 - \frac{\nu}{t} - \frac{1}{n} \Tr \prn{\bSigma (\bSigma + t  \sigma_n \bI)^{-1}} \, .
\end{align*}

We will first show $\constantsig(n) = \bigO_{\bSigma}(n)$ and $\lambdaefct = \bigTht_{\nu}( \lambda )$, and then we can invoke Proposition~\ref{prop:bounded-varying-ridge} for variance approximation. For simplicity, we will suppress the dependence on sequences $\{a_i\}$ and $\{b_i\}$ in the big-O and big-$\Omega$ notations. For instance, we will just write for all $n = \Omega_{\alpha}(1)$, $|b_n| \leq \alpha$. 

We first upper bound $\constantsig$. Note that
\begin{align*}
	\sum_{l=k}^d \sigma_l & = \sum_{l=k}^\infty l^{-\alpha} a_l \exp \brc{\sum_{j=1}^l b_j / j} = \sigma_k \cdot \sum_{l=k}^\infty \prn{\frac{l}{k}}^{-\alpha} \cdot \frac{a_l}{a_k} \cdot \exp \brc{\sum_{j=k+1}^l b_j/j} \, .
\end{align*}
As $a_l$ converges to a positive limit, we have $a_l/a_k = \bigO(1)$. For $k = \bigOmg_\alpha(1)$ such that $|b_l| \leq \alpha/2$ for all $l \geq k$, we can further derive that
\begin{align*}
	\sum_{l=k}^d \sigma_l & \leq \sigma_k \cdot \bigO(1) \cdot \sum_{l=k}^\infty \prn{\frac{l}{k}}^{-\alpha} \cdot \exp \brc{\frac{\alpha}{2}\sum_{j=k+1}^l j^{-1}} \nonumber \\
	& \leq \sigma_k \cdot \bigO(1) \cdot \sum_{l=k}^\infty \prn{\frac{l}{k}}^{-\alpha} \cdot \exp \brc{\frac{\alpha}{2}\int_k^l \frac{\de t}{t}} \nonumber \\
	& = \sigma_k \cdot \bigO(1) \cdot \sum_{l=k}^\infty \prn{\frac{l}{k}}^{\alpha/2-\alpha} = k\sigma_k \cdot \bigO(1) \cdot \sum_{l=k}^\infty \frac{1}{k}\prn{\frac{l}{k}}^{-\alpha/2} \nonumber \\
	& \leq k\sigma_k \cdot \bigO(1) \cdot \prn{ \frac{1}{k} + \int_1^\infty t^{ - \alpha/2} \de t} \nonumber \\
	& = \bigO_{\alpha}(k \sigma_k ) \, .
\end{align*}
This implies for all $n = \Omega_\alpha(1)$, we can take $\constantsig(n) = \bigO_{\alpha}(n)$. Next we show $\lambdaefct = \bigTht_\nu(\lambda)$. Note that
\begin{align}
	& \lim_{n \to \infty} \frac{1}{n} \Tr \prn{\bSigma (\bSigma + t \sigma_n \bI)^{-1}} \nonumber \\
	& = \lim_{n \to \infty} \frac{1}{n} \sum_{l=1}^\infty  \frac{\sigma_l}{\sigma_l + t \sigma_n} = \lim_{M \to \infty} \lim_{n \to \infty} \frac{1}{n} \sum_{M^{-1} n \leq l \leq Mn} \frac{\sigma_l}{\sigma_l + t \sigma_n} \nonumber \\
	& = \lim_{M \to \infty} \lim_{n \to \infty} \frac{1}{n} \sum_{M^{-1} n \leq l \leq Mn} \frac{1}{1 + t (l/n)^{\alpha}} = \int_0^\infty \frac{1}{1 + t x^\alpha} \de x = t^{-1/\alpha} \cdot \frac{1}{\alpha}\int_0^\infty \frac{u^{1/\alpha - 1}}{1 + u} \de u  \nonumber \\
	& = t^{-1/\alpha} \cdot \frac{\mathsf{Beta}(1/\alpha, 1 - 1/\alpha)}{\alpha} = t^{-1/\alpha} \cdot \frac{\Gamma(1/\alpha)\Gamma(1 - 1/\alpha)}{\alpha \Gamma(1)}  \nonumber \\ & \stackrel{\mathrm{(i)}}{=} t^{-1/\alpha} \frac{\pi/\alpha}{\sin(\pi/\alpha)} \, , \label{eq:regular-spec-1-mid-1}
\end{align}
where in (i) we use the reflection formula for $\Gamma$ function. Recall that we define $\c_{\star} = \c_{\star}(\nu)$ as the unique solution of
\begin{align*}
	1 = \nu \c_{\star}^{-1} + \frac{\pi/\alpha}{\sin(\pi / \alpha)} \c_{\star}^{-1/\alpha} \, ,
\end{align*}
it then follows from the above displays that
\begin{align*}
	\lim_{n \to \infty} f_n(\c_{\star}; \lambda) & = 1 - \nu \c_{\star}^{-1} - \lim_{n \to \infty} \frac{1}{n} \Tr \prn{\bSigma (\bSigma + \c_{\star} \sigma_n \bI)^{-1}} \nonumber \\
	& = 1 - \nu \c_{\star}^{-1} -\frac{\pi/\alpha}{\sin(\pi / \alpha)} \c_{\star}^{-1/\alpha} = 0 \, .
\end{align*}
By the definition of $\lambdaefct$ in~\eqref{eq:lambda-fixed-point}, we can write $f_n(\lambdaefct/\sigma_n; \lambda) = 0$. Combining with the above limit, we can then conclude that
\begin{align*}
	\lambdaefct = \c_{\star} \sigma_n (1 + o_n(1)) \, .
\end{align*}
Substituting into Eq.~\eqref{eq:RhoLambda}, we further have
\begin{align*}
	\rho(\lambda) & = \frac{\btheta^\sT \bSigma^{\half} (\bSigma + \lambdaefct \bI)^{-1} \bSigma^{\half} \btheta}{\norm{\btheta}^2 \Tr \prn{\bSigma (\bSigma + \lambdaefct \bI)^{-1}}} = \frac{\boldbeta^\sT (\bSigma + \lambdaefct \bI)^{-1} \boldbeta}{\norm{\boldbeta}_{\bSigma^{-1}}^2 (n - n\lambda / \lambdaefct)} \nonumber \\
	& =  \frac{\sum_{l=1}^\infty \prn{\sigma_l + \lambdaefct}^{-1} \< \boldbeta, \bv_l\>^2}{n (1 - \nu \c_{\star}^{-1}) \sum_{l=1}^\infty \sigma_l^{-1} \< \boldbeta, \bv_l\>^2} \cdot (1 + o_n(1)) \nonumber \\
	& = \frac{\sum_{l=1}^\infty \sigma_n \prn{\sigma_l + \lambdaefct}^{-1} \< \boldbeta, \bv_l\>^2}{n (1 - \nu \c_{\star}^{-1}) \sum_{l=1}^\infty \sigma_n \sigma_l^{-1} \< \boldbeta, \bv_l\>^2}  \cdot (1 + o_n(1)) \nonumber \\
	& = \frac{\sum_{l=1}^\infty (l/n)^\alpha \prn{1 + \c_{\star} (l/n)^\alpha}^{-1} \< \boldbeta, \bv_l\>^2}{n (1 - \nu \c_{\star}^{-1}) \sum_{l=1}^\infty (l/n)^\alpha \< \boldbeta, \bv_l\>^2}  \cdot (1 + o_n(1)) \nonumber \\
	& = \frac{\int_0^\infty x^\alpha \prn{1 + \c_{\star}x^\alpha}^{-1} \,  \de F_{\boldbeta}(x) }{n (1 - \nu \c_{\star}^{-1}) \int_0^\infty x^\alpha \,  \de F_{\boldbeta}(x)}  \cdot (1 + o_n(1)) \, .
\end{align*}
Therefore, under the additional condition for some $0 < \theta \leq 1$ that
\begin{align*}
	\int_0^\infty x^\alpha\,  \de F_{\boldbeta}(x) = \bigO \prn{n^{1- \theta} \int_0^\infty x^\alpha \prn{1 + \c_{\star}x^\alpha}^{-1} \,  \de F_{\boldbeta}(x)} \, ,
\end{align*}
we have $\rho(\lambda) = \bigOmg(n^{-2+ \theta})$. By choosing $\gamma = (1-\theta)/3$, we can invoke Proposition~\ref{prop:bounded-varying-ridge}. Choosing a sufficiently large $k$ yields $\var_\bX(\lambda) = \VAR_n(\lambda)(1 + o_n(1))$ and $\bias_\bX(\lambda) = \BIAS_n(\lambda)(1 + o_n(1))$. 

In the next step, we derive explicit asymptotic formulas for $\VAR_n$ and $\BIAS_n$. Similar to the previous calculations in Eq.~\eqref{eq:regular-spec-1-mid-1}, we can compute that
\begin{align*}
	& \lim_{n \to \infty} \frac{1}{n} \Tr \prn{\bSigma^2 (\bSigma + t \sigma_n \bI)^{-2}} \nonumber \\
	& = \int_0^\infty \frac{1}{(1 + t x^\alpha)^2} \de x = t^{-1/\alpha} \cdot \frac{1}{\alpha}\int_0^\infty \frac{u^{1/\alpha - 1}}{(1 + u)^2} \de u  \nonumber \\
	& = t^{-1/\alpha} \cdot \frac{\mathsf{Beta}(1/\alpha, 2 - 1/\alpha)}{\alpha} = t^{-1/\alpha} \cdot \frac{\Gamma(1/\alpha)\Gamma(2 -  1/\alpha)}{\alpha \Gamma(2)}  =  t^{-1/\alpha} \cdot \frac{\Gamma(1/\alpha)\Gamma(1 - 1/\alpha)}{\alpha} \cdot \prn{1 - \frac{1}{\alpha}} \nonumber \\
	& =  t^{-1/\alpha} \frac{\pi/\alpha}{\sin(\pi/\alpha)} \cdot \prn{1 - \frac{1}{\alpha}} \, .
\end{align*}
and further $n^{-1} \Tr(\bSigma^2 (\bSigma + \lambdaefct \bI)^{-2}) \to (1- \nu \c_{\star}^{-1})(1-\alpha^{-1})$. This then gives the variance
\begin{align*}
	\VAR_n(\lambda) = \frac{\tau^2 n^{-1}\Tr(\bSigma^2 (\bSigma + \lambdaefct \bI)^{-2})}{1 - n^{-1}\Tr(\bSigma^2 (\bSigma + \lambdaefct \bI)^{-2})} = \frac{\tau^2  (1- \nu \c_{\star}^{-1})(\alpha - 1)}{1 + \nu \c_{\star}^{-1} (\alpha - 1)}   (1 +o_n(1)) \, .
\end{align*}
For the bias term, we can similarly write
\begin{align*}
	\lambdaefct^2 \<\boldbeta, \prn{\bSigma + \lambdaefct \bI}^{-2} \bSigma \boldbeta\> & = (1+o_n(1)) \cdot \sigma_n \c_\star^2  \sum_{l=1}^\infty \frac{\sigma_l \sigma_n}{(\sigma_l + \lambdaefct)^2} \<\boldbeta,\bv_l\>^2 \nonumber \\
	& = (1+o_n(1)) \cdot \sigma_n \c_\star^2 \sum_{l=1}^\infty \frac{(l/n)^\alpha}{(1 + \c_\star (l/n)^\alpha)^2} \<\boldbeta,\bv_l\>^2 \nonumber \\
	& = \sigma_n \c_{\star}^2  \int_{0}^{\infty}\frac{x^{\alpha}}
	{(1+\c_{\star}x^{\alpha})^2}\, \de F_{\boldbeta}(x)\, \big(1+o_n(1)\big) \, .
\end{align*}
Together with $ \Tr(\bSigma^2 (\bSigma + \lambdaefct \bI)^{-2}) = n(1- \nu \c_{\star}^{-1})(1-\alpha^{-1})  (1+o_n(1))$, we conclude the proof for this case.

\paragraph{Case II: regularly varying spectrum when $\alpha = 1$} Setting $\lambda = \nu \sigma_n \log n $. For any $t > 0$, we can compute that
\begin{align*}
	f_n(t; \lambda) = 1 - \frac{\nu \log n}{t} - \frac{1}{n} \Tr \prn{\bSigma (\bSigma + t  \sigma_n \bI)^{-1}} \, .
\end{align*}

We first verify Assumption~\ref{asmp:data-dstrb} holds.
With $a_i = \bigO(1)$ bounded and $\alpha' > 1$, we indeed have that $\Tr(\bSigma) < \infty$ as
\begin{align*}
	\Tr(\bSigma) = \sum_{l=1}^\infty \frac{a_l}{l (1 + \log l)^{\alpha'}} = \bigO \prn{1 + \int_1^\infty \frac{M}{t(1 + \log t)^{\alpha'}} \de t} = \bigO\prn{-\left. \frac{(1 + \log t)^{1 - \alpha'}}{\alpha' - 1}\right|_{t = 1}^\infty} = \bigO\prn{\frac{1}{\alpha' - 1}} \, .
\end{align*}
Since the sequence $\{a_i\}$ converge to a positive limit, we have for $k = \bigOmg(1)$
\begin{align*}
	\sum_{l=k}^d \sigma_l & = \bigTht\prn{\sum_{l=k}^\infty \frac{1}{l (1 + \log l)^{\alpha'}}} = \bigTht\prn{ \sigma_k + \int_k^\infty \frac{1}{t(1 + \log t)^{\alpha'}} \de t} = \bigTht\prn{\sigma_k + \frac{(1 + \log k)^{1- \alpha'}}{\alpha' - 1}} \nonumber \\
	& = \bigTht_{\alpha'}(k \log k \sigma_k) \, ,
\end{align*}
and therefore, we can take $\constantsig(n) = \bigTht_{\alpha'}(n \log n)$. We proceed to compute $\lambdaefct$. Taking any $t > 0$,
\begin{align*}
	& \lim_{n \to \infty} \frac{1}{n} \Tr \prn{\bSigma (\bSigma +  t \sigma_n \log n \bI)^{-1}} \nonumber \\
	& = \lim_{n \to \infty} \frac{1}{n} \sum_{l=1}^\infty  \frac{\sigma_l}{\sigma_l + t \sigma_n \log n} = \lim_{M \to \infty} \lim_{n \to \infty} \frac{1}{n} \sum_{l \geq M^{-1} n} \frac{\sigma_l}{\sigma_l + t \sigma_n \log n} \nonumber \\
	& = \lim_{M \to \infty} \lim_{n \to \infty} \frac{1}{n} \sum_{l \geq M^{-1} n} \frac{\sigma_l}{t \sigma_n \log n} = \lim_{M \to \infty} \lim_{n \to \infty} \frac{(1 + \log n - \log M)^{1 - \alpha'}}{t (\alpha' - 1)(\log n)^{1 - \alpha'}} \nonumber \\
	& = \frac{1}{t(\alpha ' - 1)} \, .
\end{align*}
Recalling that $\c_\star$ solves
\begin{align*}
	1 = \nu \c_{\star}^{-1} + (\alpha'-1)^{-1} \c_{\star}^{-1} \, ,
\end{align*}
we then have
\begin{align*}
	\lim_{n \to \infty} f_n(\c_{\star} \log n; \lambda) & = 1 - \nu \c_{\star}^{-1} - \lim_{n \to \infty} \frac{1}{n} \Tr \prn{\bSigma (\bSigma + \c_{\star} \sigma_n \log n \bI)^{-1}} \nonumber \\
	& = 1 - \nu \c_{\star}^{-1} - (\alpha'-1)^{-1} \c_{\star}^{-1} = 0 \, ,
\end{align*}
and consequently by Eq.~\eqref{eq:lambda-fixed-point},
\begin{align*}
	\lambdaefct = \c_{\star} \sigma_n \log n ( 1 + o_n(1)) \, .
\end{align*}
Taking the above display into Eq.~\eqref{eq:RhoLambda}, we get
\begin{align*}
	\rho(\lambda) & = \frac{\btheta^\sT \bSigma^{\half} (\bSigma + \lambdaefct \bI)^{-1} \bSigma^{\half} \btheta}{\norm{\btheta}^2 \Tr \prn{\bSigma (\bSigma + \lambdaefct \bI)^{-1}}}  =  \frac{\sum_{l=1}^\infty \prn{\sigma_l + \lambdaefct}^{-1} \< \boldbeta, \bv_l\>^2}{n (1 - \nu \c_{\star}^{-1}) \sum_{l=1}^\infty \sigma_l^{-1} \< \boldbeta, \bv_l\>^2} \cdot (1 + o_n(1)) \nonumber \\
	& = \frac{\sum_{l=1}^\infty \sigma_n \prn{\sigma_l + \c_{\star} \sigma_n \log n}^{-1} \< \boldbeta, \bv_l\>^2}{n (1 - \nu \c_{\star}^{-1}) \sum_{l=1}^\infty \sigma_n \sigma_l^{-1} \< \boldbeta, \bv_l\>^2}  \cdot (1 + o_n(1)) \nonumber \\
	& = \frac{\sum_{l=1}^\infty l/n \cdot \prn{1 + l \cdot (\c_{\star} \log n / n)}^{-1} \< \boldbeta, \bv_l\>^2}{n (1 - \nu \c_{\star}^{-1}) \sum_{l=1}^\infty l/n \cdot  \< \boldbeta, \bv_l\>^2}  \cdot (1 + o_n(1)) \nonumber \\
	& = \frac{\sum_{l=1}^\infty l \cdot ( \log n / n) \cdot \prn{1 + l \cdot (\c_{\star} \log n / n)}^{-1} \< \boldbeta, \bv_l\>^2}{n (1 - \nu \c_{\star}^{-1}) \sum_{l=1}^\infty l \cdot ( \log n / n) \cdot  \< \boldbeta, \bv_l\>^2}  \cdot (1 + o_n(1)) \nonumber \\
	& = \frac{\int_0^\infty x \prn{1 + \c_{\star}x}^{-1} \,  \de F_{\boldbeta}(x) }{n (1 - \nu \c_{\star}^{-1}) \int_0^\infty x \,  \de F_{\boldbeta}(x)}  \cdot (1 + o_n(1)) \, ,
\end{align*}
where in the last line we use $F_\boldbeta(x) = \sum_{k=1}^{\lfloor (n/\log n) x\rfloor }\<\boldbeta,\bv_k\>^2$. Thus we can have $\rho(\lambda) = \Omega(n^{-2+\theta})$ provided the condition
\begin{align*}
	\int_0^\infty x \,  \de F_{\boldbeta}(x) = \bigO \prn{n^{1-\theta} \int_0^\infty x \prn{1 + \c_{\star}x}^{-1} \,  \de F_{\boldbeta}(x)} \, .
\end{align*}
Setting $\gamma = (1-\theta)/3$, we can invoke Proposition~\ref{prop:bounded-varying-ridge} and obtain $\var_\bX(\lambda) = \VAR_n(\lambda)(1 + o_n(1))$, $\bias_\bX(\lambda) = \BIAS_n(\lambda)(1 + o_n(1))$.

For the variance $\VAR_n(\lambda)$, we note
\begin{align*}
	& \lim_{n \to \infty} \frac{\log n}{n} \Tr \prn{\bSigma^2 (\bSigma +  t \sigma_n \log n \bI)^{-2}} \nonumber \\
	& = \lim_{n \to \infty} \frac{\log n}{n} \sum_{l=1}^\infty  \frac{\sigma_l^2}{(\sigma_l + t \sigma_n \log n)^2} = \lim_{M \to \infty} \lim_{n \to \infty} \frac{\log n}{n} \sum_{M^{-1}n / \log n \leq l \leq M n / \log n} \frac{\sigma_l^2}{(\sigma_l + t \sigma_n \log n)^2}  \nonumber \\
	& = \lim_{M \to \infty} \lim_{n \to \infty} \frac{\log n}{n} \sum_{M^{-1}n / \log n \leq l \leq M n / \log n} \frac{1}{\prn{1 + tl \log n/n}^2} = \int_0^\infty \frac{1}{(1 + tx)^2} \de x  = \frac{1}{t} \, .
\end{align*}

Substituting in $\lambdaefct$, we thus have $n^{-1} \Tr \prn{\bSigma^2 (\bSigma +  \lambdaefct \bI)^{-2}} = (1+o_n(1))/(\c_{\star} \log n)$, which further implies that
\begin{align*}
	\VAR_n(0) & = \frac{\tau^2 n^{-1}\Tr(\bSigma^2 (\bSigma + \lambdaefct(0) \bI)^{-2})}{1 - n^{-1}\Tr(\bSigma^2 (\bSigma + \lambdaefct(0) \bI)^{-2})} = \frac{\vareps^2}{\c_\star \log n}\, \big(1+o_n(1)\big) \, .
\end{align*}
Finally for the bias, we have
\begin{align*}
	\lambdaefct^2 \<\boldbeta, \prn{\bSigma + \lambdaefct \bI}^{-2} \bSigma \boldbeta\> & = (1+o_n(1)) \cdot \c_{\star}^2  \sigma_n \log n  \sum_{l=1}^\infty \frac{\sigma_l \sigma_n \log n}{(\sigma_l + \lambdaefct)^2}  \<\boldbeta,\bv_l\>^2 \nonumber \\
	& = (1+o_n(1)) \cdot \c_{\star}^2  \sigma_n \log n   \sum_{l=1}^\infty \frac{(l \log n/n)}{(1 + \c_{\star} l \log n/n)^2} \<\boldbeta,\bv_l\>^2 \,  \nonumber \\
	& = \c_{\star}^2  \sigma_n \log n \int_0^\infty	\frac{x}{(1+\c_{\star} x)^2}\, \de F_{\boldbeta}(x)\, \big(1+o_n(1)\big)\, .
\end{align*}
Combining with $n^{-1} \Tr \prn{\bSigma^2 (\bSigma +  \lambdaefct \bI)^{-2}} = (1+o_n(1))/(\c_{\star} \log n)$, it holds that
\begin{align*}
	\BIAS_n(\lambda) & = \c_{\star}^2  \sigma_n \log n \int_0^\infty	\frac{x}{(1+\c_{\star} x)^2}\, \de F_{\boldbeta}(x)\, \big(1+o_n(1)\big)\, .
\end{align*}

\paragraph{Case III: a non-regularly varying spectrum}  Take $\lambda = \nu \sigma_n$. For any $t > 0$, we can compute that
\begin{align*}
	f_n(t; \lambda) = 1 - \frac{\nu }{t} - \frac{1}{n} \Tr \prn{\bSigma (\bSigma + t  \sigma_n \bI)^{-1}} \, .
\end{align*} 

If $\sigma_k = p^{-s}$, we can easily have $\sigma_{l} \leq p^{-r - s}$ if $q^r k \leq l < q^{r+1} k$. This immediately yields
\begin{align*}
	\sum_{l=k}^d \sigma_l \leq \sum_{r = 0}^\infty (q^{r+1}k- q^rk) \cdot p^{-r - s} \leq k\sigma_k \sum_{r=0}^\infty \frac{q^{r+1}}{p^r}  = \bigO_{p,q}(k \sigma_k)\, ,
\end{align*} 
as $q < p$ and the geometric sum converges. We can thus take $\constantsig(n) = \bigO_{p,q}(n)$. For $\lambdaefct$, using that
\begin{align*}
	\Tr \prn{\bSigma(\bSigma + t \sigma_n \bI)^{-1}} & = \sum_{l=0}^\infty \frac{(q^{l+1} - q^l) p^{-l}}{p^{-l} + t p^{-s_\star}} = \prn{q^{s_\star + 1} - q^{s_\star}} \cdot \sum_{l=0}^\infty \frac{q^{l-s_\star}}{1 + tp^{l-s^\star}} \, .
\end{align*}
Since $s^\star \to \infty$ as $n$ tends to infinity, we have
\begin{align*}
	\Tr \prn{\bSigma(\bSigma + t \sigma_n \bI)^{-1}} & = n \rho_\star^{-1} \cdot G_{p,q,1}(t) \big(1+o_n(1)\big) \, .
\end{align*} 
Hence
\begin{align*}
	\lim_{n \to \infty} f_n(t; \lambda) = 1 - \nu t^{-1} - \rho_\star^{-1} \cdot G_{p,q,1}(t) \, .
\end{align*}
While the right hand side is increasing in $t$ ranging in $(-\infty, 1)$. There exists a unique $\c_\star = \c_{\star}(\nu)$ solving
\begin{align*}
	\lim_{n \to \infty} f_n(\c_{\star}; \lambda) = 0 \, ,
\end{align*}
and substituting into Eq.~~\eqref{eq:lambda-fixed-point} yields
\begin{align*}
	\lambdaefct  = \c_{\star} \sigma_n (1+o_n(1)) \, .
\end{align*}	
Next we compute $\rho(\lambda)$ from Eq.~\eqref{eq:RhoLambda},
\begin{align*}
	\rho(\lambda) & = \frac{\btheta^\sT \bSigma^{\half} (\bSigma + \lambdaefct \bI)^{-1} \bSigma^{\half} \btheta}{\norm{\btheta}^2 \Tr \prn{\bSigma (\bSigma + \lambdaefct \bI)^{-1}}} =  \frac{\sum_{l=1}^\infty \prn{\sigma_l + \lambdaefct}^{-1} \< \boldbeta, \bv_l\>^2}{n (1 - \nu \c_{\star}^{-1}) \sum_{l=1}^\infty \sigma_l^{-1} \< \boldbeta, \bv_l\>^2} \cdot (1 + o_n(1)) \nonumber \\
	& = \frac{\sum_{l=1}^\infty \sigma_n \prn{\sigma_l + \lambdaefct}^{-1} \< \boldbeta, \bv_l\>^2}{n (1 - \nu \c_{\star}^{-1}) \sum_{l=1}^\infty \sigma_n \sigma_l^{-1} \< \boldbeta, \bv_l\>^2}  \cdot (1 + o_n(1)) \nonumber \\
	& = \frac{\sum_{s=0}^\infty p^{s-s_\star} /(1 + \c_{\star} p^{s-s_\star} ) \sum_{l=q^s}^{q^{s+1}-1} \< \boldbeta, \bv_l \>^2}{n (1 - \nu \c_{\star}^{-1}) \sum_{s=0}^\infty p^{s-s_\star}  \sum_{l=q^s}^{q^{s+1}-1} \< \boldbeta, \bv_l \>^2}  \cdot (1 + o_n(1)) \nonumber \\
	& = \frac{\int_0^\infty p^{x-s_\star} /(1 + \c_{\star} p^{x-s_\star} )  \,  \de F_{\boldbeta}(x) }{n (1 - \nu \c_{\star}^{-1}) \int_0^\infty p^{x-s_\star}  \,  \de F_{\boldbeta}(x)}  \cdot (1 + o_n(1)) \, .
\end{align*}
Given the ``rapid-decay'' property
\begin{align*}
	\int_0^\infty p^{x-s_\star}  \,  \de F_{\boldbeta}(x) = \bigO \prn{n^{1-\theta} \int_0^\infty p^{x-s_\star} (1 + \c_{\star} p^{x-s_\star} )^{-1}  \,  \de F_{\boldbeta}(x)} \, ,
\end{align*}
we have $\rho(\lambda) = \Omega(n^{-2+\theta})$ and Proposition~\ref{prop:bounded-varying-ridge} holds with $\gamma = (1-\theta)/3$, implying that $\var_\bX(\lambda) = \VAR_n(\lambda)(1 + o_n(1))$ and $\bias_\bX(\lambda) = \BIAS_n(\lambda)(1 + o_n(1))$.

To compute the effective variance $\VAR_n(\lambda)$, we first note that
\begin{align*}
	\Tr \prn{\bSigma^2(\bSigma + t \sigma_n \bI)^{-2}} = \prn{q^{s_\star + 1} - q^{s_\star}} \cdot \sum_{l=0}^\infty \frac{q^{l-s_\star}}{(1 + tp^{l-s^\star})^2} = n \rho_\star^{-1} \cdot G_{p,q,2}(t)\big(1+o_n(1)\big)  \, .
\end{align*}
Thus
\begin{align*}
	\VAR_n(\lambda) & = \frac{\tau^2 \Tr \prn{\bSigma^2(\bSigma + \lambdaefct \bI)^{-2}}}{n - \Tr \prn{\bSigma^2(\bSigma + \lambdaefct\bI)^{-2}}}  = \frac{G_{p,q,2}(\c_\star) \tau^2 }{\rho_\star - G_{p,q,2}(\c_\star)} \big(1+o_n(1)\big) \, .
\end{align*}
For the bias term, we have
\begin{align*}
	\lambdaefct^2 \<\boldbeta, \prn{\bSigma + \lambdaefct \bI}^{-2} \bSigma \boldbeta\> & = (1+o_n(1)) \c_{\star}^2 \sigma_n  \sum_{l=0}^\infty \frac{\sigma_l \sigma_n}{(\sigma_l + \c_{\star} \sigma_n )^2} \< \boldbeta, \bv_l \>^2 \nonumber \\
	& = (1+o_n(1)) \c_{\star}^2 \sigma_n  \sum_{s=0}^\infty \brc{\frac{p^{s-s_\star}}{(1 +  \c_{\star} p^{s-s_\star})^2} \cdot \sum_{l=q^s}^{q^{s+1}-1} \< \boldbeta, \bv_l \>^2} \nonumber \\
	& =  \c_{\star}^2 \sigma_n \int_0^\infty \frac{p^{x-s_\star}}{(1 +  \c_{\star} p^{x-s_\star})^2}\, \de F_{\boldbeta}(x)\, \big(1+o_n(1)\big) \, .
\end{align*}
We conclude the proof for $\BIAS_n(\lambda)$ by substituting in $ n^{-1} \Tr \prn{\bSigma^2(\bSigma + \lambdaefct\bI)^{-2}} =(1+o_n(1)) \rho_\star^{-1} G_{p,q,2} ( \c_{\star})  $.

\end{document}